\documentclass[11pt,a4paper,oneside,reqno]{amsart}
\usepackage{shadethm,framed}
\usepackage[T1]{fontenc}
\usepackage[utf8]{inputenc}
\usepackage[english]{babel}
\usepackage[papersize={21cm,29.7cm},left=2.5cm,right=2.5cm,top=2.5cm,bottom=2.5cm]{geometry}
\usepackage{enumerate}
\usepackage{listings}
\usepackage{amsthm, amsfonts, amssymb, amsmath}
\usepackage{graphicx}
\usepackage{xcolor}
\usepackage{multirow} 
\usepackage{color}
\usepackage{framed}
\usepackage{wallpaper}
\usepackage{tikz,tikz-cd}
\usepackage{pgf,tikz,pgfplots}
\usepackage{tcolorbox}
\usepackage{scalefnt}
\usepackage{mdframed}
\usepackage{booktabs}
\usepackage{caption}
\usepackage{makecell}
\usepackage{float}
\usepackage{calrsfs}
\usepackage{transparent}
\usepackage{adjustbox}
\usepackage[
  backend=biber,
  style = alphabetic,
  maxalphanames=99,
    minalphanames=99,
    maxnames=99
]{biblatex}

\AtBeginBibliography{\small}

\usepackage[table]{xcolor}
\usepgfplotslibrary{patchplots}
\usetikzlibrary{patterns, positioning, arrows,decorations.markings}

\definecolor{blou}{rgb}{0.66, 0.44, 0.98}
\usepackage[hidelinks,urlcolor=blue]{hyperref}
\hypersetup{
    colorlinks,
    linkcolor={blue!50!black},
    citecolor=blou,
    urlcolor={blue!80!black}
}

\usepackage{pgfplots}
\pgfplotsset{width=7cm, compat=1.10}
\usepgfplotslibrary{fillbetween}
\pgfmathdeclarefunction{poly}{0}{\pgfmathparse{sin(x)+x}}

\tikzcdset{arrow style=tikz,diagrams={>={Straight Barb[scale=0.8]},all arrow/.append style = -{Latex[scale=0.8]}}}

\tcbset{
    frame code={}
    center title,
    left=0pt,
    right=0pt,
    top=0pt,
    bottom=0pt,
    colback=red!70,
    colframe=white,
    width=\dimexpr\textwidth\relax,
    enlarge left by=0mm,
    boxsep=5pt,
    arc=0pt,outer arc=0pt,
    }

\newtheoremstyle{pourdef}
  {10pt}
  {10pt}
  {}
  {}
  {\bf}
  {.~}
  { }
  {}
\newtheoremstyle{pourth}{10pt}{10pt}{\em}{}{\sc}{.~}{ }{}
\newtheoremstyle{pourpp}{10pt}{10pt}{\em}{}{\bf \em}{.~}{ }{}
\newtheoremstyle{pourrk}{10pt}{10pt}{}{}{\em}{.~}{ }{}
\newtheoremstyle{pourlm}{10pt}{10pt}{\em}{}{\bf \em}{.~}{ }{}
\newtheoremstyle{pourco}{10pt}{10pt}{\em}{}{\bf \em}{.~}{ }{}

\definecolor{black}{cmyk}{1,1,1,1}
\definecolor{colordef}{rgb}{0.2,0.5,0.07} 
\definecolor{colorprop}{rgb}{0.2,0.1,0.5}

\definecolor{color1}{rgb}{0.6,0.4,0.8}
\definecolor{color1}{rgb}{0.9,0.6,0.4}
\definecolor{color1}{rgb}{0.36, 0.54, 0.66}

\definecolor{color2}{rgb}{0.2,0.1,0.5}
\definecolor{color2}{rgb}{0.91, 0.84, 0.42}
\definecolor{color2}{rgb}{0.87, 0.36, 0.51}
\definecolor{color2}{rgb}{0.4, 0.69, 0.2}
\definecolor{color2}{rgb}{0.84, 0.23, 0.24}

\definecolor{color3}{rgb}{1.0, 0.13, 0.32}
\definecolor{color3}{rgb}{0.54, 0.17, 0.89}

\definecolor{babypink}{rgb}{0.96, 0.76, 0.76}

\definecolor{babyblue}{rgb}{0.76, 0.76, 0.95}

\makeatletter
\renewenvironment{proof}[1][\proofname]{\par
  \normalfont
  \trivlist
  \item[\hskip\labelsep\itshape
    #1.]\ignorespaces
}{%
  \endtrivlist
}
\makeatother

\newtheorem{lem}{Lemma}[section]
\newtheorem{cor}[lem]{Corollary}

\newtheorem{thm}[lem]{Theorem}
\newtheorem{fact}[lem]{Fact}
\newtheorem{prop}[lem]{Proposition}
\theoremstyle{definition}
\newtheorem{definition}[lem]{Definition}
\newtheorem{ex}[lem]{Example}

\newtheorem{rmk}[lem]{Remark}

\newtheorem{conj}[lem]{Conjecture}

\newtheorem{notation}[lem]{Notation}

\numberwithin{equation}{section}

\newcommand{\RUN}{\mathcal{A}_1}

\newcommand{\LP}{\mathfrak{p}}
\newcommand{\pr}{\mathsf{pr}}

\newcommand{\lenk}{\mathsf{L}_\Omega}
\newcommand{\PRSP}{\Lambda^p \mathbb{R}^{p+q}}

\newcommand{\phoo}{\leftrightsquigarrow}

\newcommand{\GrassC}{\operatorname{Gr}_p (\mathbb{C}^{p+q})}
\newcommand{\GrassH}{\operatorname{Gr}_p (\mathbb{H}^{p+q})}
\newcommand{\Grassq}{\operatorname{Gr}_q (\mathbb{R}^{p+q})}

\newcommand{\compact}{\mathcal{C}}

\def\O{\Omega}

\newcommand{\Grass}{\operatorname{Gr}_p (\mathbb{R}^{p+q})}
\newcommand{\id}{\operatorname{id}}

\newcommand{\Fl}{\mathcal{F}}
\newcommand{\Phot}{\ell}
\newcommand{\Affstd}{\mathbb{A}}
\newcommand{\Affstdstd}{\Affstd^-}
\newcommand{\MRr}{\mathcal{R}}

\def\Rr{\operatorname{Extr}_{\MRr}}
\newcommand{\Contlam}{\mathsf{X}}
\newcommand{\hyp}{\operatorname{Z}}

\newcommand{\ncdn}{\mathsf{F}_{G'}^\opp}
\newcommand{\ncd}{\mathsf{F}_{(\mathfrak{g}, \alpha)}}
\newcommand{\uu}{\mathfrak{u}}
\newcommand{\Cartinv}{\sigma_\mathfrak{g}}
\newcommand{\g}{\mathfrak{g}}

\newcommand{\kk}{\mathfrak{k}}

\newcommand{\aaa}{\mathfrak{a}}
\newcommand{\oppinv}{\mathsf{i}}

\newcommand{\tr}{\mathsf{t}}

\newcommand{\vv}{\mathsf{v}}

\newcommand{\FS}{\operatorname{\Delta}}

\newcommand{\Cf}{\mathbb{C}}
\newcommand{\Rf}{\mathbb{R}}
\newcommand{\Hf}{\mathbb{H}}

\newcommand{\isl}{\zeta}

\newcommand{\ro}{\mathsf{k}}
\newcommand{\kob}{\mathsf{K}}
\newcommand{\varphistd}{\varphi_-}

\newcommand{\plong}{\tau}
\newcommand{\Ad}{\operatorname{Ad}}
\newcommand{\ad}{\operatorname{ad}}
\newcommand{\leng}{\mathsf{Len}_{\O}}
\newcommand{\plongsl}{\operatorname{j}}

\newcommand{\pstd}{\operatorname{pr}}
\newcommand{\E}{\operatorname{E}}
\newcommand{\F}{\operatorname{F}}
\newcommand{\He}{\operatorname{H}}
\newcommand{\nG}{\mathsf{n}(G)}

\newcommand{\Iso}{\mathrm{Isom}}

\newcommand{\Aut}{\mathsf{Aut}}

\newcommand{\SL}{\operatorname{SL}}
\newcommand{\PO}{\operatorname{PO}}
\newcommand{\GL}{\operatorname{GL}}
\newcommand{\PGL}{\operatorname{PGL}}
\newcommand{\SU}{\operatorname{SU}}
\newcommand{\Sp}{\operatorname{Sp}}
\newcommand{\spp}{\mathfrak{sp}}

\newcommand{\sll}{\mathfrak{sl}}

\newcommand{\SO}{\operatorname{SO}}
\newcommand{\soo}{\mathfrak{so}}
\renewcommand{\qed}{\hfill\square}
\DeclareMathOperator{\Ker}{ker}

\newcommand{\suu}{\mathfrak{su}}
\newcommand{\eeee}{\mathfrak{e}}
\newcommand{\opp}{-}

\newcommand{\std}{\mathsf{std}}

\newcommand{\inver}{\mathsf{s}}
\newcommand{\typ}{\mathsf{typ}}
\newcommand{\pos}{\mathsf{pos}}
\newcommand{\Oo}{\mathcal{O}_\O}
\newcommand{\Extr}{\operatorname{Extr}_{\mathcal{R}}}
\newcommand{\cpl}{\mathbf{C}_\O}

\makeatletter
\newcommand{\tpitchfork}{%
  \vbox{
    \baselineskip\z@skip
    \lineskip-.52ex
    \lineskiplimit\maxdimen
    \m@th
    \ialign{##\crcr\hidewidth\smash{$-$}\hidewidth\crcr$\pitchfork$\crcr}
  }%
}
\makeatother

\title[Metrics on domains in Nagano spaces]{Metric properties of domains in real-type Nagano spaces}
\author{Blandine Galiay}

\begin{document}

\begin{abstract} Nagano spaces are compact symmetric spaces that admit large transformation groups. They include for instance all the Grassmannians and the Einstein Universes. In this paper, we study a Kobayashi-type pseudometric on domains in real-type Nagano spaces. When the Nagano space is real projective space, this metric coincides with the classical Kobayashi pseudometric. For a dually convex domain of a general real-type Nagano space, we prove that this pseudometric is a genuine metric if and only if the domain does not contain a photon minus a point. We compute this metric on the proper symmetric domains and prove that it is obtained by integrating the~$L^1$-norm along flats. 

We prove that in higher rank, the Kobayashi metric of a strongly~$\mathcal{R}$-proper dually convex divisible domain is never Gromov hyperbolic. This contrasts with the rank-one case corresponding to real projective space, where a classical result of Benoist shows that this metric is Gromov hyperbolic if and only if the domain is strictly convex. 

\end{abstract}

\maketitle

\section{Introduction}

Initially introduced in the complex setting by Kobayashi in the 1960s \cite{kobayashi1967invariant}, Kobayashi pseudometrics in complex geometry have given rise to real analogues. For instance, in real projective geometry, a Kobayashi pseudometric can be defined using chains of projective segments \cite{kobayashi1984projectively}. In conformal pseudo-Riemannian geometry, an analogous pseudometric has been introduced by Markowitz, replacing projective segments by segments of \emph{photons} \cite{markowitz_1981}. These constructions were later adapted to domains (i.e.\ connected open subsets) of different \emph{flag manifolds}, such as Grassmannians by Limbeek--Zimmer \cite{van2019rigidity}, or \emph{causal flag manifolds} by the author \cite{galiay2024rigidity}.

The first goal of this paper is to place these real constructions into the Lie-theoretic generality of \emph{real-type Nagano spaces}, whose classification is known \cite{nagano1965transformation} and recalled in Table~\ref{table_nagano_full} on page~\pageref{table_nagano_full}: they include real projective space, \emph{Einstein Universes} (item~(viii) of Table~\ref{table_nagano_full}), but also \emph{minimal Grassmannians} (item~(iv)), \emph{causal flag manifolds} (items (iii), (viii, $\min(p,q)=2$), (x), (xi), and (xii)), and some exceptional flag manifolds. These pseudometrics are built from chains of \emph{photon} segments, whose construction we recall.

We identify a natural family of domains on which the Kobayashi pseudometric is a genuine metric, namely the family of $\MRr$-proper photon-convex domains. As a consequence, they enjoy the following property: their automorphism group acts properly on them (Corollary~\ref{cor_action_proper}). In the real projective case, these domains are exactly properly convex domains, but in general they may not be bounded in an affine chart. We thus extend the considerations of \cite{zimmer2015convexity, van2019rigidity, galiay2024rigidity} to a broader class of domains. In the special case of the \emph{Einstein universe}, these domains were also studied independently by Chalumeau with techniques from conformal pseudo-Riemannian geometry, see \cite{chalumeau2024metriques, chalumeau2025markowitz, chalumeau2026gromov}.

The second goal of this paper is to apply our construction to geometric questions on manifolds locally modelled on Nagano spaces. In particular, Kobayashi pseudometrics have already proven to play a key role in proving partial results of a rigidity conjecture of Limbeek--Zimmer (Conjecture~\ref{question_lim_Zim}), see \cite{van2019rigidity, galiay2024rigidity, chalumeau2024rigidity}. Our construction provides new directions toward Limbeek--Zimmer's conjecture: for instance, building on work of Zimmer \cite{zimmer2015convexity}, we show that the automorphism group of a strongly~$\MRr$-proper divisible dually convex domain in a higher-rank Nagano space is \emph{never} Gromov hyperbolic. This result contrasts with the rank-one case, corresponding to real projective space. More generally, the analysis carried out in this paper thus allows us to formally relate Limbeek--Zimmer's conjecture for real-type Nagano spaces to a higher-rank phenomenon.

\subsection{Domains in Nagano spaces}\label{sect_photons_intro} \emph{Nagano spaces}, also called \emph{R-symmetric spaces} (``R'' for ``root'') or \emph{extrinsic symmetric spaces}, are compact symmetric spaces~$N$ admitting a noncompact transformation group, i.e.\ such that there exists a noncompact semisimple Lie group~$G$ acting transitively on~$N$ with compact kernel. Their geometry can be understood as generalizing both conformal (pseudo-)Riemannian geometry and projective geometry, and they are classified, see Table~\ref{table_nagano_full}. We refer for instance to \cite{nagano1965transformation,chow1949geometry, kobayashi1964filtered, takeuchi1965cell, kobayashi1965filtered, takeuchi1968minimal, loos1971jordan, peterson1987arithmetic, takeuchi1988basic, makarevivc1973open, kaneyuki1998sylvester, kaneyuki2006causal, kaneyuki2011automorphism} for a rich literature on this topic.

Every Nagano space decomposes as a product of \emph{irreducible} Nagano spaces, i.e.\ spaces whose transformation group $G$ is a \emph{simple} Lie group; most arguments concerning Nagano spaces reduce to the
irreducible case.

A domain in a Nagano space is a connected open set. We are in particular interested in a special class of domains, those that are \emph{proper}, and recall their definition. A fundamental result is that every Nagano space with transformation group~$G$ is a
\emph{flag manifold} of~$G$ \cite{nagano1965transformation}; in other words, the stabilizer of a point
$x \in N$ is a \emph{parabolic subgroup} $P$ of $G$, which yields the
identification $N \simeq G/P$.
In particular, such a space admits \emph{affine charts}. These are distinguished dense open subsets canonically diffeomorphic to affine spaces; this structure generalizes the well-known one of real projective space, see Section~\ref{sect_incidence} for more details. A domain $\O \subset N$ is said to be \emph{proper} if it
is bounded in an affine chart. We can also define the \emph{automorphism group} of a domain~$\O \subset N \simeq G/P$ as
$$\Aut(\O) := \{ g \in G \mid g \cdot \O = \O \}~.$$

Nagano proved that the \emph{non-compact dual} symmetric space $\mathbb{X}(N)$ of a Nagano space
$N$, that is, the unique non-compact symmetric space having the same
complexification as $N$, embeds~$\Iso(\mathbb{X}(N))$-equivariantly into $N$ as a \emph{proper domain} \cite{nagano1965transformation}. Two such embeddings are always conjugate in~$G$. We call the image of such an embedding a \emph{realization of
$\mathbb{X}(N)$ in $N$}; up to translation by an element of~$G$, there is a unique realization of~$\mathbb{X}(N)$ in~$N$. For instance, real hyperbolic space is the non-compact dual of real projective space $N = \mathbb{P}(\mathbb{R}^n)$, and the Klein model
\begin{equation}\label{eq_hyperbolique}
    \mathbb{H}^n
    = \mathbb{P}\bigl(\{x \in \mathbb{R}^{n+1} \mid
    x_1^2 + \dots + x_n^2 - x_{n+1}^2 > -1\}\bigr)
\end{equation}
realizes it as a proper domain of~$\mathbb{P}(\mathbb{R}^{n+1})$.

\subsection{Kobayashi metric and~$\MRr$-properness}\label{sect_intro_kob} 
We define and study a \emph{Kobayashi pseudometric} on domains of real-type Nagano spaces. Its construction relies on the existence of \emph{photons}, which we introduce in Section~\ref{sect_intro_photons} below.

\subsubsection{Photons}\label{sect_intro_photons} Among irreducible Nagano spaces, we single out those that are
\emph{of real type}. Any Nagano space admits a generic~$G$-invariant family of natural spheres, called \emph{Helgason spheres} (see e.g.\ \cite{takeuchi1988basic}). When these spheres are one-dimensional, we say that the Nagano space is \emph{of real type}. In this case, Helgason spheres are projective lines, called \emph{photons}. In Table~\ref{table_nagano_full}, the real-type Nagano spaces are those for which the column ``$\dim(\mathfrak{g}_\alpha)$'' contains a~$1$; their item case is greyed out. This is for instance the case of the Grassmannian $\Grass$ of
$p$-planes in $\mathbb{R}^n$.

The photons in real projective space are exactly the projective lines. From another perspective, in the flat model of conformal pseudo-Riemannian geometry (the \emph{Einstein Universe}), they are exactly the non-oriented null geodesics. Genralizations have been investigated in \cite{van2019rigidity} for Grassmannians, in \cite{galiay2024rigidity} in causal flag manifolds, and in \cite{beyrer2024positivity} for general flag manifolds defined by roots of multiplicity 1.

Photons in flag manifolds have recently proven useful in the study of higher Teichmüller spaces \cite{beyrer2024positivity}. This suggests that their properties are of intrinsic interest. In this work we further investigate them in the setting of Nagano spaces; see, for instance, Section~\ref{sect_type_I}.

\subsubsection{The Kobayashi pseudometric in Nagano Spaces}
Given two points~$x,y $ in a domain~$\O$, a \emph{chain of photons} is a continuous path defined by concatenating segments contained in photons. Since each of these photons is a projective line, one may compute the length of such a path as the sum of the Hilbert lengths of each of its segments of photon. The Kobayashi pseudo-distance~$\kob_\O(x,y)$ between~$x$ and~$y$ is the infimum of the lengths of chains of photon joining~$x$ to~$y$ inside~$\O$ (see Section~\ref{sect_definition_kobayashi} for details). For this distance to be finite, we require photons to exist in generic positions, in order to always be able to find a chain of photons between two points of~$\O$. This is ensured by the condition of being a Nagano space; see Fact~\ref{fact_kostant}. For general flag manifolds, this construction fails. This is why in this paper we restrict ourselves to real-type Nagano spaces.

\begin{rmk}
    Kobayashi pseudometrics exist more generally on \emph{parabolic geometries} (see~\cite[Rmk~2.11]{markowitz_1981}), which are manifolds \emph{infinitesimally} modeled on flag manifolds. The construction presented in this paper concerns \emph{flat kleinian} parabolic geometries, which constitute a natural first step, but a subsequent direction would be to further investigate the non-flat case (see Remark~\ref{rmk_infinitesimal_form} for more details). Indeed, to the best of our knowledge, Kobayashi pseudometrics constructed from photons have not been isolated and studied in the general setting of (even flat kleinian) parabolic geometries modeled on Nagano spaces. 
\end{rmk}

\subsubsection{$\MRr$-properness and photon-convexity}\label{sect_intro_def_R_proper} Photons allow us to define a natural family of domains in $N$, namely those that do not contain a photon minus a point (resp.\ whose closure does not contain a photon); these are called \emph{$\mathcal{R}$-proper} (resp.\ \emph{strongly~$\MRr$-proper}) domains, following \cite{van2019rigidity}. This definition can be viewed as an analogue of the well studied condition of \emph{Brody hyperbolicity} in complex geometry \cite{brody1978compact}. One clearly has:
\begin{equation}\label{eq_implication_properness}
    \text{properness } \Longrightarrow \text{ strong~$\MRr$-properness } \Longrightarrow \text{ $\MRr$-properness}~.
\end{equation}
We can also define a natural notion of convexity: a domain~$\O \subset N$ is \emph{photon-convex} if~$\Phot \cap \O$ is connected for every photon~$\Phot$, and if~$\overline{\O \cap \Phot} = \overline{\O} \cap \Phot$ whenever~$\Phot \cap \O \ne \varnothing$. We prove in Lemma~\ref{lem_continuité_intersection_photon} that any~$\MRr$-proper dually convex domain is photon-convex.

When~$N$ has higher rank, the implications in~\eqref{eq_implication_properness} are never equivalences, even assuming the domain to be photon-convex.

\subsubsection{Kobayashi hyperbolicity} The families of domains introduced in the previous section are good candidates for Kobayashi-hyperbolic domains (i.e.\ domains on which the Kobayashi pseudometric is a metric). We prove:

\begin{thm}\label{thm_kobayashi_metric_nagano} Let~$N$ be an irreducible real-type Nagano space, and let~$\O \subset N$ be a domain.
    \begin{enumerate}
        \item (Prop.~\ref{prop_generate_topo_std} and  Cor.~\ref{cor_kobayashi_geodesic}) If~$\O$ is \emph{proper}, then~$\kob_\O$ is an $\Aut(\O)$-invariant metric generating the standard topology on~$\O$. If~$\O$ is moreover dually convex, then~$\kob_\O$ is a proper geodesic metric, and segments of photons are geodesics.
        \item (Thm.~\ref{thm_kob_hyp_R_propre} and Cor.~\ref{cor_photon_convex_quasi_hom_complete}) If~$\O$ is~\emph{$\MRr$-proper and photon-convex}, then~$\kob_\O$ is an $\Aut(\O)$-invariant metric generating the standard topology on~$\O$. If~$\O$ is moreover dually convex, then segments of photons are geodesics. Finally, if moreover~$\Aut(\O)$ acts cocompactly on~$\O$, then~$\kob_\O$ is a proper geodesic metric. 
        
        \item (Prop.~\ref{prop_kob_hyp_implies_R_proper}) If~$\O$ is dually convex and~$\kob_\O$ is a metric, then~$\O$ is~$\MRr$-proper.
    \end{enumerate}
\end{thm}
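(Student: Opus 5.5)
The statement packages three separate assertions, and I would prove them in the order (1), (2), (3). Throughout I use the defining property of $\kob_\O$: it is the largest pseudometric bounded above, on each photon segment $\Phot\cap\O$, by the Hilbert metric of that segment.

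For (1), the approach is comparison with a model domain. Suppose $\O$ is proper, so that $\overline{\O}$ lies in some affine chart $\Affstd$. I would fit a realization $\mathbb{X}$ of the non-compact dual around it, or more generally some large $G$-translate of a known Kobayashi-hyperbolic domain $D\supset\O$; failing that, a finite intersection of complements of "hyperplanes" (transverse loci of points of the dual flag manifold). Monotonicity then gives $\kob_\O\ge\kob_D$. On $D$, I would produce an explicit nondegenerate lower bound as follows. Each photon segment projects, by a fixed linear map of the chart, to a segment in a line, and the Hilbert metric only decreases under this projection. This bounds $\kob_D$ from below by a sum of Hilbert distances in one-dimensional projections, and that sum separates points.

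For the upper bound and the standard topology, I would use Fact~\ref{fact_kostant}. Near any point, a fixed number of photon directions span the tangent space, so short photon chains reach a full neighbourhood. This makes $\kob_\O$ continuous, and the metric topology then coincides with the manifold topology. Invariance under $\Aut(\O)$ holds because $G$ maps photons to photons projectively.

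For the dually convex case, fix a photon $\Phot$. Dual convexity provides supporting "hyperplanes" through the endpoints of $\overline{\O}\cap\Phot$. From these I would build a $1$-Lipschitz retraction, i.e.\ a cross-ratio projection $\O\to\Phot\cap\O$ onto a line that is Hilbert-contracting on every photon. This shows that photon segments are geodesics. Properness of the metric (closed balls compact) follows from Hilbert distance blowing up near $\partial\O$, by the same projection argument. Geodesicity then follows from Hopf–Rinow, since the metric is a length metric.

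For (2), the model-comparison trick is not available, because $\O$ need not sit in a chart; this is the main obstacle. I would argue by contradiction. Suppose $\kob_\O(x,y)=0$ with $x\neq y$. Then there are chains of photon segments from $x$ to $y$ of Hilbert length tending to $0$. Since the total displacement is bounded below, some segment must have large Euclidean size relative to its Hilbert length, which forces that photon segment to nearly fill its photon. Passing to limits in the compact space of photons, I extract a limiting photon $\Phot_\infty$ with $\Phot_\infty\setminus\{pt\}\subset\overline{\O}$. Photon-convexity, specifically $\overline{\O\cap\Phot}=\overline{\O}\cap\Phot$ together with connectedness, upgrades this to $\Phot_\infty$ minus a point lying in $\O$ when $\Phot_\infty$ meets $\O$ near $x$. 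The subtle part is ensuring $\Phot_\infty$ actually passes through a point of $\O$, and not only of $\partial\O$. To handle it, I would first localize the chain to a compact neighbourhood of $x$ using continuity of $\kob_\O$. This yields the contradiction with $\MRr$-properness.

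The rest of (2) goes as follows. The topology statement follows exactly as in (1). The geodesic statement for photon segments follows from the same retraction as in (1). If $\Aut(\O)$ acts cocompactly, invariance gives a uniform lower bound on the radius of compact balls, so the metric space is complete; properness and geodesicity then follow from Hopf–Rinow.

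For (3), I argue by contraposition. If $\O$ contains $\Phot\setminus\{p\}$, then $\Phot\setminus\{p\}$ is an affine line, whose Hilbert "metric" is identically zero. Hence $\kob_\O$ vanishes between any two points of that line, so $\kob_\O$ is not a metric. Dual convexity is needed only to make sense of the reduction when $\O\cap\Phot$ is a line minus a point inside $\O$. Indeed, if $\O$ contains $\Phot$ entirely, the same zero-length argument applies.
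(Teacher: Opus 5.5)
Your proposal is correct. For the metric and topology statements it takes a different, more elementary route than the paper; for the dually convex statements it is the paper's argument in disguise.

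\textbf{Where your route differs.} In (1), your lower bound needs only that $\overline{\O}$ is compact in a chart and that photon traces are affine lines carrying their projective structure (Fact~\ref{lem_photons_affine}). A linear functional $\pi$ preserves cross-ratios along a photon segment. Hence $\ro_\O(z,z')\ge \mathsf{H}_J(\pi z,\pi z')$ for any bounded interval $J\supset \pi(\overline{\O})$, and summing along chains gives $\kob_\O\ge \max_j \mathsf{H}_{J_j}(\pi_j\cdot,\pi_j\cdot)$. You can apply this to $\O$ itself, so no model domain $D$ is needed. Your continuity argument takes a basis of $\uu^\opp$ inside $\RUN$ (Fact~\ref{fact_kostant}) and transports it by translations. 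It replaces the paper's use of small realizations of $\mathbb{X}(\g,\alpha)$ together with Lemma~\ref{lem_chain_in_realization}.

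The paper only cites a proof for this part (Proposition~\ref{prop_generate_topo_std}). For everything involving dual convexity it uses Zimmer's Carath\'eodory metrics through Proposition~\ref{lem_key_lemma}.

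In (2), the paper proves the explicit bound $\kob_\O(x,y)\ge \varepsilon/(\varepsilon+M)$ for $y$ off $B_\varepsilon(x)$. The constant $M$ comes from the no-overhang continuity of $\delta_\O$ and closedness of $\mathbb{P}(\RUN)$. Your compactness argument runs as follows: truncate a chain at its first exit from $B_\varepsilon(x)$, pick a step with small Hilbert-to-Euclidean ratio, and pass to a Hausdorff-limit photon through a point of $\overline{B_\varepsilon(x)}\subset\O$ (Lemma~\ref{fact_stab_u_std}.(5)). Photon-convexity then forces that photon to lie in $\O$ up to one point. This uses the same two hypotheses, but on the limit photon rather than in the no-overhang lemma. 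The paper's version gives explicit constants; yours avoids $\delta_\O$ altogether.

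\textbf{Three steps need more than you wrote.}

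(i) The ``cross-ratio projection'' needs a Plücker triple $(G,\rho,V)$. This turns $\hyp_{\xi_1},\hyp_{\xi_2}$ into hyperplane sections (Fact~\ref{prop_ggkw}) and gives a single function $F(z)=\log|[\xi_1:x:z:\xi_2]_\rho|$ on $\O$. By Proposition~\ref{prop_photons_egal_proj_lines}, the increments of $F$ along any photon $\Phot'$ are logarithms of projective cross-ratios involving $\pstd_{\Phot'}(\xi_1)$ and $\pstd_{\Phot'}(\xi_2)$. These points lie outside the relevant component of $\Phot'\cap\O$ because $\xi_i\in\O^*$. This is Lemma~\ref{lem_comp_crossratios}, and your retraction then amounts exactly to the paper's identity $\ro_\O=\chi(h_\alpha)^{-1}C^\rho_\O\le \kob_\O$.

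(ii) Properness in (1) does not come from retracting onto a fixed photon. For $z_k\to a\in\partial\O$, take $\xi\in\O^*$ with $a\in\hyp_\xi$ and $\eta$ with $\hyp_\eta\cap\overline{\O}=\varnothing$. Then $z\mapsto \log|[\xi:x:z:\eta]_\rho|$ is $\chi(h_\alpha)$-Lipschitz for $\kob_\O$ by the same lemma, and it tends to $\infty$ along $(z_k)$. This is Zimmer's completeness argument.

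(iii) In (2), the topology does not follow ``as in (1)'', because your projection bound needs $\overline{\O}$ inside a chart. Instead, run your contradiction argument with a sequence $y_k\notin B_\varepsilon(x)$ satisfying $\kob_\O(x,y_k)\to 0$; this yields the required local lower bound.

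Minor points: the localization needs no continuity of $\kob_\O$, only truncation of the chain. Dual convexity plays no role in (3).
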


The proof of Theorem~\ref{thm_kobayashi_metric_nagano}.(1) is essentially identical to that for causal flag manifolds \cite{galiay2024rigidity}. By contrast, Theorem~\ref{thm_kobayashi_metric_nagano}.(2) and its proof are new, and generalize techniques from \cite{van2019rigidity}, which deals with the Grassmannian case (item~(iv) of Tabe~\ref{table_nagano_full}), when~$\O$ is also \emph{convex in an affine chart}. 

An important consequence of Theorem~\ref{thm_kobayashi_metric_nagano} is that the automorphism group of an~$\MRr$-proper photon-convex domain~$\O$ always acts properly on~$\O$ (Corollary~\ref{cor_action_proper}). This result generalizes, in Nagano spaces, the analogous one proven by Zimmer for proper domains in flag manfifolds \cite{zimmer2018proper}.

Note that Theorem~\ref{thm_kobayashi_metric_nagano}.(1) was also proven in joint work with Chalumeau for item (viii) in Table~\ref{table_nagano_full} \cite{chalumeau2024rigidity}. In this case, the standard projective embedding of~$G/P$ (called the \emph{Einstein universe}) is well understood and makes the proof more direct. A similar version of Theorem~\ref{thm_kobayashi_metric_nagano}.(2) for item (viii) in Table~\ref{table_nagano_full} was then independently proven in \cite[Thm~C]{chalumeau2025markowitz} by Chalumeau, with techniques coming from pseudo-Riemannian geometry.

\begin{rmk} Some aspects of this construction extend to general Nagano spaces. Indeed, one can always define a Kobayashi pseudometric by replacing photons with the \emph{Helgason spheres} mentionned in Section~\ref{sect_intro_photons}. However, several arguments become more involved in this greater generality. A particularly interesting case is that of complex Nagano spaces (those for which the column ``$\dim(\g_{\alpha})$'' in Table~\ref{table_nagano_full} contains a~$2$), where photons are replaced by complex projective lines. This situation is well understood in complex projective space (item~(v, $\min(p,q)=1$)), as studied in \cite{kobayashi1984projectively}.
\end{rmk}

\subsubsection{Computation on realizations of the noncompact dual} The Kobayashi metric is in general hard to compute on concrete examples, and it is moreover hard to find a geodesic between two points which are not on a same photon. However, Proposition~\ref{lem_key_lemma} gives us a tool to compute it. The tools developped in this paper as well as a fine analysis of the boundary of realizations of the noncompact dual, allow us to compute the Kobayashi metric on it.

\begin{thm}[Thm~\ref{thm_calcul_kob}]\label{prop_calcul_geod}
Let~$N$ be an irreducible real-type Nagano space of rank~$r \geq 1$, and let~$\O \subset N$ be a realization of the noncompact dual~$\mathbb{X}(N)$ of~$N$. Any pair of points of~$\O$ can be joined by a geodesic $r$-chain for~$\kob_\O$. Furthermore, the pullback of the Kobayashi metric restricted to a flat of~$\mathbb{X}(N)$ coincides with the integration of the $L^1$-norm on the flat.
\end{thm}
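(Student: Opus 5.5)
The plan is to reduce everything to a single maximal flat and then compare the Kobayashi metric with the $L^1$-norm through an upper bound given by explicit chains and a lower bound given by a $1$-Lipschitz projection. Write $\O = G'\cdot x_0$ for a realization of $\mathbb{X}(N)$, where $G'=\Iso(\mathbb{X}(N))^0$. Since $\kob_\O$ is $\Aut(\O)$-invariant (Theorem~\ref{thm_kobayashi_metric_nagano}.(1), $\O$ being proper), and $K'$ acts transitively on pairs of maximal flats through $x_0$, it suffices to treat $x=x_0$ and $y=\exp(H)\cdot x_0$ with $H$ in a closed Weyl chamber $\overline{\aaa^+}$. A realization of $\mathbb{X}(N)$ is dually convex, so Theorem~\ref{thm_kobayashi_metric_nagano}.(1) also applies.

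\textbf{Structure of the flat.} Since $N$ is of real type, the restricted root system of $\mathbb{X}(N)$ contains strongly orthogonal roots $\beta_1,\dots,\beta_r$ of multiplicity one, obtained by Harish-Chandra's cascade from the root $\alpha$ defining the parabolic. Their coroots $h_1,\dots,h_r$ form a basis of $\aaa$, and the orbit $\exp(\aaa)\cdot x_0$ is the product $\prod_{i=1}^r \exp(\Rf h_i)\cdot x_0$. Each factor is a segment of a photon $\Phot_i$; it is the trace on $\O$ of the $\SL_2$ (or $\PO(1,2)$) copy attached to $\beta_i$, and is the Klein model of $\mathbb{H}^1$ inside $\Phot_i$. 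Along each such segment the Hilbert length equals $|t_i|$ after a suitable normalization of the $h_i$.

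\textbf{Upper bound.} Write $H=\sum t_i h_i$ and consider the $r$-chain that moves successively along $\exp(t_1h_1)$, then $\exp(t_2h_2)$, and so on. Each step is a translate of a photon segment lying in $\O$, so it is a photon segment in $\O$. Its Hilbert length in $\O\cap\Phot$ is at most $|t_i|$. In fact it equals $|t_i|$, provided $\O\cap\Phot$ is exactly the $\mathbb{H}^1$-segment. This gives $\kob_\O(x,y)\le\sum|t_i|=\|H\|_1$.

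\textbf{Lower bound (main obstacle).} Here I would invoke Proposition~\ref{lem_key_lemma}, which presumably provides a lower bound of $\kob_\O$ via a $1$-Lipschitz projection to a Hilbert geometry on a segment. The plan is to build, for each $i$, a map $\pi_i\colon \O\to \mathbb{H}^1$ that decreases Hilbert length on every photon segment. A natural candidate is a projective cross-ratio with boundary points, defined using the boundary strata of $\O$. Equivalently, one can take a $G'$-equivariant map to the boundary of a rank-one factor, using the fact that $\partial\O$ contains the closed orbits of points transverse to $x_0$ in codimension one. The maps would be coordinated through the Weyl group, giving $\kob_\O(x,y)\ge\max_{w}\langle w\cdot\epsilon, H\rangle$ with $\epsilon=(\pm1,\dots,\pm1)$, which is exactly $\|H\|_1$. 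The delicate point is showing that each photon segment in $\O$ meets the boundary in a way that makes the projection contract. This requires a fine description of how an arbitrary photon through a point of $\O$ exits $\overline{\O}$, namely through which $G'$-orbits of $\partial\O$. I expect this analysis of $\partial\O$ to be the heart of the proof.

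Combining the two bounds shows that the $r$-chain is geodesic and that $\kob_\O$ restricted to the flat is the length metric of $\|\cdot\|_1$, that is, $d(H,H')=\|H-H'\|_1$. By translation invariance this is the integrated $L^1$-norm on the whole flat.
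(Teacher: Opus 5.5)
Your reduction to a single flat and your upper bound are sound and agree with the paper. Moving successively along the $r$ commuting $\sll_2$-copies attached to $\beta_1,\dots,\beta_r$ is exactly the paper's $r$-chain $x_0,\dots,x_r$. Monotonicity of the Hilbert metric under inclusion then gives $\kob_\O(x,y)\le\sum_i|t_i|$ without knowing $\Phot_i\cap\O$ exactly.

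The genuine gap is the lower bound, which is the real content of the theorem and which you leave as a program. You also place the difficulty in the wrong spot. Proposition~\ref{lem_key_lemma} gives $\chi_\rho(h_\alpha)\,\kob_\O\ge C^\rho_\O$. Hence for \emph{any} $\xi_1,\xi_2\in\O^*$, the function $z\mapsto\log|[\xi_1:x:z:\xi_2]_\rho|$ is automatically $\chi_\rho(h_\alpha)$-Lipschitz for $\kob_\O$. This is where Lemma~\ref{lem_comp_crossratios} and the fact that $\pr_{\Phot}(\xi_j)\notin\Phot\cap\O$ enter. No study of how an arbitrary photon leaves $\overline{\O}$ is needed.

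What is needed, and what you do not provide, is a pair $\xi_1,\xi_2\in\O^*$ adapted to $y$. For every photon $\Phot_i$ of the chain, $\pr_{\Phot_i}(\xi_1)$ and $\pr_{\Phot_i}(\xi_2)$ must be precisely the two endpoints $a_i,b_i$ of $\Phot_i\cap\O$. They must also be oriented so that all the numbers $\log(a_i:x_i:x_{i+1}:b_i)$ have the same sign. Since $[\xi_1:x:y:\xi_2]_\rho=\prod_i[\xi_1:x_i:x_{i+1}:\xi_2]_\rho$, this gives $C^\rho_\O(x,y)\ge\chi_\rho(h_\alpha)\sum_i|t_i|$. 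That closes the sandwich, and incidentally shows $C^\rho_\O=\chi_\rho(h_\alpha)\kob_\O$.

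The paper constructs this pair in the unbounded realization $\O_{\mathsf{nb}}$, where the flat acts through $\exp(\sum_i\mathbb{R}h_{\beta_i})$.
\begin{enumerate}
\item The endpoints $a_i,b_i$ are the limits of $x_i$ under the divergent automorphisms $\exp(\pm n h_{\beta_i})$, hence lie in $\partial\O$ by properness of the action.
\item Set $J=\{i: t_i<0\}$. The paper takes $\xi_1=\lim_n\exp(e^n\sum_{j\in J}F_j)\cdot\LP^+$ and $\xi_2=\lim_n\exp(e^n\sum_{j\notin J}F_j)\cdot\LP^+$, and checks that they lie in $\O^*$.
\item It obtains $a_i\in\hyp_{\xi_1}$ and $b_i\in\hyp_{\xi_2}$ from Fact~\ref{thm_takeuchi_hyp}.
\item When $\oppinv(\alpha)\ne\alpha$, the type $B_r$ Weyl group lets one assume all $t_i\ge0$, and one uses $\LP^+_{\{\oppinv(\alpha)\}}$ and $\LP^{-}$ instead.
\end{enumerate}

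The dependence of $(\xi_1,\xi_2)$ on the sign pattern of the $t_i$ is essential. If you intended to build one Lipschitz function and obtain the other sign vectors $\epsilon$ by Weyl symmetry, that fails in general. For item~(xi) of Table~\ref{table_nagano_full}, the Weyl group of the flat is $S_r$ up to a global sign. So for $r\ge2$ the vector $(1,-1,1,\dots,1)$ is not a Weyl translate of $(1,1,\dots,1)$.

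Your alternative via a $G'$-equivariant map to the boundary of a rank-one factor is also not available. A rank-one factor exists only in special cases such as item~(viii). Even there, no $G'$-equivariant map from $G'/K'$ to a flag manifold of $G'$ can exist: $K'$ would have to fix the image of the base point while acting transitively on that flag manifold.
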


In projective geometry, the Kobayashi metric on the ellipsoid, which is also its Hilbert metric, coincides with the hyperbolic metric. In contrast, the last assertion of Theorem~\ref{prop_calcul_geod} implies that when~$N$ has higher rank, the Kobayashi metric on a realization of its noncompact dual is \emph{never} Riemannian.

Note that Theorem~\ref{prop_calcul_geod} for item~(viii) of Table~\ref{table_nagano_full} was already proven in collaboration with Chalumeau \cite[Ex.\ 4.2]{chalumeau2024rigidity}, relying on two arguments that were specific to that case: first, the infinitesimal form of~$\kob_\O$ given by Markowitz \cite{markowitz_1981}, and second, the fact that the noncompact dual is a product of rank-one Lie groups. The argument would thus be more involved in general. In the present paper, our proof relies on an analysis of the boundary of realizations of the noncompact dual and on the explicit construction of geodesic~$r$-chains. This allows us to compute all the Caratheodory metrics on realizations on the noncompact dual (see Theorem~\ref{thm_calcul_kob}).

\subsubsection{Constructions and obstructions}

In Section~\ref{sect_examples_kob_hyp}, we provide and elementary sufficient condition for a domain to be~$\MRr$-proper. One may also ask under which conditions such domains can be preserved by groups whose limit set is sufficiently generic, extending an analysis carried out in \cite{galiay2025transverse} for proper domains. Theorem~\ref{thm_kobayashi_metric_nagano}, combined with the analysis of \cite{galiay2025transverse}, allows us to show that for certain Nagano spaces, there are no~$\MRr$-proper domains preserved by a group with at least three elemts in its limit set: see Corollary~\ref{thm_Z_dense_Nagano_prop_I}.

\subsection{Proper divisible domains in flag manifolds and rigidity}\label{sect_intro_proper_divisible}

A proper domain~$\O$ in a flag manifold is \emph{divisible} (resp.\ \emph{almost-homogeneous}) if there exists a discrete subgroup of~$\Aut(\O)$ acting cocompactly on~$\O$ (resp.\ if any element of~$\partial \O$ is in the closure of the~$\Aut(\O)$-orbit of a point of~$\O$). 

\subsubsection{Two flexible cases} \label{sect_proj_case_intro}
When~$G = \PGL(n, \mathbb{R})$ and~$P$ is the stabilizer of a line in~$\mathbb{R}^n$, i.e.\ when~$G/P = \mathbb{P}(\mathbb{R}^n)$, such domains are well understood and are called \emph{divisible convex sets}. All proper symmetric domains can be identified with Riemannian symmetric spaces, and their isometry group coincides with their automorphism group
(as open subsets of projective space).  They are therefore divisible, and their complete classification is known
in all dimensions \cite{koecher1999minnesota}.
The simplest example is real hyperbolic space~$\mathbb{H}^{n-1}$, embedded
in projective space~$\mathbb{P}(\mathbb{R}^{n})$ via the Klein model defined in~\eqref{eq_hyperbolique}. There also exist nonsymmetric divisible convex domains, although their construction is generally technical and remains an active area of research \cite{koszul1968deformations,
vinberg1967quasi, johnson1987deformation, choi1993convex, kapovich2007convex, benoist2006convexes, choi2020convex,
ballas2018convex, blayac2024divisible}. The diversity of proper divisible domains in~$\mathbb{P}(\mathbb{R}^n)$ highlights the importance of general results concerning them, beyond those related to cocompact actions on Riemannian symmetric spaces.

Here, the $n$-dimensional \emph{conformal sphere} is the unique flag manifold of the group~$G = \PO(n+1,1)$, where $P$ is the unique (up to conjugation)
proper parabolic subgroup of $G$. In dimension~$2$, it is known to
admit proper non-symmetric divisible domains: take one connected component of the complement of the limit set of a quasi-Fuchsian non-Fuchsian representation into~$\PO(3,1)$. Here ``proper''
means that the complement of the domain has non-empty interior.

\subsubsection{The conjecture of Limbeek--Zimmer}\label{sect_question_lim_zim} The question of whether the theory of proper divisible domains of real projective space can be generalized to flag manifolds other than real projective space was
raised by Limbeek--Zimmer.  The realizations of the noncompact dual of a Nagano space (Section~\ref{sect_photons_intro}) are divisible proper domains, but they are symmetric.  The problem is to determine whether nonsymmetric examples exist, as in real projective case:

\begin{conj}[\cite{van2019rigidity}]\label{question_lim_Zim}\label{question_Lim_Zim}
    Given a flag manifold~$G/P$ different than real projective space and conformal sphere, any proper divisible domain of~$G/P$ is symmetric.
\end{conj}

Conjecture~\ref{question_Lim_Zim} has been solved for particular classes of flag manifolds~\cite{zimmer2018proper, galiay2024rigidity, chalumeau2024rigidity}, and partially solved for some other classes in~\cite{wong1977characterization, rosay1979caracterisation, frankel1989complex, zimmer2013rigidity, van2019rigidity}. It remains open in many cases of interest, including Grassmannians of~$p$-planes of~$\mathbb{R}^n$, $2 \leq p \leq n-2$.

\subsubsection{Higher rank and rigidity} There is a deep connection between the rank of an irreducible real-type Nagano space~$N$ and its \emph{degrees of transversality}; see for instance Fact~\ref{thm_takeuchi_hyp}. This rank is~$1$ if and only if~$N$ is real projective space. When the rank is at least~$2$, strong geometric constraints appear on the structure of the boundary of a proper almost-homogeneous domain (see e.g.\  Proposition~\ref{prop_geom_prop_extremal_points}). In \cite{van2019rigidity, chalumeau2024rigidity, galiay2024rigidity}, these constraints play a key role in the proof of the rigidity of proper divisible domains. We thus interpret this rigidity as a higher-rank phenomenon.

In the following paragraphs, we give some applications of the construction of the Kobayashi metric and other general objects introduced in this paper, that support Conjecture~\ref{question_lim_Zim}.

\subsubsection{Faces and complexity}\label{sect_intro_facettes}
In Section~\ref{sect_R_extremality}, \emph{$\MRr$-faces} are defined as generalizations of the well-known faces of boundary points of properly convex domains in projective space (see Definition~\ref{def_R_face}). This allows to define the \emph{complexity}~$\cpl(p)$ of a boundary point~$p$ of a proper domain~$\O$ (see Definition~\ref{def_complexity_boundary_point}), which measures the level of transversality of~$p$ with respect to the other points of the closure of its~$\MRr$-face. In Theorem~\ref{lem_geom_prop_visuel_general}, we obtain a lower bound on the complexity of~$p$ when~$\O$ is almost-homogeneous, involving the rank of the Nagano space~$N$:
\begin{thm}\label{lem_geom_prop_visuel_general}
Let~$N$ be an irreducible real-type Nagano space. 
Let~$\O$ be a proper almost-homogeneous domain of~$\Fl(\g,\alpha)$, and let~$p \in \partial \O$. Then one has:
\begin{equation}\label{eq_complexity}
    \min_{x \in \O} d_H(x, p) \;\geq\; \operatorname{rk}(N) - \cpl(p)~.
\end{equation}
\end{thm}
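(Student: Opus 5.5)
The plan is a limiting argument along automorphism orbits: push the boundary point $p$ into a general position by almost-homogeneity, then use that the realization of the noncompact dual has maximally transverse interior points. Here $d_H(x,p)$ is a combinatorial "transversality distance": the minimal number of photon steps needed to go from $x$ to $p$, equivalently $\operatorname{rk}(N)$ minus the degree of transversality. In particular $d_H(x,p)=\operatorname{rk}(N)$ exactly when $x$ and $p$ are transverse (Fact~\ref{thm_takeuchi_hyp}).

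\textbf{Step 1 (reduction to the face).} Write $F=F_\O(p)$ for the $\MRr$-face of $p$. By definition, $\cpl(p)$ measures the worst transversality between $p$ and points of $\overline{F}$, so there is $q\in\overline{F}$ with $d_H(q,p)\ge$ something like $\operatorname{rk}(N)-\cpl(p)$ forced by definition. The task is to transfer this to interior points $x\in\O$. I would show that for every $x\in\O$ there is $q\in\overline{F}$ with
\[
d_H(x,p)\ \ge\ \operatorname{rk}(N)-\cpl(p).
\]

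\textbf{Step 2 (almost-homogeneity).} Since $\O$ is almost-homogeneous, choose $x_0\in\O$ and $g_n\in\Aut(\O)$ with $g_n x_0\to p$. After passing to a subsequence, use the Cartan decomposition $g_n=k_n a_n k_n'$ and properness of $\O$ to identify the attracting and repelling configurations. The expected picture is that for $y$ in an open dense set, $g_n y$ converges into $\overline{F}$; this is the $\MRr$-analogue of the projective fact that limits of orbits land in the face. The key input is Proposition~\ref{prop_geom_prop_extremal_points}, together with the fact that $\MRr$-faces are closed under such limits: a photon segment through $g_n x_0$ converges to a segment in $\partial\O$ through $p$.

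\textbf{Step 3 (semicontinuity).} The function $(x,y)\mapsto d_H(x,y)$ is lower semicontinuous, since $\{d_H\le k\}$ is a closed Schubert-type incidence variety. Also $d_H$ is $G$-invariant. So $d_H(x,p)=\lim d_H(g_n^{-1}\cdot)$ comparisons can be pulled back. Conversely, if some $x\in\O$ had $d_H(x,p)<\operatorname{rk}(N)-\cpl(p)$, one could join $x$ to $p$ by a short photon chain. Pushing this chain by the $g_n$ and taking limits, while using that $\O$ is proper (so photon segments cannot escape into the interior without being detected by the Kobayashi metric, Theorem~\ref{thm_kobayashi_metric_nagano}.(1)), yields a chain inside $\overline{F}$ joining $p$ to points of $\overline{F}$. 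This would contradict the definition of $\cpl(p)$ via the triangle inequality for $d_H$.

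\textbf{Main obstacle.} The hardest step is the convergence statement in Step~2: that limits of the translated photon chains stay in $\overline{F}$ and do not degenerate into shorter chains. Otherwise $d_H$ could drop in the limit in the wrong direction. I would try first to control this with the Kobayashi metric: $\kob_\O$-bounded sets map to bounded sets, and chains of bounded $\kob$-length converge to chains in the face. I would then invoke Proposition~\ref{prop_geom_prop_extremal_points} to handle extremal points.
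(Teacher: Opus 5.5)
\textbf{There is a genuine gap: your argument never produces the one object the inequality needs.} Your Step 1 misreads the definition. Since $\cpl(p)$ is a maximum over $\overline{\Fl_\O^{\MRr}(p)}$, it only gives some $q$ with $d_H(p,q)=\cpl(p)$; it gives no bound involving $\operatorname{rk}(N)$. The triangle inequality yields the theorem only in the form $\operatorname{rk}(N)=d_H(x,q)\le d_H(x,p)+d_H(p,q)\le d_H(x,p)+\cpl(p)$. This requires a point $q\in\overline{\Fl_\O^{\MRr}(p)}$ in generic position, i.e.\ $d_H(x,q)=\operatorname{rk}(N)$, with respect to \emph{every} $x\in\O$.

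Your Step 3 does not supply such a $q$. Pushing a short chain from $x$ to $p$ by $g_n$ and passing to a limit gives, at best, two points at arithmetic distance $\le d_H(x,p)$. Moreover, $\lim g_n\cdot p$ need not be $p$, nor lie in the face. Two such points are entirely compatible with $\cpl(p)$ being a maximum, so there is no contradiction. Nor can such a $q$ come from tracking $d_H$ through limits. The sets $\{d_H\le k\}$ are closed, so $d_H$ is lower semicontinuous, and generic position is an open condition that can degenerate under limits. The rank of $N$ never genuinely enters your argument, and the Kobayashi-metric control you propose does not address this.

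\textbf{The missing idea is to encode ``generic position with respect to all of $\O$'' by a closed condition.} For $q\in\Extr(\O)$, Proposition~\ref{prop_geom_prop_extremal_points} gives $\xi\in\O^*$ with $\pos^{(\{\alpha\},\{\oppinv(\alpha)\})}(q,\xi)=\overline{\id}$. This rests on Proposition~\ref{lem_extremal_attracting}: $\MRr$-extremal points are $\{\alpha\}$-limit points of $\Aut_G(\O)$. Every $x\in\O$ is transverse to $\xi$, so Lemma~\ref{lem_positions_weyl}.(2) and Fact~\ref{thm_takeuchi_hyp} give $d_H(x,q)=\operatorname{rk}(N)$. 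Both conditions, $\xi\in\O^*$ and position $\overline{\id}$, are closed, so they pass to limits of extremal points.

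What remains is to show $\overline{\Extr(\O)}\cap\overline{\Fl_\O^{\MRr}(p)}\neq\varnothing$ (Lemma~\ref{lem_existence_extremal_dans_face}). Take $g_k\cdot x\to p$, and let $T=\lim\rho(g_k)\in\mathbb{P}(\operatorname{End}(V))$ for a Pl\"ucker triple. Then $T(\iota_\rho(\O))\subset\iota_\rho(\Fl_\O^{\MRr}(p))$ by Lemma~\ref{lem_asymptotic_behav_extremal}; this is the correct part of your Step 2. Images of $\MRr$-extremal points span $V$ (Lemma~\ref{lem_existence_extreme_points}), so you can pick an extremal $q$ with $\iota_\rho(q)\notin\ker T$. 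Then $g_k\cdot q$ converges to a point lying in $\overline{\Fl_\O^{\MRr}(p)}$ and, by $\Aut_G(\O)$-invariance of $\Extr(\O)$, in $\overline{\Extr(\O)}$. Without bringing an extremal point into the closure of the face, and without the $\xi$-trick, the proof does not go through.
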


Here~$d_H(x,p)$ is the \emph{arithmetic distance} between~$x$ and~$p$ (see Section~\ref{sect_arithm_distance}), which measures the degree of transversality between~$p$ and~$x$; for instance, if~$p$ and~$x$ are distinct and lie on the same photon, then~$d_H(x,p) = 1$; if they are in generic positions, then~$d_H(x,p)$ is maximal. Equation~\eqref{eq_complexity} implies in particular the existence, in higher rank, of boundary points of~$\O$ with large complexity; these points are far from being extremal. The fact of having a boundary with ``big'' faces is in a sense a higher-rank behavior, even in real projective space (see e.g.\ \cite{zimmer2020higher}). This result supports the idea that proper almost-homogeneous domains in higher-rank irreducible real-type Nagano spaces exhibit higher-rank behaviour.

\subsubsection{Non-Hyperbolicity of the Kobayashi metric}\label{sect_intro_hyp}
Gromov hyperbolicity of a metric space is typically a ``rank-one'' behaviour. In Section~\ref{sect_non_hyp_kob}, we show that for a large family of almost-homogeneous, Kobayashi-hyperbolic domains~$\O$, the distance~$\kob_\O$ cannot be Gromov hyperbolic:
\begin{thm}\label{thm_group_non_hyperbolic_1}
Let~$N$ be a higher-rank irreducible real-type Nagano space, with transformation group~$G$. Let~$\O \subset N$ be a domain. Assume that one of the following conditions holds:
\begin{enumerate}
    \item $\O$ is proper and almost-homogeneous;
    \item $\O$ is strongly~$\MRr$-proper, dually convex and quasi-homogeneous.
\end{enumerate}
Then~$(\O, \kob_\O)$ is not Gromov hyperbolic.
\end{thm}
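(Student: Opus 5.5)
The strategy follows Zimmer's argument for convex domains \cite{zimmer2015convexity} and Benoist's in projective space. The idea is to find, via rescaling by automorphisms, a domain in the closure of the $\Aut$-orbit of $\O$ that contains a \emph{flat-like} object, namely a subdomain isometric (or quasi-isometric) to a higher-rank symmetric space or to $\mathbb{R}^2$ with an $L^1$-type norm. Such an object forbids Gromov hyperbolicity.

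First I reduce to a limiting domain. In case (1), almost-homogeneity lets me pick $p\in\partial\O$ and $g_n\in\Aut(\O)$ with $g_n x_n\to$ a fixed compact set for $x_n\to p$. In case (2), quasi-homogeneity does the same, with a compact set $K$ such that $\Aut(\O)K=\O$. By Theorem~\ref{lem_geom_prop_visuel_general} (for proper domains; in case (2) I would redo its proof using strong $\MRr$-properness and dual convexity), I can choose $p$ of large complexity. Its $\MRr$-face $F$ then contains points of arithmetic distance at least $2$ from each other, since $\operatorname{rk}(N)\ge 2$. Two such points $a,b\in\overline F$ lie in a common photon-convex subset of $\partial\O$ that is not contained in a single photon.

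Next I build a fat triangle, or in fact a quasi-flat. Take $x_0\in\O$ and consider photon chains from $x_0$ toward $a$ and toward $b$. By Theorem~\ref{thm_kobayashi_metric_nagano}.(2), in case (2) segments of photons are $\kob_\O$-geodesics, and $\kob_\O$ is proper geodesic. In case (1), part (1) of the same theorem gives this when $\O$ is dually convex. For nonconvex proper almost-homogeneous domains, I would instead use the Hilbert-type lower bounds from comparison with the realization of $\mathbb{X}(N)$ and its complement. Following Zimmer's convex argument, I show that geodesic rays converging to distinct points of the same face $F$ stay at bounded Hausdorff distance, while their Gromov product stays bounded. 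Alternatively, I produce triangles with vertices $x_n\to p$ (approaching via different points of $F$) whose thinness constant diverges. The estimate should come from the comparison $\kob_\O\le\kob_{\O'}$ for $\O'\subset\O$, combined with a lower bound by a Hilbert metric on photons $\Phot$, namely $\kob_\O\ge$ a distance controlled by $\Phot\cap\partial\O$ (Proposition~\ref{lem_key_lemma}).

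Finally, I would like to renormalize so that the thin-triangle constant is automorphism-invariant. Using $g_n$ I bring the centers of these triangles into $K$. After passing to a subsequence, $g_n\O$ converges in the Chabauty/Hausdorff sense to a domain $\O_\infty$ of the same type. Here strong $\MRr$-properness, resp.\ properness, is what guarantees that the limit is still Kobayashi-hyperbolic and that $\kob_{g_n\O}\to\kob_{\O_\infty}$ locally uniformly. In $\O_\infty$, the face $F$ gives rise to a two-parameter family of photon segments through a point, spanning a region on which $\kob_{\O_\infty}$ restricts to an $L^1$-type metric. The model here is the flat computation of Theorem~\ref{prop_calcul_geod}. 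So $\O_\infty$ contains a quasi-isometrically embedded $\mathbb{R}^2$, and $\O$ contains triangles that are uniformly far from thin. This contradicts hyperbolicity.

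The main obstacle is this last step: the continuity of $\kob$ under Hausdorff limits of domains, and the identification of an honest quasi-flat from the large face. I would attack it first by proving an upper bound, via explicit photon chains inside the convex hull of $F$ and $x_0$, together with a matching lower bound through dual convexity, using supporting hyperplanes at $a$ and $b$ to bound lengths of photon segments from below.
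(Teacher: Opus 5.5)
There is a genuine gap: the step that is supposed to produce the contradiction is either vacuous or unsupported, and no actual non-thinness estimate is ever established.

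\textbf{The renormalization step.} Your $g_n$ are automorphisms, so $g_n\cdot\O=\O$, and the ``limit domain'' $\O_\infty$ is just $\O$ itself. Moving triangles into $K$ changes nothing, and you are left with exactly what was to be proved.

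If you meant to rescale by non-automorphisms (for instance the dilations $g_0(t)$ at a boundary point), two things are missing. First, you would need convergence of $\kob_{g_n\cdot\O}$ to $\kob_{\O_\infty}$ on large balls. Nothing in the paper provides this, and monotonicity (Fact~\ref{prop_properties_kobayashi_metric}) only gives one-sided control. Second, you would need a reason why $\O_\infty$ carries a flat on which $\kob_{\O_\infty}$ is an $L^1$-norm. Theorem~\ref{prop_calcul_geod} computes $\kob$ only on realizations of $\mathbb{X}(N)$, and nothing identifies $\O_\infty$ with such a realization; that identification is essentially the content of Conjecture~\ref{question_Lim_Zim_1}.

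Likewise, the sentence ``rays to distinct points of $F$ stay at bounded Hausdorff distance while their Gromov product stays bounded'' restates the desired conclusion rather than proving it.

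There are also smaller issues. Theorem~\ref{lem_geom_prop_visuel_general} applies only to proper domains. It only gives $\cpl(p)\ge\operatorname{rk}(N)-1$, so in rank $2$ it does not yield points of $F$ at arithmetic distance $2$. Finally, proper almost-homogeneous domains are dually convex by Fact~\ref{prop_zimmer_dual_convex}, so your ``nonconvex'' case does not arise.

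\textbf{The missing idea.} A single explicit corner of $\partial\O$ suffices, and no limiting procedure is needed.
\begin{enumerate}
\item Place $\LP^+\in\O\subset\Affstdstd$, and let $a=\exp(v^-)\cdot\LP^+$ be the exit point of the standard photon.
\item Since $d_H(\LP^+,a)=1<\operatorname{rk}(N)$, the point $a$ is not $\MRr$-extremal. This is the first assertion of Proposition~\ref{prop_geom_prop_extremal_points}: extremal points are $\{\alpha\}$-limit points by Proposition~\ref{lem_extremal_attracting}, hence lie at arithmetic distance $\ge\operatorname{rk}(N)$ from $\O$ by Corollary~\ref{lem_geom_prop_visuel_4}. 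This holds under strong $\MRr$-properness, so it covers case (2) as well.
\item Non-extremality gives a second photon direction $w$ with $\exp(v^-+sw)\cdot\LP^+\in\partial\O$ for $|s|<1$.
\item By Proposition~\ref{cor_face_incluse_hyp}, this whole boundary segment lies in one supporting $\hyp_\xi$ with $\xi\in\O^*$.
\item In a Plücker triple, $\rho_*(v^-)^2\vv_0=\rho_*(w)^2\vv_0=0$ (Lemma~\ref{lem_degree_rep_1}) and $\uu^-$ is abelian. Hence $f(\rho(\exp(tv^-+sw))\vv_0)$ is bilinear in $(t,s)$. Its vanishing on $t=1$ forces the factorization $(1-t)(1+s\lambda)$.
\item The Carathéodory bound of Proposition~\ref{lem_key_lemma}, applied with $\xi$ and $\LP^-$, then gives $\kob_\O(x_{t_1,s_1},x_{t_2,s_2})\ge\bigl|\log\bigl|\tfrac{(1-t_1)(1+s_1\lambda)}{(1-t_2)(1+s_2\lambda)}\bigr|\bigr|$.
\item Photon segments are $\kob_\O$-geodesics. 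Rectangles whose sides run along $v^-$- and $w$-photons approaching the face are therefore not $A$-thin for any $A$ (Lemma~\ref{lem_zim_convex_Omega}).
\end{enumerate}

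Your last paragraph points toward supporting hyperplanes and photon segments, which is the right direction. But without the corner at the exit point of an entering photon, and without knowing that a single $\hyp_\xi$ contains the whole face, no quantitative estimate comes out.
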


This result contrasts with the situation in rank~1, corresponding to the case~$N = \mathbb{P}(\mathbb{R}^{n+1})$ (Table~\ref{table_nagano_full}.(iv) for~$p =1$), where Benoist has proven that the Gromov hyperbolicity of the Kobayashi metric on a proper divisible domain is equivalent to the strict convexity of the domain \cite{benoist2001convexes}, see Remark~\ref{rmk_comp_rang_1}. Our proof generalizes that of \cite[Thm~1.5]{zimmer2015convexity}, which concerned proper quasi-homogeneous domains in real Grassmannians. With the tools developed in this paper, we can adapt the proof to any higher-rank irreducible real-type Nagano space, and to the~$\MRr$-proper case.  The idea, in te notation of Theorem~\ref{thm_group_non_hyperbolic_1} is to find a boundary point~$p$ of~$\O$ with nontrivial~$\MRr$-face and a photon through~$p$ that enters~$\O$. This leads to the existence of thick rectangles whose sides are segments of photons. The existence of such a point~$p$ is not ensured anymore when~$\O$ is not almost-homogeneous. However, in the special case of item~(viii, $\min(p,q) = 1$) in Table~\ref{table_nagano_full}, an explicit description of the boundary of dually convex domains in the \emph{Lorentzian Einstein Universe} given by Smaï \cite{smai2025maximality} allows to recover a stronger version of Theorem~\ref{thm_group_non_hyperbolic_1}.(2), as proven independently by Chalumeau \cite{chalumeau2026gromov}.

For the Nagano spaces corresponding to items~(iii, viii, x, xi, xii) of Table~\ref{table_nagano_full}, \cite[Thm 1.4]{galiay2024rigidity} and~\cite[Thm 1.1]{chalumeau2024rigidity} imply that proper almost-homogeneous domains of~$N$ are realizations of the higher-rank symmetric space~$\mathbb{X}(N)$. Hence Theorem~\ref{thm_group_non_hyperbolic_1}.(1) is just a consequence of the fact that the rank of~$\mathbb{X}(N)$ is at least~$2$. Since Theorem~\ref{thm_group_non_hyperbolic_1}.(1) is already proven in \cite{zimmer2015convexity} for the Grassmannians, the cases at issue here are the remaining higher-rank real-type Nagano pairs, namely items~(i, vii, xix) of  Table~\ref{table_nagano_full}. Besides, Theorem~\ref{thm_group_non_hyperbolic_1}.(2) is new in any case different from item~(viii, $\min(p,q) = 1$) of Table~\ref{table_nagano_full}.

\subsection{Symmetric behavior} The results of this paper support the idea that the proper almost-homogeneous domains of higher-rank real-type Nagano spaces should be higher-rank symmetric spaces, in particular the realizations of the non-compact dual. Concerning \emph{complex Nagano spaces} (those that are flag manifolds of complex Lie groups), a forthcoming paper in collaboration with Nicolas Tholozan proves Conjecture \ref{question_lim_Zim}: the only proper divisible domains of such Nagano spaces are the realizations of the noncompact dual. It is reasonable to expect the same phenomenon to occur in Nagano spaces with a quaternonic or octonionic structure.

Besides, we have the following direct consequence of \cite{nagano1965transformation}, that we prove in Section \ref{sect_charac_nagano}:  

\begin{lem}\label{lem_symmetric_nagano} 
Let $G/P$ be a flag manifold that is not a Nagano space. Then $G/P$ contains no symmetric domains. 
\end{lem}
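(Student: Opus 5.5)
The plan is to argue by contraposition: assume $G/P$ contains a symmetric domain $\O$ and show that $G/P$ must then be a Nagano space. Here a symmetric domain means a domain $\O \subset G/P$ such that, for every $x \in \O$, some element $\sigma_x \in \Aut(\O)$ is an involution of $\O$ with $x$ as an isolated fixed point. With the induced geometry, $\O$ is then a Riemannian symmetric space, since $\Aut(\O)$ acts transitively: compositions $\sigma_y \sigma_x$ move points along geodesics. The strategy is to use such an element $\sigma_x$ to produce the defining structure of a Nagano space at the level of the Lie algebra, namely a grading of $\g$ of depth one.

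First I will pick $x \in \O$ and let $\mathfrak{p} = \operatorname{Lie}(P_x)$, where $P_x$ is the stabilizer of $x$. The element $\sigma_x$ lies in $P_x$ and acts on $T_x(G/P) \simeq \g/\mathfrak{p}$. Since $x$ is an isolated fixed point of an involution, the linearization $d\sigma_x$ at $x$ is an involution without eigenvalue $1$, so $d\sigma_x = -\id$ on $\g/\mathfrak{p}$. Next I will use the Levi decomposition $P_x = L \ltimes U$ and conjugate inside $P_x$ so that $\sigma_x$ lies in the Levi factor $L$. This is possible because $\sigma_x$ has finite order and $U$ is unipotent, so a finite-order element of $P_x$ is $U$-conjugate into $L$.

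Now $\Ad(\sigma_x)$ preserves the grading $\g = \bigoplus_{k} \g_k$ determined by $P_x$, with $\mathfrak{p} = \bigoplus_{k \ge 0} \g_k$. It acts on $\g/\mathfrak{p} \simeq \bigoplus_{k<0} \g_k$ by $-\id$. The key step is to derive a contradiction when the depth of the grading is at least $2$. Since $[\g_{-1}, \g_{-1}] = \g_{-2}$ (the negative part being generated by $\g_{-1}$), the involution $\Ad(\sigma_x)$ would act on $\g_{-2}$ by $(-1)(-1) = +1$, contradicting that it acts by $-1$ there. Hence the grading has depth $1$, so $\mathfrak{p}$ has abelian nilradical, i.e.\ $P$ is a parabolic subgroup with abelian unipotent radical.

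The final step is to invoke Nagano's characterization \cite{nagano1965transformation}: a flag manifold $G/P$ is a Nagano space, i.e.\ a compact symmetric space admitting $G$ as noncompact transformation group, exactly when the nilradical of $\mathfrak{p}$ is abelian, i.e.\ when the grading is symmetric of depth one. For a non-simple $G$, the argument is carried out factor by factor. The main obstacle I expect is justifying precisely that a symmetry of a domain must be an element of $G$ acting with differential $-\id$, given that automorphisms are only elements of $G$ preserving $\O$. The natural choice is to make this part of the definition of a symmetric domain in the paper, namely that the symmetries are required to lie in $\Aut(\O)$. If instead arbitrary isometries were allowed, one would first need the fact that isometries of $\O$ extend to $G$, which I would avoid by adopting that definition.
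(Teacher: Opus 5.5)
Your proposal is correct and follows essentially the paper's proof: conjugating $\sigma_x$ into the Levi factor by $U$ is the same as the paper's step of finding a fixed point $y$ of the finite-order element $s_x$ in the opposite affine chart $\Affstd_x$, so that $s_x$ lies in the Levi $L_\Theta$. Your depth argument via $[\g_{-1},\g_{-1}]=\g_{-2}$ refines the paper's one-line computation $[X,Y]=[-X,-Y]=\Ad(s_x)[X,Y]=-[X,Y]$ for $X,Y\in\uu^-_\Theta$, and neither Nagano's characterization nor a factor-by-factor reduction is needed, since in this paper a Nagano pair is by definition one with $\uu^-_\Theta$ abelian.
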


This leads us to the followinf refinement of Conjecture~\ref{question_lim_Zim}:

\begin{conj}[Limbeek--Zimmer, reformulated]\label{question_Lim_Zim_1}
    Let~$G$ be a simple Lie group and let~$P$ be a parabolic subgroup of~$G$. Then:

    \begin{enumerate}
        \item If~$G/P$ admits proper almost-homogeneous domains, then it is an irreducible Nagano space.

        \item If~$G/P$ is an irreducible Nagano space~$N$, and if it is neither Example (ix) nor (iv, $\min(p,q)=1$) of Table~\ref{table_nagano_full}, then any proper almost-homogeneous domain of~$N$ is a realization of~$\mathbb{X}(N)$, and is in particular symmetric.
    \end{enumerate}
\end{conj}

Note that if~$G$ is \emph{semisimple} but not simple, then \cite[Thm~1.7]{zimmer2018proper} and Conjecture~\ref{question_Lim_Zim_1} allow one, conjecturally, to determine the proper almost-homogeneous domains in~$G/P$.

Throughout the paper, we explain, through remarks and comments, why the results stated in Section~\ref{sect_intro_proper_divisible} support this conjecture. We refer to \cite[Sect.\ 8.9]{galiay2025convex} for a more detailed discussion on this subject.

\subsection{Organization of the paper} In Section~\ref{sect_flag_mfds_domains}, we give reminders and preliminary lemmas on Lie theory, flag manifolds and their domains. 

In Section~\ref{sect_def_nagano_space}, we recall the definition of Nagano spaces, their properties, the embedding of their noncompact dual, and prove elementary characterizations of them among flag manifolds, and of real projective space among Nagano spaces. 

In Section~\ref{sect_HS_photons_defs}, we recall the definition and properties of photons. 

In Section~\ref{sect_type_I}, we define the \emph{real type} and isolate special embeddings of real-type Nagano spaces, defined from what we call \emph{Plücker triples}; these embeddings generalize the Plücker embedding of Grassmannians. We investigate the image of photons under these embeddings (Proposition~\ref{prop_photons_egal_proj_lines}). 

In Section~\ref{sect_kob}, we define the Kobayashi pseudometric on domains of real-type Nagano spaces and prove a comparison result between the Kobayashi metric and Caratheodory metrics (Proposition~\ref{lem_key_lemma}). This relies on the fundamental Lemma~\ref{lem_comp_crossratios}, which is useful in the remainder of the paper (for instance in the proof of Theorem~\ref{thm_kobayashi_metric_nagano}.(2)). 

In Section~\ref{sect_kob_hyperbolicity}, we prove Theorem~\ref{thm_kobayashi_metric_nagano}. 

In Section~\ref{sect_dynamics_self_opposite}, we focus on self-opposite Nagano spaces and investigate the dynamical properties of sequences of automorphisms (Lemma~\ref{lem_photon_contenu_dans_D}). We deduce Corollary~\ref{lem_geom_prop_visuel_4}, which allows us to prove Corollary~\ref{thm_Z_dense_Nagano_prop_I} in Section~\ref{sect_cor_type}; this corollary is central in the proof of Theorem~\ref{thm_group_non_hyperbolic_1}.(2). 

In Section~\ref{sect_examples_kob_hyp}, we construct elementary examples of~$\MRr$-proper domains, and in Lemma~\ref{lem_R_proper_not_proper}, we give a recipe for constructing~$\MRr$-proper nonproper domains, with Zariski-dense automorphism group. 

In Section~\ref{sect_R_extremality}, we define~$\MRr$-faces,~$\MRr$-extremality and complexity. In particular, in subsection~\ref{sect_convex_hull}, we investigate the convex hull of the image of a proper domain in projective space under the embedding induced by a Plücker triple; this allows us to prove a genericity property for~$\MRr$-extremal points (Lemma~\ref{lem_existence_extremal_dans_face}). Subsection~\ref{sect_geometric_prop_R_extr} is devoted to the proof of Proposition~\ref{prop_geom_prop_extremal_points}, which states a geometric property for~$\MRr$-extremal points of some almost-homogeneous domains. This proposition is the key property that forces the rigidity results of this paper (Theorems~\ref{lem_geom_prop_visuel_general} and~\ref{thm_group_non_hyperbolic_1}).

In Section~\ref{sect_geod_sym_domains}, we prove Theorem~\ref{prop_calcul_geod}.

In Section~\ref{sect_rigidity}, we prove Theorems~\ref{lem_geom_prop_visuel_general} and~\ref{thm_group_non_hyperbolic_1}. In Subsection~\ref{sect_rank_and_complexity}, we prove Theorem~\ref{lem_geom_prop_visuel_general}, after establishing the auxiliary Proposition~\ref{prop_proximal_limit_set} which is of independent interest. In Subsection~\ref{sect_non_hyp_kob}, we prove Theorem~\ref{thm_group_non_hyperbolic_1}. Finally, in Subsection~\ref{sect_rank_one_almost}, we prove another rigidity result (Proposition~\ref{prop_limit_proximal_pas_tout}) directly following from Proposition~\ref{prop_proximal_limit_set}.

\subsection*{Acknowledgements}
Much of the content of this paper was developed during my PhD, or arose from comments and questions by members of my committee. I would therefore like to thank my PhD advisor, Fanny Kassel; the referees of my dissertation, Olivier Guichard and Andy Zimmer; and the members of my defense committee, Beatrice Pozzetti, Karin Melnick, and Yves Benoist. I am also grateful to Yosuke Morita and Alex Nolte for the fruitful discussions that helped improve this paper. This material is based upon work supported by the National Science Foundation under Grant No.\ DMS-1928930, while the author was in residence at the Simons Laufer Mathematical Sciences Institute in Berkeley, California, during the first semester of 2026.

\section{Flag manifolds and their domains}\label{sect_flag_mfds_domains}
In this section, we provide a review of Lie theory, flag manifolds and discrete group actions on them, and their proper domains.

\subsection{Preliminaries on Lie theory}\label{sect_lie_theory} All the Lie groups and Lie algebras in this paper are supposed to be linear. Given a semisimple Lie group~$G$, we will always denote by~$\g$ its Lie algebra in this memoir. In this section, we fix a noncompact real semisimple Lie group~$G$.

\subsubsection{$\mathfrak{sl}_2$-triples} \label{sect_sl2-triples} A triple~$\tr = (e,h,f)$ of nonzero elements of~$\g$ satisfying the equalities~$[h,e] = 2e$,~$[h,f] = -2f$ and~$[e,f] = h$ is called an \emph{$\mathfrak{sl}_2$-triple}. There is a Lie algebras embedding~$\plongsl_\tr: \mathfrak{sl}_2(\mathbb{R}) \hookrightarrow \g$ such that~$\plongsl_\tr(\E) = e$,~$\plongsl_\tr (\He) = h$ and~$\plongsl_\tr(\F) = f$, where
\begin{equation*}
    \E = \begin{pmatrix}
        0 & 1 \\ 0 & 0
    \end{pmatrix}; \quad \He = \begin{pmatrix}
        1 & 0 \\ 0 & -1
    \end{pmatrix}; \quad \F = \begin{pmatrix}
        0 & 0 \\ 1 & 0
    \end{pmatrix}.
\end{equation*}

\subsubsection{Cartan decomposition}\label{sect_cartan_decomp} Let~$B$ be the Killing form on~$\g$. Let~$K \leq G$ be a maximal compact subgroup and~$\mathfrak{h}$ be the~$B$-orthogonal of the Lie algebra~$\mathfrak{k}$ of~$K$ in~$\g$. Then one has~$\g = \mathfrak{k} \oplus \mathfrak{h}$. The \emph{Cartan involution }of~$\g$ (with respect to~$K$) is then the Lie algebra automorphism~$ \Cartinv: \g \rightarrow \g$ defined by~$ (\Cartinv)_{|\mathfrak{k}} = \operatorname{id}_{\mathfrak{k}}$ and~$ (\Cartinv)_{|\mathfrak{h}} = -\operatorname{id}_{\mathfrak{h}}$. It induces a Lie group automorphism of~$G$, denoted by~$\Cartinv$ and called the \emph{Cartan involution of }$G$.

\subsubsection{The restricted Weyl group}\label{sect_restricted_weyl_grouup} Let~$\aaa \subset \mathfrak{h}$ be a maximal abelian subspace, and~$\g_0$ the centralizer of~$\aaa$ in~$\g$. We denote by~$\aaa^*$ the space of all linear forms on~$\aaa$. The \emph{restricted Weyl group}~$W$ of~$G$ is the quotient~$N_K(\aaa)/Z_K(\aaa)$ of the normalizer of~$\aaa$ in~$K$ (for the adjoint action) by the centralizer of~$\aaa$ in~$K$. ~For its natural embedding in~$\GL(\aaa)$, it is a finite group generated by the~$s_\alpha$, for~$\alpha \in \FS$. By duality with respect to~$B$ (which induces a scalar product on~$\aaa$), the action of~$W$ on~$\aaa$ induces an action on~$\aaa^*$ preserving~$\Sigma$. There exists a unique~$w_0 \in W$, called the \emph{longest element}, such that~$w_0 \cdot \Sigma^+ = -\Sigma^+$. The element~$\oppinv: \aaa^* \rightarrow \aaa^*$ defined as~$ \oppinv = - w_0$ is called the \emph{opposition involution}, and satisfies~$\oppinv(\FS) = \FS$.

\subsubsection{Restricted root system} \label{sect_real_lie_alg} For~$\alpha \in \aaa^*$, we define~$\g_{\alpha} := \{ X \in \g \mid [H, X] = \alpha(H)X \quad \forall H \in \aaa\}$. One has~$[\g_{\alpha}, \g_{\beta}] \subset \g_{\alpha + \beta}$ for any~$\alpha, \beta \in \aaa^*$. If~$\alpha \in \aaa^* \smallsetminus \{0\}$ satisfies~$\g_{\alpha} \ne \{0 \}$, then we say that~$\alpha$ is \emph{a restricted root} of~$(\g, \aaa)$. We denote by~$\Sigma = \Sigma(\g, \aaa)$ the set of all restricted roots of~$(\g, \aaa)$. One has~$\g = \g_0 \oplus \bigoplus_{\alpha \in \Sigma} \g_{\alpha}$. We fix a \emph{fundamental system}~$\FS = \{ \alpha_1, \dots , \alpha_N\} \subset \Sigma$, i.e.\ a family of restricted roots such that any root of~$\g$ can be uniquely written as~$\alpha = \sum_{i=1}^N n_i \alpha_i$, where the~$n_i$ all have same sign for~$1 \leq i \leq N$. The elements of~$\FS$ are called \emph{simple restricted roots}. From now on, whenever we fix a real semisimple Lie algebra of noncompact type~$\g$, it will always implicitly be endowed a fixed set~$\FS$ of simple restricted roots. 

The choice of a fundamental system determines a set of \emph{positive roots}~$\Sigma^+$, i.e.\ those roots~$\alpha$ where the~$n_i$ are all nonnegative.

For any~$\alpha \in \Sigma$ and~$X \in \g_{\alpha}\smallsetminus \{0\}$, there exists a unique scalar multiple~$X'$ of~$X$ such that~$(X',  [\Cartinv(X'), X'],  \Cartinv(-X'))$ is an~$\mathfrak{sl}_2$-triple. The element~$[ \Cartinv(X'), X']$ does not depend on the choice of~$X \in \g_{\alpha}$, and is denoted by~$h_{\alpha}$. We denote by~$s_\alpha$ the~$B$-orthogonal reflexion of~$\aaa$ with respect to~$\ker(\alpha)$. A representative of~$s_\alpha$ in~$K$ is given by~$\exp(\frac{\pi}{2}(X'+ \Cartinv(X')))$.

The family~$(h_{\alpha})_{\alpha \in \FS}$, whose elements are called the \emph{coroots of~$\g$}, forms a basis of~$\aaa$, whose dual basis in~$\aaa^*$ is denoted by~$(\omega_{\alpha})_{\alpha \in \FS}$.

The \emph{nonnegative Weyl chamber associated with~$\FS$} is~$\overline{\aaa}^+ = \{X \in \aaa \mid \alpha(X) \geq 0 \quad \forall \alpha \in \FS\}$.

\subsubsection{Parabolic subgroups} \label{sect_parab_subgroups} Let~$\Theta \subset \FS$ be a subset of the simple restricted roots. \emph{The standard parabolic subgroup}~$P_{\Theta}^+$ (resp.\ the \emph{standard opposite parabolic subgroup}~$P_{\Theta}^{\opp}$) is defined as the normalizer in~$G$ of the Lie algebra
\begin{equation}\label{eq_lie_u}
    \uu_{\Theta}^+ := \bigoplus_{\alpha \in \Sigma_{\Theta}^+} \g_{\alpha} \quad \Bigg{(}\text{resp.\ }  \uu_{\Theta}^- := \bigoplus_{\alpha \in \Sigma_{\Theta}^+} \g_{-\alpha} \Bigg{)}~,
\end{equation}
where~$\Sigma_{\Theta}^+ := \Sigma^+ \smallsetminus \operatorname{Span}(\FS \smallsetminus \Theta)$. By “standard”, we mean with respect to the above choices. The Lie algebra of~$P_\Theta^+$ (resp.~$P_\Theta^\opp$) is denoted by~$\LP_\Theta^+$ (resp.~$\LP_\Theta^\opp$). For any representative~$k_0 \in N_K(\aaa)$ of~$w_0$, one has~$k_0 \LP_{\Theta}^\opp k_0 = P_{\oppinv(\Theta)}^+$. 

More generally, a \emph{parabolic subgroup of type~$\Theta$} of~$G$ is a conjugate of~$P_{\Theta}^+$ in~$G$. A \emph{Borel subgroup} is a conjugate of~$P_{\FS}^+$ in~$G$. The \emph{Levi subgroup} associated with~$\Theta$ is the reductive Lie group defined as the intersection~$L_{\Theta} := P_{\Theta}^+ \cap P_{\Theta}^{\opp}$. The unipotent radical of~$P_{\Theta}^+$ (resp.\ $P_{\Theta}^{\opp}$) is~$U_{\Theta}^+ := \exp (\uu_{\Theta}^+)$ (resp.\ $U_{\Theta}^- := \exp (\uu_{\Theta}^-)$). One then has~$P_{\Theta}^+ = U_{\Theta}^+ \rtimes L_{\Theta}$ (resp.\ $P_{\Theta}^{\opp} = U_{\Theta}^- \rtimes L_{\Theta}$).

\subsection{Flag manifolds}\label{sect_flag_mfds}
The \emph{flag manifold} determined by~$G$ and~$\Theta$ is the set~$\Fl(\g, \Theta)$ of parabolic subalgebras of type~$\Theta$ of~$\g$. Since~$G$ acts transitively on~$\Fl(\g, \Theta)$ (by the adjoint action), and the stabilizer of~$\LP_\Theta^+$ is~$P_\Theta^+$, we have a natural identification~$G/P_\Theta^+ \simeq \Fl(\g, \Theta)$. The adjoint action of~$G$ on~$\Fl(\g, \Theta)$ by conjugation will be denoted by~$g \cdot \LP = \Ad(g)(\LP)$ for all~$\LP \in \Fl(\g, \Theta)$.

If~$\oppinv(\Theta) = \Theta$, then~$\Theta$ and~$\Fl(\g, \Theta)$ are said to be \emph{self-opposite}, and~$\Fl(\g, \Theta)= \Fl(\g, \Theta)^\opp$.

\subsubsection{Automorphisms of flag manifolds}\label{sect_aut_lie_alg} The group of all Lie algebra automorphisms of~$\g$ is called the \emph{automorphism group} of~$\g$ and denoted by~$\Aut(\g)$. It is a Lie group with Lie algebra~$\g$. When~$G$ is semisimple, the map~$\Ad: G \rightarrow \Aut(\g)$ has finite kernel.

In general, the group~$\Aut(\g)$ does not act on~$\Fl(\g, \Theta)$. However, it admits a finite-index subgroup that does: indeed, any~$g \in \Aut(\g)$ induces an automorphism~$\psi_g$ of the fundamental system~$\FS$. This defines a group homomorphism~$\Aut(\g) \rightarrow \Aut(\FS)$. For~$\Theta \subset \FS$, we denote by~$\Aut_\Theta(\g)$ the subgroup of~$\Aut(\g)$ of all Lie algebra automorphisms~$g$ such that~$\psi_g$ fixes~$\Theta$. This group contains the kernel~$\Aut_1(\g)$ of~$f$, which itself contains~$\Ad(G)$. The group~$\Aut_\Theta(\g)$ acts on~$\Fl(\g, \Theta)$. In particular~$\ker(\Ad)$ acts trivially on~$\Fl(\g, \Theta)$. 

\begin{definition}
    We denote by~$\mathcal{G}_\Theta(\g)$ the set of finite-index subgroups of~$\Aut_\Theta(\g)$. 
\end{definition}
Since~$\ker(\Ad)$ acts trivially on~$\Fl(\g, \Theta)$, we will always be able to assume that~$G \in \mathcal{G}_\Theta(\g)$, identifying it with its image under~$\Ad$.

\subsubsection{Incidence}\label{sect_incidence} Incidence is a way to compare two elements belonging to two different flag manifolds defined by~$\g$. It should be thought of as a function measuring the degrees of transversality between two flags. 

Formally, given~$\Theta \subset \FS$, the \emph{Weyl group with respect to~$\Theta$}, denoted by~$W_\Theta$, is the subgroup of the Weyl group~$W$ of~$G$ generated by the reflections~$s_\alpha$, with~$\alpha \in \Theta$.

Let~$\Theta, \Theta' \subset \FS$. Any element~$(x,y) \in \Fl(\g, \Theta) \times \Fl(\g, \Theta')$ can be written~$(x,y) = (g \cdot \LP_{\Theta}^+, \ gw \cdot \LP_{\Theta'}^+)$, with~$g \in G$ and~$w \in W$. Let 
$$\pos^{(\Theta, \Theta')}: \Fl(\g, \Theta) \times \Fl(\g, \Theta') \longrightarrow W_{\FS \smallsetminus \Theta} \backslash W / W_{\FS \smallsetminus \Theta'}$$
be the~$\operatorname{diag}(G)$-invariant map such that~$\pos^{(\Theta, \Theta')}(\LP_{\Theta}^+, w \cdot \LP_{\Theta'}^+) = \overline{w}$ for all~$w \in W$. This map only depends on the Lie algebra~$\g$ of~$G$. Given a point~$x \in \Fl(\g, \Theta)$ and an element~$\overline{w} \in W_{\FS \smallsetminus \Theta} \backslash W / W_{\FS \smallsetminus \Theta'}$, let 
\begin{equation*}
    \mathsf{C}_{\overline{w}}^{(\Theta, \Theta')}(x) = \{x' \in \Fl(\g, \Theta') \mid \pos^{(\Theta, \Theta')}(x,x') = \overline{w}\}~.
\end{equation*}
This follows from the definitions that for all~$w,w' \in W$, if~$\mathsf{C}_{\overline{w}}^{(\Theta, \Theta')}(x) \cap \mathsf{C}_{\overline{w}'}^{(\Theta, \Theta')}(x) \ne \varnothing$, then~$\overline{w} = \overline{w}'$.

\begin{ex}[Positions in the Grassmannians]\label{exx_grass} Let us consider~$\g = \sll(n, \mathbb{R})$, with its root system~$\FS = \{\alpha_1, \dots, \alpha_{n-1}\}$ of type~$A_{n-1}$. Then~$\Fl(\g, \alpha_p) = \operatorname{Gr}_p(\mathbb{R}^n)$ is the Grassmannian of~$p$-planes of~$\mathbb{R}^n$ (see item~(iv) of Table~\ref{table_nagano_full}). If~$ 1 \leq l\leq n-1$, then for all~$y \in \operatorname{Gr}_l(\mathbb{R}^n)$ and~$w \in W$, the integer
    \begin{equation*}
       p - \dim(y \cap x), \quad x \in \mathsf{C}_{\overline{w}}^{(\{\alpha_{l}\}, \{\alpha_{p}\})}(y)
    \end{equation*}
    is constant on~$\mathsf{C}_{\overline{w}}^{(\{\alpha_{l}\}, \{\alpha_{p}\})}(y) \subset \operatorname{Gr}_p(\mathbb{R}^n)$. The larger~$\overline{w}$ is for~$\leq$, the larger this number is. Hence~$\pos^{(\{\alpha_{l}\}, \{\alpha_p\})}$ simply detects the dimension of the intersection of a~$p$-plane with an~$l$-plane.
\end{ex}

The usual partial order on~$W_{\FS \smallsetminus \Theta} \backslash W / W_{\FS \smallsetminus \Theta'}$, defined in the following way: for~$w, w' \in W$, we have
\begin{equation*}
   \overline{w} \leq \overline{w}' \ \Longleftrightarrow \ \mathsf{C}_{\overline{w}}^{(\Theta, \Theta')}(x) \subset \overline{\mathsf{C}_{\overline{w}'}^{(\Theta, \Theta')}(x)} \text{ for some (hence any) }x \in \Fl(\g, \Theta)~.
\end{equation*}
Note that this partial order admits a maximum, which is the class~$\overline{w_0}$ of the longest element~$w_0$ of~$W$ defined in Section~\ref{sect_restricted_weyl_grouup}.

In the case where~$\Theta' = \oppinv(\Theta)$, a point~$x \in \Fl(\g, \Theta)$ is said to be \emph{transverse to~$y \in \Fl(\g, \Theta)^\opp$} if one has~$ \pos^{(\Theta, \oppinv(\Theta))}(x,y) = \overline{w_0}$. The set
\begin{equation}\label{eq_def_affine_chart}
\Affstd_y := \mathsf{C}_{\overline{w_0}}^{(\Theta, \oppinv(\Theta))}(y)
\end{equation}
is an open dense subset of~$\Fl(\g, \Theta)$, called an \emph{affine chart}. The set
\begin{equation}\label{eq_hyp_def}
    \hyp_y := \Fl(\g, \Theta) \smallsetminus \Affstd_y
\end{equation}
is an algebraic subvariety of~$\Fl(\g, \Theta)$ (resp.\ of~$\Fl(\g, \Theta)^\opp$), called a \emph{maximal proper Schubert subvariety}. 

The affine chart defined by~$P_\Theta^\opp$ will be called the \emph{standard affine chart} and be denoted by
\begin{equation}\label{eq_standard_affine_chart}
    \Affstdstd := \Affstd_{\LP_\Theta^\opp}~.
\end{equation}
The bijection
\begin{equation}\label{eq_A_egal_exp}
\begin{array}{cccc}
  \varphistd: &
        \uu_{\Theta}^- &\overset{\sim}{\longrightarrow} & \Affstdstd \\
        &X &\longmapsto &\exp(X) \cdot \LP_{\Theta}^+
\end{array}
\end{equation}
induces an affine structure on~$\Affstdstd$. Since~$G$ acts transitively on~$\Fl(\g, \Theta)^{\opp}$, any affine chart~$\Affstd_y$ with~$y \in \Fl(\g, \Theta)^\opp$ admits an affine structure, which moreover depends only on~$y$ (and not on the choice of~$g \in G$ such that~$y = g \cdot \LP_{\Theta}^{\opp}$).

\begin{lem}\label{lem_positions_weyl}
\begin{enumerate}
    \item Let~$x, y \in  \Fl(\g, \Theta)^\opp$ be a triple such that~$w_0 \in \pos^{(\oppinv(\Theta), \oppinv(\Theta))}(x,y)$. Then there exists~$z \in \Fl(\g, \Theta) \smallsetminus \hyp_y$ such that~$\id \in \pos^{(\Theta, \oppinv(\Theta))}(z,x)$.
    
    \item Let~$(x,y,z)$ be a triple of~$ \Fl(\g, \Theta)^2 \times \Fl(\g, \Theta)^\opp$ such that~$\id \in \pos^{(\Theta, \oppinv(\Theta))}(x,z)$ and~$w_0 \in \pos^{(\Theta, \oppinv(\Theta))}(y,z)$. Then~$w_0 \in \pos^{(\Theta, \Theta)}(x,y)$.
\end{enumerate}

\end{lem}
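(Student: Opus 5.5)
The plan is to reduce both assertions to computations at base points, using that $G$ acts transitively on each relative position $\mathsf{C}_{\overline{w}}$. The key tool is the Bruhat decomposition: by definition of $\pos$, for $x_0 \in \Fl(\g,\Theta_1)$ the stabilizer $\operatorname{Stab}_G(x_0)$ acts transitively on each cell $\mathsf{C}^{(\Theta_1,\Theta_2)}_{\overline{w}}(x_0)$. This follows from $G = \bigsqcup P^+_{\Theta_1} w P^+_{\Theta_2}$. Throughout, $\Fl(\g,\Theta)^\opp = \Fl(\g,\oppinv(\Theta))$, with base point $\LP^+_{\oppinv(\Theta)} = k_0\cdot\LP^\opp_\Theta$.

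For (1), write $x = g\cdot \LP^+_{\oppinv(\Theta)}$ and $y = g w_0\cdot \LP^+_{\oppinv(\Theta)}$ with $g\in G$; this is possible since $\overline{w_0} = \pos(x,y)$. Set $z := g\cdot \LP^+_{\Theta} \in \Fl(\g,\Theta)$. Then $\pos^{(\Theta,\oppinv(\Theta))}(z,x) = \pos(\LP^+_\Theta, \LP^+_{\oppinv(\Theta)}) = \overline{\id}$, and $\pos^{(\Theta,\oppinv(\Theta))}(z,y) = \overline{w_0}$ by $\operatorname{diag}(G)$-invariance. Hence $z \in \Affstd_y$, i.e.\ $z\notin \hyp_y$. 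This part is purely formal.

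For (2), I will normalize $z = \LP^+_{\oppinv(\Theta)}$, with stabilizer $Q := P^+_{\oppinv(\Theta)}$. By the transitivity recalled above, $x = q_1\cdot\LP^+_\Theta$ and $y = q_2 w_0\cdot \LP^+_\Theta$ for some $q_1,q_2\in Q$. Then
\[
\pos(x,y) = \pos\bigl(\LP^+_\Theta,\; q_1^{-1}q_2\, w_0\cdot \LP^+_\Theta\bigr).
\]
It therefore suffices to show that $q w_0\cdot \LP^+_\Theta$ is in position $\overline{w_0}$ with respect to $\LP^+_\Theta$ for every $q \in Q$. Equivalently, we need the inclusion $Q\, w_0\, P^+_\Theta \subset P^+_\Theta\, w_0\, P^+_\Theta$.

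To prove this inclusion, I will use the Levi decomposition $Q = U^+_{\oppinv(\Theta)} L_{\oppinv(\Theta)}$ together with $w_0^{-1} L_{\oppinv(\Theta)} w_0 = L_\Theta$ and $w_0^{-1} U^+_{\oppinv(\Theta)} w_0 = U^-_\Theta$. These identities come from $k_0 \LP^\opp_\Theta k_0^{-1} = \LP^+_{\oppinv(\Theta)}$. They give $Q w_0 P^+_\Theta = w_0\, U^-_\Theta L_\Theta\, P^+_\Theta = w_0 P^-_\Theta P^+_\Theta$. Now $P^-_\Theta P^+_\Theta$ is exactly the set of $g$ with $g\cdot\LP^+_\Theta \in \Affstdstd$, i.e.\ transverse to $\LP^\opp_\Theta$. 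Translating by $w_0$, a point of the form $w_0 g\cdot \LP^+_\Theta$ with $g \in P^-_\Theta P^+_\Theta$ is transverse to $w_0\cdot\LP^\opp_\Theta$, which is the point of $\Fl(\g,\Theta)^\opp$ in position $\overline{\id}$ with $\LP^+_\Theta$. Hence the remaining claim is that being in position $\overline{w_0}$ relative to $\LP^+_\Theta$ within $\Fl(\g,\Theta)$ is the same as being transverse to that opposite point. I will prove this via the natural identification of $\Fl(\g,\Theta)$ and $\Fl(\g,\oppinv(\Theta))$-positions through $k_0$.

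I expect this last bookkeeping step to be the main obstacle. It requires tracking the double cosets $W_{\FS\smallsetminus\Theta}\backslash W/W_{\FS\smallsetminus\Theta}$ versus $W_{\FS\smallsetminus\Theta}\backslash W/W_{\FS\smallsetminus\oppinv(\Theta)}$, and checking that multiplication by $w_0$ sends the class of $\id$ to the class of $w_0$ compatibly. If this becomes messy, my fallback is the Bruhat-order argument: $\pos(x,z)=\overline{\id}$ and $\pos(z,y)=\overline{w_0}$ with lengths adding, and $Bw_0B\cdot B = Bw_0B$ forces $\pos(x,y)=\overline{w_0}$.
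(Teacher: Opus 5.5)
Your part (1) is correct. After absorbing the $W_{\FS\smallsetminus\oppinv(\Theta)}$-factors of the double coset into $g$, the point $z=g\cdot\LP^+_\Theta$ does the job. This is also more self-contained than the paper, which only refers to earlier work for (1).

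Part (2), however, has a genuine gap. The identity $Qw_0P^+_\Theta=w_0P^-_\Theta P^+_\Theta$ is correct, but all it says is that $Qw_0\cdot\LP^+_\Theta=k_0\cdot\Affstdstd$ is the set of points transverse to $k_0\cdot\LP^\opp_\Theta=\LP^+_{\oppinv(\Theta)}=z$. In other words, it rewrites the hypothesis on $y$. Your ``remaining claim'' is therefore not bookkeeping. The direction you need (transverse to $\LP^+_{\oppinv(\Theta)}$ implies position $\overline{w_0}$ relative to $\LP^+_\Theta$) is exactly statement (2) after normalization, so the argument is circular.

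Moreover, the equivalence you state is false whenever $\oppinv(\Theta)\neq\Theta$. That is the only case with content, since for self-opposite $\Theta$ the hypothesis $\pos(x,z)=\overline{\id}$ forces $x=z$. For example, take $\mathbb{P}(\mathbb{R}^3)=\Fl(\sll(3,\mathbb{R}),\alpha_1)$ with $\LP^+_\Theta=\mathbb{R}e_1$ and $\LP^+_{\oppinv(\Theta)}=\operatorname{Span}(e_1,e_2)$. The line $\mathbb{R}e_2$ is in position $\overline{w_0}$ relative to $\mathbb{R}e_1$, but it is not transverse to that plane. Likewise, $w_0\cdot\LP^\opp_\Theta$ is not ``the'' point in position $\overline{\id}$ with $\LP^+_\Theta$: every plane through $\mathbb{R}e_1$ is.

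Your fallback does not close the gap either. The identity $Bw_0B\cdot B=Bw_0B$ covers only the Borel case, whereas the stabilizer of $z$ is $P^+_{\oppinv(\Theta)}$, and the whole difficulty is controlling its $W_{\FS\smallsetminus\oppinv(\Theta)}$-part. Also, ``lengths adding'' is not meaningful as stated, because the three positions live in different double-coset spaces of $W$.

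The missing idea is small. Every opposite parabolic contains the whole negative unipotent subgroup, so $U^-_\Theta\subset P^-_{\oppinv(\Theta)}$. Combined with $k_0P^-_{\oppinv(\Theta)}k_0^{-1}=P^+_\Theta$, this gives
\[
Qk_0P^+_\Theta=k_0U^-_\Theta P^+_\Theta\subset k_0P^-_{\oppinv(\Theta)}P^+_\Theta=P^+_\Theta k_0P^+_\Theta ,
\]
which completes your route.

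The paper argues on the Weyl-group side instead. It normalizes $(x,z)=(\LP^+_\Theta,\LP^+_{\oppinv(\Theta)})$ and writes $y=pw\cdot\LP^+_\Theta$ with $p$ in the minimal parabolic $P^+_{\FS}\subset P^+_\Theta\cap P^+_{\oppinv(\Theta)}$. Then $\pos(y,z)$ and $\pos(y,x)$ are both the class of $w^{-1}$ in their respective double-coset spaces. The hypothesis gives $w^{-1}\in W_{\FS\smallsetminus\Theta}w_0W_{\FS\smallsetminus\oppinv(\Theta)}=W_{\FS\smallsetminus\Theta}w_0$, using $w_0W_{\FS\smallsetminus\oppinv(\Theta)}w_0=W_{\FS\smallsetminus\Theta}$. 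Hence $\overline{w^{-1}}=\overline{w_0}$ in $W_{\FS\smallsetminus\Theta}\backslash W/W_{\FS\smallsetminus\Theta}$.
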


\begin{proof} Point (1) is proven in \cite[Lem.\ 8.1]{galiay2025transverse}. Let us prove (2). Since~$\id \in \pos^{(\Theta, \oppinv(\Theta))}(x,z)$, we may assume that
\begin{equation*}
    (x,z) = (\LP_{\Theta}^+, \LP_{\oppinv(\Theta)}^+)~.
\end{equation*}
By the Bruhat decomposition (see e.g.\ \cite{knapp1996lie}), there exist~$p \in P_{\FS}^+, w \in W$ such that~$y = p w \cdot \LP_{\Theta}^+$. One has~$w_0 \in \pos^{(\Theta, \oppinv(\Theta))} (y, \LP_{\oppinv(\Theta)}^+) = \pos^{(\Theta, \oppinv(\Theta))} (\LP_{\Theta}^+, w^{-1} \cdot \LP_{\oppinv(\Theta)}^+)$, which implies that~$\overline{w^{-1}} = \overline{w_0}$. Thus there exist~$(a,b) \in W_{\FS \smallsetminus \Theta} \times W_{\FS \smallsetminus \oppinv(\Theta)}$ such that~$w^{-1} = aw_0 b = ab'w_0$, with~$b' \in W_{\FS \smallsetminus \Theta}$. On the other hand, since~$W_{\FS \smallsetminus \Theta} \subset L_\Theta$, we have 
\begin{equation*}
    \pos^{(\Theta, \Theta)}(y , \LP_{\Theta}^+) = \pos^{(\Theta, \Theta)}(w_0 b'^{-1}a^{-1} \cdot \LP_{\Theta}^+, \LP_{\Theta}^+) = \pos^{(\Theta, \Theta)}(\LP_{\Theta}^+, w_0 \cdot \LP_{\Theta}^+)~.
\end{equation*}
Thus~$w_0 \in \pos^{(\Theta, \Theta)}(y , \LP_{\Theta}^+)$.~$\qed$  
\end{proof}

\subsubsection{The~$\Theta$-limit set}\label{sect_theta_limit_set} Let~$\Theta \subset \FS$ be a subset of simple restricted roots. A sequence~$(g_k) \in G^\mathbb{N}$ is said to be~\emph{$\Theta$-contracting} if there exist~$(a,b) \in (\Fl(\g, \alpha)) \times (\Fl(\g, \alpha)^\opp)$ such that~$g_k \cdot y \rightarrow a$ uniformly on compact subsets of~$(\Fl(\g, \alpha)) \smallsetminus \hyp_b$. We also say that~$(g_k)$ is \emph{$\Theta$-contracting with respect to~$(a,b)$}. The point~$a$ is then uniquely determined by~$(g_k)$ and is called the~\emph{$\Theta$-limit} of the sequence~$(g_k)$. The following fact is well known: 
\begin{fact}\label{fact_KAK_divergent} Let~$(g_k) \in G^{\mathbb{N}}$.
\begin{enumerate}
    \item Assume that that there exist an open subset~$\mathcal{U} \subset \Fl(\g, \Theta)^+$ and a point~$x \in \Fl(\g, \Theta)^+$ such that~$g_k \cdot \mathcal{U} \rightarrow \{ x \}$ for the Hausdorff topology. Then~$(g_k)$ admits a~$\Theta$-contracting subsequence with~$\Theta$-limit~$x$.
    \item It admits a~$\Theta$-contracting subsequence with respect to~$(x, \xi) \in \Fl(\g, \Theta)^+\times \Fl(\g, \Theta)^\opp$ if and only if the sequence~$(g_k^{-1})$ admits a~$\oppinv(\Theta)$-contracting subsequence with respect to~$(\xi, x)$.
\end{enumerate}
\end{fact}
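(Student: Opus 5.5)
We prove Fact~\ref{fact_KAK_divergent} through the Cartan decomposition $G = K \exp(\overline{\aaa}^+) K$. Write $g_k = k_k \exp(X_k) l_k$ with $k_k, l_k \in K$ and $X_k \in \overline{\aaa}^+$. Since $K$ is compact, we may pass to a subsequence so that $k_k \to k$ and $l_k \to l$.

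The basic dynamical input is the following. If $\alpha(X_k) \to +\infty$ for every $\alpha \in \Theta$, then $\exp(X_k)$ acts on the standard affine chart $\Affstdstd \simeq \uu_\Theta^-$ by $\Ad(\exp X_k)$. This map contracts each root space $\g_{-\beta}$, $\beta \in \Sigma_\Theta^+$, by $e^{-\beta(X_k)}$. Every such $\beta$ has a positive coefficient on some root of $\Theta$, and all its coefficients are nonnegative while $X_k \in \overline{\aaa}^+$. Hence $e^{-\beta(X_k)} \to 0$, and $\exp(X_k)\cdot y \to \LP_\Theta^+$ uniformly on compact subsets of $\Affstdstd = \Fl(\g,\Theta) \smallsetminus \hyp_{\LP_\Theta^\opp}$. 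Conjugating by the limits of $k_k$ and $l_k$, we conclude that $(g_k)$ is $\Theta$-contracting with respect to $(a,b) = (k\cdot \LP_\Theta^+,\ l^{-1}\cdot \LP_{\Theta}^\opp)$. The uniformity survives this conjugation because $l_k \to l$ uniformly and compact subsets avoiding $\hyp_b$ are eventually mapped by $l_k$ into a fixed compact subset avoiding $\hyp_{\LP_\Theta^\opp}$.

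For (1), we first show that $\alpha(X_k) \to \infty$ for all $\alpha \in \Theta$ along a subsequence. Suppose instead that, after extraction, $\alpha(X_k)$ stays bounded for some $\alpha \in \Theta$. Then $\exp(X_k)$ factors as an element of a compact set times an element of the Levi-type subgroup centralizing the $\alpha$-direction. This subgroup preserves the fibration structure, and its orbits through points of $l\cdot\mathcal{U}$ retain positive dimension: concretely, a neighbourhood of a point contains an arc of the orbit of $\exp(\g_{-\alpha})$, which is only boundedly distorted. So $g_k\cdot\mathcal{U}$ cannot shrink to a point, a contradiction. Thus the dynamical statement above applies. The limit $a$ equals $x$, because $\mathcal{U}$ meets the open dense set $\Fl(\g,\Theta)\smallsetminus\hyp_b$, so $g_k\cdot\mathcal{U}$ accumulates at $a$.

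Making the non-shrinking argument in (1) rigorous is the main obstacle. I would handle it with a Plücker-type representation $V$ with highest weight a multiple of $\omega_\alpha$, using the equivariant embedding of $\Fl(\g,\alpha)$ into $\mathbb{P}(V)$. The ratio of the top two singular values of $\exp(X_k)$ acting on $V$ is $e^{\alpha(X_k)}$. If this ratio stays bounded, the images of an open set in $\mathbb{P}(V)$ keep a definite diameter, and projecting from $\Fl(\g,\Theta)$ to $\Fl(\g,\alpha)$ transfers this to the original open set.

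For (2), write $g_k^{-1} = l_k^{-1}\exp(-X_k)k_k^{-1} = l_k^{-1} k_0 \exp(-w_0 X_k) k_0^{-1} k_k^{-1}$, where $k_0$ represents $w_0$. Here $-w_0X_k \in \overline{\aaa}^+$ and $\alpha(-w_0X_k) = \oppinv(\alpha)(X_k)$. So the divergence condition for $\Theta$ on $(X_k)$ is exactly the divergence condition for $\oppinv(\Theta)$ on the Cartan projection of $g_k^{-1}$. The attracting point of $(g_k^{-1})$ is then $l^{-1}k_0\cdot\LP_{\oppinv(\Theta)}^+ = l^{-1}\cdot\LP_\Theta^\opp = \xi$, by the relation $k_0 \LP_\Theta^\opp k_0^{-1} = \LP_{\oppinv(\Theta)}^+$. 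The repelling point is $k k_0\cdot \LP_{\oppinv(\Theta)}^\opp = k\cdot\LP_\Theta^+ = x$. The converse direction is symmetric, since $\oppinv$ is an involution.
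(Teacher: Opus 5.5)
The paper states this Fact as well known and gives no proof, so the comparison is with the standard Cartan-decomposition argument, which is the one you use. Your forward dynamical statement is correct, and so is the Weyl-group bookkeeping $g_k^{-1}=l_k^{-1}k_0\exp(-w_0X_k)k_0^{-1}k_k^{-1}$ in (2).

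\textbf{Gap in (2).} In the ``only if'' direction the pair $(x,\xi)$ is \emph{given}. After extraction, and using (1) to get $\alpha(X_k)\to\infty$, your argument shows that $(g_k^{-1})$ is $\oppinv(\Theta)$-contracting towards $b:=l^{-1}\cdot\LP_\Theta^\opp$. You then write $l^{-1}\cdot\LP_\Theta^\opp=\xi$ without justification. Uniqueness of the attracting point does give $x=k\cdot\LP_\Theta^+$. But nothing you say shows that the \emph{repelling} point of a contracting sequence is determined by the sequence; the paper only asserts uniqueness of the $\Theta$-limit $a$. This identification is precisely the nontrivial content of (2), and the converse you obtain ``by symmetry'' has the same hole.

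One way to close it:
\begin{enumerate}
\item For $z\in\hyp_{\LP_\Theta^\opp}$ put $y_k:=l_k^{-1}\cdot z\to l^{-1}\cdot z$.
\item Since $\exp(X_k)\in L_\Theta\subset P_\Theta^\opp$ preserves $\hyp_{\LP_\Theta^\opp}$, one has $g_k\cdot y_k\in k_k\cdot\hyp_{\LP_\Theta^\opp}$. For a $K$-invariant metric this set stays at a fixed positive distance from $k_k\cdot\LP_\Theta^+\to x$.
\item Uniform convergence to $x$ on compact subsets of $\Fl(\g,\Theta)\smallsetminus\hyp_\xi$ therefore forces $\hyp_b\subset\hyp_\xi$, i.e.\ $\hyp_{\LP_\Theta^\opp}\subset\hyp_{l\cdot\xi}$.
\item The set of $\eta\in\Fl(\g,\Theta)^\opp$ with $\hyp_{\LP_\Theta^\opp}\subset\hyp_\eta$ is $P_\Theta^\opp$-invariant.
\item For $w\notin W_{\FS\smallsetminus\Theta}$, the point $w\cdot\LP_\Theta^+$ lies in $\hyp_{\LP_\Theta^\opp}$, because the only $\exp(\aaa)$-fixed point of $\Affstdstd$ is $\LP_\Theta^+$. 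It is also transverse to $w\cdot\LP_\Theta^\opp$.
\item By the Bruhat decomposition $G=\bigcup_{w\in W}P_\Theta^\opp wP_\Theta^\opp$, that set is therefore $\{\LP_\Theta^\opp\}$, which gives $\xi=b$.
\end{enumerate}

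\textbf{Incomplete step in (1).} Your sketch of the non-shrinking step is not complete as stated. $\iota_\rho(\Fl(\g,\alpha))$ has empty interior in $\mathbb{P}(V)$, so knowing that images of open subsets of $\mathbb{P}(V)$ keep a definite diameter says nothing directly about $\iota_\rho$ of an open subset of the flag manifold. The argument goes through with three additions:
\begin{itemize}
\item The projection $\pi$ onto $V^{\chi}\oplus V^{\chi-\alpha}$ along the other weight spaces commutes with $\rho(\exp X_k)$. When $\alpha(X_k)$ is bounded, $\rho(\exp X_k)$ acts on $\mathbb{P}(V^{\chi}\oplus V^{\chi-\alpha})$ through a relatively compact subset of $\PGL$.
\item In the chart one has $\pi\bigl(\iota_\rho(\exp(Y)\cdot\LP_{\{\alpha\}}^+)\bigr)=[\vv_0+\rho_*(Y_{-\alpha})\vv_0]$, where $Y_{-\alpha}$ is the $\g_{-\alpha}$-component of $Y$. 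The map $Y_{-\alpha}\mapsto\rho_*(Y_{-\alpha})\vv_0$ is injective, so $\pi\circ\iota_\rho$ is nonconstant on every open set.
\item Because $\chi$ dominates every weight on $\overline{\aaa}^+$, limits of $\rho(\exp X_k)\cdot\iota_\rho(y)$ for $y$ in the chart keep a nonzero $V^{\chi}$-component. So you may pass to the limit through $\pi$, and two points with distinct $\pi$-images give the contradiction.
\end{itemize}
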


Given a subgroup~$H$ of~$G$, we denote by~$\Lambda_\Theta(H)$ the set of~$\Theta$-limits of~$\Theta$-contracting sequences of elements of~$H$.

\subsection{Proximal representations}\label{sect_prox_reps} In this section, we recall some fundamental results on (irreducible, proximal) representations of semisimple Lie groups~$G$. Let~$(V, \rho)$ be a finite-dimensional real linear (resp.\ projective) representation of~$G$, i.e.\ a group homomorphism~$G \rightarrow \GL(V)$ (resp.\ $G \rightarrow \PGL(V)$). We will denote by~$\rho_*: \g \rightarrow \operatorname{End}(V)$ the differential of~$\rho$ at~$\operatorname{id}$. 

\subsubsection{Restricted weights}
 
 For any~$\lambda \in \aaa^*$, if the space~$ V^{\lambda} := \{ v \in V \mid \rho_*(h) \cdot v = \lambda (h) v \quad \forall h \in \aaa \}$ is nontrivial, then we say that~$\lambda$ is a \emph{restricted weight} of~$(V, \rho)$. Given~$\alpha, \lambda \in \aaa^*$, one has~$\rho_*(X)\cdot V^{\lambda} \subset V^{\lambda + \alpha}$ for all~$X \in \g_{\alpha}$. For each~$\alpha \in \FS$, the element~$\omega_{\alpha} \in \aaa^*$ introduced in Section~\ref{sect_real_lie_alg} is called the \emph{fundamental weight} associated with~$\alpha$. The cone generated by the simple restricted roots determines a partial ordering on~$\aaa^*$ given by: $\lambda \leq \lambda' \Longleftrightarrow \lambda' - \lambda \in \sum_{\alpha \in \FS} \mathbb{R}_+ \alpha$. If~$(V, \rho)$ is a finite-dimensional real irreducible linear or projective representation of~$G$, then the set of restricted weights of~$(V, \rho)$ admits a unique maximal element for that ordering (see \cite[Cor.\ 3.2.3]{goodman2009symmetry}). This element is called the \emph{highest weight} of~$\rho$, and denoted by~$\chi_{\rho}$.

\subsubsection{Proximality and~$\Theta$-proximal representations}\label{sect_reps_proximales} An element~$g \in\GL(V)$ (resp.\ $\PGL(V)$) is said to be \emph{proximal} in~$\mathbb{P}(V)$ if it has (resp.\ any lift of~$g$ in~$\GL(V)$ has) a unique eigenvalue of maximal modulus and if the corresponding eigenspace is one-dimensional. Given a nonempty subset of the simple restricted roots~$\Theta \subset \FS$, we say that a linear (resp.\ projective), finite-dimensional real irreducible representation~$(V, \rho)$ is \emph{$\Theta$-proximal} if~$\rho(G)$ contains a proximal element and~$\{\alpha \in \FS \mid \langle \chi_\rho, \alpha \rangle >0\} = \Theta$ (see \cite{gueritaud2017anosov}). Note that the last condition is equivalent to saying that~$\chi_\rho \in \sum_{\alpha \in \Theta} \mathbb{N} \omega_\alpha$. If~$(V, \rho)$ is proximal, then
we denote by~$V^{<\chi_\rho}$ the sum of all weight spaces of weights~$\lambda \ne \chi_\rho$ of~$(V, \rho)$.

We will often use irreducible~$\Theta$-proximal representations in this paper. Thus we will use the following terminology:

\begin{definition}\label{def_proximal_triple} Let~$\g$ be a real semisimple Lie algebra and~$\Theta$ be a subset of the simple restricted roots of~$\g$. A triple~$(G, \rho, V)$ is a \emph{linear (resp.\ projective)~$\Theta$-proximal triple} if~$G \in \mathcal{G}_\Theta(\g)$ and if~$(V, \rho)$ is a finite-dimensional, real, irreducible, linear (resp.\ projective),~$\Theta$-proximal representation of~$G$.
\end{definition}

The following fact is a result due to Guéritaud--Guichard--Kassel--Wienhard:

\begin{fact}[{\cite[Prop.\ 3.3]{gueritaud2017anosov}}]\label{prop_ggkw} Let~$(G, \rho, V)$ be a linear (resp.\ projective)~$\Theta$-proximal triple for~$\g$.
    
\begin{enumerate}
    \item The stabilizer of~$V^{\chi_\rho}$ in~$G$ (resp.\ $V^{< \chi_\rho}$) is~$P_{\Theta}^+$ (resp.\ $P_{\Theta}^{\opp}$).
    \item The orbit maps~$g \mapsto \rho(g) \cdot V^{\chi_\rho}$ and~$g \mapsto \rho(g) \cdot V^{<\chi_\rho}$ induce two~$\rho$-equivariant embeddings: 
    \begin{equation*}
        \iota_\rho: \Fl(\g, \Theta) \longrightarrow \mathbb{P}(V)\ \text{ and } \ \iota^{\opp}_\rho: \Fl(\g, \Theta)^{\opp} \longrightarrow \mathbb{P}(V^*)~.
    \end{equation*}
    Two elements~$x \in \Fl(\g, \Theta)$ and~$\xi \in \Fl(\g, \Theta)^{\opp}$ are transverse if and only if their images~$\iota_\rho(x)$ and~$\iota_{\rho}^{\opp}(\xi)$ are.
\end{enumerate}
\end{fact}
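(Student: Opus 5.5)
The statement to prove is Fact~\ref{prop_ggkw}, which is quoted from \cite[Prop.\ 3.3]{gueritaud2017anosov}. I would reprove it from highest-weight theory, treating the linear case first. The projective case then follows by lifting to a finite cover of $G$, since $\rho_*$ is still a Lie algebra representation.

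\textbf{Step 1: the stabilizer of $V^{\chi_\rho}$.} First I would show that $V^{\chi_\rho}$ is a line.
\begin{itemize}
\item Since $\rho(G)$ contains a proximal element, the standard argument of Abels--Margulis--Soifer/Benoist applies: write a proximal element as $k a k'$, or use a Jordan projection.
\item This shows $\dim V^{\chi_\rho}=1$.
\end{itemize}
Next I would check that $\mathfrak{p}_\Theta^+$ preserves the line $V^{\chi_\rho}$.
\begin{itemize}
\item By maximality of $\chi_\rho$, we have $\rho_*(\g_\alpha)V^{\chi_\rho}=0$ for every $\alpha>0$.
\item $\g_0$ preserves $V^{\chi_\rho}$.
\item For $\alpha\in\FS\smallsetminus\Theta$ we have $\langle\chi_\rho,\alpha\rangle=0$. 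The $\mathfrak{sl}_2$-theory for the triple attached to $\alpha$ then shows that $\g_{-\alpha}$ kills $V^{\chi_\rho}$: a highest-weight vector of $h_\alpha$-weight $0$ is $\mathfrak{sl}_2$-invariant. The same holds for all negative roots in $\operatorname{Span}(\FS\smallsetminus\Theta)$.
\end{itemize}
Hence $\mathfrak{p}_\Theta^+$ stabilizes $V^{\chi_\rho}$. The connected components are handled using $P_\Theta^+=N_G(\uu_\Theta^+)$ together with the Bruhat decomposition: $Z_K(\aaa)$ preserves weight spaces.

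Conversely, the stabilizer is a subgroup containing $P_\Theta^+$, so it is parabolic, of type $P_{\Theta'}^+$ with $\Theta'\subset\Theta$. For $\alpha\in\Theta$ we have $\langle\chi_\rho,\alpha\rangle>0$, so $\rho_*(\g_{-\alpha})V^{\chi_\rho}\neq0$ by $\mathfrak{sl}_2$-theory. Therefore $\Theta'=\Theta$.

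\textbf{Step 2: the stabilizer of $V^{<\chi_\rho}$.} I would pass to the dual representation.
\begin{itemize}
\item $V^{<\chi_\rho}$ is the annihilator of the lowest-weight line of $V^*$, of weight $-\chi_\rho$.
\item Conjugating by a representative $k_0$ of $w_0$ turns this line into the highest-weight line of $V^*$, of weight $\oppinv(\chi_\rho)$.
\item By Step 1 applied to $V^*$, whose $\Theta$-set is $\oppinv(\Theta)$, the stabilizer of that line is $P_{\oppinv(\Theta)}^+$.
\item Hence the stabilizer of $V^{<\chi_\rho}$ is $k_0 P_{\oppinv(\Theta)}^+ k_0^{-1}=P_\Theta^\opp$.
\end{itemize}
The orbit maps are then injective and $G$-equivariant. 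They are immersions because the stabilizers are exactly $P_\Theta^\pm$, and the source is compact, so they are embeddings.

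\textbf{Step 3: transversality.} This is the main obstacle. By equivariance I would reduce to $\xi=P_\Theta^\opp$, so that $\iota^\opp_\rho(\xi)=V^{<\chi_\rho}$. Write $x=g\cdot\LP_\Theta^+$ and use the Bruhat decomposition $G=\bigsqcup_w P_\Theta^\opp w P_\Theta^+$, so that $x$ is in relative position $\overline w$ with respect to $\xi$. Then $\iota_\rho(x)=\rho(p)\rho(w)V^{\chi_\rho}$ with $p\in P_\Theta^\opp$, and $V^{<\chi_\rho}$ is $\rho(P_\Theta^\opp)$-invariant. So the question reduces to whether the line $V^{w\chi_\rho}$ lies in $V^{<\chi_\rho}$, that is, whether $w\chi_\rho\ne\chi_\rho$.

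The key point is that $w\chi_\rho=\chi_\rho$ exactly when $w\in W_{\FS\smallsetminus\Theta}$, because the stabilizer of a dominant weight is generated by the simple reflections fixing it. This condition is exactly $\overline w=\overline{\mathrm{id}}$, which corresponds to the open cell, i.e.\ transversality under the convention $\pos(\LP_\Theta^+,\LP_{\oppinv(\Theta)}^+)$ versus $\overline{w_0}$. I expect the main care to go into matching these conventions, in particular the factor of $w_0$ between the standard pair $(\LP^+,\LP^\opp)$ and $\pos=\overline{w_0}$.
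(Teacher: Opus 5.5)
Your proposal is correct and is essentially the standard highest-weight and Bruhat-decomposition argument of Guéritaud--Guichard--Kassel--Wienhard, which the paper does not reprove but simply cites as \cite[Prop.\ 3.3]{gueritaud2017anosov}. The one point to tighten is in Step~1, for the paper's generality $G\in\mathcal{G}_\Theta(\g)$: such a $G$ may contain automorphisms inducing a nontrivial symmetry of $\FS$ that fixes $\Theta$, so not every component of $P_\Theta^+$ meets $Z_K(\aaa)$, and it is cleaner to argue as follows.
\begin{itemize}
\item For the inclusion $P_\Theta^+\subset\operatorname{Stab}$: after correcting by $(P_\Theta^+)^\circ$, such elements normalize $\aaa$ and $\uu_{\FS}^+$, hence preserve the order on weights and therefore the unique highest-weight line.
\item For the reverse inclusion: the stabilizer has Lie algebra exactly $\LP_\Theta^+$, so it normalizes $\uu_\Theta^+$ and lies in $N_G(\uu_\Theta^+)=P_\Theta^+$. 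This avoids your unproved claim that any overgroup of $P_\Theta^+$ is a standard parabolic.
\end{itemize}
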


The notations of Fact~\ref{prop_ggkw}, in particular the notation~$\iota_\rho, \iota_\rho^\opp$ for the embeddings induced by~$(G, \rho, V)$, will be used during all the paper.

\subsection{Domains in flag manifolds}\label{sect_generalities_proper_domains}

In this section, we let~$\g$ be a real semisimple Lie algebra of noncompact type,~$\Theta \subset \FS$ a subset of the simple restricted roots, and~$G \in \mathcal{G}_{\Theta}(\g)$.

\begin{definition}\label{def_domain_proper}
    Let~$\O \subset \Fl(\g, \Theta)$ be a subset. We say that~$\O$ is a \emph{domain} if~$\O$ is open, nonempty and connected. It is said to be \emph{proper} if there exists~$\xi \in \Fl(\g, \Theta)^\opp$ such that~$\overline{\O}\cap \hyp_{\xi}  
 = \varnothing$. In particular, if~$\xi = \LP_\Theta^\opp$, then we will say that~$\O$ is \emph{proper in~$\Affstdstd$}. This is equivalent to saying that~$\overline{\O}\subset \Affstdstd$.   
\end{definition}

Recall that the hypersurface~$\hyp_\xi$, for~$\xi \in \Fl(\g, \Theta)^\opp$, is defined in~\eqref{eq_hyp_def}, and that the standard affine chart~$\Affstdstd$ is defined in~\eqref{eq_standard_affine_chart}.

Since~$G$ acts transitively on the set of affine charts, we will always be able to assume that a proper domain is proper in~$\Affstdstd$.

\subsubsection{The automorphism group}
Given an open subset~$\O \subset \Fl(\g, \Theta)$, the \emph{automorphism group} of~$\O$ with respect to~$G$ is~$\Aut_G(\O) = \left\{ g \in G \mid g \cdot \O = \O \right\}$. One has:

\begin{fact}\label{fact_autom_group_proper}\cite[Cor.\ 5.3]{zimmer2018proper} The group~$\Aut_G(\O)$ is a Lie subgroup of~$G$. Moreover, it acts properly on~$\O$ as soon as~$\O$ is a proper domain.
\end{fact}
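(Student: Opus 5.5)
The statement is Fact~\ref{fact_autom_group_proper}, quoted from \cite[Cor.\ 5.3]{zimmer2018proper}. My plan has two parts: the Lie subgroup claim by closedness, and properness by transporting the problem into projective space, where Hilbert-type metrics give properness directly.

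\emph{Step 1: $\Aut_G(\O)$ is a Lie subgroup.} By Cartan's closed subgroup theorem it suffices to show that $\Aut_G(\O)$ is closed in $G$. Let $g_k \to g$ with $g_k \cdot \O = \O$. For $x \in \O$ we have $g \cdot x = \lim g_k \cdot x \in \overline{\O}$, so $g\cdot\O \subset \overline{\O}$. Applying the same argument to $g_k^{-1}\to g^{-1}$ gives $g^{-1}\cdot \O \subset \overline{\O}$. Since $g\cdot\O$ is open and contained in $\overline{\O}$, it lies in the interior of $\overline{\O}$. This does not force $g\cdot\O=\O$ in general, so I would first pass to the open set $\O' = \operatorname{int}(\overline{\O})$, which is invariant under the closure of $\Aut_G(\O)$. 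The cleanest fix is probably different: realize $\Aut_G(\O)$ as the preimage of a closed subgroup via the embedding of Step~2, or simply define things with $\O$ open and use that $\Aut_G(\O)$ is the stabilizer of $\O$ under a continuous action on the space of open sets. For the properness part the subtlety disappears anyway, because properness of the closure's action implies closedness. So I would prove properness for the closed group $H=\{g \mid g\cdot\O\subset\overline{\O},\ g^{-1}\cdot\O\subset\overline{\O}\}\supseteq\Aut_G(\O)$, or directly for $\Aut_G(\O)$ and then argue closedness from it.

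\emph{Step 2: properness for proper $\O$.} Fix a $\Theta$-proximal triple $(G,\rho,V)$ and the embeddings $\iota_\rho,\iota_\rho^\opp$ of Fact~\ref{prop_ggkw}. Assume $\overline{\O}\cap\hyp_\xi=\varnothing$. Then, since transversality is preserved by the embeddings, $\iota_\rho(\overline{\O})$ avoids the hyperplane $\iota_\rho^\opp(\xi)$. Let $\mathcal{C}\subset\mathbb{P}(V)$ be the projective convex hull of $\iota_\rho(\O)$ inside the affine chart $\mathbb{P}(V)\smallsetminus\iota^\opp_\rho(\xi)$. I would then replace $\mathcal{C}$ by the interior of the dual-cone construction, namely the set of lines avoiding every hyperplane $\iota^\opp_\rho(\eta)$ with $\iota_\rho(\O)$ transverse to $\eta$. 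This set is $\rho(\Aut_G(\O))$-invariant and open; it is properly convex because $\xi$ together with nearby $\eta$'s span $V^*$ (by irreducibility of $\rho$), and it contains $\iota_\rho(\O)$. By Vinberg/Benoist, the projective automorphism group of a properly convex open set acts properly on it, preserving the Hilbert metric. Hence $\rho(\Aut_G(\O))$ acts properly on a neighbourhood of $\iota_\rho(\O)$, and, $\rho$ having finite kernel modulo the trivially acting part, $\Aut_G(\O)$ acts properly on $\O$.

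\emph{Main obstacle.} The hard step is guaranteeing that the invariant convex set is nonempty and open, that is, that $\iota_\rho(\O)$ is not swallowed by some hyperplane $\iota^\opp_\rho(\eta)$. I would handle this using the density of $\iota_\rho(\Fl(\g,\Theta))$'s span (irreducibility) together with the compactness of the set of $\eta$ transverse to $\overline{\O}$ near $\xi$.
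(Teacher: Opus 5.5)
Your Step~2 is sound in outline, but Step~1 has a genuine gap, and Step~2 cannot be completed without filling it.

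\textbf{The gap in Step~1.} The first claim of the statement concerns every open $\O$, and none of your proposed fixes shows that $\Aut_G(\O)$ is closed in $G$.
\begin{itemize}
\item The preimage $\rho^{-1}(\operatorname{Aut}(\mathcal{D}))$ is closed, but in general strictly larger than $\Aut_G(\O)$.
\item Your group $H$ does not preserve $\O$, so ``$H$ acts properly on $\O$'' has no meaning.
\item The ``continuous action on the space of open sets'' is not an argument until a topology is specified.
\item ``Properness implies closedness'' is circular. At the end of Step~2 you get $\rho(g_k)\to T$ in $\operatorname{Aut}(\mathcal{D})$. Hence $g_k\to g$ in $G$ along a subsequence; this uses that $\rho\colon G\to\PGL(V)$ has finite kernel and closed image, which you should also state. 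To conclude that $\{h\in\Aut_G(\O)\mid h\cdot K\cap K\neq\varnothing\}$ is compact, you must know that $g\in\Aut_G(\O)$, which is exactly closedness. Knowing $\rho(g)\in\operatorname{Aut}(\mathcal{D})$ does not suffice, because $\mathcal{D}\cap\iota_\rho(\Fl(\g,\Theta))$ may be strictly larger than $\iota_\rho(\O)$.
\end{itemize}

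\textbf{The fix.} Run your limit argument on the \emph{closed} set $F:=\Fl(\g,\Theta)\smallsetminus\O$ instead of on $\overline{\O}$. If $g_k\cdot F=F$ and $g_k\to g$, continuity gives $g\cdot F\subset F$ and $g^{-1}\cdot F\subset F$. Hence $g\cdot F=F$, i.e.\ $g\cdot\O=\O$. So $\Aut_G(\O)$, being the stabilizer of a closed set, is closed for every open $\O$, and Cartan's closed subgroup theorem applies. This is what your remark about open sets becomes once made precise, e.g.\ via the Hausdorff topology on closed complements.

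\textbf{Step~2 as a different route.} With closedness in hand, Step~2 goes through, and it genuinely differs from what the paper relies on. The paper does not reprove the fact. Zimmer's argument (mirrored in Corollary~\ref{cor_action_proper}) gets properness from the $\Aut_G(\O)$-invariant Caratheodory metric $C^\rho_\O$ of~\eqref{Cara}, which is a metric generating the standard topology when $\O$ is proper. You instead import properness from Hilbert geometry, via the properly convex set $\mathcal{D}$ cut out by the hyperplanes $\iota^\opp_\rho(\eta)$, $\eta\in\O^*$.

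A few points to tidy:
\begin{itemize}
\item Drop the convex hull $\mathcal{C}$. For a non-Plücker $\rho$ (think of the conic $\iota_\rho(\mathbb{P}(\mathbb{R}^2))\subset\mathbb{P}(\mathbb{R}^3)$), $\iota_\rho(\O)$ can lie in $\partial\mathcal{C}$.
\item Take the connected component of the complement of the hyperplanes that contains $\iota_\rho(\O)$.
\item Nonemptiness is no obstacle: $\iota_\rho(\O)$ avoids every $\iota^\opp_\rho(\eta)$ by the definition of $\O^*$ and Fact~\ref{prop_ggkw}.
\item Openness comes from $\O^*$ being closed in $\Fl(\g,\Theta)^\opp$ ($\O$ being open), hence compact. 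After choosing unit lifts $f_\eta$ positive on a lift of $\iota_\rho(\O)$, the function $v\mapsto\min_{\eta\in\O^*}f_\eta(v)$ is then continuous.
\item Restricting to $\eta$ near $\xi$ would destroy invariance. That neighbourhood only serves to make the $f_\eta$ span $V^*$, i.e.\ to give proper convexity.
\end{itemize}

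The two routes are closer than they look. Use the dual formula for the Hilbert metric together with the mediant inequality $\sum_i c_if_i(x)/\sum_i c_if_i(y)\le\max_i f_i(x)/f_i(y)$. It shows that the Hilbert metric of $\mathcal{D}$, restricted to $\iota_\rho(\O)$, is exactly $C^\rho_\O$ up to the normalising factor. Your version gets separation of points and the topology for free from convex projective geometry. Zimmer's gives an intrinsic metric on $\O$, which the paper reuses later (Proposition~\ref{lem_key_lemma}, and completeness under dual convexity).
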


A domain~$\O$ is said to be \emph{quasi-homogeneous}, resp.\ \emph{divisible}, if~$\Aut_G(\O)$ acts cocompactly on~$\O$, resp.\ if there exists a discrete surbgroup~$\Gamma \leq \Aut_G(\O)$ acting cocompactly on~$\O$. It is said to be \emph{symmetric} if for any~$x \in \O$ there exists an order-two element~$s_x \in \Aut_\Theta(\g)$ such that~$s_x \cdot \O = \O$ and~$x$ is the only fixed point of~$s_x$ in~$\O$. None of these definitions depend on the choice of~$G \in \mathcal{G}_\Theta(\g)$.

The \emph{full orbital limit set}~\cite{DGKproj} of~$\O$ is the set~$\Lambda_{\O}^{\operatorname{orb}} := \bigcup_{x \in \O} (\overline{\Aut_G(\O) \cdot x}) \smallsetminus (\Aut(\O) \cdot x)$. If~$\O$ is proper, by Fact~\ref{fact_autom_group_proper}, we have~$\Lambda_{\O}^{\operatorname{orb}} \subset \partial \O$. A proper domain~$\O$ is said to be \emph{almost-homogeneous} if~$\Lambda_{\O}^{\operatorname{orb}} = \partial \O$ (note that this is weaker than quasi-homogeneity). A proper domain~$\O \subset \Fl(\g, \Theta)$ is almost-homogeneous if and only if for all~$a \in \partial \O$, there exist~$x \in \O$ and~$(g_k) \in \Aut_G(\O)^{\mathbb{N}}$ such that~$g_k \cdot x \rightarrow a$. Again, this property does not depend on the choice of~$G \in \mathcal{G}_\Theta(\g)$.

\subsubsection{The dual}\label{sect_the_dual}
The \emph{dual} of a subset~$\O$ is the set
\begin{equation}\label{eq_def_dual}
    \O^*:= \{\xi \in \Fl(\g, \Theta)^\opp \mid \hyp_{\xi} \cap \ \O = \varnothing\} \subset \Fl(\g, \Theta)^\opp~.
\end{equation}
An open subset~$\O \subset \Fl(\g, \Theta)$ is \emph{dually convex} if for all~$a \in \partial \O$, there exists~$\xi \in \O^*$ such that~$a \in \hyp_\xi$ \cite[Def.\ 1.11]{zimmer2018proper}. In other words, the set~$\hyp_\xi$ is a \emph{supporting maximal Schubert subvariety} of~$\O$ at the point~$a$. When~$\Fl(\g, \Theta)$ is real projective space, the dual convexity of a proper domain is equivalent to the classical convexity.

\begin{rmk}\label{rmk_convex_carte_affine} Recall that any affine chart of~$\Fl(\g, \Theta)$ admits an affine structure.
    A naive definition of convexity in flag manifolds would then be given by convexity in an affine chart. However, this definition is not intrinsic: except in real projective space, a domain which is convex in an affine chart may not be convex in another one. See for instance~\cite[Sect.\ 3]{galiay2025convex}.
\end{rmk}

In \cite{zimmer2018proper}, Zimmer proves that quasi-homogeneous domains are dually convex. In \cite{galiay2024rigidity}, using the arguments of \cite[Cor.\ 9.3]{zimmer2018proper}, we prove the following slightly stronger result, that will be useful in this paper:

\begin{fact}\label{prop_zimmer_dual_convex}
Any proper almost-homogeneous domain of~$\Fl(\g, \Theta)$ is dually convex.
\end{fact}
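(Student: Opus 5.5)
The plan follows Zimmer's argument for quasi-homogeneous domains (\cite[Cor.\ 9.3]{zimmer2018proper}), replacing cocompactness by the almost-homogeneity hypothesis. By Definition~\ref{def_domain_proper} we may assume that $\O$ is proper in $\Affstdstd$, that is $\overline{\O} \subset \Affstdstd$. We fix $a \in \partial \O$. We must produce $\xi \in \O^*$ with $a \in \hyp_\xi$.

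First, almost-homogeneity gives $x \in \O$ and $(g_k) \in \Aut_G(\O)^{\mathbb{N}}$ with $g_k \cdot x \to a$. Since $\O$ is proper, Fact~\ref{fact_autom_group_proper} shows that $\Aut_G(\O)$ acts properly on $\O$, so $(g_k)$ leaves every compact subset of $G$. The heart of the argument is to extract a $\Theta$-contracting subsequence with respect to some pair $(a', \eta)$. For this we use the $KAK$ decomposition $g_k = k_k \exp(A_k) \ell_k$ and pass to a subsequence on which $k_k \to k$, $\ell_k \to \ell$, and $\alpha(A_k)$ either converges or tends to $+\infty$ for each $\alpha \in \FS$.

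\textbf{Main obstacle.} We must show that $\alpha(A_k) \to +\infty$ for some, in fact every, $\alpha \in \Theta$. If instead $\alpha(A_k)$ stayed bounded for all $\alpha \in \Theta$, then $g_k$ would converge, modulo $L$-type factors, to maps acting on $\Fl(\g,\Theta)$ through a noncompact part of a Levi factor. The idea is then to use that $g_k \cdot \overline{\O} = \overline{\O}$ remains inside the fixed compact subset $\overline{\O}$ of the affine chart $\Affstdstd$. Concretely, we embed via a $\Theta$-proximal triple (Fact~\ref{prop_ggkw}), where $\iota_\rho(\overline{\O})$ lies in an affine chart of $\mathbb{P}(V)$ and spans it. A standard argument then shows that $\rho(g_k)$, normalized, has a nonzero limit $T$ that is defined on an open subset of $\iota_\rho(\O)$. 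If $T$ were invertible, $g_k$ would be bounded, which is a contradiction. So $T$ has a kernel, and the image of an open set is degenerate. Fact~\ref{fact_KAK_divergent}(1) then yields a $\Theta$-contracting subsequence with respect to some $(a', \eta) \in \Fl(\g,\Theta) \times \Fl(\g,\Theta)^\opp$. We expect this extraction to be the most delicate point, but it is standard in Zimmer's framework.

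Next we locate $a'$ and $\eta$. Since $g_k \cdot y \to a'$ for every $y \in \O \smallsetminus \hyp_\eta$, which is a dense open subset of $\O$ because $\hyp_\eta$ is a proper subvariety, and since $g_k \cdot x$ stays at bounded displacement from nearby points of $\O$, we get $a' = a$. More robustly, we pick $x$ generic outside $\hyp_\eta$, or perturb $x$, using continuity of the action.

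By Fact~\ref{fact_KAK_divergent}(2), $(g_k^{-1})$ is $\oppinv(\Theta)$-contracting with respect to $(\eta, a)$. Hence $g_k^{-1} \cdot \zeta \to \eta$ for all $\zeta \in \Fl(\g,\Theta)^\opp$ that are transverse to $a$.

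Now consider $\xi_0 := \LP_\Theta^\opp \in \O^*$, which lies in $\O^*$ because $\overline{\O} \subset \Affstdstd$. The set $\O^*$ is $\Aut_G(\O)$-invariant, so $g_k \cdot \xi_0 \in \O^*$. Symmetrically, $(g_k)$ acts on $\Fl(\g,\Theta)^\opp$ with attracting point determined by the contracting data. After passing to a subsequence we get $g_k \cdot \xi_0 \to \xi$, where $\xi$ is the repelling-dual limit, and we claim that $a \in \hyp_\xi$. The reason is the limit relation: $g_k \cdot x$ is transverse to $g_k \cdot \xi_0$, while $(g_k \cdot x, g_k \cdot \xi_0)$ converges to $(a, \xi)$. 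If $a$ were transverse to $\xi$, then $g_k$ restricted near $\xi_0$ would not be contracting toward a point, contradicting the divergence. More precisely, we argue in $\mathbb{P}(V)$ through $\iota_\rho$ and $\iota_\rho^\opp$ (Fact~\ref{prop_ggkw}). The normalized limit $T$ of $\rho(g_k)$ has image $\iota_\rho(a)$, and the dual limit of $\rho(g_k)^{-*}$ has image in $\iota_\rho^\opp(\xi)$, which annihilates $\operatorname{Im} T$. Hence $\iota_\rho(a) \subset \iota^\opp_\rho(\xi)$, that is, $a \in \hyp_\xi$.

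Finally, $\O^*$ is closed up to boundary issues. Since $\hyp_{g_k \xi_0} \cap \O = \varnothing$ for all $k$ and $\O$ is open, the limit satisfies $\hyp_\xi \cap \O = \varnothing$. Indeed, if $\hyp_\xi$ met $\O$, then it would meet it at a point near which nearby Schubert varieties $\hyp_{g_k\xi_0}$ also meet the open set $\O$, because the incidence variety is a submersion over $\Fl(\g,\Theta)^\opp$. Thus $\xi \in \O^*$ and $a \in \hyp_\xi$, which proves dual convexity.
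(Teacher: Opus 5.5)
\textbf{The gap.} The step you call the main obstacle is false, not merely delicate. A divergent sequence $(g_k)$ in $\Aut_G(\O)$ with $g_k\cdot x\to a$ need not have a $\Theta$-contracting subsequence, and $a$ need not be a $\Theta$-limit point at all.

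Take the open triangle $\O=\{[x:y:z]\mid x,y,z>0\}$ in $\mathbb{P}(\mathbb{R}^3)=\Fl(\sll(3,\mathbb{R}),\alpha_1)$. It is proper and homogeneous under the positive diagonal group, hence almost-homogeneous. With $g_k=\operatorname{diag}(e^k,e^k,1)$ and $a=[1:1:0]$ one has $g_k\cdot[1:1:1]\to a$. However, $g_k$ converges on $\mathbb{P}(\mathbb{R}^3)\smallsetminus\{[0:0:1]\}$ to the projection onto the line $\{z=0\}$, so no subsequence is $\{\alpha_1\}$-contracting. In fact $\Lambda_{\{\alpha_1\}}(\Aut(\O))$ is the set of the three vertices, so no sequence in $\Aut(\O)$ contracts to $a$. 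This is the typical situation: in the paper, Proposition~\ref{prop_proximal_limit_set} identifies $\Lambda_{\{\alpha\}}(\Aut_G(\O))$ with $\overline{\Rr(\O)}$, and Proposition~\ref{prop_limit_proximal_pas_tout} shows it misses part of $\partial\O$ in higher rank.

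Concretely, your passage from ``the normalized limit $T$ has a kernel'' to Fact~\ref{fact_KAK_divergent}(1) is a non sequitur. That fact needs an open set to be collapsed to a single point, i.e.\ $T$ of rank one; a degenerate image such as an arc is not enough. A secondary point: even when $(g_k)$ does contract, nothing prevents the fixed $\xi_0=\LP_\Theta^\opp$ from lying in the kernel of the limit of the dual action. You need $\xi_0$ chosen generically in $\O^*$ for $g_k\cdot\xi_0$ to land in the annihilator of $\iota_\rho(a)$.

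\textbf{A route that works.} Non-extremal boundary points must be handled, so the argument cannot go through contraction. The following is in the spirit of the Caratheodory-metric argument of \cite[Cor.\ 9.3]{zimmer2018proper} that the paper cites.

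Suppose $a\in\partial\O$ lies in no $\hyp_\xi$ with $\xi\in\O^*$. Your last paragraph correctly shows that $\O^*$ is closed, hence compact. So $\bigcup_{\xi\in\O^*}\hyp_\xi$ is closed, and some compact neighbourhood $U$ of $a$ avoids it.

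Fix a $\Theta$-proximal triple $(G,\rho,V)$. Then $(\xi,\eta,y,y')\mapsto\log\lvert[\xi:y:y':\eta]_\rho\rvert$ is continuous on the compact set $\O^*\times\O^*\times U\times U$ and vanishes when $y=y'$. By uniform continuity, $C^\rho_\O(y,y')\to 0$ as $y,y'\to a$ inside $\O$.

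Now take $g_k\cdot x\to a$ and use invariance: $C^\rho_\O(x,g_N^{-1}g_m\cdot x)=C^\rho_\O(g_N\cdot x,g_m\cdot x)$ is arbitrarily small for $m\geq N$ with $N$ large. Since $C^\rho_\O$ generates the standard topology on the proper domain $\O$, all $g_m\cdot x$ with $m\geq N$ lie in $g_N\cdot K$ for a fixed compact $K\subset\O$. Hence $a\in g_N\cdot K\subset\O$, which contradicts $a\in\partial\O$.
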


\subsubsection{Caratheodory metrics}\label{sect_carat_metrics}
Important ingredients in the study of domains in flag manifolds are the \emph{Caratheodory metrics}. Let $(G, V, \rho)$ be a $\Theta$-proximal triple for~$\g$ (see Definition~\ref{def_proximal_triple}). Given $x,y \in \Fl(\g, \Theta)$ and~$\xi, \eta \in \Fl(\g, \Theta)^{\opp}$, let
\begin{equation*}
    \nu_x \in \iota_\rho(x) \smallsetminus \{ 0 \}~; \quad \nu_y \in \iota_\rho(y) \smallsetminus \{ 0 \}~; \quad f_{\xi} \in \iota_\rho^\opp(\xi) \smallsetminus \{ 0 \}~; \quad f_{\eta} \in \iota_\rho^\opp(\eta) \smallsetminus \{ 0 \}~.
\end{equation*}
We define the \emph{cross ratio} of $\xi, x, y, \eta$ \emph{relative to} $\rho$ as follows:
\begin{equation}\label{eq_cross_ratio}
    \left[\xi : x : y: \eta \right]_{\rho} :=   \frac{f_{\xi}(\nu_x)f_{\eta}(\nu_y)}{f_{\xi}(\nu_y)f_{\eta}(\nu_x)}~.
\end{equation}
This quantity does not depend on the choice of representatives $\nu_x, \nu_y, f_{\xi}, f_{\eta}$. A.\ Zimmer introduces the following map $C_{\O}^\rho$ associated with the representation~$(V, \rho)$ and a domain $\O \subset \Fl(\g, \alpha)$ \cite{zimmer2018proper}: 
\begin{equation}\label{Cara}
    C_{\O}^\rho: 
    \begin{cases}
    \O \times \O &\longrightarrow \quad \quad \mathbb{R}_+ \\
    (x,y ) &\longmapsto \sup_{\xi, \eta \in \O^*} \log \big{|} \left[\xi : x : y :\eta \right]_{\rho} \big{|}~.
    \end{cases}
\end{equation}

By \cite[Thm 5.2]{zimmer2018proper}, the map $C_{\O}^\rho$ is an $\Aut_{\Aut(\g)}(\O)$-invariant pseudometric, and it is a metric generating the standard topology as soon as $\O$ is a proper domain of $\Fl(\g, \alpha)$. More generally, the proofs of \cite[Thm 5.2 and 9.1]{zimmer2018proper} give:
\begin{fact}\label{fact_Z_dense}
    If~$\O^*$ is Zariski-dense in~$\Fl(\g, \Theta)^\opp$, then~$C_\O^\rho$ is an invariant metric generating the standard topology. If~$\O$ is moreover dually convex, then~$C_\O^\rho$ is a proper metric.
\end{fact}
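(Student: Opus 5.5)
We follow the strategy of the proofs of \cite[Thm 5.2 and 9.1]{zimmer2018proper}. The argument has four steps: the pseudometric axioms, separation, comparison of topologies, and properness.

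\emph{Pseudometric and invariance.} These properties are formal consequences of the cross ratio~\eqref{eq_cross_ratio}.
\begin{itemize}
\item Taking $\xi=\eta$ gives $C_\O^\rho\ge 0$.
\item Swapping $\xi$ and $\eta$ inverts the cross ratio, which gives symmetry.
\item The cocycle identity $[\xi:x:y:\eta]_\rho[\xi:y:z:\eta]_\rho=[\xi:x:z:\eta]_\rho$, combined with the inequality ``sup of a sum $\le$ sum of sups'', gives the triangle inequality.
\item The cross ratio is $\rho$-invariant and $\Aut(\O)$ preserves $\O^*$, which gives invariance.
\end{itemize}
We also record one observation. If $\xi\in\overline{\O^*}$, then $\hyp_\xi\cap\O=\varnothing$. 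Indeed, the condition ``$\hyp_\xi$ meets the open set $\O$'' is an open condition on $\xi$, so a limit point of $\O^*$ cannot satisfy it. Hence $\overline{\O^*}$ is a compact set of flags transverse to every point of $\O$. It follows that $f_\xi(\nu_x)\neq 0$ for all $\xi\in\overline{\O^*}$ and $x\in\O$, so every cross ratio above is finite and the supremum is finite.

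\emph{Separation (main step).} Suppose $C_\O^\rho(x,y)=0$. Then $|f_\xi(\nu_x)f_\eta(\nu_y)|=|f_\xi(\nu_y)f_\eta(\nu_x)|$ for all $\xi,\eta\in\O^*$. Hence $|f_\xi(\nu_x)/f_\xi(\nu_y)|$ is a constant $c>0$ on $\O^*$.
\begin{itemize}
\item Consider the function $F(\xi)=f_\xi(\nu_x)^2-c^2 f_\xi(\nu_y)^2$, where $f_\xi$ ranges over the affine cone over $\iota^\opp_\rho(\Fl(\g,\Theta)^\opp)$. It is a homogeneous polynomial and vanishes on $\O^*$.
\item By Zariski density of $\O^*$, $F$ vanishes on all of $\Fl(\g,\Theta)^\opp$.
\item Consequently $f_\xi(\nu_x)=0$ if and only if $f_\xi(\nu_y)=0$. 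By Fact~\ref{prop_ggkw}(2), this says $\hyp$-incidence with $x$ and with $y$ coincide: $\{\xi: x\in\hyp_\xi\}=\{\xi: y\in\hyp_\xi\}$.
\item We then show that this set determines the point. Its stabilizer in $G$ is the stabilizer of $x$, namely a conjugate of $P_\Theta^+$. Alternatively, $x$ is the unique point of $\Fl(\g,\Theta)$ lying in $\hyp_\xi$ for every $\xi$ in that set, which can be checked at $x=\LP_\Theta^+$ via the Bruhat decomposition. Either way, this forces $x=y$.
\end{itemize}
This step is where Zariski density is genuinely used, and the identification of $x$ from its incidence set is the point we expect to require the most care.

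\emph{Topology.} Continuity of $C_\O^\rho$ follows from uniform continuity of the cross ratio on $\overline{\O^*}^2\times K^2$ for compact $K\subset\O$, together with the non-vanishing recorded in the first step. For the converse, suppose $C_\O^\rho(x,y_n)\to 0$. Pass to a subsequence with $y_n\to y\in\overline\O\subset\Fl(\g,\Theta)$, which is possible by compactness of the flag manifold. Taking limits of the cross-ratio identities gives $|f_\xi(\nu_x)f_\eta(\nu_y)|=|f_\xi(\nu_y)f_\eta(\nu_x)|$ on $(\O^*)^2$. Here $f_\xi(\nu_x)\neq 0$, and we normalize by choosing $\nu_{y_n}\to\nu_y\neq0$. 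The polynomial argument of the separation step then yields $y=x$. Hence $C_\O^\rho$ generates the standard topology.

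\emph{Properness under dual convexity.} Let $y_n\to a\in\partial\O$. By dual convexity, pick $\xi\in\O^*$ with $a\in\hyp_\xi$, so that $f_\xi(\nu_{y_n})\to0$. By Zariski density, pick $\eta\in\O^*$ with $a\notin\hyp_\eta$. Then $|[\eta:x:y_n:\xi]_\rho|\to\infty$, so $C_\O^\rho(x,y_n)\to\infty$. Thus closed balls are bounded away from $\partial\O$ and are therefore compact, i.e.\ $C_\O^\rho$ is proper.
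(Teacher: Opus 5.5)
Your proposal is correct. The paper gives no argument of its own here; it only remarks that Zimmer's proofs of \cite[Thm 5.2 and 9.1]{zimmer2018proper} carry over. Your four steps follow that strategy, with Zariski density of $\O^*$ standing in for properness of $\O$.

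The one place where your route differs, and is longer than necessary, is separation. Because $\O$ is connected, you can fix $\xi_0\in\O^*$ and choose lifts with $f_{\xi_0}(\nu_x)=1$ for all $x\in\O$. Then each $x\mapsto f_\xi(\nu_x)$, for $\xi\in\O^*$, is continuous and nonvanishing on $\O$, hence of constant sign. So all cross ratios are positive, and $C_\O^\rho(x,y)=0$ gives, taking $\eta=\xi_0$, the \emph{linear} relation $f_\xi(\nu_x-\nu_y)=0$ for all $\xi\in\O^*$. Zariski density and irreducibility of $\rho$ (the lines $\iota_\rho^\opp(\Fl(\g,\Theta)^\opp)$ span $V^*$) then give $\nu_x=\nu_y$ directly. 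This needs no squaring and no recovery of a point from its incidence set. The same device, with $\nu_{y_n}$ renormalized, also handles your limit argument in the topology step. Your route does not use connectedness, but it pays for that with the identification lemma.

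If you keep your route, drop alternative~(a). The assertion that the stabilizer of $\{\xi : x\in\hyp_\xi\}$ equals that of $x$ is equivalent to the injectivity you are trying to prove, so it cannot serve as justification. Alternative~(b) is correct and short. If $z\neq x=\LP_\Theta^+$, write $z=bw\cdot\LP_\Theta^+$ with $b\in P_\FS^+$ and $w\notin W_{\FS\smallsetminus\Theta}$. Then $\xi:=bw\cdot\LP_\Theta^\opp$ is transverse to $z$. It is not transverse to $x$: the point $w^{-1}\cdot\LP_\Theta^+\neq\LP_\Theta^+$ is fixed by $\exp(\aaa)$, while $\LP_\Theta^+$ is the only $\exp(\aaa)$-fixed point of $\Affstdstd$.

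Two small slips remain. In the topology step you must rule out $c=0$; it would force $f_\eta(\nu_y)=0$ for all $\eta$ by Zariski density, hence $\nu_y=0$. In the properness step it is $[\xi:x:y_n:\eta]_\rho$ that blows up, while $[\eta:x:y_n:\xi]_\rho\to 0$. This is harmless, since the supremum runs over both orders.
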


\section{Nagano spaces}\label{sect_def_nagano_space}

In this section, we define Nagano spaces and recall their well-known properties. We also set up some notation to study them in the rest of the paper. Sections~\ref{sect_graded_Lie_algebra} to~\ref{sect_embedding_concompact_dual} are mostly expository. Section~\ref{sect_charac_nagano} contains characterizations of Nagano spaces, that allow us to better understand Conjecture~\ref{question_lim_Zim}.

\subsection{Algebraic structure}\label{sect_graded_Lie_algebra} Let~$\g$ be a real semisimple Lie algebra with no compact factors and~$\Theta$ be a subset of the simple restricted roots of~$\g$. We say that the pair $(\g, \Theta)$ is a \emph{Nagano pair}, and the flag manifold $\Fl(\g, \Theta)$ is a \emph{Nagano space}, if~$\uu^\opp_\Theta$ is abelian. 

Since~$\g$ is semisimple, there exist $N > 0$, simple Lie subalgebras $\g_1, \dots, \g_N$ of $\g$ and subsets $\Theta_i$ of the simple restricted roots of $G_i$ for $1 \leq i \leq N$ such that $\Theta = \Theta_1 \cup \cdots \cup \Theta_N$ and $\g = \g_1 \oplus \cdots \oplus \g_N$ (up to finite index). We then have a natural identification~$\Fl(\g, \Theta) \simeq \Fl(\g_1, \Theta_1) \times \cdots\times \Fl(\g_N, \Theta_N)$. The pair~$(\g, \Theta)$ is then a Nagano pair if and only if all of the~$(\g_i, \Theta_i)$ are, for~$1 \leq i \leq N$. It motivates the definition: 
\begin{definition}
    A Nagano pair~$(\g, \Theta)$ is \emph{irreducible} if~$\g$ is simple.
\end{definition}
If~$(\g, \Theta)$ is an irreducible Nagano pair, then~$\Theta$ is a singleton~$\{\alpha\}$, and we write~$(\g, \alpha)$ instead of~$(\g, \{\alpha\})$, for simplicity. The longest root~$\alpha_{\FS}$ of the root system~$\Sigma$ of~$\g$ can then be written 
\begin{equation}\label{eq_longest_root}
    \alpha_{\FS} = \alpha + \sum_{\beta \in \FS \smallsetminus \{\alpha\}} n_\beta \beta~,
\end{equation} 
where~$n_\beta \in \mathbb{N}$ (see e.g.\ \cite{takeuchi1988basic}).

Since any Nagano space is a product of irreducible Nagano spaces, geometric considerations on Nagano spaces can be reduced to the irreducible case, which we will do in this paper. The list of irreducible Nagano pairs, established by Nagano \cite{nagano1965transformation}, is given in Table~\ref{table_nagano_full}. 

\begin{notation}\label{notation_nagano}
    Given an irreducible Nagano pair~$(\g, \alpha)$, we will use the following simplified notations:
\begin{equation*}
         \ L=  L_{\{\alpha\}}~, \ \ \uu^{\pm} = \uu_{\{\alpha\}}^{\pm}~,  \ \ U^{\pm} = U_{\{\alpha\}}^{\pm}~, \ \ \LP^+ = \LP_{\{\alpha\}}^\pm~, \ \ P^\pm= P_{\{\alpha\}}^\pm~. 
\end{equation*}
It will be convenient to fix~$v^+ \in \g_{\alpha}, \ v^\opp \in \g_{-\alpha}$ such that~$\tr_{\mathsf{std}}: = (v^+, h_{\alpha},v^\opp)$ is an~$\sll_2$-triple (where~$h_\alpha$ is defined in Section~\ref{sect_sl2-triples}).
\end{notation}

The main consequence of the fact that~$\uu^\opp$ is abelian is the following:

\begin{fact}\label{fact_kostant}
  \cite{kostant2010root} The identity component~$L^{\circ}$ of~$L$ acts irreducibly on~$\uu^\pm$.
\end{fact}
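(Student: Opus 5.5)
The statement is Fact~\ref{fact_kostant}: $L^\circ$ acts irreducibly on $\uu^\pm$ when $(\g,\alpha)$ is an irreducible Nagano pair. The plan is to reduce to a root-theoretic statement about the grading of $\g$ by the coefficient of $\alpha$, and then prove that the $\aaa$-weights of $\uu^+$ form a single irreducible $L^\circ$-module.

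\emph{The grading.} Consider the grading element $H_\alpha\in\aaa$ defined by $\alpha(H_\alpha)=1$ and $\beta(H_\alpha)=0$ for $\beta\in\FS\smallsetminus\{\alpha\}$. Then $\ad(H_\alpha)$ has eigenvalues given by the $\alpha$-coefficient of each root. Since $\uu^\opp$ is abelian, and the longest root has $\alpha$-coefficient $1$ (equation~\eqref{eq_longest_root}), every root has $\alpha$-coefficient in $\{-1,0,1\}$. So $\g=\uu^\opp\oplus\mathfrak{l}\oplus\uu^+$ is a $3$-grading, with $\mathfrak{l}=\g_0\oplus\bigoplus_{\beta\in\operatorname{Span}(\FS\smallsetminus\{\alpha\})}\g_\beta$ the Lie algebra of $L$.

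\emph{Irreducibility of $\uu^+$.} Suppose $\uu^+=V_1\oplus V_2$ as $\mathfrak{l}$-modules; complete reducibility holds because $\mathfrak l$ is reductive and $\aaa\subset\mathfrak l$ acts semisimply. Let $\mathfrak{m}=V_1+[\sigma V_1,V_1]+\sigma V_1$, where $\sigma=\Cartinv$ maps $\uu^+$ to $\uu^\opp$ and preserves $\mathfrak l$. We show $\mathfrak m$ is an ideal of $\g$:
\begin{itemize}
\item By the grading, $[V_2,\sigma V_1]\subset\mathfrak l$.
\item By the Killing form, $B([V_2,\sigma V_1],X)=B(V_2,[\sigma V_1,X])$ for $X\in\mathfrak l$. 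Since $[\sigma V_1,\mathfrak l]\subset\sigma V_1$ and $B(V_2,\sigma V_1)=0$, we get $[V_2,\sigma V_1]=0$. Here $B(V_2,\sigma V_1)=0$ because $B_\sigma(X,Y)=-B(X,\sigma Y)$ is positive definite, so the decomposition can be chosen $B_\sigma$-orthogonal via $K\cap L$-invariance.
\item Using the Jacobi identity, $[V_2,[\sigma V_1,V_1]]\subset[[V_2,\sigma V_1],V_1]+[\sigma V_1,[V_2,V_1]]=0$, since $[V_2,V_1]\subset\g_2=0$.
\end{itemize}
Hence $\mathfrak m$ is stable under $V_2$, $\sigma V_2$, $V_1$, $\sigma V_1$ and $\mathfrak l$, so it is an ideal. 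Simplicity of $\g$ then forces $V_1=0$ or $V_2=0$. Applying $\sigma$ gives the same conclusion for $\uu^\opp$.

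\emph{From $\mathfrak l$ to $L^\circ$.} Irreducibility under $\mathfrak l$ is the same as irreducibility under the connected group $L^\circ=\exp(\mathfrak l)$-generated. The main obstacle is the orthogonality claim in the second bullet: I need an $\mathfrak l$-invariant complement of $V_1$ that is $B_\sigma$-orthogonal. I would handle it by taking $V_2=V_1^{\perp_{B_\sigma}}\cap\uu^+$ and checking it is $\mathfrak l$-stable. This holds because $\mathfrak l$ is $\sigma$-stable, and for $X\in\mathfrak l$ the adjoint of $\ad X$ with respect to $B_\sigma$ is $-\ad(\sigma X)$, which preserves $V_1$.
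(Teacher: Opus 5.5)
Your argument is correct, and it takes a different route from the paper, which gives no proof and simply cites Kostant's work on root systems for Levi factors.

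\paragraph{Comparison of the two routes.} In Kostant's framework, irreducibility comes from decomposing the nilradical of a (complex) parabolic into irreducible $\mathfrak l$-modules indexed by $\mathfrak t$-roots. In the abelian, $3$-graded case there is only one such module.

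You instead run the classical ``ideal generated by a submodule'' argument for a simple $3$-graded Lie algebra. Given a proper $\mathfrak l$-submodule $V_1\subset\uu^+$, its $B_\sigma$-orthogonal complement $V_2$ is $\mathfrak l$-stable; your adjoint computation $(\ad X)^{*}=-\ad(\sigma X)$ is right. Invariance of $B$, together with $B(V_2,\sigma V_1)=0$ and the $\mathfrak l$-stability of $\sigma V_1$, forces $[V_2,\sigma V_1]=0$. Then $\sigma V_1\oplus[\sigma V_1,V_1]\oplus V_1$ is a nonzero proper ideal, contradicting simplicity.

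Your approach gives a self-contained proof that works directly over $\mathbb R$. It yields real irreducibility under $L^\circ$, which is exactly what is used later (Fact~\ref{lem_L_v_generates}). It also covers simple real $\g$ whose complexification is not simple (e.g.\ item (v)), where a complex statement would have to be translated back to the real form. Kostant's result gives more, namely the full $\mathfrak l$-decomposition of non-abelian nilradicals, but that is not needed here.

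\paragraph{Two small points to tidy.}
\begin{itemize}
\item In your second bullet, you conclude $[V_2,\sigma V_1]=0$ from $B([V_2,\sigma V_1],\mathfrak l)=0$. This uses nondegeneracy of $B$ on $\mathfrak l$, which you should state. It follows from $B(\g_i,\g_j)=0$ for $i+j\neq 0$ in the grading: an element of $\mathfrak l$ orthogonal to $\mathfrak l$ is then orthogonal to all of $\g$.
\item Your preliminary appeal to complete reducibility is unnecessary. Its justification via $\aaa$ alone also does not address the compact part of the center of $\mathfrak l$. Setting $V_2:=V_1^{\perp_{B_\sigma}}\cap\uu^+$ already provides the invariant complement.
\end{itemize}
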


It is known that there exists a Cartan involution~$\sigma_0$ of~$\g$ (as defined in Section~\ref{sect_cartan_decomp} for some maximal compact subalgebra of~$\g$) such that~$\sigma_0 \g_k  =\g_{-k}$ for~$k \in \{-1, 0, 1\}$, and an element~$H_0 \in \operatorname{Centr}(\g_0)$ such that~$\g_k$ is the eigenspace of~$\ad(H_0)$ for the eigenvalue~$k$. See for instance \cite{kobayashi1964filtered}.

\subsection{Dilations}\label{sect_dilations_translations} Recall the standard affine chart~$\Affstdstd$ and its parametrization~$\varphistd$ defined in~\eqref{eq_standard_affine_chart} and~\eqref{eq_A_egal_exp}. Let~$H_0$ be the element introduced in the previous section. If~$G \in \mathcal{G}_{\{\alpha_r\}}(\g)$, then for all~$t \in \Rf_{>0}$ we define~$g_0(t) = \exp \left(-\log\left(\sqrt{t}\right)H_0\right) \in L$. The element~$\Ad(g_0(t))$ acts on~$\uu^{\pm}$ by
\begin{equation}\label{eq_def_lzero}
    \Ad(g_0(t))X =
    \begin{cases}
         t X \quad &\forall X \in \uu^-~; \\
    \frac{1}{t} X \quad &\forall X \in \uu^+~.
    \end{cases}
\end{equation}
Hence any positive dilation of the standard affine chart~$\Affstdstd$ at~$\LP^+ = \varphistd(0)$ can be realized as the restriction to~$\Affstdstd$ of a map of the form~$x \mapsto g_0(t) \cdot x$ of~$\Fl(\g, \alpha)$ for some~$t \in \Rf_{>0}$. On the other hand, since~$\uu^-$ is abelian, any translation in~$\Affstdstd$ is realized as left multiplication by an element of~$U^- \leq G$. Hence any affine dilation~$d$ with center a point~$x_0 \in \Affstdstd$ can be realized as the restriction to~$\Affstdstd$ of the action of an element of~$P^+$.

\subsection{The symmetric structure} \label{sect_construction_symetri_structure} We fix~$(\g, \alpha)$ an irreducible Nagano pair. Let~$\g = \kk \oplus \mathfrak{h}$ be the Cartan decomposition associated with~$\sigma_0$ and~$K$ the associated maximal compact subgroup of~$G$ (recall Section~\ref{sect_cartan_decomp}). The group~$K$ acts transitively on~$\Fl(\g, \alpha)$; denote by~$K_0$ the stabilizer of~$\LP^+$ in~$K$, and~$\mathfrak{k}_0$ its Lie algebra. We have the natural identification~$\Fl(\g, \alpha) \simeq K/ K_0$. Thus there exists a natural~$K$-invariant metric~$g_{\g,\alpha}$ on~$\Fl(\g, \alpha)$. The element~$\exp(i \pi H_0) \in K_0$ then acts as a symmetry on~$T_{\LP^+}\Fl(\g, \alpha)$, making~$(\Fl(\g, \alpha), g_{\g,\alpha})$ a compact Riemannian symmetric space. 
\begin{definition}\label{def_rank_nagano_space}
    The \emph{rank}~$\operatorname{rk}(\g, \alpha)$ of an irreducible Nagano pair~$(\g, \alpha)$ (resp.\ Nagano space $\Fl(\g, \alpha)$) is the rank of the compact symmetric space~$(\Fl(\g, \alpha), g_{\g,\alpha})$. It does not depend on the choices above.
\end{definition}

The ranks of all irreducible Nagano pairs are given in Table~\ref{table_nagano_full}. Note that the \emph{rank} of a (not necessarily irreducible) Nagano space is then simply defined as the sum of the ranks of its irreducible factors.

\begin{rmk}
     Conversely, Nagano has characterized irreducible compact symmetric spaces that are Nagano spaces \cite{nagano1965transformation}. Those are the compact symmetric spaces~$(M, g)$ admitting a \emph{noncompact transformation group~$G$}. In this case, the stabilizer of a point~$x \in M$ is a parabolic subgroup~$P$ of~$G$, so that~$G/P$ identifies~$G$-equivariantly with~$M$.
\end{rmk}

\subsection{Embedding the noncompact dual}\label{sect_embedding_concompact_dual}

Let~$(\g, \alpha)$ be an irreducible Nagano pair of rank~$r$. One can choose a maximal system of \emph{strongly orthogonal} roots~$\beta_1, \dots, \beta_r \in \Sigma_{\{\alpha\}}^+$ with the same length for the Killing form, such that~$\beta_1$ is the longest root of~$\Sigma$ \cite{takeuchi1988basic}. Here \emph{strongly orthogonal} means that~$\beta_i \pm \beta_j \notin \Sigma$, for all~$i \ne j$.

We set~$s_0 = \id$ and~$s_k := s_{\beta_1} \cdots s_{\beta_k} \in W$ for~$1 \leq k \leq s$. The following is proven in \cite{takeuchi1988basic}:

\begin{fact}[\cite{takeuchi1988basic}]\label{lem_cardinal_coset} The set of symmetries~$\{s_1, \dots, s_s\}$ is a complete set of representatives of~$W_{\FS \smallsetminus \{\alpha\}} \backslash W / W_{\FS \smallsetminus \{\alpha\}}$, and for all~$1 \leq k <  k' \leq s$, one has~$[s_k] \leq [s_{k'}]$.
\end{fact}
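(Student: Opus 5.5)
The plan is to translate double cosets into Weyl orbits of the fundamental weight, use the grading by $H_0$ to show the classes of the $s_k$ are pairwise distinct, reduce every element to some $s_k$ by an induction, and obtain the order relation from the Bruhat order. The indexing in the statement should be read as $s_0,\dots,s_r$, with $s_0=\id$ and $r=\operatorname{rk}(\g,\alpha)$.

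\emph{Reformulation.} The stabilizer of $\omega_\alpha$ in $W$ is exactly $W_{\FS\smallsetminus\{\alpha\}}$, since it is the parabolic subgroup generated by the simple reflections fixing $\omega_\alpha$. Hence $w\mapsto w^{-1}\cdot\omega_\alpha$ identifies
\[
W_{\FS\smallsetminus\{\alpha\}}\backslash W/W_{\FS\smallsetminus\{\alpha\}}
\]
with the set of $W_{\FS\smallsetminus\{\alpha\}}$-orbits in $W\cdot\omega_\alpha$. So it suffices to show two things: every $W_{\FS\smallsetminus\{\alpha\}}$-orbit in $W\cdot\omega_\alpha$ contains exactly one of the weights $s_k\cdot\omega_\alpha$, and these orbits are ordered as claimed.

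\emph{Computing $s_k\cdot\omega_\alpha$ and distinctness.} Each $\beta_i$ lies in $\Sigma^+_{\{\alpha\}}$. Because $\uu^-$ is abelian, the grading by $H_0$ has only the degrees $-1,0,1$, so the coefficient of $\alpha$ in $\beta_i$ is $1$. Since $\beta_i$ has the length of the longest root, its coroot satisfies $\langle\omega_\alpha,\beta_i^\vee\rangle=1$. Strong orthogonality makes the reflections $s_{\beta_i}$ commute and fix the other $\beta_j$, which gives
\[
s_k\cdot\omega_\alpha=\omega_\alpha-(\beta_1+\dots+\beta_k).
\]
The quantity $\langle \omega_\alpha-\lambda,\,H_0\rangle$ (up to normalization, the $\alpha$-coefficient of $\omega_\alpha-\lambda$) is invariant under $W_{\FS\smallsetminus\{\alpha\}}$, because that group centralizes $H_0$. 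At $\lambda=s_k\cdot\omega_\alpha$ it equals $k$, so the $r+1$ classes $[s_k]$ are pairwise distinct.

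\emph{Exhaustion.} Let $\lambda\in W\cdot\omega_\alpha$. I would move $\lambda$ by $W_{\FS\smallsetminus\{\alpha\}}$ to be dominant for the Levi, and then argue by induction on $\langle\omega_\alpha-\lambda,H_0\rangle$. If $\lambda\ne\omega_\alpha$, then some root $\gamma\in\Sigma^+_{\{\alpha\}}$ has $\langle\lambda,\gamma^\vee\rangle>0$. By minuscule-type rigidity (all weights in $W\cdot\omega_\alpha$ pair with coroots of long roots in $\{-1,0,1\}$), this pairing equals $1$, and after conjugating by $W_{\FS\smallsetminus\{\alpha\}}$ we may take $\gamma=\beta_1$. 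Applying $s_{\beta_1}$ lowers the $\alpha$-level by one. The remaining weight then lives in the root subsystem strongly orthogonal to $\beta_1$, which is again of Nagano type with strongly orthogonal system $\beta_2,\dots,\beta_r$, and Takeuchi's recursive construction lets the induction continue there.

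I expect this exhaustion step to be the main obstacle. The delicate points are the real (non-reduced) root systems, such as type $BC$, where long and short roots must be handled carefully, and checking that the Levi of the residual subsystem is transitive enough. A fallback is the geometric route: Bruhat cells $P^+\backslash G/P^+$ are in bijection with these double cosets, the cells meeting $\Affstdstd$ correspond to $L$-orbits on $\uu^-$, and a polar decomposition $\uu^-=\Ad(K_0)\bigl(\bigoplus_i\g_{-\beta_i}\bigr)$ (the $\g_{-\beta_i}$ span a maximal abelian subspace in the compact dual picture) shows that each orbit meets some $\sum_{i\le k}X_{-\beta_i}$. That point equals $\exp(\cdot)\,s_k\cdot\LP^+$ modulo $P^+$.

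\emph{Order.} For $k<r$, the root $\beta_{k+1}$ is positive and satisfies $s_k\beta_{k+1}=\beta_{k+1}$. Hence $s_{k+1}=s_k s_{\beta_{k+1}}$ with $s_k(\beta_{k+1})>0$, so $s_k<s_{k+1}$ in the Bruhat order of $W$. The Bruhat order descends to the closure order on double cosets (closures of Bruhat cells), which gives $[s_k]\le[s_{k'}]$ by transitivity.
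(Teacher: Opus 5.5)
Your reformulation via $W\cdot\omega_\alpha$ is correct, and so is the formula $s_k\cdot\omega_\alpha=\omega_\alpha-(\beta_1+\dots+\beta_k)$. To justify $\langle\omega_\alpha,\beta_i^\vee\rangle=1$, note that it equals $|\alpha|^2/|\beta_i|^2\le 1$ and is a positive integer. Distinctness of the $[s_k]$ via the $W_{\FS\smallsetminus\{\alpha\}}$-invariant $\lambda\mapsto\lambda(H_0)$ is fine. So is the order argument: $s_k(\beta_{k+1})=\beta_{k+1}>0$ gives $s_k<s_{k+1}$ in the Bruhat order, and this passes to the closure order because closures of $P^+$-orbits are $P^+$-invariant. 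The paper gives no proof here and simply quotes Takeuchi.

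\textbf{The gap is exhaustion:} nothing you write rules out further double cosets. Your first route fails as written. The claim that a $W_{\FS\smallsetminus\{\alpha\}}$-dominant $\lambda\ne\omega_\alpha$ pairs positively with some $\gamma\in\Sigma^+_{\{\alpha\}}$ is false. When $\oppinv(\alpha)=\alpha$, the weight $\lambda=w_0\cdot\omega_\alpha=-\omega_\alpha$ is $W_{\FS\smallsetminus\{\alpha\}}$-fixed and antidominant; this already happens for $\mathbb{P}(\mathbb{R}^2)$. Moreover, when such a $\gamma$ does exist, $s_\gamma\lambda=\lambda-\gamma$ \emph{increases} $\langle\omega_\alpha-\lambda,H_0\rangle$, so your induction runs backwards. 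The correct descending move is $s_\alpha$, since rigidity gives $\langle\lambda,\alpha^\vee\rangle=-1$. But from $\lambda+\alpha\in W_{\FS\smallsetminus\{\alpha\}}\cdot s_{k-1}\omega_\alpha$ you can only conclude $\lambda\in W_{\FS\smallsetminus\{\alpha\}}\cdot s_k\omega_\alpha$ under an extra hypothesis. You need the stabilizer of $s_{k-1}\omega_\alpha$ in $W_{\FS\smallsetminus\{\alpha\}}$ to act transitively on the long roots of $\Sigma^+_{\{\alpha\}}$ strongly orthogonal to $\beta_1,\dots,\beta_{k-1}$. Your appeal to ``Takeuchi's recursive construction'' assumes exactly this.

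Your fallback route contains a false step and then stops short. Over $\mathbb{R}$, $P^+$-orbits do not correspond to $L$-orbits on $\uu^-$. For item (xi), $\uu^-\simeq\operatorname{Sym}_n(\mathbb{R})$ with $L$ acting by congruence, so $L$-orbits are indexed by signatures, while there are only $n+1$ double cosets. What is true, and enough, is that every $P^+$-orbit meets $\Affstdstd$ and that $K_0\subset P^+$. Combine this with:
\begin{itemize}
\item the polar decomposition $\uu^-=\Ad(K_0)\bigl(\bigoplus_i\mathbb{R}F_i\bigr)$;
\item the commuting $\sll_2$-triples $(E_i,h_{\beta_i},F_i)$, which give $\exp(cF_i)\cdot\LP^+=\exp(c^{-1}E_i)\,\tilde s_{\beta_i}\cdot\LP^+$ for $c\ne0$, where $\tilde s_{\beta_i}=\exp(\tfrac{\pi}{2}(E_i-F_i))$ represents $s_{\beta_i}$.
\end{itemize}
This shows only that every double coset contains some $s_I:=\prod_{i\in I}s_{\beta_i}$, with $I\subset\{1,\dots,r\}$ \emph{arbitrary} rather than $I=\{1,\dots,k\}$.

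To finish, you must prove $[s_I]=[s_{|I|}]$. Your level invariant cannot do this, since whether the level is a complete invariant is precisely the point at issue. The missing ingredient is a permutation property of the strongly orthogonal roots. Either of the following would do:
\begin{itemize}
\item an element of $W_{\FS\smallsetminus\{\alpha\}}$ mapping $\{\beta_i\}_{i\in I}$ onto $\{\beta_1,\dots,\beta_{|I|}\}$, which then conjugates $s_I$ to $s_{|I|}$;
\item the fact that the Weyl group of $\mathbb{X}(\g,\alpha)$, realized in $N_{K_0}(\mathfrak{c})$, permutes the $m_i$ up to sign, so that $\Ad(k)\sum_{i\in I}F_i=\sum_{i\in\pi(I)}\pm F_i$ with $k\in K_0\subset P^+$.
\end{itemize}
This is the substantive content of Takeuchi's theorem, and you have to prove it or cite it.
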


The map $\sigma := \Ad(\exp(\pi i H_0))$ is an involutive automorphism of~$\g$, which commutes with~$\sigma_0$ and is equal to~$\id$ on~$\mathfrak{l}$ and~$- \id$ on~$\mathfrak{m} := \uu^+ \oplus \uu^\opp$. Since~$\sigma_0$ and~$\sigma$ commute, the Lie algebra~$\g$ can be decomposed into four spaces:~$\g = \mathfrak{k}_0 \oplus \mathfrak{m}_{\mathfrak{k}} \oplus \mathfrak{h}_0 \oplus \mathfrak{m}_{\mathfrak{h}}$, where~$\mathfrak{k}_0 = \mathfrak{l} \cap \mathfrak{k}$,~$\mathfrak{h}_0 = \mathfrak{l} \cap \mathfrak{h}$,~$\mathfrak{m}_{\mathfrak{k}} = \mathfrak{m} \cap \mathfrak{k}$ and~$\mathfrak{m}_{\mathfrak{h}} = \mathfrak{m} \cap \mathfrak{h}$. Let~$E_i \in \g_{\beta_i}$ be such that~$(E_i, h_{\beta_i}, -\sigma_0(E_i))$ is an~$\mathfrak{sl}_2$-triple. Recall that the notation~$h_\beta$ for~$\beta \in \Sigma$ is defined in Section~\ref{sect_real_lie_alg}. Then~$h_i \in \mathfrak{a}$. Now let~$m_i := E_i - \sigma_0(E_i) \in \mathfrak{m}_{\mathfrak{h}}$ and~$ \mathfrak{c} := \sum_{i=1}^s \mathbb{R} m_i$. The following is well known:

\begin{fact}\label{fact_construction_noncompact_dual}
    \begin{enumerate}
        \item \cite{kaneyuki1987orbit} The space~$\mathfrak{c}$ is a maximal abelian subspace in~$\mathfrak{m}_{\mathfrak{h}}$;
        \item \cite{nagano1965transformation, takeuchi1965cell} Let~$\g^* := \mathfrak{k}_0 \oplus \mathfrak{m}_{\mathfrak{h}}\subset \g$. Then~$\g^*$ is a subalgebra of~$\g$, and the triple~$(\g^*, \mathfrak{k}_0, \sigma_0)$ is the noncompact dual of the symmetric triple~$(\mathfrak{k}, \mathfrak{k}_0, \Cartinv)$.
    \end{enumerate}
\end{fact}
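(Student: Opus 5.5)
The plan is to derive everything from the joint eigenspace decomposition of the two commuting involutions $\sigma$ and $\sigma_0$. By definition, $\mathfrak{k}_0,\mathfrak{m}_{\mathfrak{k}},\mathfrak{h}_0,\mathfrak{m}_{\mathfrak{h}}$ are the joint eigenspaces of $(\sigma,\sigma_0)$ with eigenvalues $(+,+),(-,+),(+,-),(-,-)$. Since both involutions are Lie algebra automorphisms, the bracket of two joint eigenspaces lands in the eigenspace whose signs are the products of the signs.

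\emph{Subalgebra claim in (2).} The sign rule gives $[\mathfrak{k}_0,\mathfrak{m}_{\mathfrak{h}}]\subset\mathfrak{m}_{\mathfrak{h}}$ and $[\mathfrak{m}_{\mathfrak{h}},\mathfrak{m}_{\mathfrak{h}}]\subset\mathfrak{k}_0$, so $\g^*=\mathfrak{k}_0\oplus\mathfrak{m}_{\mathfrak{h}}$ is a subalgebra. Moreover $\sigma_0|_{\g^*}$ is an involution with fixed algebra $\mathfrak{k}_0$ and $(-1)$-eigenspace $\mathfrak{m}_{\mathfrak{h}}$.

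\emph{Duality claim in (2).} It remains to compare this with $(\mathfrak{k},\mathfrak{k}_0,\sigma)$, where $\mathfrak{k}=\mathfrak{k}_0\oplus\mathfrak{m}_{\mathfrak{k}}$. I use $T:=\ad(H_0)$.
\begin{itemize}
\item Since $\sigma_0$ exchanges $\g_1$ and $\g_{-1}$, we get $\sigma_0H_0=-H_0$, so $H_0\in\mathfrak{h}_0$.
\item Hence $T$ maps $\mathfrak{m}_{\mathfrak{k}}$ to $\mathfrak{m}_{\mathfrak{h}}$, and it is bijective because $T^2=\id$ on $\mathfrak{m}=\uu^+\oplus\uu^-$.
\item $T$ is $\mathfrak{k}_0$-equivariant, because $H_0$ is central in $\mathfrak{l}=\g_0$.
\item Write $X=a+b$ and $Y=a'+b'$ with $a,a'\in\uu^+$ and $b,b'\in\uu^-$. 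Since $\uu^\pm$ are abelian,
\[
[TX,TY]=[a-b,a'-b']=-[a,b']-[b,a']=-[X,Y].
\]
\end{itemize}
Consequently $\mathfrak{k}_0\oplus\mathfrak{m}_{\mathfrak{k}}\ni K+X\mapsto K+iTX$ identifies $\mathfrak{k}=\mathfrak{k}_0\oplus\mathfrak{m}_{\mathfrak{k}}$ with $\mathfrak{k}_0\oplus i\,\mathfrak{m}_{\mathfrak{h}}$ inside $\g^*\otimes\mathbb{C}$, intertwining $\sigma$ with the complex-linear extension of $\sigma_0|_{\g^*}$. This is precisely the statement that $(\g^*,\mathfrak{k}_0,\sigma_0)$ is the noncompact dual of $(\mathfrak{k},\mathfrak{k}_0,\sigma)$.

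\emph{Abelian claim in (1).} First, $m_i\in\mathfrak{m}_{\mathfrak{h}}$: indeed $E_i\in\g_{\beta_i}\subset\uu^+$ and $\sigma_0E_i\in\uu^-$, so $m_i\in\mathfrak{m}$, and $\sigma_0m_i=-m_i$. For $i\neq j$,
\[
[m_i,m_j]=-[E_i,\sigma_0E_j]-[\sigma_0E_i,E_j],
\]
because $[E_i,E_j]$ and $[\sigma_0E_i,\sigma_0E_j]$ vanish ($\uu^\pm$ abelian). The remaining terms lie in $\g_{\beta_i-\beta_j}$ and $\g_{\beta_j-\beta_i}$, which are zero by strong orthogonality. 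So $\mathfrak{c}$ is abelian, and it has dimension $r$ because the $m_i$ lie in distinct root spaces.

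\emph{Maximality in (1), the main obstacle.} The cleanest route is a dimension count: every maximal abelian subspace of $\mathfrak{m}_{\mathfrak{h}}$ has dimension $\operatorname{rk}(\g^*/\mathfrak{k}_0)$. By the duality just proven, this equals the rank of $K/K_0$, which is $r$ by Definition~\ref{def_rank_nagano_space} and the choice of $r=\operatorname{rk}(\g,\alpha)$ strongly orthogonal roots from \cite{takeuchi1988basic}. The risk is circularity, if that citation itself uses $\mathfrak{c}$.

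If a direct argument is needed instead, I would take $Y\in\mathfrak{m}_{\mathfrak{h}}$ commuting with $\mathfrak{c}$ and decompose it under the $\sll_2$-triples $(E_i,h_{\beta_i},-\sigma_0E_i)$. The goal is to show that $Y$ has no components in root spaces other than the $\g_{\pm\beta_i}$. This would use the maximality of the strongly orthogonal system (every root of $\Sigma^+_{\{\alpha\}}$ is non-orthogonal to some $\beta_i$), and it is the delicate step.
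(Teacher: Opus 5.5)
Your argument is correct, and it takes a genuinely different route from the paper, which gives no argument here and simply cites Kaneyuki for (1) and Nagano--Takeuchi for (2).

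For (2), your argument is self-contained and uses only the $3$-grading and $\sigma_0\g_k=\g_{-k}$. The sign rule for the joint eigenspaces of $(\sigma,\sigma_0)$ gives the subalgebra property. The map $T=\ad(H_0)$ is a $\mathfrak{k}_0$-equivariant bijection $\mathfrak{m}_{\mathfrak{k}}\to\mathfrak{m}_{\mathfrak{h}}$ with $[TX,TY]=-[X,Y]$, and this yields an explicit isomorphism $\mathfrak{k}\cong\mathfrak{k}_0\oplus i\,\mathfrak{m}_{\mathfrak{h}}$ intertwining the involutions. You are right to read the involution on $\mathfrak{k}$ as $\sigma$: $\Cartinv$ is trivial on $\mathfrak{k}$, so the statement only makes sense with $\sigma|_{\mathfrak{k}}$. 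The abelianness of $\mathfrak{c}$ and $\dim\mathfrak{c}=r$ are fine as written.

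For maximality, your dimension count is valid relative to the paper's setup. $T$ exchanges abelian subspaces of $\mathfrak{m}_{\mathfrak{h}}$ and $\mathfrak{m}_{\mathfrak{k}}$, and maximal ones are $\Ad(K_0)$-conjugate, so they all have dimension $\operatorname{rk}(\g,\alpha)$. The paper also takes from Takeuchi that the strongly orthogonal system has exactly that many elements. But, as you suspect, this only relocates Kaneyuki's content into that count, which is classically obtained by showing that $\mathfrak{c}$ (or its compact counterpart) is maximal abelian.

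Your fallback closes quickly in real type:
\begin{enumerate}
\item Write $Y=Z-\sigma_0Z$ with $Z=\sum_\gamma Z_\gamma\in\uu^+$. Since $\uu^\pm$ are abelian, $[m_i,Y]=0$ is equivalent to $Q=-\sigma_0Q$, where $Q:=[\sigma_0E_i,Z]$.
\item Let $\gamma\in\Sigma^+_{\{\alpha\}}$ with $\gamma(h_{\beta_i})=1$. Then $2\beta_i-\gamma\notin\Sigma$, because its image under $s_{\beta_i}$ is $-\beta_i-\gamma$, of $\alpha$-degree $-2$. So the $\g_{\gamma-\beta_i}$-component of $\sigma_0Q$ vanishes, which forces $[\sigma_0E_i,Z_\gamma]=0$. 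By $\sll_2$-theory, since $\gamma(h_{\beta_i})>0$, this gives $Z_\gamma=0$.
\item Because $\gamma+\beta_i\notin\Sigma$, orthogonality to $\beta_i$ implies strong orthogonality. So by maximality every $\gamma\notin\{\beta_1,\dots,\beta_r\}$ is non-orthogonal to some $\beta_i$.
\item For that $i$ one has $\gamma(h_{\beta_i})=1$. It is $\geq 0$ since $\gamma+\beta_i\notin\Sigma$, and $\leq 1$ since $\beta_i$ is long and $\Sigma$ is reduced (a $BC$ system admits no $3$-grading).
\item Each $\g_{\beta_i}$ is one-dimensional, being Weyl-conjugate to $\g_\alpha$ (both roots are long). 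Hence $Z\in\sum_i\mathbb{R}E_i$ and $Y\in\mathfrak{c}$.
\end{enumerate}
This proves maximality independently, and the count $r=\operatorname{rk}(\g,\alpha)$ then becomes a consequence rather than an input.
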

By Fact~\ref{fact_construction_noncompact_dual}.(2), the symmetric space defined by the symmetric triple~$(\g^*, \mathfrak{k}_0, \sigma_0)$ is uniquely defined by the pair~$(\g, \alpha)$, and called the \emph{noncompact dual of~$\Fl(\g, \alpha)$}. We will denote it by~$\mathbb{X}(\g, \alpha)$. By construction, it has same real rank as~$\Fl(\g, \alpha)$.

In the setting of Fact~\ref{fact_construction_noncompact_dual}, there exist~$G \in \mathcal{G}_{\{\alpha\}}(\g)$ such that the connected subgroup~$G^*$ of~$G$ with Lie algebra~$\g^*$ identifies, up to finite index, with the identity component of the isometry group of~$\mathbb{X}(\g, \alpha)$. The groups~$G^*$ (up to finite index) associated with all irreducible Nagano pairs~$(\g, \alpha)$ are given in Table~\ref{table_nagano_full}.

Let~$\O = G^* \cdot \LP^+$, and let~$x_0 \in \mathbb{X}(\g, \alpha) \simeq G^* / K_0$ be the class of~$K_0$. By \cite[Thm 6.2]{nagano1965transformation}, there are two~$G^*$-equivariant diffeomorphisms
\begin{equation}\label{eq_def_ncd}
\begin{split}
    \ncd:& ~\mathbb{X}(\g, \alpha) \longrightarrow \O~; \ g \cdot x_0 \longmapsto g \cdot \LP^+ ~; \\
    \ncd^\opp:& ~\mathbb{X}(\g, \alpha) \longrightarrow \operatorname{int}(\O^*)~; \ g \cdot x_0 \longmapsto g \cdot \LP^\opp ~, 
\end{split}
\end{equation}
and both~$\O$ and~$\O^*$ are domains of~$\Fl(\g, \alpha)$.
In particular, the set~$\ncd(\mathbb{X}(\g, \alpha))$ is proper in~$\Affstdstd$.

\begin{definition}
    We will say that a domain~$\O \subset \Fl(\g, \alpha)$ is a \emph{realization of~$\mathbb{X}(\g, \alpha)$} if there exists~$g \in G$ such that~$\O = g \cdot \ncd(\mathbb{X}(\g, \alpha))$.
\end{definition}

By construction, any realization of the noncompact dual of~$\Fl(\g, \alpha)$ is a symmetric domain of~$\Fl(\g, \alpha)$.

An example to have in mind is when~$(\g, \alpha)$ corresponds to item~(iv,~$p=1$) of Table~\ref{table_nagano_full}. In this case~$\Fl(\g,\alpha) = \mathbb{P}(\mathbb{R}^{q+1})$, and~$\mathbb{X}(\g, \alpha)$ is the real hyperbolic space~$\mathbb{H}^q$. Any realization of~$\mathbb{X}(\g, \alpha)$ is a translate of the \emph{Klein model} of hyperbolic space.

\begin{rmk}\label{rmk_realizations}\begin{enumerate}
    \item The embedding of the noncompact dual generalizes the well-known Harish-Chandra embedding of Hermitian symmetric spaces of noncompact type as bounded domains of complex vector spaces.
    \item Since realizations of~$\mathbb{X}(\g, \alpha)$ are proper, according to Section~\ref{sect_dilations_translations} they form a neighborhood basis of~$\Fl(\g, \alpha)$.
\end{enumerate}
\end{rmk}

\subsection{Example: the Grassmannians}\label{sect_grass_example} They correspond to item~(iv) of Table~\ref{table_nagano_full} and will be our lead examples in this paper. For~$\g = \sll(p+q, \mathbb{R}) = \operatorname{Mat}_{p+q}(\mathbb{R})$, we fix the Cartan subspace
$$\aaa := \Big{\{} \operatorname{diag}(\lambda_1, \dots, \lambda_{p+q}) \mid \lambda_i \in \mathbb{R} \quad \forall 1 \leq i \leq p+q, \ \sum_{i=1}^{p+q} \lambda_i = 0\Big{\}}~.$$

We denote by~$\varepsilon_i$ the map~$\varepsilon_i: \operatorname{diag}(\lambda_1, \dots, \lambda_{p+q}) \mapsto \lambda_i$, for~$1 \leq i \leq p+q$. Note that those~$\varepsilon_i$ do not coincide with those defined in Section~\ref{sect_embedding_concompact_dual}.

The root system of~$\sll(p+q, \mathbb{R})$ associated to~$\aaa$ is~$\Sigma = \{ \pm(\varepsilon_i - \varepsilon_{j}) \mid 1 \leq i < j \leq p+q\}$. A fundamental system is then 
\begin{equation*}
    \Delta := \{\alpha_i := \varepsilon_i - \varepsilon_{i+1} \mid 1 \leq i \leq p+q-1\}~.
\end{equation*}
The flag manifold~$\Fl(\g, \alpha_{p})$ identifies with the space~$\Grass$ of~$p$-planes of~$\mathbb{R}^{p+q}$, called the \emph{Grassmannian of~$p$-planes of~$\mathbb{R}^{p+q}$}. The opposite flag manifold identifies with the Grassmannian~$\Fl(\g, \oppinv(\alpha_p)) = \Fl(\g, \alpha_q) = \Grassq$ of~$q$-planes of~$\mathbb{R}^{p+q}$.

As written in Table~\ref{table_nagano_full}, the rank (as an irreducible Nagano pair) of~$(\sll(p+q, \mathbb{R}), \alpha_p)$ is~$\min(p,q)$. Hence the rank-one case corresponds to real projective space. 

The noncompact dual of~$\Grass$ is the symmetric space of~$\SO(p,q)$. If~$\varphi$ is a quadratic form on~$\mathbb{R}^{p+q}$ of signature~$(p,q)$, then the set of positive definite~$p$-planes of~$\mathbb{R}^{p+q}$ for~$\varphi$ is a realization of~$\mathbb{X}(\sll(p+q, \mathbb{R}), \alpha_p)$ (see e.g.\ \cite{van2019rigidity}). When~$p =1$, we recover the Klein model of real hyperbolic space.

Note that this construction has complex and quaternionic analogues, where one just replaces~$\mathbb{R}$ by~$\mathbb{C}$ or~$\mathbb{H}$. It also gives Nagano spaces (items~(v) and~(vi) of Table~\ref{table_nagano_full}), but they will not be \emph{of real type}, in the terminology of next Section~\ref{sect_HS_photons_defs}.

\subsection{Characterizations of Nagano spaces}\label{sect_charac_nagano}
This section is independent of the rest of the paper. We give elementary characterizations of Nagano spaces among flag manifolds, and of real projective space among Nagano spaces, coming from the classification of Nagano spaces \cite{nagano1965transformation} and of their symmetric domains \cite{makarevivc1973open}.

Nagano has proven that the noncompact dual~$\mathbb{X}(\g, \alpha)$ of an irreducible Nagano space~$\Fl(\g, \alpha)$ embeds as a proper symmetric domain of~$\Fl(\g, \alpha)$. However, the realizations of~$\mathbb{X}(\g, \alpha)$ into~$\Fl(\g, \alpha)$ are not necessarily the only symmetric domains in~$\Fl(\g, \alpha)$. Makarevic \cite{makarevivc1973open} has listed all possible symmetric domains of Nagano spaces~$\Fl(\g, \alpha)$ that have a reductive transitive automorphism group. In general, there is at least one other (strict) domain than realizations of~$\mathbb{X}(\g, \alpha)$ in the list. However, if one asks for properness one actually has:

\begin{lem}\label{lem_domaines_propres_espaces_de_nagano} Let~$(\g, \alpha)$ be an irreducible Nagano pair and assume that there exists a proper symmetric domain~$\O \subset N$ with transitive and reductive automorphism group, such that~$\O$ is not a realization of~$\mathbb{X}(\g, \alpha)$. Then there exists~$n \geq 3$ such that~$(\g, \alpha) = (\sll(n, \mathbb{R}), \alpha_1)$ or~$(\sll(n, \mathbb{R}), \alpha_{n-1})$, i.e.\ $\Fl(\g, \alpha)$ is either the real projective space of dimension~$n-1$ or its dual.
\end{lem}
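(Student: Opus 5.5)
The plan is a case-by-case check against Makarevič's classification \cite{makarevivc1973open}. That classification lists, for each irreducible Nagano pair $(\g,\alpha)$, the open symmetric domains of $\Fl(\g,\alpha)$ whose automorphism group is reductive and acts transitively. Each entry is an open orbit $H\cdot x$, where $H\leq G$ is the fixed-point group of an involution of $\g$ commuting with the grading element $H_0$. Equivalently, it is the open orbit of an involution $\tau$ of $\g$ with $\tau(\g_{\pm1})=\g_{\mp1}$. The realizations of $\mathbb{X}(\g,\alpha)$ are the orbits of $G^*$, corresponding to $\tau=\sigma_0$. It therefore suffices to show that every other open orbit in the list fails to be proper, except in projective space and its dual.

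The first step is a general obstruction. Suppose $\O=H\cdot x$ is such a domain and is proper, so $\overline{\O}\subset\Affstd_\xi$ for some $\xi$. Then $\O$ is a proper domain with transitive automorphism group. By Fact~\ref{fact_autom_group_proper}, $H$ acts properly on $\O$, so the stabilizer $H_x$ is compact. Being the fixed-point group of an involution in the reductive group $H$, $H_x$ is then a maximal compact subgroup of $H$, and $\O\simeq H/H_x$ is a Riemannian symmetric space of noncompact type. Properness of $\O$ also gives $\O^*$ nonempty interior, and $\overline{\O}$ misses the Schubert hypersurface $\hyp_\xi$.

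Next, compare dimensions. We have $\dim \O=\dim\uu^-$ and $\dim H-\dim H_x=\dim\uu^-$ with $H_x$ maximal compact. Going through Makarevič's table, for every non-dual entry the isotropy of the symmetric pair $(\mathfrak h,\mathfrak h\cap\mathfrak l)$ is noncompact in all cases except $(\sll(n,\Rf),\alpha_1)$ and $(\sll(n,\Rf),\alpha_{n-1})$. For example, in $\Grass$ the extra domains are the sets of $p$-planes of a fixed signature $(p-k,k)$ with $0<k<p$ for a form of signature $(p,q)$. Their stabilizers $S(\mathrm O(p-k,k)\times\mathrm O(q-k',k'))$ are noncompact when $p\ge2$ and $k\ge1$. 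Such orbits cannot be proper, because $H_x$ noncompact contradicts Fact~\ref{fact_autom_group_proper}.

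In projective space, the extra proper symmetric domains are genuine: for instance, the projectivization of the positive cone of symmetric matrices. These are the symmetric properly convex cones, and that is exactly the exception in the statement (with $n\ge3$, since for $n=2$ every proper domain is a segment, i.e.\ $\mathbb{H}^1$).

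The main obstacle is the bookkeeping over the exceptional and orthogonal/symplectic items of Table~\ref{table_nagano_full}. For these I would check directly that $\mathfrak h\cap\mathfrak k_0$-type stabilizers are noncompact, by computing the signature of the restricted Killing form on $\mathfrak h\cap\mathfrak l$. The argument is uniform: compactness of $H_x$ forces $\tau|_{\mathfrak l}$ to be a Cartan involution of $\mathfrak l$, which pins $\tau$ down to $\sigma_0$ up to conjugation. The exception is when $\mathfrak l$ has a one-dimensional center acting by scalars on $\uu^-$ with $\uu^-$ a Jordan algebra admitting several inequivalent real forms of its cone, which occurs only for the rank-one case, i.e.\ projective space and its dual.
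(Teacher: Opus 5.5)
Your main line is the paper's own proof. Properness gives a proper action of $\Aut_G(\O)$ (Fact~\ref{fact_autom_group_proper}), hence compact isotropy, and one then reads off Makarevi\v{c}'s list that outside projective space only the realizations of $\mathbb{X}(\g,\alpha)$ have compact isotropy. The problems lie in the extra structure you add to make this check ``uniform'', and in taking compact isotropy as the criterion.

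\textbf{The involution for the realization.} The realizations do not correspond to $\tau=\sigma_0$. The fixed algebra of $\sigma_0$ is $\mathfrak{k}$, and $K$ is transitive on all of $\Fl(\g,\alpha)$. The realization is the orbit of the group with Lie algebra $\g^*=\mathfrak{k}_0\oplus\mathfrak{m}_{\mathfrak{h}}$, which is the fixed algebra of $\sigma\circ\sigma_0$, where $\sigma=\Ad(\exp(\pi i H_0))$. (Such $\tau$ satisfy $\tau(H_0)=-H_0$; they do not commute with $H_0$.)

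Since $\sigma$ is trivial on $\mathfrak{l}$ and commutes with every such $\tau$, the involutions $\tau$ and $\sigma\tau$ are both admissible. They have the same restriction to $\mathfrak{l}$ and the same isotropy. So ``$\tau|_{\mathfrak l}$ is a Cartan involution'' cannot pin $\tau$ down.

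To repair this you need a Schur step. Two extensions of the same $\tau|_{\mathfrak l}$ differ by an automorphism that is trivial on $\mathfrak l$. It acts on $\uu^-$ by a scalar $c$, because in real type $\uu^-$ is absolutely irreducible as $\dim\g_{-\alpha}=1$, and on $\uu^+$ by $c^{-1}$. After conjugating by $\exp(\mathbb{R}H_0)$, exactly two candidates remain: $\sigma_0$, whose orbit is the whole space and so is not proper, and $\sigma\sigma_0$, whose orbit is the realization.

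\textbf{Compact isotropy is not enough in item~(ix).} ``Compact isotropy forces $\tau|_{\mathfrak l}$ to be a Cartan involution'' requires $\mathfrak l$ to have no compact ideal. This fails for item~(ix), which the lemma covers: there $\mathfrak l=\mathbb{R}\oplus\soo(n-1)$.

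For $1\le k\le n-2$, let $r\in\mathrm{O}(n,1)$ be $+1$ on $\mathbb{R}^{k,1}$ and $-1$ on its orthogonal complement $\mathbb{R}^{n-k}$. The fixed-point group of $\Ad(r)$ has identity component $\SO_0(k,1)\times\SO(n-k)$. Its open orbit is $\mathbb{S}^{n-1}\smallsetminus\mathbb{S}^{k-1}\simeq\mathbb{H}^k\times\mathbb{S}^{n-k-1}$, with compact isotropy $\SO(k)\times\SO(n-k-1)$. For every $x$ in this orbit, $x$ and $r\cdot x$ are transverse, and $\Ad(r)$ exchanges $\g_{\pm1}$ for the grading they define.

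So this orbit has exactly the form you attribute to the entries of the list, has compact isotropy, and is not a realization. It also shows that a compact fixed-point group need not be maximal compact, contrary to your first step. In case~(ix), ``compact isotropy $\Rightarrow$ realization'' is therefore false, both in your table claim and in your uniform argument; the paper's one-sentence appeal to the list makes the same claim. These orbits must be excluded by properness itself: they are dense in $\mathbb{S}^{n-1}$. They are also not symmetric in the paper's strict sense, since the symmetry at $(p,q)$ also fixes $(p,-q)$.

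\textbf{The projective exception.} Your explanation of why projective space is exceptional is not an argument. $\mathbb{R}H_0$ is the center of $\mathfrak l$ and acts by scalars on $\uu^-$ for every irreducible pair. Moreover, $\mathbb{P}(\mathrm{Sym}^+_m)$ is not an orbit of a symmetric subgroup of $\SL(m(m+1)/2,\mathbb{R})$ when $m\ge 3$, which contradicts your own description of the list. Since the lemma is only an implication, you can drop that discussion.
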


\begin{proof} By Fact~\ref{fact_autom_group_proper}, the stabilizer of a point~$x \in \O$ is a compact Lie subgroup of~$\Aut_G(\O)$. But in the list of  \cite{makarevivc1973open}, whenever~$(\g, \alpha) \notin \{ (\sll(n, \mathbb{R}), \alpha_1), (\sll(n, \mathbb{R}), \alpha_{n-1})\mid n \in \mathbb{N}_{\geq 3}\}$, the only cases where the stabilizer of a point is compact is when~$\O$ is a realization of~$\mathbb{X}(\g, \alpha)$.~$\qed$
\end{proof}

Nagano's theorem and Makarevic's list tell us that Nagano spaces contain many symmetric domains. Conversely, we can now prove Lemma~\ref{lem_symmetric_nagano}:

\begin{proof}[Proof of Lemma~\ref{lem_symmetric_nagano}] Let~$\g$ be the Lie algebra of~$G$, and~$\Theta$ be the subset of simple restricted roots of~$\g$ such that~$P = P_\Theta^+$. We have~$G/P = \Fl(\g, \Theta)$.

Indeed, let~$x \in \O$ and let~$s_x \in \Aut(\O)$ be a symmetry. Since~$s_x$ has finite order, up to translating~$\O$ by an element of~$G$, we may assume that~$s_x \in K$. Since~$x$ is the only fixed point of~$s_x$ in~$\O$, we have~$d_x s_x = - \id_{T_x \O}$, and since~$\O$ is open in~$\Fl(\g, \Theta)$, one has~$\id_{T_x \O} = \id_{T_x \Fl(\g, \Theta)}$, so~$d_x s_x = - \id_{T_x \Fl(\g, \Theta)}$.

Since~$s_x$ fixes~$x$, it preserves the affine chart~$\Affstd_x$ of~$\Fl(\g, \oppinv(\Theta))$. Since it has finite order, it has a fixed point~$y \in \Affstd_x$. Then~$x$ and~$y$ are transverse, we may thus assume that~$(x,y) = (\LP^+_\Theta, \LP_\Theta^\opp)$. Hence~$s_x \in L_\Theta$. But then~$T_x s_x = -\id_{T_x \Fl(\g, \Theta)}$ implies that~$\Ad(s_x)\vert_{\uu_\Theta^\opp}= - \id$. We then have, for all~$X, Y \in \uu_\Theta^\opp$:
\begin{equation*}
    [X,Y] = [-X, -Y] = [\Ad(s_x) X, \Ad(s_x)Y] = \Ad(s_x)[X,Y] = -[X, Y]~.
\end{equation*}
Hence~$[X,Y] = 0$. This proves that~$\uu_\Theta^\opp$ is abelian, hence~$(\g, \Theta)$ is a Nagano pair.~$\qed$
\end{proof}

\section{Photons}\label{sect_HS_photons_defs}
The content of this section mainly consists of reminders on photons studied in \cite{galiay2024rigidity, beyrer2024positivity}. In that paper, photons are defined and studied only in causal flag manifolds. However, the results generalize verbatim. Moreover, these results are also proved in \cite{galiay2025convex} for real-type Nagano spaces and in \cite{beyrer2024positivity} for all flag manifolds defined by restricted simple roots of multiplicity~$1$. Most of the results stated in this section are given without proof, with a precise reference provided each time.

A distinguished class of Nagano spaces, which we shall study extensively in this paper, is that of \emph{real-type} Nagano spaces:

\begin{definition}
    An irreducible Nagano pair~$(\g, \alpha)$ is said to be \emph{of real type} if~$\dim(\g_{\alpha}) = 1$. In that case we also say that the Nagano space~$\Fl(\g, \alpha)$ is of real type. 
\end{definition}
See Table~\ref{table_nagano_full} for the list of real-type irreducible Nagano pairs. Those are given by the items for which the column ``$\dim(\g_\alpha)$'' contains a~$1$.

For the rest of this section, we fix an irreducible real-type Nagano pair~$(\g, \alpha)$ and a group~$G \in \mathcal{G}_{\{\alpha\}}(\g)$.

\subsection{Embedding the projective line into~$\Fl(\g, \alpha)$}\label{sect_plong_proj_line}
 In this section, we construct embeddings of the projective line into~$\Fl(\g, \alpha)$. The images of these embeddings are what are defined as \emph{photons} in Section~\ref{sect_photons} below.

Let us consider the~$\sll_2$-triple~$\tr_{\mathsf{std}}$ defined in Notation~\ref{notation_nagano}. As explained in Section~\ref{sect_sl2-triples}, it induces a Lie algebra embedding~$\plongsl_{\tr_{\mathsf{std}}}: \sll(2, \mathbb{R}) \rightarrow \g$, which itself induces a group homomorphism~$\plong: \SL(2, \mathbb{R}) \hookrightarrow G$ with kernel contained in~$\{\pm \operatorname{id}\}$ and with  differential~$\plong_* = \plongsl_{\tr_{\mathsf{std}}}$ at~$\operatorname{id}$.

\begin{lem}\label{lem_stab_parap}\cite[Lem.\ 3.7]{beyrer2024positivity}
    The stabilizer of~$P$ in~$\SL(2, \mathbb{R})$ is the standard Borel subgroup~$P_1$ of~$\SL(2, \mathbb{R})$.
\end{lem}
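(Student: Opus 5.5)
The claim is that the stabilizer in $\SL(2,\mathbb{R})$ of $\LP^+$, for the action $g \mapsto \plong(g)\cdot \LP^+$, is exactly the upper triangular subgroup $P_1$. I would prove the two inclusions separately, working first at the Lie algebra level and then passing to groups.

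For $P_1 \subset \operatorname{Stab}$, note that $P_1$ is generated by $\exp(\mathbb{R}\E)$, $\exp(\mathbb{R}\He)$ and $\pm\id$.
\begin{itemize}
\item One has $\plong(\exp(t\E)) = \exp(t v^+)$ with $v^+ \in \g_\alpha \subset \uu^+ \subset \LP^+$.
\item One has $\plong(\exp(t\He)) = \exp(t h_\alpha)$ with $h_\alpha \in \aaa \subset \LP^+$.
\end{itemize}
Both therefore lie in $P^+$, which is connected to the identity through $\exp(\LP^+)$ and normalizes $\LP^+$. The element $-\id$ maps to $\exp(\pi(v^+ - v^-))$, or equivalently to $\exp(i\pi h_\alpha)$ computed inside $\plong(\SL(2,\mathbb{R}))$. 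This element lies in $K \cap \exp(i\aaa)$-type elements centralizing $\aaa$, hence in $Z_K(\aaa) \subset P^+$. The simplest way to see this is that $\plong(-\id)$ is central in $\plong(\SL(2,\mathbb{R}))$ and acts on $\g_\beta$ by $(-1)^{\beta(h_\alpha)}$, so it commutes with $\aaa$. Thus $P_1 \subset \operatorname{Stab}$.

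For the reverse inclusion, the stabilizer $S$ is a closed subgroup containing $P_1$. The closed subgroups of $\SL(2,\mathbb{R})$ containing $P_1$ are only $P_1$ and $\SL(2,\mathbb{R})$: this follows since $\SL(2,\mathbb{R})/P_1 = \mathbb{P}^1$ and $P_1$ acts transitively on $\mathbb{P}^1 \smallsetminus \{[e_1]\}$. It therefore suffices to exhibit one element outside $S$. Take $\F$ and compute $\plong_*(\F) = v^- \in \g_{-\alpha}$. Since $\g_{-\alpha} \cap \LP^+ = 0$, we get $\exp(tv^-) \notin P^+$ for $t \ne 0$.

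To be careful, I would argue as follows. $\exp(tv^-) \in U^-$, and $U^- \cap P^+ = \{\id\}$ because the map $\varphistd$ in~\eqref{eq_A_egal_exp} is a bijection onto the affine chart. Hence $\exp(tv^-)\cdot \LP^+ = \varphistd(tv^-) \ne \LP^+$, and so $\exp(t\F) \notin S$, giving $S = P_1$.

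The main (mild) obstacle is the central element $-\id$ and the discrete part of $P_1$. In the second inclusion this is automatic from the dichotomy of overgroups, since that dichotomy includes all of $P_1$. In the first inclusion, I would handle it via the centrality argument above, or alternatively note that $\plong(-\id)$ has order at most $2$ and lies in the exponential of $\mathbb{R} h_\alpha$ complexified, hence normalizes $\uu^+$ because it preserves every root space.
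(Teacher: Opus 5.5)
Your proof is correct. The paper does not prove this lemma itself; it imports it from \cite[Lem.~3.7]{beyrer2024positivity}, so there is no internal argument to compare against. What you give is a self-contained proof. It uses only ingredients available in the paper, namely that $P^+$ is the normaliser of $\uu^+$ and that $\varphistd$ in \eqref{eq_A_egal_exp} is injective. It also uses the elementary fact that $P_1$ and $\SL(2,\mathbb{R})$ are the only subgroups of $\SL(2,\mathbb{R})$ containing $P_1$.

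One cosmetic point concerns the first inclusion. You assert, without proof, that $\plong(-\id)$ lies in $K$ and hence in $Z_K(\aaa)$. This is superfluous. Your parity computation already shows that $\Ad(\plong(-\id))$ acts on the $k$-eigenspace of $\ad(h_\alpha)$ by $(-1)^k$, so trivially on $\g_0$ and by $\pm 1$ on each $\g_\beta$. That alone means $\Ad(\plong(-\id))$ preserves $\uu^+$, so $\plong(-\id)\in P^+$ directly, which is all the first inclusion needs.

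Similarly, for the reverse inclusion you only need one element of $\plong(\exp(\mathbb{R}\F))$ outside $P^+$. The Lie-algebra observation $\g_{-\alpha}\cap\LP^+=\{0\}$ already suffices: $\{t \mid \exp(tv^-)\in P^+\}$ is a closed subgroup of $\mathbb{R}$ different from $\mathbb{R}$, so it excludes all sufficiently small $t\neq 0$. Your injectivity argument via $\varphistd$ gives the stronger statement for every $t\neq 0$.
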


By Lemma~\ref{lem_stab_parap} above, the group homomorphism~$\plong$ induces a~$\plong$-equivariant embedding {$\isl_\std:\mathbb{P}(\mathbb{R}^2) \hookrightarrow \Fl(\g, \alpha)$}. It will be convenient to write this map explicitly:
\begin{equation}\label{eq_param_photons}
    \isl_\std ([1:t]) =  \exp(tv^-) \cdot \LP^+\quad \forall t \in \mathbb{R}.
\end{equation}

\subsection{Photons}\label{sect_photons}
In this section, we define \emph{photons} in Nagano spaces and investigate their properties.

Let~$(\g, \alpha)$ be an irreducible Nagano pair. Recall the map~$\isl_\std$ defined before Equation~\eqref{eq_param_photons}. We define the topological circle
\begin{equation}\label{eq_def_photon_standard}
    \Phot_\std := \isl_\std(\mathbb{P}(\mathbb{R}^2))~,
\end{equation}
called the \emph{standard photon} of~$\Fl(\g, \alpha)$. The map~$\isl_\std$ is then a parametrization of~$\Phot_{\mathsf{std}}$.
\begin{definition}\label{def_photon}
    A \emph{photon} of~$\Fl(\g, \alpha)$ is an~$\Aut_\Theta(\g)$-translate of~$\Phot_\std$ in~$\Fl(\g, \alpha)$.
\end{definition}

Recall that~$\Aut_\Theta(\g)$ is defined in Section~\ref{sect_aut_lie_alg}.

For the Nagano pair~$(\sll(p+q, \mathbb{R}), \alpha_p)$ (item~(iv) of Table~\ref{table_nagano_full}), which by Section~\ref{sect_grass_example} give the Grassmannnians, the photons can be described explicitly. Two~$p$-planes~$V, W \in \Grass$ are on a same photon if and only if~$\dim(V \cap W) \geq p-1$. Given a~$(p-1)$-plane~$V_0$ and a~$(p+1)$-plane~$V_1$, the set
\begin{equation*}
    \{V \in \Grass \mid V_0 \subset V \subset V_1\}
\end{equation*}
defines a photon. Reciprocally, any photon comes from this kind of construction. Note that, in particular, for~$(\g, \alpha) = (\sll(n, \mathbb{R}), \alpha_1)$, photons are simply the projective lines of~$\mathbb{P}(\mathbb{R}^n)$.

See~\cite[Sect.\ 6.3.4]{galiay2025convex} for a description of photons in other concrete examples (Einstein Universe, causal flag manifolds). 

\begin{rmk}
    Photons have been defined and investigated for general flag manifolds defined by roots of multiplicity~$1$ in \cite{beyrer2024positivity}, to prove that the property of being~$\Theta$-positive (in the sense of \cite{labourie2021positivity}) for representations of a surface groups is closed. 
\end{rmk}

\begin{lem}\label{fact_stab_u_std}(\cite[Prop.\ 3.4, Prop.\ 3.6, 3.9 and Lem.\ 3.11]{beyrer2024positivity}, see also \cite{galiay2024rigidity} for the case of causal flag manifolds)
Let~$(\g, \alpha)$ be an irreducible real-type Nagano pair. Then
\begin{enumerate}
    \item~$U^+ \cdot \Phot_{\mathsf{std}} = \Phot_{\mathsf{std}}$;
    \item~$L$ acts transitively on the set of photons through~$\LP^+$;
    \item Let~$(\g, \alpha)$ be an irreducible real-type Nagano pair. For all~$x,y \in \Fl(\g, \alpha)$ there exists at most one photon through~$x$ and~$y$.
    \item One has 
\begin{equation*}
    \operatorname{Stab}_G (\Phot_{\mathsf{std}}) = \plong(\SL(2, \mathbb{R})) \times \operatorname{Cent}_G \left(\plong\left(\SL(2, \mathbb{R})\right)\right),
\end{equation*}
where~$\operatorname{Stab}_G (\Phot_{\mathsf{std}})$ is the stabilizer of~$\Phot_{\mathsf{std}}$ in~$G$ and~$\operatorname{Cent}_G \left(\plong\left(\SL(2, \mathbb{R})\right)\right)$ is the centralizer in~$G$ of the group~$\plong(\SL(2, \mathbb{R}))$, acting trivially on~$\Phot_\std$.

\item The Hausdorff limit in~$\Fl(\g, \alpha)$ of a sequence of photons is still a photon.
\end{enumerate}
\end{lem}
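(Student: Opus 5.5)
The plan is to reduce each item to the homogeneous structure of the standard photon $\Phot_\std = \isl_\std(\mathbb{P}(\mathbb{R}^2))$. We use the parametrization~\eqref{eq_param_photons}, the abelianity of $\uu^\opp$, and the one-dimensionality of $\g_{\alpha}$ and $\g_{-\alpha}$, which is the real-type assumption.

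\emph{Item (1).} Since $\uu^+$ is abelian (the opposite of $\uu^\opp$ under $\sigma_0$), a photon point $\exp(tv^-)\cdot\LP^+$ is moved by $\exp(X)$, $X\in\uu^+$, as follows. Write $X=X_\alpha+X'$ with $X_\alpha\in\g_\alpha=\mathbb{R}v^+$. The $X_\alpha$-part lies in $\plong(\SL(2,\mathbb{R}))$, which preserves $\Phot_\std$ by equivariance of $\isl_\std$. For $X'$ (a sum of root spaces $\g_\beta$ with $\beta\in\Sigma^+_{\{\alpha\}}$, $\beta\neq\alpha$), I would show that $\Ad(\exp(-tv^-))X' \in \LP^+$ for all $t$. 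Indeed, $[v^-,\g_\beta]\subset\g_{\beta-\alpha}$, and $\beta-\alpha$ has zero $\alpha$-coefficient since $\alpha$ has coefficient at most $1$ by~\eqref{eq_longest_root}. Also $[v^-,[v^-,\g_\beta]]\subset\g_{\beta-2\alpha}=0$. Hence $\exp(X')$ fixes every point of the photon, which proves (1).

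\emph{Item (2).} Photons through $\LP^+$ are $\Aut_\Theta(\g)$-translates $g\Phot_\std$ containing $\LP^+$. Translating by an element of the stabilizer of $\Phot_\std$ (which is transitive on it), we may assume $g\in P^+$ up to finite index. Then (1) and $P^+=U^+L$ reduce this to $L$. The finitely many components of $\Aut_\Theta(\g)$ are handled by noting that they normalize $L$ and $\g_{\pm\alpha}$ up to $W$-conjugacy.

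\emph{Item (3)} is the main obstacle. Here I would work in the affine chart $\Affstdstd$ with $x=\LP^+=\varphistd(0)$. Photons through $0$ are $L$-translates of the line $\mathbb{R}v^-$, i.e. lines $\mathbb{R}\Ad(l)v^-$ in $\uu^\opp$ (closed up at one point of $\hyp$). Two distinct photons through $x$ therefore meet again only at infinity. I must check that their unique points in $\hyp_{\LP^\opp}$ differ. Using the $\plong(\SL(2))$-action, that point is $\isl_\std([0:1])$. Swapping the roles of $x$ and this point via $\exp(\tfrac{\pi}{2}(v^+-v^-))$, both photons would pass through two common points $x\ne y$ with $y$ also in the chart. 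Then they are two distinct lines through $0$ in $\uu^\opp$ containing the same nonzero vector, a contradiction. The delicate case is $y\in\hyp_{\LP^\opp}$ for one photon but not the other, handled by choosing a chart transverse to both points, which exists by Lemma~\ref{lem_positions_weyl}.

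\emph{Item (4).} The stabilizer contains $\plong(\SL_2)\times\mathrm{Cent}$. Conversely, an element stabilizing $\Phot_\std$ can be composed with $\plong(\SL_2)$ to fix $\LP^+$ and $\LP^\opp$ and $\isl_\std([1:1])$, hence to lie in $L$ and centralize $v^-$ up to scale. The remaining scale is absorbed by $\plong$ of diagonal matrices, and such elements commute with $\plong$ because $v^+$ is determined by $v^-$ through the $\sll_2$-triple.

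\emph{Item (5)} follows from compactness. Photons are $\Aut_\Theta(\g)/\mathrm{Stab}$-orbits of a compact set, and the set of photons is the compact orbit $K\cdot\Phot_\std$, since $K$ acts transitively by (2) and Iwasawa. This orbit is closed in the Hausdorff topology.
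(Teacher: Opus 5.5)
Items (1) and (2) are essentially fine, and (1) is a clean root-space computation. Item (4), however, has a genuine gap, and it cannot be proved as stated.

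\emph{The failing step in (4).} You write ``compose with $\plong(\SL(2,\mathbb{R}))$ to fix $\LP^+$, $\LP^\opp$ and $\isl_\std([1:1])$, hence lie in $L$''. This confuses two different points:
the point at infinity of the photon, $\isl_\std([0:1])=s_\alpha\cdot\LP^+\in\Fl(\g,\alpha)$, which is at arithmetic distance $1$ from $\LP^+$; and the opposite flag $\LP^\opp\in\Fl(\g,\alpha)^\opp$.
Fixing three points of $\Phot_\std$, or even all of them, does not put an element in $L$. Your own computation in (1) shows this. For $X'\in\g_\beta$ with $\beta\in\Sigma^+_{\{\alpha\}}\smallsetminus\{\alpha\}$, the element $\exp(X')$ fixes $\Phot_\std$ pointwise and is not in $L$. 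It also fails to centralize $\plong(\SL(2,\mathbb{R}))$ as soon as $[X',v^\opp]\in\g_{\beta-\alpha}$ is nonzero.

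\emph{Item (4) is false as written, in both directions.} In $\mathbb{P}(\mathbb{R}^3)$ with $v^\opp=E_{21}$, the matrix $I+E_{13}$ stabilizes $\Phot_\std=\mathbb{P}(\operatorname{span}(e_1,e_2))$. But it is not in the block-diagonal group $\plong(\SL(2,\mathbb{R}))\cdot\operatorname{Cent}_G(\plong(\SL(2,\mathbb{R})))$. The inclusion you take for granted also fails. In $\operatorname{Gr}_2(\mathbb{R}^4)$ one has $\Phot_\std=\{V\mid \mathbb{R}e_1\subset V\subset\operatorname{span}(e_1,e_2,e_3)\}$. The element of $\SL(4,\mathbb{R})$ sending $e_1\mapsto e_4$, $e_4\mapsto -e_1$ and fixing $e_2,e_3$ centralizes $\plong(\SL(2,\mathbb{R}))$, yet it moves $\Phot_\std$.

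\emph{What is true.} The paper only uses (4) to say that two parametrizations $\isl_g$ of a photon differ by a projective reparametrization. What you actually need is that every element of $\operatorname{Stab}_G(\Phot_\std)$ acts on $\Phot_\std$, read through $\isl_\std$, by an element of $\PGL(2,\mathbb{R})$. A quick proof uses a Plücker triple: the map $[1:t]\mapsto[\vv_0+t\rho_*(v^\opp)\vv_0]$ is a projective isomorphism onto a line of $\mathbb{P}(V)$, and $\rho(g)$ acts linearly on that line. This relies only on Lemma~\ref{lem_degree_rep_1}, so there is no circularity.

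\emph{Structural version.} $\operatorname{Stab}_G(\Phot_\std)$ contains $P^+_{\FS}$, for three reasons:
root groups of positive roots of $\mathfrak{l}$ commute with $v^\opp$;
the other root groups of $\uu^+$ fix $\Phot_\std$ pointwise, by your (1);
$Z_K(\aaa)\exp(\aaa)$ preserves $\g_{-\alpha}$.
So the stabilizer is a parabolic subgroup. Its unipotent radical acts trivially on $\Phot_\std$, and its Levi factor is, up to finite index, $\plong(\SL(2,\mathbb{R}))$ times the centralizer of $\plong(\SL(2,\mathbb{R}))$ \emph{inside that Levi factor}. This is the correct form of (4).

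\emph{Repairing (5).} $K$-transitivity on photons does not follow from (2) and $G=KP^+$ alone. Those only reduce it to compactness of the set of photons through $\LP^+$, i.e.\ closedness of $\mathbb{P}(\RUN)$, which is essentially the claim. It does follow from the containment above together with $G=KP^+_{\FS}$.

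\emph{Cleaning up (3).} The rotation $\exp(\tfrac{\pi}{2}(v^+-v^\opp))$ merely swaps $x$ and $y$, so $y$ stays at infinity. There is also no ``delicate case'', since whether $y\in\hyp_{\LP^\opp}$ depends on $y$ alone, not on the photon. The argument that works is your last one:
pick $\xi$ transverse to both $x=\LP^+$ and $y$ (the intersection of two open dense sets);
write $\xi=u\cdot\LP^\opp$ with $u\in U^+$;
apply $u^{-1}$, which fixes $\LP^+$, permutes the photons through it, and brings $y$ into $\Affstdstd$.
There, two distinct lines through $0$ cannot both contain $\varphistd^{-1}(u^{-1}\cdot y)\neq 0$.
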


Given a photon~$\Phot$, there exists~$g \in G$ such that~$\Phot = g \cdot \Phot_{\mathsf{std}}$. Since the map~$\isl_\std$ defined before Equation~\eqref{eq_param_photons} is a parametrization of~$\Phot_\std$, the map 
\begin{equation}\label{eq_param_photons_3}
 \begin{array}{cccc}
   \isl_g : &
        \mathbb{P}(\mathbb{R}^2) &\longrightarrow &\Fl(\g, \alpha) \\
        &x &\longmapsto &g \cdot \isl_\std(x)
 \end{array}
\end{equation}
is then a parametrization of~$\Phot$ (recall that~$\isl$ is defined before~\eqref{eq_param_photons}). A priori, this parametrization depends on the choice of~$g \in G$ such that~$\Phot = g \cdot \Phot_{\mathsf{std}}$. By Lemma~\ref{fact_stab_u_std}.(4), two parametrizations given by different choices of~$g \in G$ such that~$\Phot = g \cdot \Phot_{\mathsf{std}}$ only differ by a projective reparametrization of~$\Phot \simeq \mathbb{P}(\mathbb{R}^2)$. This motivates the following definition:

\begin{definition}\label{def_param_photons}
    Let~$\Phot$ be a photon of~$\Fl(\g, \alpha)$. A \emph{projective parametrization} of~$\Phot$ is a map~$\zeta: \mathbb{P}(\mathbb{R}^2) \rightarrow \Phot$ as defined in~\eqref{eq_param_photons_3}.
\end{definition}

Note that the set of projective parametrizations of a photon~$\Phot$ does not depend on the choice of a group~$G \in \mathcal{G}_{\{\alpha\}}(\g)$.

The following fact states that photons intersecting~$\Affstdstd$ are compactifications of certain affine lines of the standard affine chart~$\Affstdstd$. Using Lemma~\ref{fact_stab_u_std}, the proof of Fact~\ref{lem_photons_affine} is the same as the one of~\cite[Lem. 5.4]{galiay2024rigidity}:

\begin{fact}\label{lem_photons_affine} Let~$(\g, \alpha)$ be an irreducible real-type Nagano pair.
   Let~$\Phot$ be a photon. If~$\Phot \cap \Affstdstd$ is nonempty, then it is an affine line in~$\Affstdstd$, and~$\Phot \cap \hyp_{\LP^\opp}$ is a singleton.
\end{fact}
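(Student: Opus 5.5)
The plan is to transport everything to the standard situation with elements of $G$, and then read the claim off the explicit parametrization~\eqref{eq_param_photons}.

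\textbf{Reduction.} Write $\Phot = g\cdot \Phot_\std$ for some $g \in G$; by Definition~\ref{def_photon} an element of $\Aut_\Theta(\g)$ may be needed, but that changes nothing below. Pick $x \in \Phot \cap \Affstdstd$. Since $U^-$ acts simply transitively on $\Affstdstd$ by affine translations (Section~\ref{sect_dilations_translations}), we can translate by an element of $U^-$. This element preserves both $\Affstdstd$ and $\hyp_{\LP^\opp}$, and acts on $\Affstdstd$ as a translation. So we may assume $x = \LP^+$. By Lemma~\ref{fact_stab_u_std}.(2), there is $l \in L$ with $\Phot = l\cdot \Phot_\std$. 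The group $L$ fixes $\LP^\opp$, so it preserves $\Affstdstd$ and $\hyp_{\LP^\opp}$. Moreover, through $\varphistd$ it acts on $\uu^-$ by the linear map $\Ad(l)$, since $l\exp(X)l^{-1} = \exp(\Ad(l)X)$. Hence it suffices to treat $\Phot = \Phot_\std$.

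\textbf{Standard photon.} By~\eqref{eq_param_photons},
\[
\Phot_\std \cap \Affstdstd \supset \{\exp(tv^-)\cdot \LP^+ \mid t \in \mathbb{R}\} = \varphistd(\mathbb{R} v^-).
\]
This is an affine (in fact linear) line in the chart. The remaining point of $\Phot_\std$ is $\isl_\std([0:1]) = \plong(w)\cdot \LP^+$, where $w$ is the Weyl element of $\SL(2,\mathbb{R})$. A representative of $s_\alpha$ is $\exp(\tfrac{\pi}{2}(v^+ - v^-))$ up to sign conventions, so this point equals $s_\alpha\cdot \LP^+$.

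\textbf{Main step.} We must show that $s_\alpha\cdot\LP^+ \notin \Affstdstd$, i.e.\ that it is not transverse to $\LP^\opp$. Its position relative to $\LP^\opp$ is the class of $s_\alpha w_0$ (up to the conventions). This class differs from $\overline{w_0}$ because $s_\alpha \notin W_{\FS\smallsetminus\{\alpha\}}$, and the double coset of $w_0$ is $W_{\FS\smallsetminus\{\alpha\}}\, w_0\, W_{\FS\smallsetminus\{\oppinv(\alpha)\}} = W_{\FS\smallsetminus\{\alpha\}}\, w_0$.

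An alternative, cleaner route uses a projective embedding. Apply Fact~\ref{prop_ggkw} with a $\{\alpha\}$-proximal representation of highest weight $\chi$. The point $s_\alpha\cdot\LP^+$ corresponds to the line $V^{s_\alpha\chi} = V^{\chi-\langle\chi,\alpha^\vee\rangle\alpha}$, which lies in $V^{<\chi}$ since $\langle \chi,\alpha\rangle > 0$. Hence it lies in $\hyp_{\LP^\opp}$. We then get $\Phot_\std\cap\Affstdstd = \varphistd(\mathbb{R}v^-)$ exactly, and $\Phot_\std\cap\hyp_{\LP^\opp} = \{s_\alpha\cdot\LP^+\}$ is a singleton.

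I expect this nontransversality check to be the only real obstacle. I would do it via the weight argument in the proximal representation, which avoids Bruhat order bookkeeping.
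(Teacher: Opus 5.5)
Your proof is correct and is essentially the argument the paper points to, namely \cite[Lem.~5.4]{galiay2024rigidity} combined with Lemma~\ref{fact_stab_u_std}: reduce to $\Phot_{\std}$ by a translation in $U^-$ and an element of $L$ (both act affinely on $\Affstdstd$ and preserve $\hyp_{\LP^\opp}$), then read the answer off~\eqref{eq_param_photons}. For the step you flag as the obstacle, your Weyl-group and weight arguments both work, but a shorter route is to write $\isl_{\std}([0:1])=\lim_{t\to\infty}\varphistd(tv^-)$: this point cannot lie in $\Affstdstd$, because $\varphistd$ is a homeomorphism from $\uu^-$ onto $\Affstdstd$ and $tv^-$ leaves every compact subset of $\uu^-$.
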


Fact~\ref{lem_photons_affine} implies that for all~$\xi \in \Fl(\g, \alpha)^\opp$ such that~$\Phot_{\mathsf{std}} \not\subset \hyp_{\xi}$, the set~$\Phot_{\mathsf{std}} \cap \hyp_{\xi}$ is a singleton.

Let us add that Fact~\ref{lem_photons_affine} is proven in \cite{van2019rigidity} for Grassmannians (item~(iv) of Table~\ref{table_nagano_full}). In that case~$\Affstdstd$ can be identified with the space~$\operatorname{Mat}_{q,p}(\mathbb{R})$ of~$(q \times p)$-matrices; the nonempty intersection of a photon with it is then of the form~$X + \mathbb{R} S$, where~$S$ is a rank-one matrix and~$X \in \operatorname{Mat}_{q,p}(\mathbb{R})$.

To end this section, note that Fact~\ref{fact_kostant} directly gives the following fundamental fact:

\begin{fact} \label{lem_L_v_generates}
    The set~$\Ad(L^{\circ}) \cdot v^\opp = \{\Ad(g) \cdot v^\opp \mid g \in L^{\circ} \}$ generates~$\uu^\opp$ as a vector space.
\end{fact}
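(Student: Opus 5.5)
The plan is to derive the statement directly from Fact~\ref{fact_kostant}. That fact says the identity component~$L^\circ$ acts irreducibly on~$\uu^\opp$, and irreducibility forces any nonzero $L^\circ$-invariant subspace to be all of~$\uu^\opp$. So I would set
\begin{equation*}
W := \operatorname{Span}\bigl(\Ad(L^{\circ}) \cdot v^\opp\bigr) \subset \uu^\opp
\end{equation*}
and check two properties: that~$W$ is a nonzero subspace of~$\uu^\opp$, and that it is~$\Ad(L^\circ)$-invariant.

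First I check that~$W$ lies in~$\uu^\opp$ and is nonzero. By Notation~\ref{notation_nagano}, $v^\opp \in \g_{-\alpha}$, and $\g_{-\alpha} \subset \uu^\opp$ since~$\alpha \in \Sigma_{\{\alpha\}}^+$. Moreover~$v^\opp \neq 0$, because it belongs to the~$\sll_2$-triple~$\tr_\std$. The group~$L$ normalizes~$\uu^\opp$: indeed~$L \subset P^\opp$, and~$P^\opp$ is by definition the normalizer of~$\uu^\opp$. Hence~$\Ad(g) v^\opp \in \uu^\opp$ for every~$g \in L^\circ$, so~$W \subset \uu^\opp$, and~$W \ni v^\opp$ is nonzero.

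Next, invariance. For~$h \in L^\circ$ and a finite combination~$\sum_i c_i \Ad(g_i) v^\opp$ with~$g_i \in L^\circ$,
\begin{equation*}
\Ad(h)\Bigl(\sum_i c_i \Ad(g_i) v^\opp\Bigr) = \sum_i c_i \Ad(hg_i) v^\opp \in W,
\end{equation*}
since~$hg_i \in L^\circ$. So~$W$ is invariant. Irreducibility then gives~$W = \uu^\opp$, which is exactly the claim.

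There is no real obstacle beyond this. The only point needing care is that ``irreducibly'' in Fact~\ref{fact_kostant} refers to the adjoint action restricted to~$\uu^\opp$, which is well defined precisely because~$L$ normalizes~$\uu^\opp$, as checked above.
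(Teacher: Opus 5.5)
Your proposal is correct and is the same argument the paper has in mind: the paper simply states that this fact follows directly from Fact~\ref{fact_kostant}. The span of the $\Ad(L^\circ)$-orbit of the nonzero vector $v^\opp \in \g_{-\alpha} \subset \uu^\opp$ is a nonzero invariant subspace, so irreducibility forces it to equal $\uu^\opp$; you have just written out the routine checks the paper leaves implicit.
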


\subsection{Arithmetic distance}\label{sect_arithm_distance}
The \emph{arithmetic distance}~$d_H(x,y)$ between two distinct points~$x,y \in \Fl(\g, \alpha)$ is the minimal integer such that there exist~$x_0 := x, x_1, \dots, x_k :=y \in \Fl(\g,\alpha)$ such that~$x_i$ and~$x_{i+1}$ are on the same photon. If~$x=y$, we set~$d_H(x,y) =0$. 

The following fact is a direct consequence of a theorem of Takeuchi \cite[Thm 6.4]{takeuchi1988basic}. In the statement, we use the notation introduced in Section~\ref{sect_embedding_concompact_dual}. Also recall that the element~$w_0$ is the \emph{longest element} of the restricted Weyl group of~$G$ defined in Section~\ref{sect_restricted_weyl_grouup}. 

\begin{fact}[\cite{takeuchi1988basic}]\label{thm_takeuchi_hyp} 
For all~$1 \leq k \leq \operatorname{rk}(\g, \alpha)$, one has~$\{x \in \Fl(\g, \alpha) \mid d_H (\LP^+, x) = k \} = \mathsf{C}_{\overline{s_k}}(\LP^+)$. Hence for all~$x_0 \in \Fl(\g, \alpha)$, one has 
$$\{x \in \Fl(\g, \alpha) \mid d_H (x_0, x) \leq \operatorname{rk}(\g, \alpha)-1 \} \subset \Fl(\g, \alpha) \smallsetminus \mathsf{C}_{\overline{w_0}}(x_0)~.$$ 
In particular, if~$(\g, \alpha)$ has higher rank, then any photon through~$x_0$ is contained in~$\Fl(\g, \alpha) \smallsetminus \mathsf{C}_{\overline{w_0}}(x_0)$.
\end{fact}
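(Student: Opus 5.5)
The plan is to compare two stratifications of $\Fl(\g,\alpha)$ centred at $\LP^+$: the level sets of the arithmetic distance $d_H(\LP^+,\cdot)$, and the Bruhat cells $\mathsf{C}_{\overline{s_k}}(\LP^+)$, $0\le k\le r:=\operatorname{rk}(\g,\alpha)$. By Fact~\ref{lem_cardinal_coset}, these cells partition $\Fl(\g,\alpha)$ and are linearly ordered. Each cell is a single $P^+$-orbit. The set of photons through $\LP^+$ is also $P^+$-invariant: by Lemma~\ref{fact_stab_u_std}.(1)--(2), $U^+$ fixes $\Phot_\std$ and $L$ permutes the photons through $\LP^+$. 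Hence $d_H(\LP^+,\cdot)$ is constant on each cell, and it suffices to compute it at the representatives $s_k\cdot\LP^+$. I will argue by induction on $k$ that $d_H(\LP^+,s_k\cdot\LP^+)=k$.

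For $k=1$, the points of $\Phot_\std\smallsetminus\{\LP^+\}$ are $\exp(tv^-)\cdot\LP^+$ with $t\neq0$, by \eqref{eq_param_photons}. The $\SL_2$-computation inside $\plong(\SL(2,\mathbb{R}))$ shows that these points lie in the cell of $s_\alpha$. Since $\alpha$ and $\beta_1$ are roots of $\Sigma^+_{\{\alpha\}}$ of the same length, they are $W_{\FS\smallsetminus\{\alpha\}}$-conjugate, and so $\overline{s_\alpha}=\overline{s_{\beta_1}}=\overline{s_1}$. By $L$-transitivity on photons through $\LP^+$, we get $\{d_H=1\}\subset\mathsf{C}_{\overline{s_1}}(\LP^+)$, and the reverse inclusion follows from $P^+$-invariance.

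For the inductive step there are two inclusions. For the lower bound, I would use the strongly orthogonal roots. The root $\sll_2$ attached to $\beta_{k+1}$ commutes with the reflections $s_{\beta_1},\dots,s_{\beta_k}$, because $\beta_i\pm\beta_{k+1}\notin\Sigma$. It therefore produces a photon through $s_k\cdot\LP^+$, namely a translate of $\Phot_\std$ conjugated by $s_k$ and $s_{\beta_{k+1}}$, which contains $s_ks_{\beta_{k+1}}\cdot\LP^+=s_{k+1}\cdot\LP^+$. This gives $d_H(\LP^+,s_{k+1}\cdot\LP^+)\le k+1$. For the upper bound, I must show that a photon through a point of $\mathsf{C}_{\overline{s_k}}(\LP^+)$ only meets cells $\mathsf{C}_{\overline{s_j}}(\LP^+)$ with $j\le k+1$. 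Combined with the induction hypothesis, this forces $d_H(\LP^+,s_{k+1}\cdot\LP^+)\ge k+1$, since otherwise $s_{k+1}\cdot\LP^+$ would already be reached in at most $k$ steps and so lie in a lower cell.

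This upper bound is the main obstacle. It is precisely the content of Takeuchi's Theorem~6.4 in \cite{takeuchi1988basic}, which describes the cells as the loci reached after $k$ steps along the one-dimensional Helgason spheres, and I would invoke it directly. A self-contained route would be to note that every photon through $s_k\cdot\LP^+$ is a $P^+\cap s_kP^+s_k^{-1}$-translate of a root-$\sll_2$ orbit for some root $\gamma\in\Sigma^+_{\{\alpha\}}$. One would then check, using the strongly orthogonal decomposition and the rank-$r$ structure, that $s_ks_\gamma$ lies in a double coset $\overline{s_j}$ with $j\le k+1$. I expect this double-coset bookkeeping to be the delicate part.

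Finally, for the consequences: $\overline{w_0}$ is the maximal class, so $\overline{w_0}=\overline{s_r}$. Using $G$-equivariance to replace $\LP^+$ by an arbitrary $x_0$, the condition $d_H(x_0,x)\le r-1$ places $x$ in $\bigcup_{k<r}\mathsf{C}_{\overline{s_k}}(x_0)$, which is disjoint from $\mathsf{C}_{\overline{w_0}}(x_0)$. In higher rank, every point of a photon through $x_0$ satisfies $d_H\le1\le r-1$, which gives the last assertion.
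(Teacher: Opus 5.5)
Your proposal is correct and takes essentially the paper's route. The paper gives no independent argument: it derives this Fact directly from Takeuchi's Theorem~6.4 in \cite{takeuchi1988basic}, and that is exactly where you place the only nontrivial step, namely that a single photon step raises the cell index by at most one, so that $\mathsf{C}_{\overline{s_m}}(\LP^+)$ cannot be reached from $\LP^+$ in fewer than $m$ steps. The rest of your argument is sound: the constancy of $d_H(\LP^+,\cdot)$ on $P^+$-orbits, the chain through the points $s_k\cdot\LP^+$ built from the strongly orthogonal roots (the same chain the paper uses in Section~\ref{sect_geod_sym_domains}), and the deduction of both consequences from $\overline{w_0}=\overline{s_r}$.
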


\section[Plücker triples]{Plücker triples}\label{sect_type_I}

The purpose of this section is to show that, if~$(\g, \alpha)$ is an irreducible real-type Nagano pair, then the image of photons under a suitably chosen embedding of~$\Fl(\g, \alpha)$ in a projective space are projective lines.

\subsection{Embeddings of real-type Nagano spaces} Recall that the~$\Theta$-proximal triples have been introduced in Definition~\ref{def_proximal_triple}. In this section, we prove the following lemma, concerning the image of the element~$v^\opp$ introduced in Notation~\ref{notation_nagano}, under the representation given by a proximal triple:

\begin{lem}\label{lem_degree_rep_1} 
Let~$(\g, \alpha)$ be an irreducible real-type Nagano pair. Let~$(G, \rho, V)$ be a linear or projective~$\{ \alpha\}$-proximal triple for~$\g$, with highest weight~$ \chi := N \omega_\alpha$ for some~$N \in \mathbb{N}$. Let~$\vv_0 \in V^{\chi} \smallsetminus \{ 0 \}$. Then~$\rho_*(v^-)^k \cdot \vv_0 \ne 0$ for all~$k \leq N$, and~$\rho_*(v^-)^k \cdot \vv_0 = 0$ for all~$k \geq N+1$.
\end{lem}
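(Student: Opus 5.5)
We restrict $\rho_*$ to the $\mathfrak{sl}_2$-triple $\tr_\std=(v^+,h_\alpha,v^\opp)$ and use the representation theory of $\mathfrak{sl}_2(\mathbb{R})$. Through the embedding $\plongsl_{\tr_\std}$, the space $V$ becomes a finite-dimensional $\mathfrak{sl}_2(\mathbb{R})$-module. The vector $\vv_0$ is then a weight vector for $\rho_*(h_\alpha)$, and we compute its weight first. Since $\vv_0\in V^\chi$ and $h_\alpha\in\aaa$, we have $\rho_*(h_\alpha)\vv_0=\chi(h_\alpha)\vv_0$. By the definition of the fundamental weights as the dual basis of the coroots $(h_\beta)_{\beta\in\FS}$, this gives $\chi(h_\alpha)=N\omega_\alpha(h_\alpha)=N$.

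Next we show that $\vv_0$ is a highest weight vector for this $\mathfrak{sl}_2$-module. Since $v^+\in\g_\alpha$, we have $\rho_*(v^+)\vv_0\in V^{\chi+\alpha}$. Because $\chi$ is the highest restricted weight and $\chi+\alpha>\chi$, this space is zero, so $\rho_*(v^+)\vv_0=0$.

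Standard $\mathfrak{sl}_2$-theory now applies. In a finite-dimensional $\mathfrak{sl}_2$-module (completely reducible), a nonzero vector $u$ with $Hu=nu$ and $Eu=0$ generates an irreducible submodule of dimension $n+1$. In that submodule the vectors $F^ku$ are nonzero exactly for $0\le k\le n$, and $F^{n+1}u=0$. The last claim follows from the identity $EF^{k}u=k(n-k+1)F^{k-1}u$ together with an induction: $F^ku\neq0$ for $k\le n$, since applying $E$ recovers a nonzero multiple of $F^{k-1}u$. Finally, $F^{n+1}u$ would be a highest weight vector of weight $-n-2<0$, which is impossible in a finite-dimensional module, so $F^{n+1}u=0$. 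Taking $u=\vv_0$, $n=N$, $F=\rho_*(v^\opp)$ gives the lemma. Alternatively, $F^k\vv_0$ has weight $N-2k$, and finite-dimensional weights are symmetric about $0$.

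For the projective case, we lift $\rho$ to a linear representation of a finite cover, or work directly with the differential $\rho_*:\g\to\mathfrak{sl}(V)$ (ambiguous only up to scalars, which we normalize to trace zero). This does not affect the argument, since everything happens at the Lie algebra level.

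The main obstacle is making sure the normalizations agree: $\omega_\alpha(h_\alpha)=1$ and $h_\alpha$ in $\tr_\std$ is exactly the coroot used to define $\omega_\alpha$, so that the $\mathfrak{sl}_2$-weight is $N$ rather than, say, $2N$. This holds because Section~\ref{sect_real_lie_alg} defines $(\omega_\beta)$ as the basis dual to $(h_\beta)$ and Notation~\ref{notation_nagano} uses the same $h_\alpha$. The real-type hypothesis $\dim\g_\alpha=1$ is not needed beyond guaranteeing that $v^\opp$ is canonical up to scaling.
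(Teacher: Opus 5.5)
Your proof is correct, but it takes a different route from the paper's. The paper writes $\rho_*(v^\opp)^k\cdot\vv_0\in V^{N\omega_\alpha-k\alpha}$ and cites Goodman--Wallach for the nonvanishing when $k\le N$. It obtains the vanishing for $k\ge N+1$ by showing that the whole weight space $V^{N\omega_\alpha-k\alpha}$ is zero. Weights lie in $\operatorname{Conv}(W\cdot N\omega_\alpha)$, and the Nagano grading (every root has $\alpha$-coefficient in $\{-1,0,1\}$, by \eqref{eq_longest_root}) gives $|\omega_\alpha(h_{w^{-1}\cdot\alpha})|\le 1$. Evaluating a convex combination at $h_\alpha$ then forces $|1-2\mu|\le 1$.

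You instead work entirely inside the copy of $\sll_2$ spanned by $\tr_\std$. There $\vv_0$ is a highest weight vector of weight $\chi(h_\alpha)=N$, and the identity $EF^ku=k(n-k+1)F^{k-1}u$ gives both halves at once; this is essentially what lies behind the Goodman--Wallach citation. Your route is shorter and self-contained. As you note, it uses neither the Nagano nor the real-type hypothesis: for any irreducible representation and any positive restricted root $\alpha$, one has $\rho_*(v^\opp)^k\cdot\vv_0\ne 0$ exactly for $k\le\chi(h_\alpha)$.

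The paper's route gives more than the lemma needs. First, it shows the weight spaces $V^{\chi-k\alpha}$ themselves vanish for $k\ge N+1$; your method recovers this too, by applying $\sll_2$-theory to the $\alpha$-string module $\bigoplus_j V^{\chi+j\alpha}$, whose top $h_\alpha$-weight is $N$. Second, its estimate yields the bound $|\lambda(h_\alpha)|\le N$ for every weight $\lambda$ of $V$, which is genuinely Nagano-specific; it fails, e.g., for the short simple root of $G_2$. That bound is exactly what your ``alternatively'' sentence would need if ``weights are symmetric about $0$'' were applied to all of $V$. Applied instead to the span of the vectors $\rho_*(v^\opp)^k\cdot\vv_0$, where $N$ is the top weight, that sentence is fine as it stands.
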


\begin{proof} Recall that~$(\omega_\beta)_{\beta \in \FS}$ is the basis of~$\aaa^*$, dual to~$(h_{\beta})_{\beta \in \FS}$ (Section~\ref{sect_real_lie_alg}). Also recall the weight spaces introduced in Section~\ref{sect_reps_proximales}. It is a general fact (see for instance~\cite{knapp1996lie}) that
\begin{equation*}
    \rho_*(\g_{\beta}) \cdot V^\lambda \subset V^{\lambda + \beta}
\end{equation*}
for any restricted root~$\beta \in \Sigma$ and and weight~$\lambda $ of~$(V, \rho)$. Hence one has~$\rho_*(v^-)^k \cdot \vv_0 \in V^{N\omega - k\alpha}$ for all~$k \in \mathbb{N}$.

On the other hand, one has~$\rho_*(v^-)^k \cdot \vv_0 \ne 0$ for all~$0 \leq k \leq N$ (see e.g.\ \cite[Lem.\ 3.2.9]{goodman2009symmetry})
(see e.g.\ \cite[Lem.\ 3.2.9]{goodman2009symmetry}). Thus it suffices to prove that~$V^{N\omega_\alpha - k \alpha} = \{ 0 \}$ for~$k \geq N+1$. Since~$\dim(\g_{\alpha}) = 1$, this is satisfied whenever~$N\omega_\alpha - k \alpha \notin \operatorname{Conv} (W \cdot (N \omega_\alpha))$ (see e.g.\ \cite[Prop.\ 3.2.10]{goodman2009symmetry})), where~$W$ is the restricted Weyl group of~$\g$ defined in Section~\ref{sect_restricted_weyl_grouup}. Let us check this property. In particular, it suffices to prove that~$\omega_\alpha - \mu \alpha \notin \operatorname{Conv} (W \cdot \omega_\alpha)$ for all~$\mu > 1$.

Recall that we denote by~$B$ the Killing form on~$\g$. Its restriction to~$\aaa$ induces a~$W$-invariant inner product. For any root~$\beta \in \Sigma$, let~$h_\beta' \in \aaa$ be such that~$\beta = B(\cdot , h_\beta')$ on~$\aaa$. Then the element~$h_\beta$ defined in Section~\ref{sect_real_lie_alg} is just~$h_\beta = \frac{2 h_\beta'}{B(h_\beta' , h_\beta')}$. By~$W$-invariance of~$B$, we have 
\begin{equation}\label{eq_inv_h}
    w \cdot h_\beta = h_{w^{-1} \cdot \beta}
\end{equation}
for all~$w \in W$. By~\eqref{eq_longest_root}, for all~$\beta \in \Sigma$ we have~$\beta = \sum_{\beta' \in \FS \smallsetminus \{ \alpha\}} n_{\beta'} \beta' + \delta \alpha$, with~$n_{\beta'} \in \mathbb{N}$  and~$\delta \in \{-1, 0, 1\}$. Thus a direct computation gives~$$h_\beta = X +  \delta \frac{B(h_\alpha', h_\alpha')}{B(h_\beta', h_\beta')} h_\alpha~,$$
where~$X  \in \sum_{\beta' \in \FS \smallsetminus \{ \alpha \}} \mathbb{R} h_{\beta'}$. Thus by definition of~$\omega_\alpha$ (see Section~\ref{sect_real_lie_alg}), we have~$\omega_\alpha (h_\beta) = \delta \frac{B(h_\alpha', h_\alpha')}{B(h_\beta', h_\beta')}$. If moreover we have~$\beta = w^{-1} \cdot \alpha$ for some~$w \in W$, by~\eqref{eq_inv_h} and~$W$-equivariance of~$B$, we have~$\omega_\alpha (h_{w^{-1} \cdot \alpha}) = \delta$. Thus
\begin{equation}\label{eq_majoration_omega}
    |\omega_\alpha (h_{w^{-1} \cdot \alpha})| \leq 1~.
\end{equation}
Let~$\mu \in \mathbb{R}_{>0}$ be such that~$\omega_\alpha - \mu \alpha \in \operatorname{Conv} (W \cdot \omega_\alpha)$. Then there exist~$(\mu_w) \in \mathbb{R}_{>0}^{|W|}$ such that~$\sum_{w \in W} \mu_w = 1$ and ~$\omega_\alpha - \mu \alpha = \sum_{w \in W} \mu_w w \cdot \omega_\alpha$. Evaluating in~$h_\alpha$ and taking the absolute value, we get
\begin{equation}
\begin{split}
    2 \lambda -1 &\leq | 1 - 2 \lambda  |=  |\omega_\alpha (h_\alpha) - \mu \alpha(h_\alpha)| = \Big{|}\sum_{w \in W} \mu_w w \cdot \omega_\alpha(h_\alpha) \Big{|} \\
    &\overset{\text{by \eqref{eq_inv_h}}}{=} \Big{|}\sum_{w \in W} \mu_w \omega_\alpha(h_{w^{-1} \cdot \alpha}) \Big{|} \leq \sum_{w \in W} \mu_w |\omega_\alpha(h_{w^{-1} \cdot \alpha})| \overset{\text{by \eqref{eq_majoration_omega}}}{\leq} \sum_{w \in W} \mu_w = 1~.
\end{split}
\end{equation}
Thus~$\mu \leq 1$. This ends the proof the lemma.~$\qed$
\end{proof}

\begin{rmk}\label{rmk_degree_rep}
    In the proof of Lemma~\ref{lem_degree_rep_1}, the assumption that~$\dim(\g_\alpha) = 1$ is necessary. Indeed, if~$(\g, \alpha)$ is given by item (ix) of Table~\ref{table_nagano_full}, then the triple~$(\PO(n,1), \rho_1, \mathbb{R}^{n+1})$ defined by the canonical embedding~$\PO(n, 1) \subset \PGL(n+1, \mathbb{R})$ is a projective~$\{\alpha_1\}$-proximal triple for~$\soo(n,1)$ with highest weight~$\omega_{\alpha_1}$. However, one has~$V^{\omega_\alpha - k \alpha} \ne 0$ for all~$1 \leq k \leq 2$.
\end{rmk}

\subsection{Plücker triples}\label{sect_plucker_triples}
If~$(\g, \alpha)$ is an irreducible real-type Nagano pair, then there exist~$G \in \mathcal{G}_{\{\alpha\}}(\g)$ and a projective~$\{\alpha\}$-proximal representation~$(V, \rho)$ of~$G$ with highest weight~$\omega_\alpha$. Such a triple~$(G, \rho, V)$ is called a \emph{Plücker triple} of~$(\g, \alpha)$. Recall that the associated embeddings from Fact~\ref{prop_ggkw} are denoted by~$\iota_\rho, \iota_\rho^\opp$. In the next section, we investigate the consequences on the structure of real-type Nagano spaces of the existence of Plücker triples.

But first, let us give an explicit and well-known example of Plücker triple for the Nagano pair~$(\sll(p+q, \mathbb{R}), \alpha_p)$. In that case, the triple
$$(\PGL(p+q, \mathbb{R}), \PRSP, \rho_0)$$
is a Plücker triple of~$(\sll(p+q, \mathbb{R}), \alpha_p)$; explicitly, it is defined by the natural action of \( \PGL(p+q, \mathbb{R}) \) on \( \mathbb{P}(\bigwedge^p \mathbb{R}^{p+q}) \):
\begin{equation}\label{eq_def_plucker_grassman}
    \rho_0(g) \cdot [v_1 \wedge \cdots \wedge v_p] = \big[(\widetilde{g} \cdot v_1) \wedge \cdots \wedge (\widetilde{g} \cdot v_p))\big]
\end{equation}
for any free family~$(v_1, \dots, v_p)$ of~$\mathbb{R}^{p+q}$, where~$\widetilde{g}$ is any lift of~$g$ in~$\GL(p+q, \mathbb{R})$.
The associated embeddings via Fact~\ref{prop_ggkw} are the classical \emph{Plücker embeddings}:
\begin{equation}\label{eq_def_plucker_grassman_iota}
\begin{array}{cccc}
\iota_{\rho_0}: &
     \Grass &\longrightarrow &\mathbb{P} (\bigwedge^p \mathbb{R}^{p+q}) \\
&\operatorname{Span}(v_1, \dots, v_p) &\longmapsto &[v_1 \wedge \cdots \wedge v_p]~;
\end{array}
\end{equation}

\begin{equation*}
\begin{array}{cccc}
\iota^{\opp}_{\rho_0}: &
    \Grassq &\longrightarrow &\mathbb{P} (\bigwedge^p \mathbb{R}^{p+q})^* \\
&\operatorname{Span}(v_1, \dots, v_q) &\longmapsto &[x \mapsto x \wedge v_1 \wedge \cdots \wedge v_q]~.
\end{array}
\end{equation*}

\subsection{Images of photons}   
Plücker triples will be of fundamental importance in our study of proper domains in real-type Nagano spaces. The structural result from which all their main properties will follow is:

\begin{prop} \label{prop_photons_egal_proj_lines}
Let~$(\g, \alpha)$ be a real-type irreducible Nagano pair, and let~$(G, \rho, V)$ be a Plücker triple for~$(\g, \alpha)$. The images of photons in~$\mathbb{P}(V)$ under~$\iota_\rho$ are projective lines of~$\mathbb{P}(V)$.
\end{prop}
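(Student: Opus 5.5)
By $G$-equivariance of $\iota_\rho$ (Fact~\ref{prop_ggkw}) and the definition of photons as translates of $\Phot_\std$, it suffices to treat the standard photon. Elements of $\Aut_{\{\alpha\}}(\g)$ not in $G$ still act projectively on $\mathbb{P}(V)$ compatibly with $\iota_\rho$: they preserve the highest weight $\omega_\alpha$ since they fix $\alpha$. So I would first reduce to proving that $\iota_\rho(\Phot_\std)$ is a projective line. Both sets are closed, so it is enough to check that the affine part $\{\iota_\rho(\exp(tv^-)\cdot\LP^+) \mid t\in\mathbb{R}\}$, which is dense in $\Phot_\std$ by~\eqref{eq_param_photons}, lies on a single projective line and is not a single point. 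Compactness of $\Phot_\std$ and continuity of $\iota_\rho$ then give equality with the whole line, since the image is a closed connected subset of the line containing the dense part.

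The key computation uses Lemma~\ref{lem_degree_rep_1} with $N=1$, since for a Plücker triple the highest weight is $\chi=\omega_\alpha$. Pick $\vv_0\in V^{\chi}\smallsetminus\{0\}$, so that $\iota_\rho(\LP^+)=[\vv_0]$. Then for all $t\in\mathbb{R}$
\begin{equation*}
\iota_\rho\bigl(\exp(tv^-)\cdot\LP^+\bigr)=\bigl[\rho(\exp(tv^-))\vv_0\bigr]=\Bigl[\sum_{k\ge 0}\tfrac{t^k}{k!}\rho_*(v^-)^k\vv_0\Bigr]=\bigl[\vv_0+t\,\rho_*(v^-)\vv_0\bigr],
\end{equation*}
because Lemma~\ref{lem_degree_rep_1} gives $\rho_*(v^-)^k\vv_0=0$ for $k\ge2$. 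The same lemma gives $\vv_1:=\rho_*(v^-)\vv_0\neq0$. Moreover $\vv_1\in V^{\omega_\alpha-\alpha}$ is a different weight space from $\vv_0$, so $\vv_0,\vv_1$ are linearly independent. Hence the image of the affine part is $\mathbb{P}(\operatorname{Span}(\vv_0,\vv_1))$ minus the point $[\vv_1]$, an honest projective line minus a point. Taking closures, $\iota_\rho(\Phot_\std)=\mathbb{P}(\operatorname{Span}(\vv_0,\vv_1))$, and the missing point $[\vv_1]$ is the image of $\isl_\std([0:1])$. One can also see this directly: $\iota_\rho\circ\isl_\std$ is $\plong$-equivariant, and $\rho\circ\plong$ restricted to $\operatorname{Span}(\vv_0,\vv_1)$ is the standard $2$-dimensional representation of $\SL(2,\mathbb{R})$.

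The main subtlety is the projective case, where $\rho$ is only a projective representation, so $\rho_*$ and the exponential identity need care. I would handle this by passing to a linear lift of the identity component, or of a finite cover of $G$; the Lie algebra $\rho_*$ is always well defined on $\g$. This is harmless because $\exp(tv^-)$ lies in the identity component. The other point to check is the reduction from arbitrary photons, namely $\Aut_{\{\alpha\}}(\g)$-translates, to $G$-translates. Here I would note that photons through $\LP^+$ are $L$-translates of $\Phot_\std$ by Lemma~\ref{fact_stab_u_std}.(2), and that $G$ acts transitively on $\Fl(\g,\alpha)$, so every photon is in fact a $G$-translate of $\Phot_\std$.
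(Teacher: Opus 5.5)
Your proposal is correct and follows the paper's proof: reduce to $\Phot_\std$ by equivariance, then apply Lemma~\ref{lem_degree_rep_1} with $N=1$ to get $\rho(\exp(tv^-))\vv_0=\vv_0+t\,\rho_*(v^-)\vv_0$, and take closures. Your extra details fill in points the paper leaves implicit: the linear independence of $\vv_0$ and $\rho_*(v^-)\vv_0$ via distinct weight spaces, the lift of $\rho_*$ in the projective case, and the reduction of $\Aut_{\{\alpha\}}(\g)$-translates to $G$-translates via Lemma~\ref{fact_stab_u_std}.(2) and transitivity. That last reduction also makes your first-paragraph appeal to outer automorphisms acting on $\mathbb{P}(V)$ unnecessary.
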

This Proposition is proven for the Nagano pair~$(\sll(p+q, \mathbb{R}), \alpha_p)$ and the Plücker triple given in~\eqref{eq_def_plucker_grassman} in~\cite{van2019rigidity}. The authors actually prove that photons are \emph{exactly} the converse images of projective lines that are contained in~$\iota_{\rho_0}(\Grass)$.

\begin{proof}[Proof of Proposition~\ref{prop_photons_egal_proj_lines}] By~$\rho$-equivariance of~$\iota_\rho$, it suffices to prove the proposition for the standard photon~$\Phot_\std$ defined in~\eqref{eq_def_photon_standard}. Let~$\vv_0 \in V^{\omega_\alpha} \smallsetminus \{ 0 \}$ be a lift of~$\iota_\rho(\LP^+)$. For all~$t \in \mathbb{R}$, we have:
\begin{equation}\label{eq_photons_proj_lines}
    \rho(\exp(tv^-)) \cdot \vv_0 = \exp(t \rho_*(v^-)) \cdot \vv_0 = \sum_{k=0}^\infty \frac{t^k \rho_*(v^-)^k}{k!} \cdot \vv_0 = \vv_0 + t \rho_*(v^-) \cdot \vv_0~,
\end{equation}
the last equality holding by Lemma~\ref{lem_degree_rep_1}. Thus
\begin{equation*}
    \iota_\rho(\Phot_\std) = \overline{\{\iota_\rho(\exp(tv^\opp) \cdot \LP^+)\}} =  \overline{\{[\vv_0 + t \rho_*(v^-) \cdot \vv_0] \mid t \in \mathbb{R}\}}
\end{equation*}
is a projective line.~$\qed$
\end{proof}

\begin{rmk} If, instead of considering a Plücker triple for \( (\g, \alpha) \), we take a projective \( \{\alpha\} \)-proximal triple \( (G, \rho', V') \) of \( \g \) with highest weight \( 2\omega_\alpha \), then Proposition~\ref{prop_photons_egal_proj_lines} fails. Indeed, consider for instance de irreducible embedding of~$\SL(2, \mathbb{R})$ in~$\SL(3, \mathbb{R})$. The image of the induced embedding of~$\mathbb{P}(\mathbb{R}^2)$ in~$\mathbb{P}(\mathbb{R}^3)$ is then an ellipsoïd.
\end{rmk}

\section{Kobayashi pseudometric}\label{sect_kob}
In this section, we define and investigate the first properties of the Kobayashi pseudometric. As in Section~\ref{sect_HS_photons_defs}, the content of this section mainly consists of generalizations to general irreducible real-type Nagano spaces of material from \cite{galiay2024rigidity}. 

Even if in \cite{galiay2024rigidity}, the Kobayashi metric is defined and studied only for causal flag manifolds, the proofs of its main properties generalize verbatim. this is the case for all the results of Section~\ref{sect_definition_kobayashi} to~\ref{sect_comp_caratheodory}. Consequently, several results are stated citing~\cite{galiay2024rigidity}, in which case it means that the proof is exactly the same verbatim as the corresponding one in \cite{galiay2024rigidity}. 

On the contrary, Section~\ref{sect_geod_sym_domains} is new even for causal flag manifolds. As mentionned in the introduction, it was proven in collaboration with Chalumeau for item (viii) of Table~\ref{table_nagano_full} with tools coming from conformal pseudo-riemannian geometry, which do not adapt to our general context.

\subsection{Reminders on the Hilbert metric on intervals}\label{sect_def_Hilbert_metric} The definition of the Kobayashi pseudometric uses the Hilbert metric on intervals of the circle, so we need to fix some preliminary notations. We denote by $(\cdot: \cdot: \cdot : \cdot)$ the classical cross ratio on $\mathbb{P}(\mathbb{R}^2)$. Recall that it is~$\SL_2(\mathbb{R})$-invariant and satisfies $([1:0] : [1:1] : [1:t] : [0:1]) = t$. If $I \subset \mathbb{P}(\mathbb{R}^2)$ is a proper open interval with (possibly equal) endpoints $t_1$ and $t_2$, then the \emph{Hilbert pseudometric} on $I$ is denoted $H_I$ and defined as follows: for any pair~$s_1, s_2 \in I$ such that $t_1, s_1, s_2, t_2$ are aligned in this order (taking any order if~$s_1 = s_2$ or~$t_1 = t_2$), one has $H_I(s_1,s_2) := \log(t_1: s_1: s_2 : t_2)$. If $I = \mathbb{P}(\mathbb{R}^2)$, then $H_I$ is by convention the constant map equal to $0$ on $I^2$.

\subsection{The pseudometric}\label{sect_definition_kobayashi}
In this section, we take~$(\g, \alpha)$ to be an irreducible real-type Nagano pair.

We say that two points~$x,y \in \O$ are \emph{photon-related} and write $x\phoo_\O y$ (or simply $x\phoo y$ when there is no ambiguity) if they lie on a same photon~$\Phot$ and in a same connected component of~$\Phot \cap \O$, denoted by~$I$. Let~$\zeta$ be a projective parametrization of~$\Phot$, as in Definition~\ref{def_param_photons}. Let 
\begin{equation*}
    \ro_{\O}(x,y) := \mathsf{H}_{\isl^{-1}\left(I \right)} \left(\isl^{-1}(x), \isl^{-1}(y) \right),
\end{equation*}
where~$\mathsf{H}_{J}$ is by definition the \emph{Hilbert metric} on an interval~$J$ of~$\mathbb{P}(\mathbb{R}^2)$, as recalled in the previous Section~\ref{sect_def_Hilbert_metric}. By Lemma~\ref{fact_stab_u_std}.(4), the quantity~$\ro_{\O}(x,y)$ does not depend on the choice of the projective parametrization of~$\Phot$. 

Now if~$x,y \in \O$, we say that a finite sequence~$(x_0, \dots, x_k) \in \O^{k+1}$, with~$k \in \mathbb{N}$, is \emph{a~$k$-chain} if~$x_0 = y$,~$x_k = y$, and~$x_i, x_{i+1}$ are photon-related for all~$0 \leq i \leq k-1$. We denote by~$\mathcal{C}_{x,y}^k(\O)$ the set of~$k$-chains from~$x$ to~$y$, and
\begin{equation}\label{eq_def_set_og_chains}
    \mathcal{C}_{x,y}(\O) := \bigcup_{k \in \mathbb{N}} \mathcal{C}_{x,y}^k(\O)~.
\end{equation}
An element of~$\mathcal{C}_{x,y}(\O)$ is called a \emph{chain}. We define:
\begin{equation}\label{def_kob_pseudometric}
    \kob_{\O}(x,y) = \inf \Big{\{} \sum_{i=0}^N \ro_{\O}(x_i, x_{i+1}) \mid N \in \mathbb{N}^*, \ \ (x_0, \cdots x_N) \in \mathcal{C}_{x,y}(\O) \Big{\}}~.
\end{equation}

For~$x,y$ sufficiently close to each other, by Remark~\ref{rmk_realizations}.(2) there exists a realization~$\mathbb{B}$ of~$\mathbb{X}(\g, \alpha)$ such that~$x,y \in g\cdot \mathbb{B} \subset\O$. Thus, by the following Lemma~\ref{lem_chain_in_realization}, the set~$\mathcal{C}_{x,y}(\O)$ (and even~$\mathcal{C}^{\operatorname{rk}(\g, \alpha)}_{x,y}(\O)$) is nonempty. Hence the relation ``$x$ and~$y$ can be joined by a chain'' is locally trivial. Since it is an equivalence relation and~$\O$ is connected, it is the trivial relation. Hence the set~$\mathcal{C}_{x,y}(\O)$ is never empty for two points~$x,y \in \O$. Since~$\kob_{\O}(a,b)$ is always finite whenever~$a,b$ are photon-related, the quantity~$\kob_{\O}(x,y)$ is always finite as well. Thus~$\kob_{\O}$ is actually a map~$\O \times \O \rightarrow \mathbb{R}_{+}$.
\begin{lem}\label{lem_chain_in_realization}
    Let~$\O$ be a realization of~$\mathbb{X}(\g, \alpha)$. For all~$x,y \in \O$, there exists a~$\operatorname{rk}(\g, \alpha)$-chain between~$x$ and~$y$.
\end{lem}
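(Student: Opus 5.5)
Since $\O$ is a realization of $\mathbb{X}(\g,\alpha)$, we may translate by an element of $G$ and assume $\O = \ncd(\mathbb{X}(\g,\alpha)) = G^*\cdot\LP^+$. The group $G^*$ acts transitively on $\O$ and preserves photons. A $k$-chain in $\O$ is mapped by any $g\in G^*$ to a $k$-chain in $\O$. So it suffices to treat $x=\LP^+$ and an arbitrary $y\in\O$.

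\textbf{Step 1: reduce to the flat.} By the Cartan decomposition of the symmetric space $G^*/K_0$ with respect to the maximal abelian subspace $\mathfrak{c}\subset\mathfrak{m}_{\mathfrak{h}}$ of Fact~\ref{fact_construction_noncompact_dual}.(1), we have $G^* = K_0\exp(\mathfrak{c})K_0$. Since $K_0$ fixes $\LP^+$, we may write $y = k\exp(\sum_i t_i m_i)\cdot\LP^+$ with $k\in K_0$. Applying $k^{-1}$, which fixes $x=\LP^+$ and preserves $\O$, we may assume $y=\exp(\sum_{i=1}^r t_i m_i)\cdot\LP^+$.

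\textbf{Step 2: build the chain from the commuting copies of $\SL(2,\mathbb{R})$.} The roots $\beta_1,\dots,\beta_r$ are strongly orthogonal, so the $\sll_2$-triples $(E_i,h_{\beta_i},-\sigma_0(E_i))$ pairwise commute. They integrate to commuting homomorphisms $\plong_i:\SL(2,\mathbb{R})\to G$, and $\exp(t_im_i)$ lies in the image of $\plong_i$. Set
\[
x_j := \exp\Bigl(\sum_{i\le j} t_i m_i\Bigr)\cdot\LP^+, \qquad 0\le j\le r.
\]
Each $x_j$ lies in $\O$. Moreover $x_{j} = g_{j-1}\exp(t_jm_j)\cdot\LP^+$ with $g_{j-1}=\exp(\sum_{i<j}t_im_i)\in G^*$. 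Hence it suffices to show that $\LP^+$ and $\exp(t m_j)\cdot\LP^+$ are photon-related in $\O$ for all $t$.

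\textbf{Step 3: the orbit of each $\plong_j$ through $\LP^+$ is a photon.} Since $\g_{\beta_j}\subset\uu^+$ and $\g_{-\beta_j}\subset\uu^-$, the orbit $\plong_j(\SL(2,\mathbb{R}))\cdot\LP^+$ is a circle $\Phot_j$ through $\LP^+$. We claim $\Phot_j$ is a photon. The roots $\beta_j$ all have the same length as $\beta_1$, which is the longest root. In a real-type Nagano space, the long roots in $\Sigma^+_{\{\alpha\}}$ are $W_{\FS\smallsetminus\{\alpha\}}$-conjugate to $\alpha$, via the Levi $L$. I expect this to be the main point to check, and would justify it through Fact~\ref{fact_kostant} and the classification in \cite{takeuchi1988basic}. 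Granting it, some $k\in N_{L\cap K}(\aaa)$ conjugates the $\beta_j$-triple to $\tr_\std$ up to sign, so $\Phot_j = k\cdot\Phot_\std$ is a photon by Lemma~\ref{fact_stab_u_std}.

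It remains to place $\LP^+$ and $\exp(tm_j)\cdot\LP^+$ in the same component of $\Phot_j\cap\O$. The path $s\mapsto\exp(sm_j)\cdot\LP^+$, for $s\in[0,t]$, stays inside $\Phot_j\cap\O$, because $\exp(sm_j)\in G^*$. It joins the two points, so they are photon-related. Concatenating over $j=1,\dots,r$ gives an $r$-chain $(x_0,\dots,x_r)$ from $x$ to $y$. If some $t_j=0$, we repeat points, which still gives a valid $r$-chain since $\ro_\O(z,z)=0$.
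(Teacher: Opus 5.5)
Your proof is correct and is essentially the paper's own construction (Section~\ref{sect_geod_sym_domains}). There too one reduces to a flat via the Cartan decomposition and moves one strongly orthogonal $\sll_2$-coordinate at a time. The only difference is that the paper works in the Cayley-transformed realization $\O_{\mathsf{nb}}=g_{\mathsf{reg}}\cdot\ncd(\mathbb{X}(\g,\alpha))$, where by~\eqref{eq_ad_g_reg} your chain becomes the paper's chain $x_i=\exp(M_i)\cdot\LP^+$ along the lines $\exp(\mathbb{R}F_{i+1})\cdot x_i$.

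The step you flag, that $\overline{\exp(\mathbb{R}F_j)\cdot\LP^+}$ is a photon, is asserted without proof in the paper as well, and it holds uniformly without the classification. The longest element of $W_{\FS\smallsetminus\{\alpha\}}$ sends the highest root $\beta_1$ to $\alpha$, so $\alpha$ is long. A long $\beta\in\Sigma^+_{\{\alpha\}}\smallsetminus\{\alpha\}$ always has some $\gamma\in\FS\smallsetminus\{\alpha\}$ with $\langle\beta,\gamma\rangle>0$, since otherwise Cauchy--Schwarz forces $\beta=\alpha$. You can therefore descend from $\beta$ to $\alpha$ by simple reflections of the Levi.
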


This lemma is well known, but an explicit construction of the~$\operatorname{rk}(\g, \alpha)$-chain between~$x$ and~$y$ is given in Section~\ref{sect_geod_sym_domains}. Besides, we have:

\begin{fact}\label{prop_properties_kobayashi_metric}\cite[Prop.\ 6.5]{galiay2024rigidity}
Let~$\O_1$ and~$ \O_2$ be two domains of~$\Fl(\g, \alpha)$, and~$G \in \mathcal{G}_{\{\alpha\}}(\g)$. Then:
\begin{enumerate}
    \item If~$\O_1 \subset \O_2$, then for any~$x,y \in \O_1$ one has~$\kob_{\O_2}(x,y) \leq \kob_{\O_1}(x,y)$.
    \item For any~$g \in G$, for any~$x,y \in \O_1$, one has~$\kob_{g \cdot \O_1}(g \cdot x, g \cdot y) = \kob_{\O_1}(x,y)$. In particular, the metric~$\kob_{\O_1}$ is~$\Aut_G(\O_1)$-invariant.
\end{enumerate}
\end{fact}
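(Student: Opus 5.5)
Both assertions of Fact~\ref{prop_properties_kobayashi_metric} follow directly from the definition~\eqref{def_kob_pseudometric} of~$\kob$ as an infimum over chains. The plan is to compare, term by term, the quantities~$\ro$ attached to photon-related pairs, and then pass to the infimum.

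For (1), let~$\O_1 \subset \O_2$ and~$x,y \in \O_1$. First I check that any chain in~$\mathcal{C}_{x,y}(\O_1)$ is also a chain in~$\mathcal{C}_{x,y}(\O_2)$. Suppose~$a \phoo_{\O_1} b$, with photon~$\Phot$, and let~$I_1$ be the component of~$\Phot \cap \O_1$ containing~$a,b$. Then~$I_1$ lies in a single connected component~$I_2$ of~$\Phot \cap \O_2$, so~$a \phoo_{\O_2} b$. It remains to show~$\ro_{\O_2}(a,b) \leq \ro_{\O_1}(a,b)$. Fix a projective parametrization~$\isl$ of~$\Phot$ as in Definition~\ref{def_param_photons}. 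Then~$J_1 := \isl^{-1}(I_1) \subset J_2 := \isl^{-1}(I_2)$ are intervals of~$\mathbb{P}(\mathbb{R}^2)$, and the claim reduces to monotonicity of the Hilbert pseudometric of Section~\ref{sect_def_Hilbert_metric} under inclusion of intervals: if~$J_1 \subset J_2$, then~$\mathsf{H}_{J_2} \leq \mathsf{H}_{J_1}$ on~$J_1$.

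To prove this monotonicity I use a projective map sending~$t_1$ to~$[1:0]$ and~$t_2$ to~$[0:1]$. The formula~$\log(t_1:s_1:s_2:t_2)$ then becomes~$\log$ of a ratio, which decreases when the endpoints move outward. The degenerate cases are handled by convention: if~$J_2 = \mathbb{P}(\mathbb{R}^2)$ then~$\mathsf{H}_{J_2}=0$, and the case of coinciding endpoints is also immediate. Summing over a chain and taking the infimum then gives~$\kob_{\O_2}(x,y) \leq \kob_{\O_1}(x,y)$.

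For (2), fix~$g \in G$. Photons are~$\Aut_\Theta(\g)$-translates of~$\Phot_\std$, so~$g$ maps photons to photons. It also maps connected components of~$\Phot \cap \O_1$ onto connected components of~$g\Phot \cap g\O_1$. Hence~$a \phoo_{\O_1} b$ if and only if~$ga \phoo_{g\O_1} gb$, and~$g$ induces a bijection~$\mathcal{C}_{x,y}(\O_1) \to \mathcal{C}_{gx,gy}(g\O_1)$. For the lengths, if~$\isl$ is a projective parametrization of~$\Phot$, then~$g\circ\isl$ is a projective parametrization of~$g\Phot$ by~\eqref{eq_param_photons_3}. The preimage intervals are therefore identical, so~$\ro_{g\O_1}(ga,gb) = \ro_{\O_1}(a,b)$. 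This uses that~$\ro$ is independent of the chosen parametrization, which holds by Lemma~\ref{fact_stab_u_std}.(4) together with the~$\SL_2(\mathbb{R})$-invariance of the cross ratio. Taking infima gives equality of the two Kobayashi pseudometrics, and specializing to~$g \in \Aut_G(\O_1)$ gives the invariance statement.

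I do not expect a serious obstacle. The only points needing care are the monotonicity of the Hilbert metric in the degenerate cases (endpoints equal, or~$J_2$ the whole circle) and checking that~$g$ really preserves components of photon intersections, which holds because~$g$ acts by a homeomorphism.
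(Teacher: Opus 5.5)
Your proof is correct and is essentially the argument the paper relies on: the paper does not reprove this statement but defers to \cite[Prop.\ 6.5]{galiay2024rigidity}. Part (1) holds because chains in $\O_1$ remain chains in $\O_2$, with no larger $\ro$-lengths by monotonicity of the Hilbert metric under inclusion of intervals. Part (2) holds because $g \in G$ carries photons, components of photon intersections and projective parametrizations to their counterparts, so it preserves $\ro$ and hence $\kob$.

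One minor simplification: in the monotonicity step, normalize the endpoints of the larger interval $J_2$ to $[1:0]$ and $[0:1]$ rather than those of $J_1$. Then $J_1$ becomes an honest affine subinterval of $(0,\infty)$, and the comparison is immediate without any wrap-around bookkeeping.
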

Fact~\ref{prop_properties_kobayashi_metric} is proven in~\cite{galiay2024rigidity} only when~$\g$ is a Lie algebra of \emph{Hermitian tube type} and~$\alpha$ defines the \emph{Shilov boundary} of the symmetric space of~$G$, but the proof is exactly the same in our case.

Since one can concatenate chains and reverse the orientation of a chain, the map~$\kob_{\O}$ is symmetric and satisfies the triangle inequality. It is thus a pseudometric, and we call it the \emph{Kobayashi pseudometric}.

\subsection{Reminders on length}\label{section_chains_length} Let $\O$ be a domain contained in an affine chart~$\Affstd$. For a continuous path~$\gamma:[0,1] \rightarrow \O$, we define the \emph{Kobayashi length} or \emph{$\kob_{\O}$-length} of $\gamma$ in the the usual way, as
\begin{equation*}
    \leng(\gamma) = \sup \sum_{i=0}^N \kob_{\O}(\gamma(t_i),\gamma(t_{i+1}))~,
\end{equation*}
where the supremum is taken over all finite subdivisions of $\gamma$. 

Let $x,y \in \O$ be two distinct conjugate points, and let $\Phot$ be the unique photon containing $x$ and $y$. We denote by $[x,y]$ the closure of the only connected component of $\Phot \smallsetminus \{x,y\}$ that is contained in $\O$. By Fact~\ref{lem_photons_affine}, it is an affine segment in $\Affstd$. This segment can be parametrized by~$[t_1,t_2] \rightarrow [x,y]; t  \mapsto \isl([1:t])$, where $\isl$ is a projective parametrization of~$\Phot$ (Definition~\ref{def_param_photons})satisfying $x = \isl([1:t_1])$ and $y = \isl([1:t_2])$. 

Now, let $x, y \in \O$ be any two points. Any element of $u = (x_0, \cdots, x_{N}) \in \mathcal{C}_{x,y}(\O)$ gives rise to a continuous path $\gamma$ from $x$ to $y$, defined as the concatenation of all the segments $[x_0, x_1], \cdots , [x_{N-1}, x_N]$ in this order, each endowed with a parametrization as described above. This path is uniquely defined by $u$ up to reparametrization.
The $\kob_{\O}$-length $\leng(\gamma)$ of $\gamma$ does not depend on the choice of parametrization of the $[x_i, x_{i+1}]$ as constructed above, for $0 \leq i \leq N$. This defines a unique $\kob_{\O}$-length for the chain $(x_0, \cdots, x_{N})$. 

In the rest of the paper, we will identify a chain with the unique (up to parametrization) path it defines by the process described above. In particular, this will allow us to consider the $\kob_{\O}$-length of a chain.

\subsection{Comparison with the Caratheodory pseudometric}\label{sect_comp_caratheodory} Recall that dual convexity is defined in Section~\ref{sect_the_dual}. The Kobayashi pseudometric on dually convex domains can be related to the Caratheodory pseudometrics defined in~\eqref{Cara}: 

\begin{prop}[Metric comparison]\label{lem_key_lemma} Let~$(\g, \alpha)$ be an irreducible real-type Nagano pair and~$(G, V, \rho)$ a linear~$\{\alpha\}$-proximal triple for~$(\g, \alpha)$. Let~$\O \subset \Fl(\g, \alpha)$ be a dually convex domain, and let~$C_{\O}^\rho$ be the Caratheodory metric on~$\O$ induced by~$(V, \rho)$. Then for any two photon-related points~$x,y \in \O$, one has
\begin{equation}\label{eq_ineq_carat}
        \ro_\O (x,y) = \frac{1}{\chi(h_\alpha)}C_{\O}^\rho(x,y)~.
\end{equation}
Furthermore,  
    \begin{enumerate}
        \item one has~$\kob_{\O} \geq \chi_\rho(h_\alpha)^{-1}C_{\O}^\rho$;
        \item Given two photon-related points~$x,y \in \O$, the~$\kob_{\O}$-length of the~$1$-chain~$(x,y)$ is equal to~$\ro_{\O}(x,y) = \kob_{\O}(x,y)$. 
    \end{enumerate}
\end{prop}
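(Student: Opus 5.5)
The plan is to reduce everything to a computation on a single photon, parametrized so that the cross ratio of $\rho$ becomes an explicit power of the classical cross ratio on $\mathbb{P}(\mathbb{R}^2)$.

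\textbf{Step 1: restricting $\rho$ to a photon.} Fix photon-related $x,y \in \O$, on a photon $\Phot$. Let $I$ be the component of $\Phot \cap \O$ containing them. Using $G$-equivariance of both sides (Fact~\ref{prop_properties_kobayashi_metric} and invariance of $C_\O^\rho$), assume $\Phot = \Phot_\std$ with parametrization $\isl_\std$. Set $\chi = \chi_\rho = N\omega_\alpha$, so $\chi(h_\alpha) = N$. By Lemma~\ref{lem_degree_rep_1}, the vector $\vv_0$ generates under $\rho\circ\plong$ an irreducible $\SL(2,\mathbb{R})$-module of dimension $N+1$. This module is $\mathrm{Sym}^N\mathbb{R}^2$, and $\iota_\rho\circ\isl_\std$ is the Veronese map $[s:t]\mapsto [(s,t)^{\otimes N}]$.

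\textbf{Step 2: dual points meeting the photon.} For $\xi \in \Fl(\g,\alpha)^\opp$ with $\Phot \not\subset \hyp_\xi$, Fact~\ref{lem_photons_affine} says $\Phot\cap\hyp_\xi$ is a single point $\isl_\std([a])$. The restriction of $f_\xi$ to the module is a degree-$N$ form on $\mathbb{R}^2$. This form is nonvanishing off $[a]$ and vanishes at $[a]$. The key claim is that it equals $\lambda\,\det(a,\cdot)^N$. To prove it, use the equivariance of $\iota_\rho^\opp$ together with the fact that the stabilizer of $\xi$ is a parabolic conjugate to $P^\opp$. Its intersection with $\plong(\SL(2,\mathbb{R}))$ contains the Borel fixing $[a]$, since hyperplanes $\hyp_\xi$ meeting $\Phot$ at one point can be moved by $\mathrm{Stab}(\Phot)$ (Lemma~\ref{fact_stab_u_std}.(4)). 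The form is then a Borel-eigenvector, i.e.\ a lowest weight vector, i.e.\ $\det(a,\cdot)^N$. Consequently
$$[\xi:x:y:\eta]_\rho = (a:x:y:b)^N$$
for the classical cross ratio. If instead $\Phot\subset\hyp_\xi$, the expression is degenerate. Since $\xi\in\O^*$ and $\Phot\cap\O\neq\varnothing$, this case never arises.

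\textbf{Step 3: the equality $\ro_\O = N^{-1}C_\O^\rho$.} For $\xi,\eta\in\O^*$, the points $a,b$ lie outside $I$. Monotonicity of the classical cross ratio gives $|\log(a:x:y:b)|\le H_I(x,y)$, hence $C^\rho_\O\le N\ro_\O$. For the reverse inequality, let $t_1,t_2$ be the endpoints of $I$. These lie in $\partial\O$, so dual convexity provides $\xi,\eta\in\O^*$ with $t_1\in\hyp_\xi$ and $t_2\in\hyp_\eta$. When $I\neq\Phot$, these hyperplanes meet $\Phot$ exactly at $t_1,t_2$, and equality is attained. If $I=\Phot$ then $\ro_\O=0$ and the bound is trivial. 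I expect the main obstacle to be the identification of $f_\xi|_{\mathrm{Sym}^N}$ as an $N$-th power in Step 2, especially when $\rho$ is not a Plücker triple. A fallback is to argue directly: it is a real binary form of degree $N$ whose only real zero is $[a]$. Then continuity within the connected family of such $\xi$, degenerating from $\LP^\opp$ where it is explicitly $\det(e_2,\cdot)^N$, shows the zero has multiplicity $N$.

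\textbf{Step 4: the two consequences.} For (1), sum Step 3 along any chain and use the triangle inequality for $C^\rho_\O$; taking the infimum gives $\kob_\O\ge N^{-1}C^\rho_\O$. For (2), by (1) and Step 3,
$$\kob_\O(x,y)\ge N^{-1}C^\rho_\O(x,y)=\ro_\O(x,y)\ge\kob_\O(x,y),$$
so all are equal. The same equality holds for every subsegment, and the Hilbert metric is additive along $[x,y]$. Therefore every subdivision sum equals $\ro_\O(x,y)$, so the $\kob_\O$-length of the $1$-chain $(x,y)$ is $\ro_\O(x,y)$.
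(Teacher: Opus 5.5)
Your Steps~3 and~4 match the paper's argument. The paper phrases (1) through the cocycle property of cross ratios rather than the triangle inequality for $C_\O^\rho$, which amounts to the same thing. The difference is that the paper does not prove the identity $\log|[\xi:x:y:\eta]_\rho|=\chi_\rho(h_\alpha)\,|\log(a:x:y:b)|$; it imports it as Lemma~\ref{lem_comp_crossratios}. Your own proof of it in Step~2 rests on a false claim: that $\operatorname{Stab}_G(\xi)\cap\plong(\SL(2,\mathbb{R}))$ contains the Borel subgroup fixing $[a]$.

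Here is a counterexample. Take $\operatorname{Gr}_2(\mathbb{R}^4)$ with $G=\PGL(4,\mathbb{R})$, $\LP^+=\langle e_1,e_2\rangle$, $\LP^\opp=\langle e_3,e_4\rangle$ and $v^-=E_{32}$. Then $\Phot_\std=\{V\mid \langle e_1\rangle\subset V\subset\langle e_1,e_2,e_3\rangle\}$, and $\plong(\SL(2,\mathbb{R}))$ acts on $\langle e_2,e_3\rangle$ and trivially on $e_1,e_4$. For $\xi=\langle e_1+e_3,\,e_2+e_4\rangle$ one checks that $\Phot_\std\cap\hyp_\xi=\{\langle e_1,e_3\rangle\}$. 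Yet a two-line computation shows that the only element of $\plong(\SL(2,\mathbb{R}))$ preserving $\xi$ is the identity.

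The mechanism you invoke cannot produce the Borel. If $g\cdot\xi=\LP^\opp$, the conjugate $g^{-1}(P^\opp\cap\plong(\SL(2,\mathbb{R})))g$ lies in $\operatorname{Stab}_G(\xi)$, but nothing forces it into $\plong(\SL(2,\mathbb{R}))$ unless $g$ normalizes that group. Here no such $g$ exists: the normalizer preserves the isotypic splitting $\langle e_1,e_4\rangle\oplus\langle e_2,e_3\rangle$, which $\LP^\opp$ respects and $\xi$ does not. Lemma~\ref{fact_stab_u_std}.(4) says nothing about transitivity on $\Contlam_{\Phot}$ in any case.

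Your fallback does not close the gap for $N\geq 3$. A real binary form of degree $N$ with a single real zero need not be an $N$-th power: for instance $s(s^2+\varepsilon t^2)\to s^3$ as $\varepsilon\to 0$. So continuity from $\LP^\opp$ only controls the multiplicity of the real zero modulo~$2$.

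The identity you want is true, but it needs a genuine argument. The simplest repair is to quote Lemma~\ref{lem_comp_crossratios}, after which your proof is the paper's. For a self-contained proof, one route goes through a Plücker triple. Realize the representation of highest weight $N\omega_\alpha$ as the Cartan component of $\mathrm{Sym}^N$ of a Plücker representation. Then $N$-th powers of Plücker lifts of $x$ and $\xi$ are lifts for $\rho$, so $[\xi:x:y:\eta]_\rho$ is the $N$-th power of the Plücker cross ratio. Along a photon, that is the classical cross ratio on a projective line, by Proposition~\ref{prop_photons_egal_proj_lines}.

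Alternatively, the correct group-theoretic input concerns $\operatorname{Stab}_G(\xi)\cap\operatorname{Stab}_G(\Phot)$, not $\operatorname{Stab}_G(\xi)\cap\plong(\SL(2,\mathbb{R}))$: it acts on $\Phot$ through the whole Borel fixing $[a]$. In the example, the maps $e_2\mapsto e_2+ce_3,\ e_4\mapsto e_4-ce_3$ and $e_2\mapsto\lambda e_2,\ e_4\mapsto\lambda e_4$ (fixing $e_1,e_3$) do this. Projective transformations preserving the rational normal curve $\iota_\rho(\Phot)$ come from $\PGL(2,\mathbb{R})$. This would force $f_\xi$, restricted to the span of $\iota_\rho(\Phot)$, to be proportional to $\det(a,\cdot)^N$. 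That statement about the stabilizer would itself have to be proved in general.
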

 
This proposition is a straightforward generalization to any Nagano space, any dually convex domains and any~$\{\alpha\}$-proximal triple of~$\g$, of \cite[Prop.\ 6.10]{galiay2024rigidity}. It relies on the following Lemma~\ref{lem_comp_crossratios}, which is proved in \cite[Lem.\ 6.13]{galiay2024rigidity} in the case where~$\g$ is a Hermitian Lie algebra of tube type, and~$\Fl(\g,\alpha)$ is the Shilov boundary of the symmetric space of~$\g$, for~$\chi = m \omega_\alpha$ ($m \in \mathbb{N}{>0}$). We subsequently generalized our proof to all irreducible real Nagano spaces in \cite[Lem.\ 6.3.13]{galiay2025convex}. On the other hand, Beyrer--Guichard--Labourie--Pozzetti--Wienhard independently proved this lemma in \cite[Prop.\ 3.27]{beyrer2024positivity}. Their proof is valid in any flag manifold defined by a subset~$\Theta$ whose roots have multiplicity~1.

Given a photon~$\Phot$, we set
$$\Contlam_\Phot = \left\{ \xi \in \Fl(\g, \alpha)^\opp \mid \Phot \not\subset \hyp_{\xi}\right\}~.$$
Recall that, by Fact~\ref{lem_photons_affine}, given a point~$\xi \in \Contlam_{\Phot}$, the set~$\hyp_{\xi} \cap \Phot$ is a singleton. We can then define a projection~$\pstd_{\Phot}: \Contlam_{\Phot} \rightarrow \Phot$ by setting~$\pstd_{\Phot}(\xi) := a$ where~$\{a\} = \hyp_{\xi} \cap \Phot$ if~$\xi \in \Contlam_{\Phot}$.  

\begin{lem}\label{lem_comp_crossratios}\cite{beyrer2024positivity, galiay2025convex}
Let~$(\g, \alpha)$ be an irreducible real-type Nagano pair, and let~$(G, \rho, V)$ be a linear~$\{\alpha\}$-proximal triple of~$\g$. Let~$\Phot$ be a photon and let~$\isl$ be a projective parametrization of ~$\Phot$. Let~$\xi_1, \xi_2 \in \Contlam_{\Phot}$, and for~$i \in \{1,2\}$, let~$b_i \in \mathbb{P}(\mathbb{R}^2)$ be such that~$\pstd_\Phot(\xi_i) = \isl(b_i)$. Then for all~$a_1, a_2 \in \mathbb{P}(\mathbb{R}^2)$, one has
    \begin{equation*}
       \log \big{|}[\xi_1 : \isl(a_1) : \isl(a_2) : \xi_2]_{\rho} \big{|} = \chi_\rho(h_\alpha) |\log (b_1: a_1: a_2: b_2)|~.
    \end{equation*}
\end{lem}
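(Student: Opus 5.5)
We reduce to the standard photon and a direct weight computation. By $\rho$-equivariance of $\iota_\rho,\iota_\rho^\opp$ and $G$-invariance of both cross ratios, we may assume $\Phot=\Phot_\std$ and $\isl=\isl_\std$. By Lemma~\ref{fact_stab_u_std}.(4), $\plong(\SL(2,\mathbb{R}))$ acts on $\Phot_\std$ through projective transformations compatible with $\isl_\std$. Both sides of the claimed identity are invariant under $\SL(2,\mathbb{R})$ acting diagonally on $(b_1,a_1,a_2,b_2)$; on the left this uses the equivariance $\pstd_\Phot(g\xi)=g\,\pstd_\Phot(\xi)$ for $g\in\plong(\SL(2,\mathbb{R}))$. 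The plan is therefore to compute $\log|[\xi_1:\isl(a_1):\isl(a_2):\xi_2]_\rho|$ as a function of $a_1,a_2$, with $\xi_1,\xi_2$ fixed.

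\textbf{Step 1: reduction to a polynomial in $t$.} Fix a lift $\vv_0\in V^{\chi}$ of $\iota_\rho(\LP^+)$ and write $\chi=\chi_\rho=N\omega_\alpha+(\text{other})$. Since $\Theta=\{\alpha\}$, we have $\chi=N\omega_\alpha$ with $N=\chi(h_\alpha)$. By Lemma~\ref{lem_degree_rep_1}, for $a=[1:t]$,
\[
\nu_{\isl(a)}=\exp(t\rho_*(v^-))\vv_0=\sum_{k=0}^N \frac{t^k}{k!}\rho_*(v^-)^k\vv_0 .
\]
Hence for any $f\in\iota_\rho^\opp(\xi)$, the function $P_\xi(t):=f(\nu_{\isl([1:t])})$ is a polynomial of degree at most $N$. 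Homogenizing, we obtain a binary form $\tilde P_\xi(s,t)$ of degree $N$ on $\mathbb{R}^2$. This form transforms under $\SL(2,\mathbb{R})$ as a section of $\mathcal{O}(N)$, because $\plong(\SL(2,\mathbb{R}))$ acts on the span $W=\langle \rho_*(v^-)^k\vv_0\rangle$. That span is an irreducible $\sll_2$-module of highest weight $N$, i.e.\ $W\cong \mathrm{Sym}^N\mathbb{R}^2$, and $t\mapsto\nu_{\isl([1:t])}$ is the Veronese curve.

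\textbf{Step 2: the zero set of the form.} The key claim is that $\tilde P_\xi=c\,\ell_{b}^N$, where $\ell_b$ is the linear form vanishing at $b$ and $\isl(b)=\pstd_\Phot(\xi)$. The zeros of $\tilde P_\xi$ on $\mathbb{P}(\mathbb{R}^2)$ are exactly the points $a$ with $\iota_\rho(\isl(a))$ non-transverse to $\iota_\rho^\opp(\xi)$, i.e.\ $\isl(a)\in\hyp_\xi$ by Fact~\ref{prop_ggkw}.(2). By Fact~\ref{lem_photons_affine}, there is exactly one such real point, namely $b$, since $\xi\in\Contlam_\Phot$. Real zeros alone do not force an $N$-fold root, however: complex roots could also occur. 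To rule this out, move $\xi$ by $U^+$-type elements so that $b=[1:0]=\LP^+$. Then $f$ kills $\vv_0$, and we must show that $f$ kills all $\rho_*(v^-)^k\vv_0$ for $k<N$. I would argue this using the $\aaa$-weights: $\hyp_\xi\ni\LP^+$ means $\xi$ is non-transverse to $\LP^+$. By $P^+$-invariance one can reduce to $\xi$ in a Bruhat cell, and the restriction of $f$ to $W$ then lies in the lowest weight line, namely the dual of $V^{\chi-N\alpha}$. Alternatively, one can use that the set of $\xi$ with $\pstd(\xi)=b$ is connected and contains $\LP^\opp$-translates where the claim is clear, together with a closedness/degree argument. \emph{This step is the main obstacle.}

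\textbf{Step 3: computing the cross ratio.} With $f_{\xi_i}(\nu_{\isl(a)})=c_i\,\ell_{b_i}(a)^N$, the cross ratio becomes
\[
\left(\frac{\ell_{b_1}(a_1)\ell_{b_2}(a_2)}{\ell_{b_1}(a_2)\ell_{b_2}(a_1)}\right)^N=(b_1:a_1:a_2:b_2)^{\pm N}.
\]
Taking $\log|\cdot|$ gives $N\,|\log(b_1:a_1:a_2:b_2)|$ up to the sign convention on the cross ratio. The absolute value on the right-hand side of the statement absorbs the orientation ambiguity, and $N=\chi_\rho(h_\alpha)$.
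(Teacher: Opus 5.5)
Your Steps~1 and~3 are fine. $W=\operatorname{span}\{\rho_*(v^-)^k\vv_0\}_{0\le k\le N}$ is the irreducible $\sll_2$-module of highest weight $N=\chi_\rho(h_\alpha)$, and $a\mapsto\nu_{\isl(a)}$ is the Veronese curve. Once each $f_{\xi_i}|_W$ is known to correspond to $c_i\ell_{b_i}^N$, the cross ratio is $(b_1:a_1:a_2:b_2)^{\pm N}$. With the paper's conventions the exponent is $-N$, so the statement is really an identity between absolute values of logarithms.

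\textbf{The gap is Step~2.} For $N\ge 2$ it is the entire content of the lemma (for a Plücker triple it is vacuous by Proposition~\ref{prop_photons_egal_proj_lines}), and you do not prove it. Neither of your sketches works as written.
\begin{itemize}
\item $P^+$ does not stabilize $\Phot_\std$: by Lemma~\ref{fact_stab_u_std}.(2), $L$ permutes the photons through $\LP^+$ transitively. So $P^+$ does not stabilize $W$ either. Translating $\xi$ by $p\in P^+$ turns the problem into restricting $f_{p^{-1}\xi}$ to $\rho(p)^{-1}W$, which is no longer spanned by weight vectors, so no weight argument applies.
\item The claim that $f|_W$ lies in a single weight line holds only for Weyl translates $w\cdot\LP^\opp$, not for arbitrary points of a cell.
\item $U^+$ fixes $\LP^+=\isl([1:0])$, so it cannot move $b$ there; you would need $\plong(\SL(2,\mathbb{R}))$.
\item The connectedness alternative cannot work. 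Being an $N$-th power is a closed but not open condition. Having a unique real root, which is all Fact~\ref{lem_photons_affine} gives on $\Contlam_\Phot$, does not exclude forms such as $t(s^2+t^2)$.
\end{itemize}

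\textbf{How to close it.} Use the \emph{minimal} parabolic $P^+_\FS$, which does stabilize $\Phot_\std$ and acts rigidly on $W$.
\begin{itemize}
\item If $\beta$ is a positive root of $L$, then $\beta-\alpha$ has coefficients of both signs. Hence $[\g_\beta,v^-]=0$, and $\g_\beta$ kills $W$ because it kills $\vv_0$.
\item If $\beta\in\Sigma^+_{\{\alpha\}}\smallsetminus\{\alpha\}$, then $[\g_\beta,v^-]\subset\g_{\beta-\alpha}$ is of the previous kind: it commutes with $v^-$ and kills $\vv_0$. So $\g_\beta$ also kills $W$.
\item $\g_\alpha=\mathbb{R}v^+$ acts through $\sll_2$. $\aaa$ acts by a scalar plus a multiple of $\rho_*(h_\alpha)$, and $M$ acts by signs.
\end{itemize}
Hence every $p\in P^+_\FS$ acts on $W$ as a scalar times $\mathrm{Sym}^N(g)$ for some $g\in\GL(2,\mathbb{R})$, and so preserves $N$-th powers of linear forms.

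By the Bruhat decomposition $\Fl(\g,\alpha)^\opp=\bigsqcup_w P^+_\FS w\cdot\LP^\opp$, it suffices to treat $\xi=w\cdot\LP^\opp$. There $f_\xi$ has weight $-w\chi$, so $f_\xi|_W$ only sees the line $V^{w\chi}\cap W$. Thus $\tilde P_\xi$ is a multiple of $s^{N-k}t^k$ if $w\chi=\chi-k\alpha$, and $0$ otherwise. For $0<k<N$, $\chi-k\alpha$ is a strict convex combination of $\chi$ and $s_\alpha\chi=\chi-N\alpha$. It is therefore strictly shorter than $\chi$ and not in the Weyl orbit of $\chi$. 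So $\tilde P_\xi$ is a multiple of $s^N$ or $t^N$, and it is nonzero because $\Phot_\std\not\subset\hyp_\xi$; this is exactly Step~2.

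Alternatively, realize $\rho$ as the Cartan component of $\mathrm{Sym}^N$ of a Plücker representation. Then $f_\xi(\nu_x)$ is the $N$-th power of the Plücker pairing, which reduces everything to $N=1$.
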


Since Proposition~\ref{lem_key_lemma} is not established anywhere in this setting and generality, we provide a proof here:

 \begin{proof}[Proof of Proposition~\ref{lem_key_lemma}] First note that we may assume~$\O \ne \Fl(\g, \alpha)$, otherwise the proposition is immediate. Hence~$\partial \O \ne \varnothing$ which by dual convexity implies that~$\O$ is contained in an affine chart.

Let~$\Phot$ be the photon through~$x$ and~$y$, and let~$\zeta$ be a projective parametrization of~$\Phot$. Then there exist~$a_1, a_2 \in \mathbb{P}(\mathbb{R}^2)$ such that~$x= \isl(a_1)$ and~$ y = \isl(a_2)$. 

Recall that we denote by~$I_{x,y}$ the connected component of~$\Phot_{\mathsf{std}} \cap \O$ containing~$x$ and~$y$. Since~$\O$ is contained in an affine chart, the interval~$I_{x,y} \subset \Phot_\std$ has nonempty boundary. Let~$p_1,p_2 \in \partial \O$ be the (potentially equal) endpoints of~$I_{x,y}$, such that~$p_1, x,y, p_2$ are aligned on~$\Phot_{\mathsf{std}}$ in this order. Then there exist~$b_1, b_2 \in \mathbb{P}(\mathbb{R}^2)$ such that~$b_1, a_1, a_2, b_2$ are aligned in this order and~$p_1 = \isl(b_1)$, and~$p_2 = \isl(b_2)$.

By dual convexity, for~$i \in \{ 1,2 \}$ there exists~$\xi_i\in \O^*$ such that~$p_i \in \hyp_{\xi_i}$. Then, by Lemma~\ref{lem_comp_crossratios}, one has~$\ro_{\O}(x,y) =  \log \left| (b_1 : a_1 : a_2: b_2)\right|  = \chi(h_\alpha)^{-1}\log \big{|} \left[\xi_1 : x : y : \xi_2 \right]_{\rho}\big{|}$. By the definition of~$C_{\O}^\rho$, this implies that~$\ro_{\O}(x,y) \leq \chi(h_\alpha)^{-1} C_{\O}^\rho(x,y)$. 

For the converse inequality, let~$\eta_1, \eta_2 \in \O^*$ be such that~$C_{\O}^\rho(x,y) = \log\left|[\eta_1 : x:y: \eta_2]_{\rho}\right|$. 
For~$i \in \{ 1,2\}$, let~$b_i' \in \mathbb{P}(\mathbb{R}^2)$ be such that~$\isl(b_i') = \pstd_\Phot(\eta_i)$. Then, again by Lemma~\ref{lem_comp_crossratios}:
\begin{equation*}
\big{| } \log \left| (b_1' : a_1 : a_2 : b_2') \right| \big{| }=  \chi(h_\alpha)^{-1} \log \left| [\eta_1 : x: y: \eta_2]_{\rho} \right|~.
\end{equation*}
Since~$\eta_1, \eta_2 \in \O^*$, the two points~$\isl(b_1'), \isl(b_2')$ are not contained in~$I_{x,y}$. Thus one has
\begin{equation*}
    \big{| } \log \left| (b_1' : a_1 : a_2 : b_2') \right| \big{| } \leq \log \left| \left(b_1 : a_1 : a_2 : b_2 \right) \right| = \ro_{\O}(x,y)~.
\end{equation*}
Hence one has~$
    \chi(h_\alpha)^{-1}C_{\O}^\rho(x,y) \leq \ro_{\O}(x,y)$. This proves~\eqref{eq_comp_kob_carat}.

Now let us prove point~(1). Let~$x, y\in \O$ be any two points, and let~$(x_0, \dots , x_M) \in \mathcal{C}_{x,y}(\O)$. Then one has
\begin{equation}\label{eq_comp_kob_carat}
    \begin{split}
        \sum_i \ro_\O(x_i, x_{i+1}) &= 
    \chi(h_\alpha)^{-1}\sum_i C_{\O}^\rho(x_i, x_{i+1}) \geq \sum_i \chi(h_\alpha)^{-1} \sup_{\xi_1^i, \xi_2^i \in \O^*}\log[\xi_1^i : x_i: x_{i+1}: \xi_2^i]_{\rho} \\
    &\geq  \chi(h_\alpha)^{-1}\sup_{\xi_1, \xi_2 \in \O^*} \sum_i \log[\xi_1 : x_i: x_{i+1}: \xi_2]_{\rho} \\
    &\geq  \chi(h_\alpha)^{-1} \sup_{\xi_1, \xi_2 \in \O^*}  \log[\xi_1 : x: y: \xi_2]_{\rho}   = \chi(h_\alpha)^{-1}C_{\O}^\rho(x,y).
    \end{split}
\end{equation}
Since this is true for all~$(x_0, \dots, x_{M}) \in \mathcal{C}_{x,y}(\O)$, by taking the infimum we get the statement of point~(1).

Point~(2), just follows from taking $x$ and $y$ to be two conjugate points in~\eqref{eq_comp_kob_carat}.

The fact that $C_{\O}^\rho(x,y) = N \ro_\O(x,y)$ and Equation~\eqref{eq_comp_kob_carat} imply that the segment~$[x,y]$ has $\kob_{\O}$-length $\ro_{\O}(x,y)$. Hence the $\kob_{\O}$-length of $\gamma = (x_0, \cdots, x_M) \in \mathcal{C}_{x,y}(\O)$ is 
\begin{equation}\label{eq_longueur_chaine}
    \leng(\gamma) = \sum_i \kob_{\O}(x_i, x_{i+1})~.
\end{equation}
Then one has $\kob_{\O}(x,y) = \inf \left\{ \leng(\gamma) \mid \gamma \in \mathcal{C}_{x,y}(\O) \right\}$.

Now let $\mathcal{C}_{x,y}'(\O)$ the set of all rectifiable curves joining $x$ and $y$ in $\O$. By the definition of the length of a curve, one has $\kob_{\O}(x,y) \leq \inf \left\{ \leng(\gamma) \mid \gamma \in \mathcal{C}_{x,y}'(\O) \right\}$. Since chains are rectifiable (for the identification with continuous paths, see Section~\ref{section_chains_length}), this last inequality is an equality.

~$\qed$
 \end{proof}

\section{Kobayashi-hyperbolicity}\label{sect_kob_hyperbolicity}
We say that a domain~$\O \subset \Fl(\g, \alpha)$ is \emph{Kobayashi-hyperbolic} if~$\kob_{\O}$ is a metric, that is, if~$\kob_{\O}$ separates points. The map~$\kob_{\O}$ is then called the \emph{Kobayashi metric} of~$\O$. In this section, we prove that both proper domains and~\emph{$\MRr$-proper} dually convex domains are Kobayashi-hyperbolic. An important consequence is that~$ \Aut(\O)$ acts properly on~$\O$ (Corollary~\ref{cor_action_proper}). This was already known for proper domains (Fact~\ref{fact_autom_group_proper}), but not for~$\MRr$-proper dually convex domains).

\subsection{Continuity} In this section, we prove two auxiliary lemmas for the study of Kobayashi-hyperbolicity.

Given an affine space~$\operatorname{A}$ and a bounded convex open subset~$C \subset \operatorname{A}$, we denote by~$\mathsf{H}_C$ the classical \emph{Hilbert metric} on~$C$ (see \cite{papadopoulos2014handbook}).

\begin{lem}\label{lem_comp_hilbert_kob}
    Let~$\Affstd$ be an affine chart of~$\Fl(\g, \alpha)$, and let~$C \subset \Affstd$ be a bounded convex open subset of~$\Affstd$. Let~$x,y \in C$ be two points on a same photon. Then~$\ro_C(x,y) = \mathsf{H}_C(x,y)$.
\end{lem}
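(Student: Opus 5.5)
The plan is to reduce both sides to the same one-dimensional Hilbert cross ratio along the affine line determined by the photon. Let $\Phot$ be the photon through $x$ and $y$. Since $x \in C \subset \Affstd$, the intersection $\Phot \cap \Affstd$ is nonempty. Hence, by Fact~\ref{lem_photons_affine} (transported from $\Affstdstd$ to $\Affstd$ by an element of $G$, which acts by affine maps between charts), $\Phot \cap \Affstd$ is an affine line $D$ of $\Affstd$, and $\Phot \smallsetminus D$ is a single point $\infty_\Phot \in \hyp$. Because $C$ is convex and bounded, $D \cap C$ is a bounded open segment containing $x$ and $y$. It is therefore connected and is exactly the connected component $I$ of $\Phot \cap C$ used in the definition of $\ro_C(x,y)$. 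Its endpoints $p_1,p_2 \in \partial C$ are distinct points of $D$, and $\mathsf{H}_C(x,y)$ is by definition $\log$ of the affine cross ratio of $p_1,x,y,p_2$ on $D$.

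The key step is to check that a projective parametrization $\isl$ of $\Phot$ restricts, on $\isl^{-1}(D)$, to an affine parametrization of $D$; that is, after the change of variable $[1:t]$, the map $t \mapsto \isl([1:t])$ is affine in the chart. Granting this, the projective cross ratio on $\mathbb{P}(\mathbb{R}^2)$ of the preimages $b_1,a_1,a_2,b_2$ of $p_1,x,y,p_2$ equals the affine cross ratio on $D$. The reason is that the point at infinity $[0:1]$ of $\mathbb{P}(\mathbb{R}^2)$ corresponds to $\infty_\Phot$, and the classical cross ratio restricted to an affine line is the usual ratio of distances. We then get $\ro_C(x,y) = \mathsf{H}_{\isl^{-1}(I)}(a_1,a_2) = \log(b_1:a_1:a_2:b_2) = \mathsf{H}_C(x,y)$.

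To prove the key step, reduce by $G$-equivariance to the case $\Affstd = \Affstdstd$. Write $\Phot = g\cdot\Phot_\std$ and use Lemma~\ref{fact_stab_u_std}.(4) together with the freedom to reparametrize. We may then arrange, by composing $g$ with elements of $\plong(\SL(2,\mathbb{R}))$, that $\isl([0:1])$ is the unique point of $\Phot$ in $\hyp_{\LP^\opp}$. Then $g$ maps $\Phot_\std \cap \Affstdstd$, which is $\{\exp(tv^\opp)\cdot\LP^+\}$ and affine in $t$ by~\eqref{eq_A_egal_exp}, onto $D$ while fixing the hyperplane at infinity. Such an element should preserve $\Affstdstd$, hence lie in $P^\opp = U^\opp \rtimes L$. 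By Section~\ref{sect_dilations_translations} and the linearity of $\Ad(L)$ on $\uu^\opp$, $P^\opp$ acts on $\Affstdstd \simeq \uu^\opp$ by affine maps. The composite $t \mapsto g\exp(tv^\opp)\cdot\LP^+$ is then affine.

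I expect the main obstacle to be the justification that $g$ can be taken in $P^\opp$, i.e.\ that the stabilizer of the chart acts affinely and that one can arrange $g\cdot\Affstdstd = \Affstdstd$. If this is awkward, I would follow the route of the proof of Fact~\ref{lem_photons_affine} (\cite[Lem.~5.4]{galiay2024rigidity}), which already exhibits $\Phot \cap \Affstdstd$ as $X + \mathbb{R}\,\Ad(l)v^\opp$ with $l \in L$, parametrized by $t \mapsto \exp(X + t\Ad(l)v^\opp)\cdot\LP^+$. That parametrization is visibly both projective, being a $G$-translate of $\isl_\std$, and affine.
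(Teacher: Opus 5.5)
Your proof is correct and is the paper's argument. The paper simply notes that, by Fact~\ref{lem_photons_affine}, the trace $\Phot\cap\Affstd$ is an affine line, so $\ro_C(x,y)$ and $\mathsf{H}_C(x,y)$ are the same one-dimensional cross ratio; you additionally verify a point the paper leaves implicit, namely that a projective parametrization of $\Phot$ is affine in the chart.

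For that verification, drop your first justification, since its claim that $g$ must lie in $P^\opp$ is false. An element $g$ with $g\cdot\Phot_{\mathsf{std}}=\Phot$ and matching points at infinity need not lie in $P^\opp$: already in $\mathbb{P}(\mathbb{R}^3)$, some elements fixing $\Phot_{\mathsf{std}}$ pointwise move $\LP^\opp$. Your fallback, by contrast, is complete. Write a point of $\Phot\cap\Affstdstd$ as $\exp(X)\cdot\LP^+$; then Lemma~\ref{fact_stab_u_std}.(2) gives $\Phot=\exp(X)l\cdot\Phot_{\mathsf{std}}$ with $l\in L$, and $\exp(X)l\cdot\isl_{\mathsf{std}}([1:t])=\exp(X+t\Ad(l)v^\opp)\cdot\LP^+$ is affine in $t$.
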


\begin{proof}
    This is just a consequence of the definition of~$\mathsf{H}_C$ and the fact that the trace of a photon in~$\Affstd$ is either empty or an affine line of~$\Affstd$ (Fact~\ref{lem_photons_affine}).~$\qed$
\end{proof}

\begin{lem}\label{lem_kob_contin}
    Let~$\O \subset \Fl(\g, \alpha)$ be a domain. Then~$(x,y) \mapsto \kob_\O(x,y)$ is continuous with respect to the standard topology on~$\O$.
\end{lem}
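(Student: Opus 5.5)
By the triangle inequality it suffices to show local smallness: for every $x_0 \in \O$ and $\varepsilon>0$ there is a neighbourhood $\mathcal{U}$ of $x_0$ with $\kob_\O(x,y) \le \varepsilon$ for all $x,y\in\mathcal{U}$. Granting this, for $(x,y)$ near $(x_0,y_0)$ we get
$$|\kob_\O(x,y)-\kob_\O(x_0,y_0)| \le \kob_\O(x,x_0)+\kob_\O(y,y_0) \le 2\varepsilon,$$
which is continuity at $(x_0,y_0)$.

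To get local smallness, I use monotonicity, Fact~\ref{prop_properties_kobayashi_metric}.(1). By Remark~\ref{rmk_realizations}.(2), realizations of $\mathbb{X}(\g,\alpha)$ form a neighbourhood basis. Moreover, by Section~\ref{sect_dilations_translations}, dilations centred at $x_0$ in an affine chart containing $x_0$ are realized by elements of $P^+$. So I fix a realization $\mathbb{B}$ with $x_0 \in \mathbb{B}$ and $\overline{\mathbb{B}} \subset \O$, and a fixed convex open set $C$ in the affine chart with $x_0\in C$ and $\overline{C}\subset\O$. By shrinking $\mathbb{B}$ with a dilation at $x_0$, I also arrange $\overline{\mathbb{B}}\subset C$. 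Then for $x,y\in\mathbb{B}$ we have $\kob_\O(x,y)\le\kob_C(x,y)$.

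Next I bound $\kob_C$ by chains. By Lemma~\ref{lem_chain_in_realization}, any two points $x,y\in\mathbb{B}$ are joined by an $r$-chain $(x_0',\dots,x_r')$ inside $\mathbb{B}\subset C$, with $r=\operatorname{rk}(\g,\alpha)$. By Lemma~\ref{lem_comp_hilbert_kob}, each step satisfies $\ro_C(x_i',x_{i+1}')=\mathsf{H}_C(x_i',x_{i+1}')$. Since $\overline{\mathbb{B}}$ is a compact subset of the bounded convex set $C$, the Hilbert metric $\mathsf{H}_C$ on it is bounded by a constant times the Euclidean distance in the chart. It therefore suffices to choose $\mathbb{B}$ so small that every chain point, and hence every step, lies within Euclidean distance $\delta$ of $x_0$. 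This gives $\kob_\O(x,y)\le r\cdot c\,\delta$.

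The main obstacle is ensuring that the whole chain stays near $x_0$, and not just its endpoints. Containment in $\mathbb{B}$ handles this automatically, since the chain of Lemma~\ref{lem_chain_in_realization} lies in $\mathbb{B}$. Shrinking $\mathbb{B}$ by dilations at $x_0$ shrinks its Euclidean diameter to $0$, while $C$ stays fixed, so the Lipschitz constant $c$ of $\mathsf{H}_C$ on $\overline{\mathbb{B}}$ is uniform. I then take $\mathcal{U}=\mathbb{B}$.
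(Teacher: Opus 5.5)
Your proposal is correct and follows essentially the same route as the paper: a triangle-inequality reduction, a small realization containing an $r$-chain from Lemma~\ref{lem_chain_in_realization}, monotonicity into a convex set of the affine chart, and Lemma~\ref{lem_comp_hilbert_kob} to bound each step by a Hilbert distance that goes to $0$. The differences are minor. You bound $\mathsf{H}_C$ by a Lipschitz estimate on a fixed compact subset, whereas the paper uses the explicit formula on a Euclidean ball centred at $x_0$. You also place the realization inside the small neighbourhood rather than the other way round, which is exactly what guarantees the whole chain stays close to $x_0$; the paper's write-up asserts this without justification.
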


\begin{proof} By the inequality~$|\kob_{\O}(x_0, y_0 ) - \kob_{\O}(x, y)| \leq \kob_{\O}(x_0, x ) + \kob_{\O}(y_0, y)$ for all~$x_0, y_0, x, y \in \O$, one only needs to show that for any~$x_0 \in \O$ the map~$x \mapsto \kob_{\O}(x_0, x)$ is continuous at~$x_0$. For this we fix an euclidean norm~$|.|$ on~$\Affstd$. For any~$0 <\delta< 1$, let~$B_\delta(x_0)$ be the Euclidean ball of center~$x_0$ and of radius~$\delta$. For~$\delta$ small enough,~$B_\delta(x_0)$ is contained in a realization of~$\mathbb{X}(\g, \alpha)$, itself contained in~$\O$. In this case, for~$x \in B_\delta(x_0)$, by Lemma~\ref{lem_chain_in_realization}, there exists an~$N$-chain~$(x_0, x_1, \dots , \ x_{N} = x)$ contained in~$B_\delta(x_0)$. Then, by Fact~\ref{prop_properties_kobayashi_metric} and Lemma~\ref{lem_comp_hilbert_kob}, one has
\begin{align*}
  \kob_{\O}(x_0,x) &\leq \kob_B (x_0, x) 
  \leq \sum_{k=0}^{N-1} \kob_B(x_i, x_{i+1}) = \sum_{k=0}^{N-1} \mathsf{H}_B(x_i, x_{i+1}) \leq \sum_{k=0}^{N-1} \mathsf{H}_B(x_0, x_{i}) + \mathsf{H}_B(x_0, x_{i+1}) \\
  &= \sum_{k=1}^{N-1}  \Big{(}\log \frac{1 +\lvert x_i-x_0 \rvert }{1 - \lvert x_i-x_0 \rvert} + \log \frac{1+ \lvert x_{i+1}-x_0\rvert }{1 - \lvert x_{i+1}-x_0\rvert}\Big{)} \leq  \sum_{k=1}^{N-1}  2\log \frac{1 + \delta }{1 - \delta} \ \underset{\delta \rightarrow 0}{\longrightarrow} 0~.
\end{align*}
The last equality of the first line is a consequence of Lemma~\ref{lem_comp_hilbert_kob}
This proves that~$\kob_{\O}(x_0, x ) \rightarrow 0$ as~$x \rightarrow x_0$.~$\qed$
\end{proof}

A direct consequence of Lemma~\ref{lem_kob_contin} is the following, which gives the first statement of Theorem~\ref{thm_kobayashi_metric_nagano}.(1) (the completeness statement is proven in Corollary~\ref{cor_kobayashi_geodesic}):

\begin{prop}\label{prop_generate_topo_std}Proper domains of~$\Fl(\g, \alpha)$ are Kobayashi-hyperbolic. Moreover, for any proper domain~$\O \subset \Fl(\g, \alpha)$, the metric~$\kob_{\O}$ generates the standard topology.
\end{prop}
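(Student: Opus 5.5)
We have to prove two things: that $\kob_\O$ separates points, and that its metric topology is the standard topology. The plan is to compare $\kob_\O$ from below with a Caratheodory metric, which is already known to be a genuine metric generating the standard topology on proper domains. Dual convexity is not assumed here, so Proposition~\ref{lem_key_lemma} cannot be applied to $\O$ itself. Instead we replace $\O$ by a suitable dually convex domain containing it.

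\textbf{Setup.} Assume $\O$ is proper in $\Affstdstd$, so $\overline{\O}$ is a compact subset of $\Affstdstd$. Since realizations of $\mathbb{X}(\g,\alpha)$ are proper and translates of them by elements of $P^+$ (dilations and translations of $\Affstdstd$, Section~\ref{sect_dilations_translations}) can be made arbitrarily large, choose a realization $\mathbb{B}$ with $\overline{\O}\subset \mathbb{B}$.

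\textbf{Step 1: comparison with $\mathbb{B}$.} The realization $\mathbb{B}$ is homogeneous under $G^*$, so it is almost-homogeneous. By Fact~\ref{prop_zimmer_dual_convex}, $\mathbb{B}$ is therefore dually convex. Fact~\ref{prop_properties_kobayashi_metric}.(1) gives $\kob_\O\geq \kob_{\mathbb{B}}$ on $\O\times\O$. Proposition~\ref{lem_key_lemma}.(1) gives $\kob_{\mathbb{B}}\geq \chi_\rho(h_\alpha)^{-1}C^\rho_{\mathbb{B}}$ for any linear $\{\alpha\}$-proximal triple. Since $\mathbb{B}$ is proper, $C^\rho_{\mathbb{B}}$ is a metric generating the standard topology by \cite[Thm 5.2]{zimmer2018proper}. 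Hence $\kob_\O(x,y)>0$ whenever $x\neq y$, so $\kob_\O$ is a metric.

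\textbf{Step 2: topology.} By Lemma~\ref{lem_kob_contin}, $\kob_\O$ is continuous, so every $\kob_\O$-ball is open in the standard topology. Conversely, fix $x\in\O$ and a standard neighbourhood $U$ of $x$. Because $C^\rho_{\mathbb{B}}$ generates the standard topology on $\mathbb{B}\supset\O$, there exists $\varepsilon>0$ such that the $C^\rho_{\mathbb{B}}$-ball of radius $\varepsilon$ about $x$ lies in $U\cap\O$. The comparison from Step 1 then shows that the $\kob_\O$-ball of radius $\chi_\rho(h_\alpha)^{-1}\varepsilon$ about $x$ is contained in this $C^\rho_{\mathbb{B}}$-ball, hence in $U$. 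The two topologies therefore coincide.

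\textbf{Main obstacle.} The delicate point is the existence of a proper dually convex domain containing $\overline{\O}$. Realizations of the noncompact dual supply it, but only after justifying that they can be scaled to contain any compact subset of $\Affstdstd$ while remaining proper. This follows because the dilations $g_0(t)$ act as genuine dilations of $\Affstdstd$ centred at $\LP^+$, and the image of a proper domain under an element of $G$ is again proper. If the dual convexity of $\mathbb{B}$ were problematic, an alternative is to use a domain $\O'\supset\overline{\O}$ that is convex in the affine chart, on which $\kob_{\O'}\geq \mathsf{H}_{\O'}$-type bounds follow directly from Lemma~\ref{lem_comp_hilbert_kob}. However, the route through $\mathbb{B}$ is cleaner.
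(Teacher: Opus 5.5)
Your proof is correct. Each step applies as stated:
\begin{itemize}
\item the dilated realization $g_0(t)\cdot\ncd(\mathbb{X}(\g,\alpha))$ is proper in $\Affstdstd$ and contains $\overline{\O}$ for $t$ large;
\item it is homogeneous, hence dually convex by Fact~\ref{prop_zimmer_dual_convex};
\item the chain $\kob_\O\geq\kob_{\mathbb{B}}\geq\chi_\rho(h_\alpha)^{-1}C^\rho_{\mathbb{B}}$, together with Lemma~\ref{lem_kob_contin}, gives both separation of points and equality of the two topologies.
\end{itemize}

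The paper does not write its argument out. It only says that, once Lemma~\ref{lem_kob_contin} is available, the proof of \cite[Prop.~6.7]{galiay2024rigidity} applies verbatim. Your Step~2 uses that lemma for exactly the inclusion it is meant for, so you differ only in the lower bound.

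The auxiliary Lemma~\ref{lem_comp_hilbert_kob}, stated just before, suggests a more elementary comparison than yours. Take a Euclidean ball $C\supset\overline{\O}$ in the affine chart. Then $\ro_C=\mathsf{H}_C$ on photon-related pairs, so the triangle inequality for $\mathsf{H}_C$ gives $\kob_\O\geq\kob_C\geq\mathsf{H}_C$. Since $\mathsf{H}_C$ is a metric inducing the standard topology, this route needs no dual convexity, no Caratheodory metric and no input from Zimmer. Your route instead relies on Fact~\ref{prop_zimmer_dual_convex}, Lemma~\ref{lem_comp_crossratios} and \cite[Thm~5.2]{zimmer2018proper}. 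In exchange, it compares $\kob_\O$ with a metric built from the $G$-structure rather than from an auxiliary affine choice.

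The enlargement to $\mathbb{B}$, which you flag as the main obstacle, is in fact unnecessary on your route. In the proof of Proposition~\ref{lem_key_lemma}, the inequality $\ro_\O\geq\chi_\rho(h_\alpha)^{-1}C^\rho_\O$ only uses that $\pstd_\Phot(\eta)\notin\O$ for $\eta\in\O^*$. It does not use dual convexity. So $\kob_\O\geq\chi_\rho(h_\alpha)^{-1}C^\rho_\O$ already holds on the proper domain $\O$ itself, where $C^\rho_\O$ is a metric generating the standard topology.
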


Using Lemma~\ref{lem_kob_contin}, Proposition~\ref{prop_generate_topo_std} admits the  exact same proof as the one of \cite[Prop.\ 6.7]{galiay2024rigidity}. Hence, we do not provide it. The more involved part of Theorem~\ref{thm_kobayashi_metric_nagano} is point (2), which is carried out in next section.

\subsection{$\MRr$-proper domains}
Since the Kobayashi metric is defined with photons, natural domains to investigate in this context are~\emph{$\MRr$-proper} domains:

\begin{definition}
    A domain~$\O \subset \Fl(\g, \alpha)$ is said to be~\emph{$\MRr$-proper} if for any photon~$\Phot \subset \Fl(\g, \alpha)$, the set~$\Phot \smallsetminus (\O \cap \Phot)$ contains at least~$2$ points. It is \emph{strongly~$\MRr$-proper} if~$\overline{\O}$ does not contain any photon (note that strong~$\MRr$-properness is stronger than~$\MRr$-properness).
\end{definition}

In this section, we investigate the metric properties of~$\MRr$-proper domains.

\subsubsection{Photon-convexity} We say that an~$\MRr$-proper domain~$\O \subset \Fl(\g, \alpha)$ is \emph{photon-convex} if for any photon~$\Phot$, the set~$\Phot \cap \O$ is connected, and~$\overline{\Phot \cap \O} = \Phot \cap \overline{\O}$ whenever~$\Phot\cap \Omega \ne \varnothing$.

\begin{lem}\label{lem_continuité_intersection_photon}
    Let~$\O \subset \Fl(\g, \alpha)$ be a~$\MRr$-proper dually convex domain. Then~$\O$ is photon-convex.
\end{lem}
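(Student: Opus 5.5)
The plan is to handle the two defining properties of photon-convexity separately. Both rest on one observation: if $\xi \in \O^*$ and a photon $\Phot$ meets $\O$, then $\Phot \not\subset \hyp_\xi$, so $\xi \in \Contlam_\Phot$. By Fact~\ref{lem_photons_affine}, $\hyp_\xi \cap \Phot = \{\pstd_\Phot(\xi)\}$ is then a single point, and it lies in $\Phot \smallsetminus \O$. By dual convexity, every point of $\partial \O \cap \Phot$ is of the form $\pstd_\Phot(\xi)$ for some $\xi \in \O^*$.

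\emph{Connectedness of $\Phot \cap \O$.} Fix a Plücker triple $(G,\rho,V)$. By Proposition~\ref{prop_photons_egal_proj_lines}, $\iota_\rho(\Phot) = \mathbb{P}(W)$ for a $2$-plane $W \subset V$.
\begin{itemize}
\item Fix $\xi_0 \in \O^*$ (nonempty, since we may assume $\O \neq \Fl(\g,\alpha)$, and then dual convexity applies). Since $\O$ is connected and $\iota_\rho(\O)$ avoids the hyperplane $\iota_\rho^\opp(\xi_0)$, I will choose a connected cone $\widetilde{\O} \subset V$ lifting $\iota_\rho(\O)$.
\item Let $\mathcal{C} \subset V^*$ be the set of linear forms that are positive on $\widetilde{\O}$. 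This is a convex cone, and by connectedness every $f_\xi$ with $\xi \in \O^*$ can be normalized to lie in $\mathcal{C}$.
\item Restricting to $W$ gives a convex cone $\mathcal{C}|_W \subset W^* \smallsetminus \{0\}$. It is nonzero because $\Phot$ meets $\O$. Its projectivized kernels form a connected subset $A$ of $\mathbb{P}(W) \simeq \Phot$, and $A$ is contained in $\Phot \smallsetminus \O$.
\end{itemize}
Now suppose $\Phot \cap \O$ had two components. Then $\Phot \smallsetminus \O$ has at least two components $K_1, K_2$, each containing an endpoint of a component of $\Phot \cap \O$, hence a point of $\partial\O$. By dual convexity both of these points lie in $\pstd_\Phot(\O^*) \subset A$. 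This contradicts the connectedness of $A$. (The $\MRr$-properness is used here to ensure the complement is not a single point, so that "endpoints" make sense.)

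\emph{Closure property.} The inclusion $\overline{\Phot \cap \O} \subset \Phot \cap \overline{\O}$ is trivial. For the converse, suppose $p \in \Phot \cap \partial \O$ is not in the closure of the interval $I = \Phot \cap \O$, and fix $x \in I$.
\begin{itemize}
\item Choose $y_n \in \O$ with $y_n \to p$ and $g_n \to \id$ in $G$ with $g_n \cdot p = y_n$. Put $\Phot_n := g_n \cdot \Phot$, which is a photon through $y_n$ with $\Phot_n \to \Phot$.
\item For large $n$, $\Phot_n$ contains a point $x_n \in \O$ with $x_n \to x$.
\item By the first part, $\Phot_n \cap \O$ is connected, so it contains a closed arc $J_n$ from $x_n$ to $y_n$. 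Passing to a subsequence, $J_n$ converges in the Hausdorff sense to one of the two closed arcs $J$ of $\Phot$ joining $x$ to $p$.
\item Since $p \notin \overline{I}$, the open arc $J^\circ$ contains an endpoint $a \in \partial\O$ of $I$. Dual convexity gives $\xi_a \in \O^*$ with $\pstd_\Phot(\xi_a) = a$.
\item On the other hand, $\hyp_{\xi_a} \cap J_n = \varnothing$ because $J_n \subset \O$. So $\pstd_{\Phot_n}(\xi_a) \notin J_n$, where $\xi_a \in \Contlam_{\Phot_n}$ for large $n$ since $\Contlam$ is open.
\item By continuity of $(\Phot,\xi) \mapsto \pstd_\Phot(\xi)$, the limit $\pstd_\Phot(\xi_a) = a$ cannot lie in the open arc $J^\circ$. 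This is a contradiction.
\end{itemize}

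The main obstacle I expect is this last limiting step. It requires the continuity of the projection in both the photon and $\xi$, and it requires checking that "not in $J_n$" passes to "not in $J^\circ$". The natural way to check this is in the Plücker picture: there $\pstd$ is the intersection of a varying projective line with a fixed hyperplane, which makes the continuity transparent.
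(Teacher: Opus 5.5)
Your proof is correct. It differs from the paper's mainly in packaging and in scope.

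\textbf{The paper's route.} The paper fixes one component $(b,a)$ of $\Phot\cap\O$ and takes $\xi_a,\xi_b\in\O^*$ supporting at its endpoints. It then works in the affine chart $\mathbb{P}(V)\smallsetminus\iota_\rho^\opp(\LP^\opp)$. There $\iota_\rho(\O)$ lies in one open half-space bounded by $\iota_\rho^\opp(\xi_a)$, and the affine line $\iota_\rho(\Phot)$ crosses that hyperplane only at $a$. The same holds at $b$, so $\Phot\cap\O\subset(b,a)$. $\MRr$-properness is invoked only to ensure $\xi_a\neq\LP^\opp$.

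\textbf{Your route for connectedness.} Yours is the global form of the same positivity mechanism. Every normalized $f_\xi$ lies in the convex cone $\mathcal{C}$, so the zero set $A\subset\mathbb{P}(W)$ is a connected subset of $\Phot\smallsetminus\O$ containing all of $\Phot\cap\partial\O$. This rules out two components at once, with no chart and no case distinction.
\begin{itemize}
\item Your argument also shows that $\MRr$-properness is not needed: if $\Phot\smallsetminus\O$ is a single point, $\Phot\cap\O$ is trivially connected. So your parenthetical about where it is used is inaccurate.
\item One step should be tightened. Take $K_1,K_2$ to be the components of $\Phot\smallsetminus\O$ containing the two endpoints $c,d$ of a single component $(c,d)$ of $\Phot\cap\O$. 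They are distinct: a component containing both would be the whole arc $\Phot\smallsetminus(c,d)$, forcing $\Phot\cap\O=(c,d)$.
\end{itemize}

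\textbf{The closure condition.} The paper's proof does not address this explicitly. Your perturbation argument is valid, and the limit step is clean because $g_n^{-1}J_n$ is exactly one of the two arcs of $\Phot$ from $x$ to $p$. But your own setup gives it in one line. Let $a\neq b$ be the endpoints of $I$ and $f_a,f_b\in\mathcal{C}$ the forms vanishing there. Then $\overline{\iota_\rho(\O)}\cap\mathbb{P}(W)$ lies in the projectivization of the closed sector $\{w\in W \mid f_a(w)\geq 0,\ f_b(w)\geq 0\}$, which is exactly $\iota_\rho(\overline{I})$. If $a=b$ there is nothing to prove. This closed half-space observation is also what completes the paper's argument.
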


\begin{proof} Since~$\O$ is~$\MRr$-proper and dually convex, there exists~$\xi \in \O^*$ such that~$\hyp_\xi \cap \Phot \subset \partial \O$. Hence we may assume that~$\O \subset \Affstdstd$.

Let~$(a,b)$ be a connected component of~$\Phot \cap \O$. There exist~$\xi_a, \xi_b \in \O^*$ such that~$a \in \hyp_{\xi_a}$ and~$b \in \hyp_{\xi_b}$. If~$\xi_a = \xi_b = \LP^\opp$, then~$a = b$, and~$\O$ is not~$\mathcal{R}$-proper, so we may assume~$\xi_a \ne \LP^\opp$.

We fix a Plücker triple~$(G, \rho, V)$ of~$(\g, \alpha)$. Let~$\Affstd$ be the affine chart of~$\mathbb{P}(V)$ defined by~$\Affstd := \mathbb{P}(V) \smallsetminus \iota^\opp_\rho(P^\opp)$. Then~$\iota_\rho(\O) \subset \Affstd$. Moreover, the image~$\iota_\rho(\Phot)$ is a projective line of~$\mathbb{P}(V)$ by Proposition~\ref{prop_photons_egal_proj_lines}, hence~$\iota_\rho(\Phot) \cap \Affstd$ is an affine line of~$\Affstd$, and we denote by~$(x,y)$ a segment of this affine line, for~$x,y \in \iota_\rho(\Phot) \cap \Affstd$. We may chose a parametrization of~$\iota_\rho(\Phot) \cap \Affstd$ so that~$(\iota_\rho(b), \iota_\rho(a)) \subset (-\infty, a)$ (where the equality~$b = - \infty$ is allowed).

Since~$\xi \ne \LP^\opp$, we have~$\iota^\opp_\rho(\xi) \ne \iota^\opp_\rho(\LP^\opp)$. Hence~$\iota^\opp_\rho(\xi) \cap \Affstd$ is an affine hyperplane of~$\Affstd$; thus it separates it into two half spaces~$\Affstd^+$ and~$\Affstd^\opp$. Since~$\iota_\rho(\Phot) \cap \Affstd$ is an affine line of~$\Affstd$ not contained in~$\iota^\opp_\rho(\xi)$, up to exchanging~$\Affstd^+$ and~$\Affstd^\opp$, we have~$(- \infty , \iota_\rho(a)) = \iota_\rho(\Phot) \cap \Affstd^+$ or~$(- \infty , \iota_\rho(a)) = \iota_\rho(\Phot) \cap \Affstd^\opp$, and we may assume for instance the first equality. Then~$(\iota_\rho(b), \iota_\rho(a)) \subset \Affstd^+$, and since~$\iota_\rho(\O)$ is connected and does not intersect~$\iota^\opp_\rho(\xi)$, one has~$\iota_\rho(\O) \subset \Affstd^+$. Hence~$\iota_\rho(\O \cap \Phot) \subset (-\infty, \iota_\rho(a))$. Now if~$b = - \infty$, the lemma is proved. If~$b \ne - \infty$, then~$b \in \Affstdstd$, so the arguments above replacing~$a$ by~$b$ and~$- \infty$ by~$+ \infty$ give~$\iota_\rho(\O \cap \Phot) \subset (\iota_\rho(b), + \infty)$. In conclusion, we have~$\iota_\rho(\O \cap \Phot) \subset (\iota_\rho(b), \iota_\rho(a))$, which is equivalent to~$\O \cap \Phot \subset (b,a)$. Since~$(b,a)$ is a connected component of~$\O \cap \Phot$, this inclusion is an equality. This proves the lemma.~$\qed$
\end{proof}

\subsubsection{Kobayashi norm}
Recall the element~$v^\opp$ of~$\uu^\opp$ defined in Notation~\ref{notation_nagano}. We define the subset~$\RUN \subset \uu^\opp$ by
\begin{equation}\label{eq_A_1_def}
    \RUN := \Ad(P^\opp) \cdot \g_{-\alpha} = \Ad(L) \cdot v^\opp.
\end{equation}
The second equality holds by Lemma~\ref{fact_stab_u_std}.(1). By Lemma~\ref{fact_stab_u_std}.(2) and Section~\ref{sect_dilations_translations}, the set~$\RUN$ is the set of elements~$X \in \uu^\opp$ ``generating a photon through~$\LP^+$'', i.e.\ such that~$\overline{\exp(\mathbb{R}X) \cdot \LP^+}$ is a photon. A reformulation of Fact~\ref{fact_stab_u_std}.(5)
is the following:

\begin{fact}\cite[Lem.\ 3.11]{beyrer2024positivity}\label{fact_photon_closed}
  The set~$\mathbb{P}(\mathcal{A}_1) \subset \mathbb{P}(\uu^\opp)$ is closed.
\end{fact}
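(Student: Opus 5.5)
We take a sequence $([X_k])_k$ in $\mathbb{P}(\RUN)$ converging to some $[X] \in \mathbb{P}(\uu^\opp)$, and show that $[X] \in \mathbb{P}(\RUN)$. The idea is to pass to the photons generated by the $X_k$, take a Hausdorff limit, apply Lemma~\ref{fact_stab_u_std}.(5), and read off $X$ from the limit photon inside the standard affine chart $\Affstdstd$.

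First we normalize representatives. Fix a norm on $\uu^\opp$ and choose the $X_k$ with $|X_k| = 1$ and $X_k \to X$ in $\uu^\opp$, so $X \neq 0$. Since $\RUN$ is a cone, each $X_k$ still lies in $\RUN$. Hence each set $\Phot_k := \overline{\exp(\mathbb{R}X_k)\cdot \LP^+}$ is a photon through $\LP^+$.

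Next we pass to a limit photon. The space of compact subsets of $\Fl(\g,\alpha)$ is compact for the Hausdorff topology, so after extracting we may assume $\Phot_k \to \Phot$. By Lemma~\ref{fact_stab_u_std}.(5), $\Phot$ is a photon, and it contains $\LP^+$. For each fixed $t \in \mathbb{R}$ we have $\exp(tX_k)\cdot\LP^+ \to \exp(tX)\cdot\LP^+$ by continuity. It follows that $\varphistd(\mathbb{R}X) = \exp(\mathbb{R}X)\cdot\LP^+ \subset \Phot \cap \Affstdstd$.

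It remains to identify $\Phot \cap \Affstdstd$. By Lemma~\ref{fact_stab_u_std}.(2), there is $g \in L$ with $\Phot = g\cdot\Phot_\std$. Elements of $L$ fix $\LP^\opp$, so they preserve $\Affstdstd$ and $\LP^+$. Moreover $\varphistd$ intertwines the action of $g$ with the linear map $\Ad(g)$ on $\uu^\opp$, because $g\exp(Z)\cdot\LP^+ = \exp(\Ad(g)Z)\cdot \LP^+$. By~\eqref{eq_param_photons} we have $\Phot_\std \cap \Affstdstd = \varphistd(\mathbb{R}v^\opp)$. Therefore
\[
\Phot \cap \Affstdstd = \varphistd\bigl(\mathbb{R}\,\Ad(g)v^\opp\bigr),
\]
which is the line through $0$ spanned by an element $Y := \Ad(g)v^\opp \in \RUN$. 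Since $\varphistd$ is a bijection, the inclusion of the previous step gives $\mathbb{R}X \subset \mathbb{R}Y$. As $X \neq 0$, this forces $[X] = [Y] \in \mathbb{P}(\RUN)$.

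The step that needs the most care is this last identification: $L$ must act on the chart through the linear action $\Ad$, with no translation part, so that a photon through $\LP^+$ meets $\Affstdstd$ in a line through the origin $\varphistd(0) = \LP^+$. This holds because $L$ normalizes $U^-$ and fixes $\LP^+$.
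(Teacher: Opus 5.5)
Your argument is correct and is the deduction the paper intends. The paper gives no separate proof: it presents the statement as a reformulation of Lemma~\ref{fact_stab_u_std}.(5), with both citing \cite[Lem.\ 3.11]{beyrer2024positivity}, and you make that reduction explicit through the Hausdorff limit of the photons $\overline{\exp(\mathbb{R}X_k)\cdot\LP^+}$, the transitivity of $L$ on photons through $\LP^+$, and the fact that $L$ acts linearly on $\Affstdstd$. One small imprecision: the equality $\Phot_\std\cap\Affstdstd=\varphistd(\mathbb{R}v^\opp)$ needs Fact~\ref{lem_photons_affine} to exclude the point $\isl_\std([0:1])$ from the chart, but this is harmless, since a single extra point could not absorb the whole line $\varphistd(\mathbb{R}X)$.
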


Now let~$\O \subset \Fl(\g, \alpha)$ be a domain contained in the standard affine chart~$ \Affstdstd$. Let~$\lvert \cdot \rvert$ be any norm on~$\uu^\opp$. Let~$\delta_\O: \O \times \mathcal{A}_1 \rightarrow \mathbb{R}_{\geq 0}$ be the function defined by
    \begin{equation*}
        \delta_\O(x,v) = \inf\{\lvert Y - \varphistd^{-1}(x)\rvert \mid Y \in \partial \varphistd^{-1}(\O) \cap (\varphistd^{-1}(x) + \mathbb{R}v)\}~,
    \end{equation*}
where~$\varphistd$ is the parametrization of the standard affine chart~$\Affstdstd$, defined in~\eqref{eq_A_egal_exp}. If~$\O$ is~$\MRr$-proper, then we have~$\delta_\O < + \infty$.
\begin{lem}[No overhangs]
    If~$\O$ is photon-convex, then the function~$\delta_\O$ is continuous.
\end{lem}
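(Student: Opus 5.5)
We work in the affine coordinates given by $\varphistd$. Set $\O' := \varphistd^{-1}(\O) \subset \uu^\opp$, which is open. For $(X,v)$ with $X \in \O'$ and $v \in \RUN$, the affine line $X + \mathbb{R}v$ is the trace in $\Affstdstd$ of the photon $\Phot_{X,v} := \overline{\varphistd(X + \mathbb{R}v)}$ (Fact~\ref{lem_photons_affine}). By photon-convexity, $\Phot_{X,v} \cap \O$ is connected. Its trace $\O' \cap (X + \mathbb{R}v)$ is then an open interval $\{X + tv : t \in (t_-(X,v), t_+(X,v))\}$ with $t_- < 0 < t_+$ (endpoints possibly infinite, but not both, by $\MRr$-properness). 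Hence
\[
\delta_\O(X,v) = |v| \min\{|t_-(X,v)|, |t_+(X,v)|\}.
\]
So it suffices to show that $t_+$ and $t_-$ are continuous as maps to $(0,+\infty]$ and $[-\infty,0)$. By symmetry we only treat $t_+$, and we prove lower and upper semicontinuity separately.

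\emph{Lower semicontinuity.} This follows from openness of $\O'$ alone. If $0 < s < t_+(X,v)$, then the compact segment $\{X + \tau v : \tau \in [0,s]\}$ lies in $\O'$. Hence a neighbourhood of it lies in $\O'$, so $t_+(X',v') > s$ for $(X',v')$ close to $(X,v)$.

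\emph{Upper semicontinuity.} This is where photon-convexity (no overhangs) is used. Assume $t_+(X,v) = T < \infty$, and suppose there is a sequence $(X_k, v_k) \to (X,v)$ with $t_+(X_k,v_k) \ge T + \varepsilon$. Then the segments $X_k + [0, T+\varepsilon]v_k$ lie in $\O'$, so in the limit $X + [0,T+\varepsilon]v \subset \overline{\O'}$. Thus the points $\varphistd(X + tv)$ for $t \in (T, T+\varepsilon]$ belong to $\overline{\O} \cap \Phot_{X,v}$. The photon $\Phot_{X,v}$ meets $\O$, so photon-convexity gives $\overline{\O} \cap \Phot_{X,v} = \overline{\O \cap \Phot_{X,v}}$. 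The latter is the closure of an interval, and within the affine line it is the closed segment $X + [t_-, T]v$ together possibly with the point at infinity. The points with $t \in (T, T+\varepsilon]$ lie on the affine line outside this segment, a contradiction. The case $T = +\infty$ is trivial.

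The main obstacle is this upper semicontinuity step, and in particular controlling the point at infinity of the photon. Two points need care.

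First, one must check that the closure in $\Fl(\g,\alpha)$ of an interval of $\Phot_{X,v}$ does not reach back into the affine line from the other side. This is immediate, because the closure adds only the endpoints.

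Second, when $t_+ = +\infty$ and $t_- = -\infty$ cannot both hold, one must make sure the "overhang" cannot occur through $\hyp_{\LP^\opp}$. This is exactly what the identity $\overline{\O\cap\Phot}=\overline{\O}\cap\Phot$ rules out: an overhang would create a boundary point of $\O$ on $\Phot$ not in the closure of $\Phot\cap\O$.

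Finally, continuity of $\min$ and of the product with $|v|$ yields continuity of $\delta_\O$ on $\O\times\RUN$, with finite values by $\MRr$-properness.
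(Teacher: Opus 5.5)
Your proof is correct. It rests on the same mechanism as the paper's: continuity can only fail through an ``overhang'', where nearby lines run past the endpoint of the interval on the limit line, and the identity $\overline{\O}\cap\Phot=\overline{\O\cap\Phot}$ rules this out.

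The organization differs, though. The paper argues with boundary points. It takes the nearest boundary point $Y_k$ and the other boundary point $Z_k$ on the line through $(x_k,v_k)$, and extracts convergent subsequences. It then uses photon-convexity to identify the limits $\{Y,Z\}$ with the boundary points of the limit line, and passes $|Z_k-\varphistd^{-1}(x_k)|\geq|Y_k-\varphistd^{-1}(x_k)|$ to the limit. You instead track the extended-real endpoint parameters $t_\pm$ and split continuity into two halves. Lower semicontinuity uses only openness of $\varphistd^{-1}(\O)$; upper semicontinuity is the only step that needs photon-convexity.

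Your formulation is more robust at infinity. The paper's argument tacitly assumes two things: that each line $\varphistd^{-1}(x)+\mathbb{R}v$ meets $\partial\varphistd^{-1}(\O)$ in exactly two points, and that $(Y_k)$ and $(Z_k)$ stay bounded. Both fail as soon as an endpoint of $\Phot\cap\O$ lies in $\hyp_{\LP^\opp}$, e.g.\ for $\O_{\mathsf{nb}}$, which contains $\exp(\sum_i\mathbb{R}_{>0}F_i)\cdot\LP^+$. Working with values in $(0,+\infty]$ handles this case automatically, and you use $\MRr$-properness only to make the minimum finite. The paper's version is shorter when $\O$ is bounded in $\Affstdstd$.

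One trivial slip: in the upper semicontinuity step, assume $t_+(X_k,v_k)>T+\varepsilon$ (or use half-open segments), since $X_k+t_+(X_k,v_k)v_k\notin\varphistd^{-1}(\O)$.
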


\begin{proof}
    Let~$(x,v) \in \O \times \mathcal{A}_1$ and let~$(x_k, v_k) \in ( \O \times \mathcal{A}_1)^\mathbb{N}$ such that~$(x_k, v_k) \rightarrow (x,v)$. For all~$k \in \mathbb{N}$, there exists~$Y_k \in (\varphistd^{-1}(x_k) + \mathbb{R}v_k)$ such that~$\lvert Y_k - \varphistd^{-1}(x_k)\rvert$ is minimal. By definition of photon-convexity, the set~$(\varphistd^{-1}(x) + \mathbb{R}v)$ contains exactly two points, so let~$Z_k$ be the other point of this set.
    
    Up to taking a subsequence, we may assume that~$Y_k \rightarrow Y \in  (\varphistd^{-1}(x) + \mathbb{R}v)$ and~$Z_k \rightarrow Z \in (\varphistd^{-1}(x) + \mathbb{R}v)$. By Lemma~\ref{lem_continuité_intersection_photon}, the set~$\varphistd^{-1}(\Omega) \cap (\varphistd^{-1}(x) + \mathbb{R}v)$ contains exactly two points. But by~$\MRr$-properness of~$\O$, we must have~$Z \ne Y$. Thus~$(\varphistd^{-1}(x) + \mathbb{R}v) = \{ Y, Z\}$. Now for all~$k \in \mathbb{N}$, we have~$\lvert Z_k - \varphistd^{-1}(x_k) \rvert \geq  \lvert Y_k - \varphistd^{-1}(x_k) \rvert$, so taking the limit, we have~$\lvert Z - \varphistd^{-1}(x) \rvert \geq  \lvert Y - \varphistd^{-1}(x) \rvert$. Hence~$\delta_\O(x, v) = \lvert Y - \varphistd^{-1}(x) \rvert = \lim_{k \rightarrow + \infty} \delta_\O(x_k, v_k)$.~$\qed$
\end{proof}

\begin{rmk}[Infinitesimal form]\label{rmk_infinitesimal_form} A piecewise smooth path~$\gamma: I \rightarrow \Affstdstd$, where~$I$ is an interval of~$\mathbb{R}$, is said to be a \emph{null geodesic} if we have~$\gamma'(t) \in \mathcal{A}_1$, where we have identified~$\uu^\opp$ with the tangent space to~$\Fl(\g, \alpha)$ at~$\gamma(t)$. The set of null geodesics between two points~$x,y \in \O$ is denoted by~$\mathcal{G}_{x,y}(\O)$. For any~$\gamma \in \mathcal{G}_{x,y}(\O)$, we define the \emph{length of~$\gamma$ as}
\begin{equation*}
    L(\gamma) := \int_{I}\delta_\O(\gamma(t), \gamma'(t)) dt~.
\end{equation*}
Let~$\O \subset \Fl(\g, \alpha)$ be a domain. Then~$\delta_\O$ is upper semicontinuous and one has
\begin{equation}\label{eq_kobayashi_egal_inf}
    \kob_\O(x,y) = \inf \{L(\gamma) \mid \gamma \in \mathcal{G}_{x,y}(\O)\}~.
\end{equation}
This is due to~\cite[Thm.~4.8]{markowitz_1981}, in which the result is stated in the pseudo-Riemannian conformal setting, that is, for item~(viii) of Table~\ref{table_nagano_full}, but more general arguments from parabolic geometry allow one to extend the result to irreducible Nagano spaces of real type (see~\cite[Rmk.~2.11]{markowitz_1981}).

In fact, our paper focuses on Kleinian $(G,G/P)$-manifolds where~$G/P$ is a Nagano space. However, one may also consider the more general framework of manifolds that are \emph{infinitesimally modeled on~$(G,G/P)$}; in other words, manifolds~$M$ endowed with a \emph{Cartan geometry} modeled on~$(G,G/P)$. This amounts to the existence of a \emph{Maurer--Cartan form} on a principal~$P$-bundle~$\mathcal{G}$ over~$M$. One then says that~$(\mathcal{G},M)$ is a $(G,G/P)$-parabolic geometry. This theory, initiated by Cartan and recently developed by Čap and Slovák~\cite{vcap2024parabolic}, encompasses the case of $(G,G/P)$-manifolds: this corresponds to the situation where the curvature of the Maurer--Cartan form vanishes, in which case the $(G,G/P)$-parabolic geometry~$(\mathcal{G},M)$ is said to be \emph{flat}. The article~\cite{markowitz_1981} treats more generally the \emph{non-flat} case of pseudo-Riemannian conformal geometry for the sake of conciseness, but \cite[Rmk.~2.11]{markowitz_1981} suggests that the arguments generalize to parabolic geometry.

We believe that~\eqref{eq_kobayashi_egal_inf} may provide a way to compute the Kobayashi pseudodistance for non-symmetric domains, using methods more directly inspired by differential geometry.~$\qed$
\end{rmk}

\subsubsection{Hyperbolicity of $\MRr$-proper domains} In this section, we prove that~$\MRr$-proper photon-convex domains are Kobayashi-hyperbolic (Theorem~\ref{thm_kob_hyp_R_propre} below), as well as Corollary~\ref{cor_action_proper}.

\begin{thm}\label{thm_kob_hyp_R_propre}
    If~$\O$ is~$\mathcal{R}$-proper and photon-convex, then~$\kob_\O$ is an~$\Aut_G(\O)$-invariant metric generating the standard topology. If~$\O$ is moreover dually convex, then~$\kob_\O$ is a length metric, and for any $N \in \mathbb{N}$ and any $N$-chain $\gamma := (x_0, \dots, x_N)$ in $\O$, one has
    \begin{equation}\label{eq_longueur_chaine_1}
        \leng (\gamma) = \sum_{i=0}^{N-1} \ro_\O(x_i, x_{i+1}) = \sum_{i=0}^{N-1} \kob_\O(x_i, x_{i+1})~.
    \end{equation}
\end{thm}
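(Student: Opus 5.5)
The plan is to bound $\kob_\O$ from below locally by a fixed Riemannian distance, using compactness of the space of photons. $\Aut_G(\O)$-invariance is Fact~\ref{prop_properties_kobayashi_metric}.(2). By Lemma~\ref{lem_kob_contin}, $\kob_\O$ is continuous, so the topology it defines is coarser than the standard one. It therefore remains to show a local lower bound: for every $x_0\in\O$ there are a neighbourhood $U$ of $x_0$ and $c>0$ such that $\kob_\O(x_0,y)\geq c\, d_g(x_0,y)$ for $y\in U$. Here $g=g_{\g,\alpha}$ is the $K$-invariant metric of Section~\ref{sect_construction_symetri_structure}. This gives both separation of points and equality of the topologies. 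Note that an $\MRr$-proper domain need not lie in an affine chart, so I will work chart-free: on each photon I use a projective parametrization $\isl$ by a $K$-translate of $\isl_\std$, so that all photons carry uniformly comparable round metrics.

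\textbf{Step 1 (uniform gap; the main obstacle).} Fix a compact ball $B=\overline{B_g(x_0,r)}\subset\O$. I claim there is $\varepsilon>0$ such that for every photon $\Phot$ meeting $B$, the arc $\Phot\smallsetminus(\O\cap\Phot)$ has $g$-length at least $\varepsilon$. By photon-convexity, $\O\cap\Phot$ is connected, so this complement is a single closed arc. Suppose instead that there are photons $\Phot_k$ through $x_k\in B$ whose complementary arcs shrink. After extraction, $x_k\to x\in B$ and, by Lemma~\ref{fact_stab_u_std}.(5), $\Phot_k\to\Phot$ for some photon $\Phot$ through $x$. Since the complementary arcs shrink, every point of $\Phot$ except possibly one point $p$ is a limit of points of $\Phot_k\cap\O$. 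Hence $\Phot\subset\overline{\O}\cap\Phot$. Because $x\in\Phot\cap\O\neq\varnothing$, photon-convexity gives $\overline{\Phot\cap\O}=\Phot\cap\overline{\O}=\Phot$. So $\Phot\cap\O$ is a connected open dense subset of the circle $\Phot$, and its complement has at most one point. This contradicts $\MRr$-properness.

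\textbf{Step 2 (local comparison and chain argument).} Let $I\subsetneq\mathbb{P}(\mathbb{R}^2)$ be an interval whose complement has round length at least $\varepsilon$. An elementary estimate shows that the Hilbert metric $\mathsf{H}_I$ dominates $c(\varepsilon)$ times the round arc length on any subarc of $I$. By Step 1, every photon segment $[x_i,x_{i+1}]$ meeting $B$ therefore satisfies $\ro_\O(x_i,x_{i+1})\geq c\,\mathrm{len}_g([x_i,x_{i+1}]\cap B)$; I would choose $\isl$ so that round length and $g$-length agree up to a constant. Now take $y\in B_g(x_0,r/2)$ and any chain from $x_0$ to $y$, viewed as a path as in Section~\ref{section_chains_length}. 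If the path leaves $B$, its portion before the first exit has $g$-length at least $r$, hence $\kob$-cost at least $cr$. Otherwise its total cost is at least $c\,d_g(x_0,y)$. Taking the infimum over chains gives $\kob_\O(x_0,y)\geq c\min(r,d_g(x_0,y))$.

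\textbf{Step 3 (dually convex case).} Assume in addition that $\O$ is dually convex. By Proposition~\ref{lem_key_lemma}, $\ro_\O(x,y)=\kob_\O(x,y)$ for photon-related $x,y$, and the $\kob_\O$-length of the $1$-chain $(x,y)$ equals $\ro_\O(x,y)$. Additivity of length under concatenation then yields \eqref{eq_longueur_chaine_1}. Consequently $\kob_\O(x,y)=\inf_\gamma\leng(\gamma)$ over chains $\gamma$ from $x$ to $y$. Every path has length at least $\kob_\O(x,y)$ by definition, so the infimum over all rectifiable curves also equals $\kob_\O(x,y)$, and $\kob_\O$ is a length metric.
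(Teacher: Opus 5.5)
Your proof is correct. Step~3 is the paper's argument, but Steps~1--2 take a different route.

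\textbf{The paper's route.} Following van Limbeek--Zimmer, it places $\O$ in $\Affstdstd$ and introduces $\delta_\O(x,v)$: the Euclidean distance from $\varphistd^{-1}(x)$ to the boundary along the affine line in a photon direction $v\in\RUN$. It proves $\delta_\O$ is continuous via photon-convexity (the ``no overhangs'' lemma). It then bounds $\delta_\O$ by some $M$ on $B_\varepsilon(x)\times\RUN$, using compactness of $\mathbb{P}(\RUN)$ (Fact~\ref{fact_photon_closed}). This yields the chart estimate $\ro_\O(\varphistd(Z_1),\varphistd(Z_2))\ge |Z_1-Z_2|/(\varepsilon+M)$, followed by the same exit-the-ball argument you use.

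\textbf{Your route.} You replace this with a single compactness argument on Hausdorff limits of photons (Lemma~\ref{fact_stab_u_std}.(5), photon-convexity, $\MRr$-properness), which gives a uniform gap for the complementary arcs. You then use an intrinsic Hilbert-metric bound. That bound is correct: with $\mathbb{P}(\mathbb{R}^2)$ of total angular length $\pi$, one gets $d\mathsf{H}_I\ge 2\tan(\varepsilon/2)\,d\theta$ on an interval whose complement has length at least $\varepsilon$.

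\textbf{What each buys.} Your version never needs $\O$ to lie in an affine chart. The paper's reduction ``we may assume $\O\subset\Affstdstd$'' is not supplied by $\MRr$-properness and photon-convexity alone; elsewhere in the paper it is dual convexity that places $\O$ in a chart. So your argument covers the first assertion exactly as stated, and it merges the continuity of $\delta_\O$ and the compactness step into one. The paper's version is more concrete, with explicit constants in chart coordinates, and its $\delta_\O$ is the quantity reused in Remark~\ref{rmk_infinitesimal_form}.

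\textbf{One implicit point.} You assume without comment that $K$ acts transitively on photons. You need this so that every photon has a parametrization $k\cdot\isl_\std$ with $k\in K$, and hence a $g$-metric uniformly comparable to the round one. It holds because $\operatorname{Stab}_G(\Phot_\std)$ contains the minimal parabolic subgroup $P_{\FS}^+$, and $G=KP_{\FS}^+$.
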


\begin{proof} The first step is to prove that~$\kob_\O$ separates points. To this end, we adapt the proof of~\cite[Thm 4.6]{van2019rigidity}, in which they prove that~$\MRr$-proper domains of the Grassmannians~$\Grass$ of~$p$-planes in~$\mathbb{R}^n$ that are \emph{convex in an affine chart} (see Remark~\ref{rmk_convex_carte_affine}) are Kobayashi-hyperbolic. Here, the formalism introduced on Nagano spaces allows us to generalize the arguments to real-type Nagano spaces, and to adapt them to domains that are not convex in an affine chart anymore but instead dually convex.

We may assume that~$\O \subset \Affstdstd$. Let~$X, Y \in \uu^\opp$ be such that~$x := \varphistd(X), y := \varphistd(Y) \in \O$ are two distinct points. There exists~$\varepsilon > 0$ such that
$$B_{\varepsilon}(x) := \{Z \in \uu^\opp \mid \lvert Z-X \rvert < \varepsilon\} \subset \varphistd^{-1}(\O)~,$$ 
but~$Y \notin B_{\varepsilon}(x)$. By Fact~\ref{fact_photon_closed} and the continuity of~$\delta_\O$, there exists~$M>0$ such that~$\delta_\O(\varphistd(Z),v) \leq M$ for all~$Z \in B_{\varepsilon}(x)$ and~$v \in \mathcal{A}_1$ --- recall that~$\mathcal{A}_1$ is defined in~\eqref{eq_A_1_def}. 

Let~$Z_1, Z_2 \in B_{\varepsilon}(x)$ such that~$Z_1 - Z_2 \in \mathcal{A}_1$. Let
$$A, B \in (Z_1 + \mathbb{R}(Z_2  -Z_1)) \cap \partial \O$$ 
be such that~$A, Z_1, Z_2, B$ are aligned in this order. Up to exchanging~$A$ and~$B$ we may assume that~$\lvert A- Z_1 \rvert = \delta_\O(\varphistd(Z_1), Z_1 -Z_2) \leq M$. Then, by definition of~$\ro_\O$, one has
\begin{align*}
        \ro_\O(\varphistd(Z_1), \varphistd(Z_2)) &= \big{|} \log \frac{\lvert Z_1 - A \rvert \lvert Z_2 - B \rvert}{\lvert Z_1 - B \rvert \lvert Z_2 - A \rvert} \big{|} \geq  \log \frac{\lvert Z_2 - A \rvert }{\lvert Z_1 - A \rvert} = \int_{\lvert Z_1 -A\rvert}^{\lvert Z_2 - A\rvert} \frac{dt}{t} \\
        &\geq \int_{\lvert Z_1 -A\rvert}^{\lvert Z_2 - A\rvert} \frac{dt}{\lvert Z_2 - A\rvert} =  \frac{\lvert Z_2 - A\rvert - \lvert Z_1 - A\rvert}{\lvert Z_2 - A\rvert}~.
\end{align*}
Since~$Z_1, Z_2$ and~$A$ are on the same affine line of~$\uu^\opp$ and~$\lvert Z_1 - Z_2 \rvert \leq \varepsilon$, we have
\begin{equation}\label{eq_minoration_locale_ro}
    \ro_\O(\varphistd(Z_1), \varphistd(Z_2)) \geq \frac{\lvert Z_1 - Z_2 \rvert}{ \varepsilon + M}~.
\end{equation}

Now let~$N \in \mathbb{N}$ and~$A_1, \dots, A_N \in \varphi^{-1}_\std(\O)$ be such that~$\gamma := (\varphistd(A_1), \dots, \varphistd(A_N))$ is in~$ \mathcal{C}_{x,y}^N(\O)$. By additivity of~$\ro_\O$ on photons, we may assume that there exists~$1 \leq k \leq N-1$ such that~$A_1, \dots, A_{k} \in  B_{\varepsilon}(x)$ and~$A_{k+1} \in \partial  B_{\varepsilon}(x)$. Then by~\eqref{eq_minoration_locale_ro}, we have
\begin{equation*}\sum_{i=1}^{N-1} \ro_\O(\varphistd(A_i), \varphistd(A_{i+1})) \geq \sum_{i=1}^{k} \ro_\O(\varphistd(A_i), \varphistd(A_{i+1}))  \geq \frac{1}{\varepsilon + M} \big{(} \sum_{i=1}^{k} \lvert A_{i+1} -A_i \rvert  \big{)} \geq \frac{\varepsilon}{ \varepsilon + M}~.
\end{equation*}
Taking the infimum over~$\mathcal{C}_{x,y}(\O)$ gives
\begin{equation}\label{eq_kob_positif}
    \kob_\O(x,y) \geq \frac{\varepsilon}{ \varepsilon + M}~.
\end{equation}

We have just proved that~$\kob_\O$ separates the points, which implies that it is a metric. Note that~\eqref{eq_kob_positif} holds for all~$y \in \O \smallsetminus \varphistd(B_\varepsilon(x))$. This implies that the standard topology is included in the topology generated by~$\kob_\O$. The converse inclusion is given by Lemma~\ref{lem_kob_contin}. Thus~$\kob_\O$ generates the standard topology.

In the dually convex case, by Lemma~\ref{lem_continuité_intersection_photon} and the above paragraph, the map~$\kob_\O$ is an~$\Aut(\O)$-invariant length metric generating the standard topology. The fact that~$\kob_{\O}$ is a length metric and Equation \eqref{eq_longueur_chaine_1} then follow by~\eqref{eq_longueur_chaine}, with the same proof as the one of Proposition~\ref{lem_key_lemma}.(2).~$\qed$
\end{proof}

A corollary of Theorem~\ref{thm_kob_hyp_R_propre} is:

\begin{cor}\label{cor_action_proper}
    If~$\O \subset \Fl(\g, \alpha)$ is~$\MRr$-proper and photon-convex, then~$\Aut_G(\O)$ acts properly on~$\O$.
\end{cor}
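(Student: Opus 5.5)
The approach is the standard one: properness of an isometric action follows from the fact that $\kob_\O$ is a genuine metric generating the standard topology (Theorem~\ref{thm_kob_hyp_R_propre}). Fix a compact subset $\mathcal{K}\subset\O$ and a sequence $(g_k)\in\Aut_G(\O)^{\mathbb{N}}$ with $g_k\cdot\mathcal{K}\cap\mathcal{K}\neq\varnothing$. We must show that $(g_k)$ has a convergent subsequence in $\Aut_G(\O)$.

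The argument runs in three steps.

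\emph{Step 1: the action is isometric.} By Fact~\ref{prop_properties_kobayashi_metric}.(2), $\Aut_G(\O)$ acts on $(\O,\kob_\O)$ by isometries. Since $\kob_\O$ generates the standard topology, $\mathcal{K}$ is $\kob_\O$-compact.

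\emph{Step 2: pointwise precompactness.} Choose a finite set of points $y_1,\dots,y_m\in\O$ that is not contained in any proper Zariski-closed subset preserved by a nontrivial element acting trivially. Concretely, take the $y_j$ in a small open ball $B$ so that any element of $G$ fixing an open set is trivial modulo the kernel of the action. For each $j$,
\[
\kob_\O(g_k y_j, \mathcal{K}) \le \kob_\O(y_j,\mathcal{K}) + \operatorname{diam}_{\kob_\O}(\mathcal{K}),
\]
so the points $g_k y_j$ stay in a $\kob_\O$-bounded set. This alone is not enough: $\kob_\O$ need not be proper here, because properness of the metric is only stated under quasi-homogeneity. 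I would therefore use local compactness instead. Because $\kob_\O$ induces the standard topology, there is $r>0$ such that the closed $\kob_\O$-ball of radius $r$ around each point of a compact neighbourhood of $\mathcal{K}$ is compact. Using isometries, $g_k$ maps the ball $B_r(x_k)$, with $x_k\in\mathcal{K}$, onto $B_r(g_kx_k)$, where $g_k x_k\in\mathcal{K}$. After passing to a subsequence, $x_k\to x$ and $g_kx_k\to x'$. Then $g_k$ maps a fixed small ball $U$ around $x$ into a fixed compact set $C\subset\O$.

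\emph{Step 3: convergence in $G$.} This is the step I expect to be the main obstacle: deducing convergence in $G$ from the fact that $g_k(U)\subset C$ for an open set $U$. I would argue by contradiction using the Cartan decomposition. If $(g_k)$ left every compact subset of $G$, then after passing to a subsequence, Fact~\ref{fact_KAK_divergent} gives a $\{\alpha\}$-contracting subsequence, with attracting point $a$ and repelling point $b$, so that $g_k\cdot z\to a$ for all $z$ outside $\hyp_b$. Since $U$ is open, it meets $\Fl(\g,\alpha)\smallsetminus\hyp_b$ in an open set, and $g_k$ collapses that set to the single point $a$. Consequently $a\in C\subset\O$. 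But $\kob_\O$-distances are preserved, so two distinct points $z,z'\in U\smallsetminus\hyp_b$ satisfy
\[
\kob_\O(g_kz,g_kz')=\kob_\O(z,z')>0,
\]
while both $g_kz$ and $g_kz'$ converge to $a\in\O$. By continuity of $\kob_\O$ (Lemma~\ref{lem_kob_contin}), the left side tends to $0$, which is a contradiction. Hence $(g_k)$ is bounded in $G$ and subconverges to some $g\in G$. Passing to the limit in $g_k\cdot\O=\O$ gives $g\cdot\O\subset\overline{\O}$, and the same argument applied to $g_k^{-1}$ gives equality, so $g\in\Aut_G(\O)$. This proves properness.
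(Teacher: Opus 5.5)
Your overall strategy matches the paper's. The paper simply transplants Zimmer's argument for \cite[Cor.\ 5.3]{zimmer2018proper}, with $\kob_\O$ in place of $C_\O$, the key input being an $\Aut_G(\O)$-invariant metric generating the standard topology. Your Steps 1 and 2 (isometries, plus a uniform radius for compact balls near $\mathcal{K}$, giving $g_k\cdot U\subset C$) are fine.

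\textbf{The gap is in Step 3.} You claim that a sequence leaving every compact subset of $G$ has an $\{\alpha\}$-contracting subsequence ``by Fact~\ref{fact_KAK_divergent}''. That fact only produces a contracting subsequence under the hypothesis that some open set is Hausdorff-collapsed to a point, which is exactly what you are trying to prove. The claim is also false. Write $g_k=k_k\exp(H_k)k_k'$ with $H_k\in\overline{\aaa}^+$; the sequence is $\{\alpha\}$-contracting only if $\alpha(H_k)\to+\infty$, and divergence does not force this. Already on $\mathbb{P}(\mathbb{R}^3)$, the sequence $g_k=\operatorname{diag}(e^{k},e^{k},e^{-2k})$ diverges in $\SL(3,\mathbb{R})$. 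Off $[e_3]$ it converges to the projection $[x:y:z]\mapsto[x:y:0]$ onto a line, so no open set collapses to a point. The same happens in higher-rank Grassmannians. So the sentence ``$g_k$ collapses $U\smallsetminus\hyp_b$ to the single point $a$'' is unjustified, and your contradiction does not go through as written.

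\textbf{How to repair it.} You only need something weaker: the limit of a divergent sequence is non-injective on every nonempty open set. Take $k_k\to k$, $k_k'\to k'$ and $\lvert H_k\rvert\to\infty$.
\begin{itemize}
\item In the chart $X\mapsto\exp(X)\cdot\LP^+$ of $\Affstdstd$, $\exp(H_k)$ acts by the diagonal operator $\Ad(\exp H_k)|_{\uu^-}$, with eigenvalues $e^{-\beta(H_k)}\in(0,1]$ for $\beta\in\Sigma^+_{\{\alpha\}}$.
\item After passing to a subsequence, these operators converge to a linear map $\pi_\infty$ that kills $\g_{-\alpha_{\FS}}$, because the highest root satisfies $\alpha_{\FS}(H_k)\geq c\lvert H_k\rvert\to\infty$.
\item Hence $g_k$ converges, uniformly on compact subsets of the open dense set $k'^{-1}\cdot\Affstdstd$, to a map $\phi$. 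It satisfies $\phi(z)=\phi(z')$ whenever the chart coordinates of $k'\cdot z$ and $k'\cdot z'$ differ by an element of $\g_{-\alpha_{\FS}}$.
\item Choose such $z\neq z'$ in $U\cap k'^{-1}\cdot\Affstdstd$. Then $\phi(z)\in C\subset\O$, and Lemma~\ref{lem_kob_contin} gives $\kob_\O(z,z')=\lim_k\kob_\O(g_k\cdot z,g_k\cdot z')=0$, a contradiction.
\end{itemize}
Alternatively, for an $\{\alpha\}$-proximal triple, let $\rho(g_k)$ converge projectively to a singular $T$. The isometry property makes the limit map injective on $U\smallsetminus\iota_\rho^{-1}(\mathbb{P}(\ker T))$. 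By invariance of domain its image is then open in $\iota_\rho(\Fl(\g,\alpha))$ while lying in $\mathbb{P}(\operatorname{Im}T)$, contradicting Zimmer's spanning lemma.

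\textbf{A smaller point.} The inclusions $g\cdot\O\subset\overline{\O}$ and $g^{-1}\cdot\O\subset\overline{\O}$ do not give $g\cdot\O=\O$. Use instead that $\Fl(\g,\alpha)\smallsetminus\O$ is closed and $g_k^{\pm1}$-invariant, so $g^{\pm1}$ maps it into itself.
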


\begin{proof}
The proof is exactly the same as the one of~\cite[Cor.\ 5.3]{zimmer2018proper}, replacing~$C_\O$ with~$\kob_\O$. The key ingredient here is the existence of an~$\Aut_G(\O)$-invariant metric generating the standard topology.~$\qed$
\end{proof}

The last statement of this section is a partial converse implication to Theorem~\ref{thm_kob_hyp_R_propre}:

\begin{prop}\label{prop_kob_hyp_implies_R_proper}
    Let~$\O \subset \Fl(\g, \alpha)$ be a dually convex domain, and assume that~$\kob_\O$ is a metric. Then~$\O$ is~$\MRr$-proper.
\end{prop}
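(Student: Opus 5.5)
We argue by contraposition: assume $\O$ is dually convex but not $\MRr$-proper, and show that $\kob_\O$ fails to separate some pair of points. Non-$\MRr$-properness gives a photon $\Phot$ such that $\Phot \smallsetminus (\O\cap\Phot)$ has at most one point. If $\O = \Fl(\g,\alpha)$, every photon lies entirely in $\O$. Then $\ro_\O \equiv 0$ on photon-related pairs by the convention on $H_{\mathbb{P}(\mathbb{R}^2)}$, hence $\kob_\O \equiv 0$ and we are done. Otherwise $\partial\O \neq \varnothing$, and dual convexity places $\O$ in an affine chart, as in the proof of Proposition~\ref{lem_key_lemma}. Since a photon is not contained in an affine chart (it meets $\hyp_{\LP^\opp}$, by Fact~\ref{lem_photons_affine}), we get $\Phot \cap \O = \Phot \smallsetminus \{p\}$ for a single point $p \in \partial \O$.

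The key step is to compute $\ro_\O$ on $\Phot \cap \O$. Pick a projective parametrization $\isl$ of $\Phot$ with $p = \isl([0:1])$, so that $\Phot\cap\O = \isl(\{[1:t] \mid t\in\mathbb{R}\})$ is a single connected component $I$ with both endpoints equal to $p$. By the convention of Section~\ref{sect_def_Hilbert_metric}, when the endpoints $t_1=t_2$ coincide we have $H_I(s_1,s_2) = \log(t_1:s_1:s_2:t_1) = \log 1 = 0$. Hence $\ro_\O(x,y)=0$ for all distinct $x,y \in \Phot\cap\O$. These points are photon-related, so the $1$-chain $(x,y)$ gives $\kob_\O(x,y)\le \ro_\O(x,y) = 0$, and $\kob_\O$ is not a metric. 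This is the contrapositive of the statement.

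The only real subtlety is checking that $\Phot\cap\O$ is the full punctured photon rather than something with more complement points; this is exactly what the hypothesis gives, since the complement contains at most one point. Dual convexity is used only to rule out the degenerate configuration, namely to ensure that $\O$ lies in an affine chart so that the complement is exactly one point. In fact, even without this, a complement of zero points lands in the $\mathbb{P}(\mathbb{R}^2)$ convention and still gives $\ro_\O = 0$. One can also sanity-check the computation against Proposition~\ref{lem_key_lemma}. Any $\xi\in\O^*$ must satisfy $\hyp_\xi\cap\Phot = \{p\}$, so Lemma~\ref{lem_comp_crossratios} yields $C_\O^\rho(x,y) = \chi(h_\alpha)\,|\log(b:a_1:a_2:b)| = 0$ with $b$ the parameter of $p$. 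This agrees with $\ro_\O(x,y)=0$.
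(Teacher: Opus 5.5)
Your proof is correct. It has the same contrapositive structure as the paper's proof: a photon missing at most one point, two distinct points on it, and Kobayashi distance zero between them. The difference is in how $\kob_\O(x,y)=0$ is obtained. The paper invokes Proposition~\ref{lem_key_lemma}.(2), that is, the equality $\kob_\O=\ro_\O$ for photon-related points in a dually convex domain. You use only the definitional bound $\kob_\O(x,y)\le\ro_\O(x,y)$ coming from the $1$-chain $(x,y)$, together with the explicit fact that the Hilbert pseudometric vanishes on an interval whose two endpoints coincide (or on all of $\mathbb{P}(\mathbb{R}^2)$).

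Your route is more elementary. As you note, it also shows that the dual convexity hypothesis is not needed for this statement. Your affine-chart step, which identifies the complement as exactly one boundary point, is correct but superfluous: $\Phot\cap\O$ is connected and $\ro_\O$ vanishes on it in either case.
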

\begin{proof}
    By contrapsition, if~$\O$ is not~$\MRr$-proper, then there exists a photon~$\Phot$ such that~$\Phot \smallsetminus (\O \cap \Phot)$ is contained in a singleton. Let~$x, y \in \O \cap \Phot$ be two distinct points. Then by Proposition~\ref{lem_key_lemma}.(2), one has~$\kob_\O(x,y) = 0$, and~$\O$ is not Kobayashi-hyperbolic.~$\qed$
\end{proof}

\subsection{Completeness}

In the notation of Proposition~\ref{lem_key_lemma}, Zimmer proves \cite[Thm 9.1]{zimmer2018proper} that a Caratheodory metric~$C_\O^\rho$ on a proper domain~$\O$ is complete whenever~$\O$ is dually convex. The proof even shows that this is equivalent to~$C_\O^\rho$ being a proper metric. Proposition~\ref{lem_key_lemma} then directly implies:

\begin{cor}\label{cor_kobayashi_geodesic}
    Let~$(\g, \alpha)$ be an irreducible real-type Nagano pair and let~$\O \subset \Fl(\g, \alpha)$ be a proper dually convex domain. Then~$\kob_\O$ is a proper and complete geodesic metric on~$\O$.
\end{cor}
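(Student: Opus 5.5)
We argue by combining Proposition~\ref{lem_key_lemma}.(1) with Zimmer's completeness result for Caratheodory metrics. We fix a linear $\{\alpha\}$-proximal triple $(G,\rho,V)$ for $(\g,\alpha)$ (for instance a lift of a Plücker triple), and set $c:=\chi_\rho(h_\alpha)^{-1}>0$. Since $\O$ is proper and dually convex, \cite[Thm~9.1]{zimmer2018proper} shows that $C_\O^\rho$ is a complete metric whose closed balls are compact. Proposition~\ref{lem_key_lemma}.(1) gives $\kob_\O\geq c\, C_\O^\rho$. By Proposition~\ref{prop_generate_topo_std}, $\kob_\O$ is a metric generating the standard topology.

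\textbf{Properness.} A closed $\kob_\O$-ball $\bar B_{\kob}(x,R)$ is closed in $\O$ for the standard topology and is contained in the closed Caratheodory ball $\bar B_{C}(x,R/c)$. The latter is compact, so $\bar B_{\kob}(x,R)$ is compact. Hence $\kob_\O$ is proper. In particular it is complete, since any Cauchy sequence is bounded and thus lies in a compact set.

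\textbf{Geodesic.} First, $\kob_\O$ is a length metric. Indeed, Proposition~\ref{lem_key_lemma} and the argument closing its proof give $\kob_\O(x,y)=\inf\{\leng(\gamma)\}$ over chains, and chains are continuous rectifiable paths. A complete, locally compact length space is geodesic by the Hopf--Rinow theorem for length spaces, so $\kob_\O$ is geodesic. Alternatively, one can argue directly with Arzelà--Ascoli: take constant-speed parametrizations of almost-minimizing chains, which are $L$-Lipschitz into the compact ball $\bar B_{\kob}(x,\kob_\O(x,y)+1)$, and extract a limit path whose length is at most $\kob_\O(x,y)$ by lower semicontinuity of length.

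\textbf{Main obstacle.} The delicate point is the length-space property, namely that $\kob_\O$ is intrinsic. Here dual convexity is essential, through Proposition~\ref{lem_key_lemma}.(2). That result ensures that the $\kob_\O$-length of a photon segment equals $\ro_\O$, so chain lengths compute $\kob_\O$. Without this, a chain's $\leng$ could exceed the sum of the $\ro_\O$ terms.
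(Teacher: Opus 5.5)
Your proof is correct and follows the paper's own route. The paper likewise combines Zimmer's theorem that $C_\O^\rho$ is complete (equivalently proper) on proper dually convex domains with the comparison $\kob_\O\geq \chi_\rho(h_\alpha)^{-1}C_\O^\rho$ of Proposition~\ref{lem_key_lemma}, and gets geodesicity from the length-space property via Hopf--Rinow.

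One minor correction to your closing remark. The length-space property does not actually need dual convexity: $\kob_\O\leq \ro_\O$ on photon-related pairs, and the Hilbert metric is additive along a photon segment, so a chain's $\kob_\O$-length is always at most the sum of its $\ro_\O$ terms and can never exceed it. Dual convexity is genuinely needed for the properness of $C_\O^\rho$.
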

Corollary~\ref{cor_kobayashi_geodesic} is already known for \emph{causal flag manifolds} \cite{galiay2024rigidity} and Einstein Universes \cite{chalumeau2024rigidity}.
We also have a weaker version of this corollary for~$\MRr$-proper domains:

\begin{cor}\label{cor_photon_convex_quasi_hom_complete}
    Let~$(\g, \alpha)$ be an irreducible real-type Nagano pair and let~$\O \subset \Fl(\g, \Theta)$ be an~$\MRr$-proper photon-convex quasi-homogeneous domain. Then~$\kob_\O$ is a complete geodesic metric on~$\O$.
\end{cor}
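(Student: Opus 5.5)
The plan is the standard route: a metric space whose isometry group acts cocompactly is complete, and a complete, locally compact length space is proper and geodesic by Hopf--Rinow. First I record what is already available. By Theorem~\ref{thm_kob_hyp_R_propre}, since $\O$ is $\MRr$-proper and photon-convex, $\kob_\O$ is an $\Aut_G(\O)$-invariant metric generating the standard topology. By Corollary~\ref{cor_action_proper}, $\Aut_G(\O)$ acts properly on $\O$. By quasi-homogeneity there is a compact $K_0\subset \O$ with $\Aut_G(\O)\cdot K_0 = \O$.

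\emph{Step 1: completeness.} Since the topology is the standard one, $\O$ is locally compact. By compactness of $K_0$ and continuity, there is $r>0$ such that for every $z\in K_0$ the closed ball $\overline{B}_{\kob_\O}(z,2r)$ is compact. Invariance under $\Aut_G(\O)$ propagates this: every closed $\kob_\O$-ball of radius $r$ in $\O$ is compact, with $r$ uniform. Now let $(x_n)$ be a Cauchy sequence. Its tail lies in a ball of radius $r$, hence in a compact set, so it has a convergent subsequence, and therefore the whole sequence converges. So $\kob_\O$ is complete. Only the invariance and the topological statement of Theorem~\ref{thm_kob_hyp_R_propre} are used here, not dual convexity.

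\emph{Step 2: the length structure.} I need $\kob_\O$ to be a length metric. By its very definition \eqref{def_kob_pseudometric}, $\kob_\O(x,y)$ is the infimum over chains of $\sum \ro_\O(x_i,x_{i+1})$. On a photon segment, $\ro_\O$ restricted to the segment is the Hilbert metric of the interval $I=\Phot\cap\O$, which is connected by photon-convexity. Since $\kob_\O\le \ro_\O$ on photon-related pairs, the $\kob_\O$-length of the segment $[x_i,x_{i+1}]$ is at most $\ro_\O(x_i,x_{i+1})$, because Hilbert length is additive along the interval. Hence
\[
\kob_\O(x,y)\;\le\; \inf_\gamma \leng(\gamma)\;\le\; \inf_{\text{chains}}\sum \ro_\O(x_i,x_{i+1}) \;=\; \kob_\O(x,y),
\]
so $\kob_\O$ is a length metric. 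It is here that I expect the subtlety: without dual convexity one does not know that $\kob_\O=\ro_\O$ on photons. The inequality above only needs $\le$, so it should suffice. I would double-check that the length of the chain path is computed correctly even when $\ro_\O$ strictly exceeds $\kob_\O$; since we only bound $\leng$ from above by $\sum\ro_\O$, this is fine.

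\emph{Step 3: Hopf--Rinow.} $(\O,\kob_\O)$ is now a complete, locally compact length space. By the Hopf--Rinow--Cohn-Vossen theorem, it is proper and geodesic: any two points are joined by a minimizing curve. This gives the corollary, with properness as a bonus, matching Theorem~\ref{thm_kobayashi_metric_nagano}.(2).
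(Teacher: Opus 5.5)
Your proof is correct, and its overall structure matches the paper's. The paper deduces the corollary in one line from Theorem~\ref{thm_kob_hyp_R_propre} (an $\mathrm{Aut}_G(\O)$-invariant metric generating the standard topology) together with \cite[Lem.~9.2]{zimmer2018proper}. That lemma supplies completeness from the cocompact isometric action, which is what your Step~1 proves directly.

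The real difference is your Step~2. The paper takes the length-space property, and hence geodesicity, from Theorem~\ref{thm_kob_hyp_R_propre}, but that theorem asserts it only when $\O$ is also dually convex. Its proof uses the identity $\leng(\gamma)=\sum_i \kob_\O(x_i,x_{i+1})$, and this identity needs $\kob_\O=\ro_\O$ on photon segments, which comes from Proposition~\ref{lem_key_lemma} and hence from dual convexity. The corollary does not assume dual convexity.

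You observe that only the upper bound $\leng([x_i,x_{i+1}])\le \ro_\O(x_i,x_{i+1})$ is needed. It follows from $\kob_\O\le\ro_\O$ on photon-related pairs and additivity of the Hilbert metric along the interval. This makes $\kob_\O$ a length metric for every $\MRr$-proper photon-convex domain, so your argument fills a step the paper's citation leaves implicit. Hopf--Rinow--Cohn-Vossen then gives properness as well as geodesicity. Your citation of Corollary~\ref{cor_action_proper} is never actually used.
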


\begin{proof} This directly follows from Theorem~\ref{thm_kob_hyp_R_propre} and \cite[Lem.\ 9.2]{zimmer2018proper}. $\qed$
\end{proof}

\section{Dynamics on Nagano spaces}\label{sect_dynamics_self_opposite} 
The main goal of this section is Corollary~\ref{lem_geom_prop_visuel_4}, which will allow us to deduce several results for strongly $\MRr$-proper domains later on: first regarding the structure of their boundary (Proposition~\ref{prop_geom_prop_extremal_points}.(2)), and then rigidity results (Theorem~\ref{thm_group_non_hyperbolic_1}.(2)). This corollary relies on an analysis of the dynamical points of sequences of elements in~$G$, carried out in Lemma~\ref{lem_photon_contenu_dans_D}.

In Section~\ref{sect_cor_type}, we give an immediate consequence of Corollary~\ref{lem_geom_prop_visuel_4} for self-opposite Nagano spaces.

\subsection{Geometric properties of dynamical points}

Following \cite{frances2005lorentzian}, we define, for any sequence~$(g_k) \in G^\mathbb{N}$ and~$x \in \Fl(\g, \Theta)$, the set
\begin{equation*}
    D_{(g_k)}(x) \ \ := \bigcup_{(x_k) \in \Fl(\g, \Theta)^\mathbb{N}, \ x_k \rightarrow x}A(g_k \cdot x_k)~,
\end{equation*}
where we have denoted by~$A(y_k)$ the set of limit points of the sequence~$(y_k)$.

The following lemma uses the notion of~\emph{$\{\alpha\}$-contracting sequence} defined in Section~\ref{sect_theta_limit_set}, as well as the arithmetic distance~$d_H$ on~$\Fl(\g, \alpha)$ defined in Section~\ref{sect_arithm_distance}.

\begin{lem}\label{lem_photon_contenu_dans_D}
    Let~$(\g, \alpha)$ be an irreducible real-type Nagano pair of rank~$r$, and let~$G \in \mathcal{G}_{\{\alpha\}}$. Let~$(g_k) \in G^\mathbb{N}$ be an~$\{\alpha\}$-contracting sequence with respect to~$(a,b) \in \Fl(\g, \alpha) \times \Fl(\g, \alpha)^\opp$. Let~$x \in \Fl(\g, \alpha) \smallsetminus \hyp_b$ be such that~$d_H(x,a) \leq r-1$. Then~$D_{(g_k^{-1})}(x)$ contains a photon.
\end{lem}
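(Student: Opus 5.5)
The plan is to use the symmetry of the dynamical relation: $y \in D_{(g_k^{-1})}(x)$ if and only if $x \in D_{(g_k)}(y)$. If $y_k \to y$ and $g_k \cdot y_k \to x$, then setting $x_k := g_k \cdot y_k$ gives $g_k^{-1}\cdot x_k = y_k \to y$, and the converse is the same computation. Since $g_k$ converges to $a$ uniformly on compact subsets of $\Fl(\g,\alpha)\smallsetminus \hyp_b$, we get $D_{(g_k)}(y)=\{a\}$ for $y\notin\hyp_b$. So a photon in $D_{(g_k^{-1})}(x)$ must lie in $\hyp_b$, and the task is to build a photon $\Phot\subset \hyp_b$ together with, for each $y\in\Phot$, a sequence $y_k\to y$ with $g_k\cdot y_k\to x$. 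Equivalently, we need sequences $x_k\to x$ whose images $g_k^{-1}\cdot x_k$ sweep out $\Phot$.

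First I normalize with a Cartan decomposition. Write $g_k = m_k \exp(H_k) n_k$ with $m_k,n_k\in K$ and $H_k\in\overline{\aaa}^+$. Passing to a subsequence, $m_k\to m$ and $n_k\to n$. Being $\{\alpha\}$-contracting with respect to $(a,b)$ gives $\alpha(H_k)\to+\infty$, $a = m\cdot \LP^+$ and $b = n^{-1}\cdot \LP^\opp$. The error terms $m_k m^{-1}$ and $n_k n^{-1}$ tend to the identity, and $D$ only depends on the sequence up to such perturbations. So I may assume $g_k=\exp(H_k)$, $a=\LP^+$ and $b=\LP^\opp$. The hypothesis $x\notin\hyp_b$ then puts $x$ in the standard chart, $x=\varphistd(X)$ with $X\in\uu^\opp$. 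By Fact~\ref{thm_takeuchi_hyp}, the condition $d_H(x,\LP^+)\le r-1$ says $x\in\mathsf{C}_{\overline{s_j}}(\LP^+)$ for some $j\le r-1$. Using $L$-equivariance (Fact~\ref{fact_construction_noncompact_dual} and the strongly orthogonal roots $\beta_1,\dots,\beta_r$), I would bring $X$ into the normal form $X=\sum_{i\le j} c_i\,\sigma_0(E_i)$ with $j\le r-1$. At this step the $L$-conjugation no longer commutes with $\exp(H_k)$, so I would instead keep a Bruhat/$KAK$ representative adapted to $X$. The sequence $\exp(-H_k)$ acts on $\uu^\opp$ by scaling each $\g_{-\beta}$, $\beta\in\Sigma^+_{\{\alpha\}}$, by $e^{\beta(H_k)}$. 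By~\eqref{eq_longest_root} each such $\beta$ equals $\alpha$ plus nonnegative simple roots, so $\beta(H_k)\ge\alpha(H_k)\to\infty$ and every direction is expanded.

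Next I would construct the photon. Since $j\le r-1$, the strongly orthogonal root $\beta_r$ is not used by $X$. Let $v := \sigma_0(E_r)\in\g_{-\beta_r}$; it lies in $\RUN$ by Lemma~\ref{fact_stab_u_std}.(2), and it commutes with $X$ because $\uu^\opp$ is abelian. For $t\in\mathbb{R}$ set $x_k^t:=\varphistd\bigl(X + t\,e^{-\beta_r(H_k)}v\bigr)$, so $x_k^t\to x$. Then
\[
\exp(-H_k)\cdot x_k^t=\varphistd\Bigl(\textstyle\sum_i c_i e^{\beta_i(H_k)}\sigma_0(E_i) + t v\Bigr).
\]
I expect this to converge, as $k\to\infty$, to a point of $\exp(tv)\cdot y_\infty$, where $y_\infty=\lim_k \varphistd\bigl(\sum_i c_ie^{\beta_i(H_k)}\sigma_0(E_i)\bigr)\in\hyp_{\LP^\opp}$. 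The key input is that the $\sll_2$-triples attached to the $\beta_i$ commute, by strong orthogonality, so the computation splits into a product of commuting $\SL(2,\mathbb{R})$'s. In the $i$-th factor, $[1:c_ie^{\beta_i(H_k)}]\to[0:1]$, and the $r$-th factor is untouched. The limit set as $t$ ranges over $\mathbb{R}\cup\{\infty\}$ is then the image of the photon $\overline{\exp(\mathbb{R}v)\cdot\LP^+}$ under the limiting element $\prod_{i\le j}\plongsl_{i}\bigl(\begin{smallmatrix}0&-1\\1&0\end{smallmatrix}\bigr)$, which is a photon. The point $t=\infty$ is handled by a diagonal argument or by closedness of $D$.

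The main obstacle is the normalization step. Reducing $x$ to the diagonal normal form uses an element of $L$ (or of $P^+$) that does not commute with $\exp(H_k)$, and the $H_k$ need not lie on the ray determined by the $\beta_i$. I would first try to handle this by choosing the Cartan decomposition of $g_k$ relative to a maximal flat adapted to the pair $(a,x)$, noting that the stabilizer of $a$ acts transitively on each Schubert cell $\mathsf{C}_{\overline{s_j}}(a)$. Failing that, I would use Fact~\ref{fact_KAK_divergent} to split off the bounded part of $H_k$ along the roots $\beta_i$ for which $\beta_i(H_k)-\alpha(H_k)$ stays bounded. The part still to be checked is that the root-by-root convergence in the limit display holds uniformly enough in $t$ to produce the whole photon.
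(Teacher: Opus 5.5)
The gap is the normalization step you flag yourself, and it matters. Your limit computation needs $X=\sum_{i\le j}c_i\sigma_0(E_i)$ and $g_k=\exp(H_k)$ simultaneously, but the element of $L$ that normalizes $X$ does not commute with $\exp(H_k)$. Without the normal form, the fixed choice $v=\sigma_0(E_r)$ can produce a constant limit.

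Here is a concrete case. Work in $\operatorname{Gr}_2(\mathbb{R}^4)$, so $r=2$, $\beta_1=\varepsilon_1-\varepsilon_4$ and $\beta_2=\varepsilon_2-\varepsilon_3$. Take $H_k=\operatorname{diag}(k,k,-k,-k)$ and $x=\operatorname{Span}(e_1+e_3,e_2)$; then $x\notin\hyp_{\LP^-}$ and $d_H(x,\LP^+)=1$. Your perturbation is $x_k^t=\operatorname{Span}(e_1+e_3,\,e_2+te^{-2k}e_3)$. Then $\exp(-H_k)\cdot x_k^t=\operatorname{Span}(e^{-2k}e_1+e_3,\,e_2+te_3)\to\operatorname{Span}(e_2,e_3)$ for every $t$. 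You get a single point, not a photon.

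The remedies you sketch do not close this. The flat is imposed by the Cartan decomposition of $g_k$; you cannot choose it. A Weyl-group element of $L$ happens to rescue this monomial example, but it only permutes root spaces and cannot normalize a general $X$. Your worry about uniformity in $t$, on the other hand, is unfounded: $D$ is a union over sequences and is closed, so pointwise convergence for each $t$ suffices.

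The missing idea is that you need no normal form, only a direction adapted to the limit point. Your $x_k^t$ is exactly $\exp(t\Ad(g_k)v)\cdot x$. For every $v\in\uu^-$ one has $g_k^{-1}\exp(t\Ad(g_k)v)\cdot x=\exp(tv)\cdot(g_k^{-1}\cdot x)$, and $\Ad(g_k)v\to 0$.

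1. Pass to a subsequence with $g_k^{-1}\cdot x\to z$.
2. Since $g_k$ fixes $\LP^+$, you get $d_H(z,\LP^+)\le r-1$. By Fact~\ref{thm_takeuchi_hyp}, this is the closed condition that $z$ avoids the open $P^+$-orbit.
3. A point fixed by $U^-$ has isotropy algebra containing $\uu^-$. Hence it lies in the open $P^+$-orbit, at distance $r$ from $\LP^+$. So $U^-$ does not fix $z$.
4. Since $\Ad(L)\cdot v^-$ spans the abelian algebra $\uu^-$ (Fact~\ref{fact_kostant}), there is $v\in\RUN$ with $\exp(\mathbb{R}v)\cdot z\neq\{z\}$.
5. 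With this $v$, $D_{(g_k^{-1})}(x)$ contains the orbit $\exp(\mathbb{R}v)\cdot z$. Let $\Phot_k$ be the photon through $x$ containing the points $x_k^t$. The Hausdorff limit $\Phot$ of $g_k^{-1}\cdot\Phot_k$ is an $\exp(\mathbb{R}v)$-invariant photon containing $z$ (Lemma~\ref{fact_stab_u_std}.(5)). The orbit of $z$ is a nontrivial unipotent orbit in $\Phot$, hence $\Phot$ minus one point.
6. Closedness of $D$ adds the missing point.

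This is essentially the paper's argument: the same $KAK$ normalization as yours, followed by a choice of $v$ driven by the limit point rather than by a normal form of $X$.
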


\begin{proof} The result is well know in rank one, corresponding to~$(\g, \alpha) = (\sll(n, \mathbb{R}), \alpha_1)$, defining real projective space. Hence we may assume that~$r \geq 2$.

Recall, in the notation of Section~\ref{sect_lie_theory}, that~$G = K\exp(\overline{\mathfrak{a}}^+)K$; this is the so-called \emph{KAK} decomposition of~$G$.

Hence for all~$k \in \mathbb{N}$, there exist~$w_k, w_k' \in K$ and~$a_k \in \exp(\overline{\mathfrak{a}}^+)$ such that~$g_k = w_k a_k w_k'$. Up to taking a subsequence, we may assume that~$(w_k)$ and~$(w_k')$ converge in~$K$. We may thus assume that they are both equal to identity, and that~$a = \LP^+$ and~$b = \LP^\opp$.

Up to taking a subsequence, we may also assume that~$(g_k^{-1} \cdot x)$ converges to some point~$m \in \Fl(\g, \alpha)$. For all~$k \in \mathbb{N}$ we have

$$d_H(g_k^{-1} \cdot x, \LP^+)  =d_H(g_k^{-1} \cdot x, g_k^{-1} \cdot \LP^+) \leq r-1~,$$
so~$d_H(m,\LP^+) \leq r-1$. In particular, we have~$m \ne \LP^\opp$, so~$\exp(\uu^\opp)$ does not stabilize~$m$. Since~$\Ad(L) \cdot v^\opp$ generates~$\uu^\opp$ as a vector space (Fact~\ref{fact_kostant}) and~$\uu^\opp$ is abelian, this implies that there exists~$g \in L$ such that
\begin{equation}\label{eq_v_m_ne_m}
    \exp(\Ad(g) \cdot v^\opp) \cdot m \ne m~.
\end{equation}.
Let us set~$v := \Ad(g) \cdot v^\opp$. For all~$t \in \mathbb{R}$, we set
\begin{equation*}
    x_k(t) := \exp(t \Ad(g_k) v) \cdot x~.
\end{equation*}
Since~$g_k \in \exp(\aaa) \subset L$, for all~$k \in \mathbb{N}$ the points~$x_k$ and~$x$ are on a same photon~$\Phot_k$.

Note that~$d_H(x, a) = r-1$ implies that there exists~$v' \in \uu^\opp$ such that~$\Phot' := \overline{\exp(\mathbb{R}v) \cdot \LP^+}$ is a photon such that~$d_H(x', a) = r$ for all~$x' \in \Phot' \smallsetminus\{ x\}$. In particular, since~$(g_k^{-1})$ is~$\{\oppinv(\alpha)\}$-contracting with respect to~$(\LP^\opp, \LP^+)$, and since~$(\g,\alpha)$ has higher rank, it proves that~$m \in \hyp_{\LP^\opp}$. Hence, taking any norm~$\lvert \cdot \rvert $ on~$\uu^\opp$, we have~$\lvert \Ad(g_k)^{-1} \cdot v \rvert \rightarrow + \infty$. We thus set
\begin{equation*}
    t_k := \frac{t}{\lvert \Ad(g_k)^{-1} \cdot v \rvert} \ \text{ and } \ \overline{v} := \lim_{k\rightarrow + \infty} \frac{\Ad(g_k)^{-1} v}{\lvert \Ad(g_k)^{-1} \cdot v \rvert}~.
\end{equation*}
Since~$g_k$ is~$\{\alpha\}$-contracting with respect to~$(\LP^+,\LP^\opp)$, we must have~$ \Ad(g_k) v \rightarrow 0$ as~$k \rightarrow + \infty$, so~$x_k(t_k) \rightarrow x$. We then have:
\begin{equation}\label{eq_limit_gk_xk}
    g_k^{-1}\cdot x_k(t_k) = \exp(t_k \Ad(g_k)^{-1}v) g_k^{-1} \cdot x \longrightarrow \exp(t\overline{v}) \cdot m~.
\end{equation}
By Fact~\ref{fact_stab_u_std}.(5), the Hausdorff limit of~$(g_k^{-1} \cdot \Phot_k)$ is a photon~$\Phot$ through~$m$. But from~\eqref{eq_v_m_ne_m} and~\eqref{eq_limit_gk_xk}, is is clear (considering a Plücker triple of~$(\g, \alpha)$ for instance) that~$t \mapsto~\exp(t\overline{v}) \cdot m$ gives a parametrization of~$\Phot$ minus a point. Hence~\eqref{eq_limit_gk_xk} gives that~$\Phot \subset D_{(g_k^{-1})}(x)$.~$\qed$
\end{proof}

We can now prove:

\begin{cor}\label{lem_geom_prop_visuel_4} Let~$(\g, \alpha)$ be a real-type Nagano pair. Let~$\O$ be a strongly~$\MRr$-proper domain of~$\Fl(\g, \alpha)$. Let~$a \in \Lambda_{\{\alpha\}}(\Aut_G(\O))$. Then~$d_H(a,x) \geq \operatorname{rk}(\g, \alpha)$ for all~$x \in \O$. In particular, if~$\oppinv(\alpha) = \alpha$, then~$a \in \O^*$.
\end{cor}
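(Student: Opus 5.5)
We argue by contradiction, using Lemma~\ref{lem_photon_contenu_dans_D}. Let~$a \in \Lambda_{\{\alpha\}}(\Aut_G(\O))$, and suppose there is~$x \in \O$ with~$d_H(a,x) \leq r-1$, where~$r = \operatorname{rk}(\g,\alpha)$. By definition of the~$\{\alpha\}$-limit set, there is a sequence~$(g_k) \in \Aut_G(\O)^{\mathbb{N}}$ that is~$\{\alpha\}$-contracting with respect to some pair~$(a,b) \in \Fl(\g,\alpha)\times\Fl(\g,\alpha)^\opp$.

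\textbf{Reducing to~$x \notin \hyp_b$.} Lemma~\ref{lem_photon_contenu_dans_D} requires~$x \notin \hyp_b$. Since~$\O$ is open and~$\hyp_b$ is a proper algebraic subvariety, $\O \smallsetminus \hyp_b$ is dense in~$\O$. The condition~$d_H(a,\cdot) \leq r-1$ is a closed condition: by Fact~\ref{thm_takeuchi_hyp} it says the point lies outside the open Schubert cell~$\mathsf{C}_{\overline{w_0}}(a)$. So it need not persist under a small perturbation of~$x$. I would handle this in one of two ways:
\begin{itemize}
\item move~$b$ instead, using that~$(g_k)$ can be replaced by~$(g_k h)$ with~$h$ close to the identity, which moves~$b$ to~$h^{-1}\cdot b$ (though we need~$h$ to preserve~$\O$, which is not available); or, more realistically,
\item use that the set~$\{y : d_H(a,y) \leq r-1\} \cap \O$ is a nonempty relatively open subset of the Schubert variety~$\Fl(\g,\alpha)\smallsetminus \mathsf{C}_{\overline{w_0}}(a)$, and show it is not contained in~$\hyp_b$.
\end{itemize}
For the second route, note that this Schubert variety is irreducible and, since~$(a,b)$ is transverse-type data, not contained in~$\hyp_b$. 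Hence~$\hyp_b$ meets it in a nowhere dense subset. This allows us to replace~$x$ by a nearby point~$x' \in \O$ with~$d_H(a,x') \leq r-1$ and~$x' \notin \hyp_b$. I expect this genericity step to be the main technical obstacle.

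\textbf{The contradiction.} Lemma~\ref{lem_photon_contenu_dans_D} applied to~$x'$ shows that~$D_{(g_k^{-1})}(x')$ contains a photon~$\Phot$. Every point~$z \in \Phot$ is a limit of~$g_k^{-1}\cdot x_k$ with~$x_k \to x'$. Since~$x' \in \O$ and~$\O$ is open, we have~$x_k \in \O$ for large~$k$. Because~$g_k^{-1} \in \Aut_G(\O)$, it follows that~$g_k^{-1}\cdot x_k \in \O$, so~$z \in \overline{\O}$. Thus~$\Phot \subset \overline{\O}$, which contradicts strong~$\MRr$-properness. Hence~$d_H(a,x) \geq r$ for all~$x \in \O$.

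\textbf{The self-opposite case.} If~$\oppinv(\alpha)=\alpha$, then~$\Fl(\g,\alpha)^\opp = \Fl(\g,\alpha)$. By Fact~\ref{thm_takeuchi_hyp}, the equality~$d_H(a,x) = r$ means~$x \in \mathsf{C}_{\overline{s_r}}(a) = \mathsf{C}_{\overline{w_0}}(a)$, the maximal cell. So~$x$ is transverse to~$a$, i.e.\ $x \notin \hyp_a$, for every~$x \in \O$. Hence~$\hyp_a \cap \O = \varnothing$, that is,~$a \in \O^*$.
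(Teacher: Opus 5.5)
Your route is the paper's: argue by contradiction, get a photon in $D_{(g_k^{-1})}(x')$ from Lemma~\ref{lem_photon_contenu_dans_D}, push it into $\overline{\O}$ using openness of $\O$ and $g_k^{-1}\in\Aut_G(\O)$, and read off the self-opposite case from Fact~\ref{thm_takeuchi_hyp}. Those steps are fine.

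\textbf{The gap.} The weak point is exactly the step you flag, and your justification of it does not hold. The paper's proof gives no help here either: it simply asserts that a point of $\O\smallsetminus\hyp_b$ with $d_H(\cdot,a)\le r-1$ exists (it writes $\O\cap\hyp_b$, evidently a typo). Two facts undercut your argument:
\begin{itemize}
\item Nothing in the definition of an $\{\alpha\}$-contracting sequence makes $a$ transverse to $b$.
\item The Schubert variety $Y_a:=\{y \mid d_H(a,y)\le r-1\}$ can lie entirely inside $\hyp_b$.
\end{itemize}
In the self-opposite case, Facts~\ref{lem_cardinal_coset} and~\ref{thm_takeuchi_hyp} give $Y_a=\hyp_a$. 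So $Y_a\subset\hyp_b$ whenever $b=a$, and this occurs for unipotent sequences. On $\operatorname{Gr}_p(\mathbb{R}^{2p})$, take $g_k=\begin{pmatrix} I_p & kS\\ 0 & I_p\end{pmatrix}$ with $S$ invertible. It sends every $p$-plane transverse to $a=\mathbb{R}^p\times\{0\}$ to $a$, uniformly on compacts, so it is contracting with respect to $(a,a)$. For $p=2$ and $S$ antisymmetric it preserves $x_1x_3+x_2x_4$, hence also the realization of $\mathbb{X}(\g,\alpha)$ formed by its positive definite planes. In $\operatorname{Gr}_p(\mathbb{R}^{p+q})$ with $p<q$, the inclusion $Y_a\subset\hyp_b$ holds whenever $a\subset b$.

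In that situation no perturbation of $x$ can produce the point $x'$ you need, so Lemma~\ref{lem_photon_contenu_dans_D} cannot be applied as stated. An additional idea is required, for instance:
\begin{itemize}
\item a version of Lemma~\ref{lem_photon_contenu_dans_D} without the hypothesis $x\notin\hyp_b$; or
\item an argument using $\O$ to choose the contracting sequence so that $\O\cap Y_a\not\subset\hyp_b$.
\end{itemize}

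\textbf{A secondary point.} Even when $Y_a\not\subset\hyp_b$, irreducibility of $Y_a$ over $\mathbb{R}$ does not by itself show that a nonempty relatively open subset of $Y_a$ escapes $\hyp_b$, since real irreducible varieties can have lower-dimensional pieces. What works is the cell structure. The top Bruhat cell of $Y_a$ is a copy of $\mathbb{R}^d$ and is dense in $Y_a$ for the usual topology. Via a Plücker triple, $\hyp_b$ meets this cell in the zero set of a polynomial, which is nonzero when $Y_a\not\subset\hyp_b$. Hence $\O\cap Y_a\not\subset\hyp_b$.

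So your perturbation argument is correct precisely when $Y_a\not\subset\hyp_b$. The complementary case is the genuine gap, in your proposal and in the paper's proof alike.
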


\begin{proof} Let~$(g_k) \in \Aut_G(\O)^\mathbb{N}$ be an~$\{\alpha\}$-contracting sequence with respect to~$(a,b)$, where~$b \in \Fl(\g, \alpha)^\opp$. If there existed a point~$x \in \O$ such that~$d_H(x,a) \leq \operatorname{rk}(\g, \alpha)-1$, then there would also exist such a point~$x$ in~$\O \cap \hyp_b$. Then Lemma~\ref{lem_photon_contenu_dans_D} would give that~$D_{(g_k^{-1})}(x)$ contains a photon. But by openness of~$\O$, we have~$D_{(g_k^{-1})}(x) \subset \overline{\O}$. This would contradict the strong~$\MRr$-properness of~$\O$.~$\qed$
\end{proof}

\subsection{A corollary on the type} \label{sect_cor_type} In this section, we assume that~$\Fl(\g, \alpha)$ is self-opposite, i.e.\ $\oppinv(\alpha) = \alpha$, as defined in Section~\ref{sect_flag_mfds}. Given a subset~$F \subset \Fl(\g, \alpha)$, we denote by~$F^{3, * }$ the set of triples of pairwise transverse points in~$F$. Until the end of this section, we fix~$G := \Aut_{\{\alpha\}}(\g)$. Recall the standard affine chart~$\Affstdstd$ and its parametrization~$\varphistd$ defined in~\eqref{eq_standard_affine_chart} and~\eqref{eq_A_egal_exp}. The map 
\begin{equation*}\label{eq_def_opposition}
    \inver_{\{\alpha\}}:
        \Affstdstd \longrightarrow \Affstdstd; \ \
        \varphistd(X) \longmapsto \varphistd (-X)~.
\end{equation*}
induces a homeomorphism of~$\Affstdstd  \smallsetminus \hyp_{\LP^+}$ \cite{dey2024restrictions}, and thus acts on the set~$\mathcal{E}_{\alpha}$ of connected components of~$\Fl(\g, \alpha) \smallsetminus (\hyp_{\LP^+} \cup \hyp_{\LP^\opp})$.

The subgroup~$L$ of~$G$ acts on~$\mathcal{E}_{\alpha}$. If~$(x,y,z) \in \Fl(\g, \alpha)$ are three pairwise transverse points, then there exists a unique~$[g] \in G/L$ such that~$g \cdot (x,y) = (\LP^+, \LP^\opp)$. We denote by~$\typ(x,y,z)$ the~$L$-orbit of the connected component~$\mathcal{O}$ of~$\Fl(\g, \alpha) \smallsetminus (\hyp_{\LP^+} \cup \hyp_{\LP^\opp})$ containing~$g\cdot y$. 

The \emph{type} of a triple~$(x,y,z) \in \Fl(\g, \alpha)^{3,*}$ is by definition the orbit~$\typ(x,y,z) \in \mathcal{E}_{\alpha}/L$. It is~$G$-invariant. Since~$\inver_{\{\alpha\}}$ commutes with the action of~$L$ on~$\uu_{\{\alpha\}}^\opp$, it induces a bijection of~$\mathcal{E}_{\alpha} / L$, still denoted by~$\inver_{\{\alpha\}}$.

The following lemma is proven in \cite{galiay2025transverse} and expresses a restriction on the $\inver_{\{\alpha\}}$-invariant elements of $\mathcal{E}_{\alpha}$ when $\Fl(\g, \alpha)$ contains a domain~$\O$ with sufficently many elements in~$\partial \O \cap \O^*$.

\begin{lem}\label{lem_Omega_domain_prop_I} \cite{galiay2025transverse}
    Let~$G$ be a noncompact real semisimple Lie group and~$\Theta$ be a self-opposite subset of simple restricted roots of~$G$. Let~$\O \subset \Fl(\g, \alpha)$ be a domain. Then for any subset~$\Lambda \subset \partial \O \cap \O^*$ of pairwise transverse points and of cardinal~$\geq 4$, there exists an~$\inver_{\{\alpha\}}$-invariant element~$\mathcal{O} \in \mathcal{E}_{\alpha}$ such that~$\typ(a,b,c) = [\mathcal{O}] \in \mathcal{E}_{\alpha}/L$ for all~$(a,b,c) \in \Lambda^{3, *}$.
\end{lem}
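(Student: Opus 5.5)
We fix four pairwise transverse points $a_1,a_2,a_3,a_4\in\Lambda$. Every triple in $\Lambda^{3,*}$ lies in a common quadruple of this kind, so it suffices to show two things: all ordered triples drawn from such a quadruple have the same type, and that type is represented by an $\inver_{\{\alpha\}}$-invariant component. Using the $G$-action and the uniqueness of $[g]\in G/L$, we normalize $(a_1,a_2)=(\LP^+,\LP^\opp)$. Then $a_3,a_4$ lie in $\Fl(\g,\alpha)\smallsetminus(\hyp_{\LP^+}\cup\hyp_{\LP^\opp})$. The whole argument rests on the hypothesis $\Lambda\subset\partial\O\cap\O^*$. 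Each $a_i$ is a boundary point, and since $\Fl(\g,\alpha)$ is self-opposite we have $a_i\in\O^*$, that is, $\hyp_{a_i}\cap\O=\varnothing$. So $\O$ is a connected set avoiding $\hyp_{\LP^+}\cup\hyp_{\LP^\opp}$, and it lies in a single component $\mathcal{O}\in\mathcal{E}_\alpha$. Since each $a_i$ is in $\partial\O$, we get $a_3,a_4\in\overline{\mathcal{O}}$. Transversality puts them in the open set complementary to the two hypersurfaces, and therefore $a_3,a_4\in\mathcal{O}$. Hence $\typ(a_1,a_2,a_3)=\typ(a_1,a_2,a_4)=[\mathcal{O}]$.

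Next we handle permutations and the other choices of the first two points. The same argument with any ordered pair $(a_i,a_j)$ in place of $(a_1,a_2)$ shows the following: after the normalization $g\cdot(a_i,a_j)=(\LP^+,\LP^\opp)$, the component containing $g\cdot\O$ contains the images of the remaining two points. So the type $\typ(a_i,a_j,a_k)$ does not depend on $k$. For invariance we want the swap $(a_i,a_j)\mapsto(a_j,a_i)$ to act on $\mathcal{E}_\alpha/L$ by $\inver_{\{\alpha\}}$. We take an element $w\in N_K(\aaa)$ representing $w_0$ and exchanging $\LP^+$ and $\LP^\opp$; it exists because $\oppinv(\alpha)=\alpha$. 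We then need to check that $w$ acts on the complement of the two hypersurfaces as $\inver_{\{\alpha\}}$ composed with an element of $L$ (the "inversion" $X\mapsto -X$ up to the Levi action, cf. \cite{dey2024restrictions}). Granting this, $\typ(a_j,a_i,a_k)=\inver_{\{\alpha\}}\typ(a_i,a_j,a_k)$.

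Finally we combine the two facts. With $k,l$ the remaining indices we have
\[
\typ(a_1,a_2,a_3)=\typ(a_1,a_2,a_4),\qquad \typ(a_2,a_1,a_3)=\inver_{\{\alpha\}}\typ(a_1,a_2,a_3).
\]
We also need a cyclic relation, $\typ(a_1,a_2,a_3)=\typ(a_2,a_3,a_1)$ or a similar identity. This comes from the fourth point: each of the pairs $(a_1,a_2)$, $(a_2,a_3)$ and $(a_1,a_3)$ sees the same domain $\O$ on the side containing $a_4$. Applying a suitable $L$-element (using the connectedness of $\O$) should match these normalizations. Chaining these identities gives $\inver_{\{\alpha\}}[\mathcal{O}]=[\mathcal{O}]$ and shows all types are equal. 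An $L$-orbit fixed by $\inver_{\{\alpha\}}$ then yields an invariant representative, after possibly replacing $\mathcal{O}$ by an $L$-translate. I expect this last combinatorial step to be the main obstacle: getting a genuine constraint out of four points rather than three, which is exactly why the cardinality hypothesis is $\ge 4$. I would first try to realize the full symmetric group action on the types of a quadruple explicitly, via $w_0$ and the cross-ratio-type invariants.
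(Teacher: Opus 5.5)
Your first paragraph is correct and contains the key observation. The gap is in everything after it. You work entirely in $\mathcal{E}_\alpha/L$, but the statement asks for a component $\mathcal{O}\in\mathcal{E}_\alpha$ with $\inver_{\{\alpha\}}(\mathcal{O})=\mathcal{O}$, and class-level information cannot produce this.

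\textbf{Your final step is false.} Since $\inver_{\{\alpha\}}$ commutes with $L$, we have $\inver_{\{\alpha\}}(l\cdot\mathcal{O})=l\cdot\inver_{\{\alpha\}}(\mathcal{O})$. So either every component in an $L$-orbit is invariant or none is, and replacing $\mathcal{O}$ by an $L$-translate changes nothing.

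\textbf{Your class-level relations are empty in this setting.} Take $G=\Aut_{\{\alpha\}}(\g)$, where $\typ$ is defined. Then $\inver_{\{\alpha\}}$ is induced by $\sigma=\Ad(\exp(\pi i H_0))$, which is trivial on $\mathfrak{l}$ and equal to $-\id$ on $\uu^{\pm}$, hence lies in $L$. So $\inver_{\{\alpha\}}$ acts trivially on $\mathcal{E}_\alpha/L$. The relations you aim for (the swap acting by $\inver_{\{\alpha\}}$, the cyclic relation, $\inver_{\{\alpha\}}[\mathcal{O}]=[\mathcal{O}]$) then hold for every triple of every domain and carry no information. For example, in the Lagrangian Grassmannian (item (xi)) with $n$ odd, the components of signature $(p,q)$ and $(q,p)$ form one $\inver_{\{\alpha\}}$-fixed $L$-orbit, yet no component is $\inver_{\{\alpha\}}$-invariant. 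Separately, the swap and cyclic relations in your second and third paragraphs are asserted, not proved.

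\textbf{The missing idea is to compare normalizations exactly, not up to $L$.} Keep one point fixed and normalize with the unique element of the unipotent radical.

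\begin{itemize}
\item Set $(a,b)=(\LP^+,\LP^\opp)$, $c=\varphistd(X)$, $d=\varphistd(Y)$, $\widetilde{\O}=\varphistd^{-1}(\O)$, and $\mathcal{T}=\varphistd^{-1}(\Affstdstd\smallsetminus\hyp_{\LP^+})$.
\item Since $\uu^\opp$ is abelian, $\varphistd(Z)$ and $\varphistd(Z')$ are transverse if and only if $Z-Z'\in\mathcal{T}$. Also $\mathcal{T}=-\mathcal{T}$.
\item Let $\mathcal{C}$ be the component of $\mathcal{T}$ containing $\widetilde{\O}$. Your argument gives $X,Y\in\mathcal{C}$.
\item The set $\widetilde{\O}-X$ is the image of $\O$ under $\exp(-X)$, which normalizes the pair $(c,b)$. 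It is a connected subset of $\mathcal{T}$ whose closure contains $-X$ and $Y-X$. Hence $Y-X\in-\mathcal{C}$.
\item Symmetrically, $\widetilde{\O}-Y$ gives $X-Y\in-\mathcal{C}$, that is, $Y-X\in\mathcal{C}$.
\item Therefore $\mathcal{C}\cap(-\mathcal{C})\neq\varnothing$, so $\mathcal{C}=-\mathcal{C}$, and $\varphistd(\mathcal{C})$ is $\inver_{\{\alpha\}}$-invariant. This is exactly where the fourth point is used.
\end{itemize}

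The same computation shows that the normalized component for the pair $(c,b)$ is $-\mathcal{C}=\mathcal{C}$ itself. The analogous argument in the chart $\Fl(\g,\alpha)\smallsetminus\hyp_{\LP^+}$ handles changing the second point of the pair. Chaining such one-point changes gives a single invariant $\mathcal{O}$ for all of $\Lambda^{3,*}$.
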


Corollary~\ref{lem_geom_prop_visuel_4} and Lemma~\ref{lem_Omega_domain_prop_I} directly imply:

\begin{fact}\label{lem_Nagano_prop_I}
    Let~$(\g, \alpha)$ be an irreducible Nagano pair of real type such that~$\oppinv(\alpha) = \alpha$ and~$G \in \mathcal{G}_{\{\alpha\}}(\g)$. Let~$\O \subset \Fl(\g, \Theta)$ be an~$\MRr$-proper domain. Then for any subset~$\Lambda \subset \Lambda_{\{\alpha\}} (\Aut_G(\O))$ of pairwise transverse points and of cardinal~$\geq 4$, there exists an~$\inver_{\{\alpha\}}$-invariant element~$\mathcal{O} \in \mathcal{E}_{\alpha}$ such that~$\typ(a,b,c) = [\mathcal{O}] \in \mathcal{E}_{\alpha}/L$ for all~$(a,b,c) \in \Lambda^{3, *}$.
\end{fact}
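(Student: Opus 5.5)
The plan is to reduce the statement to Lemma~\ref{lem_Omega_domain_prop_I} by showing that any pairwise transverse subset of $\Lambda_{\{\alpha\}}(\Aut_G(\O))$ lies in $\partial\O\cap\O^*$. Take $\Lambda\subset\Lambda_{\{\alpha\}}(\Aut_G(\O))$ pairwise transverse of cardinality at least $4$, and fix $a\in\Lambda$.

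\textbf{Step 1: $a\in\O^*$.} By Corollary~\ref{lem_geom_prop_visuel_4}, since $\oppinv(\alpha)=\alpha$, we get $d_H(a,x)\ge \operatorname{rk}(\g,\alpha)$ for all $x\in\O$, hence $a\in\O^*$. By Fact~\ref{thm_takeuchi_hyp}, this inequality means that every $x\in\O$ is transverse to $a$, i.e.\ $\hyp_a\cap\O=\varnothing$. This step uses the \emph{strong} $\MRr$-properness hypothesis of that corollary. Since the statement only assumes $\MRr$-properness, I would either read the hypothesis as strong $\MRr$-properness, as the corollary requires, or try to run Lemma~\ref{lem_photon_contenu_dans_D} directly. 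In the latter case one gets that $D_{(g_k^{-1})}(x)\subset\overline{\O}$ contains a whole photon, but this only contradicts $\MRr$-properness if one can push that photon into $\O$ minus a point.

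\textbf{Step 2: $a\in\partial\O$.} Choose an $\{\alpha\}$-contracting sequence $(g_k)$ in $\Aut_G(\O)$ with respect to $(a,b)$. The set $\hyp_b$ has empty interior, so $\O$ contains a point $x\notin\hyp_b$. Then $g_k\cdot x\to a$ with $g_k\cdot x\in\O$, so $a\in\overline{\O}$. Since $a$ is transverse to itself (self-opposite case) but $a\in\O^*$ means $\hyp_a\cap\O=\varnothing$, and $a\in\hyp_a$ is not transverse to itself, we get $a\notin\O$. Hence $a\in\partial\O$.

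\textbf{Step 3: conclusion.} Steps 1 and 2 give $\Lambda\subset\partial\O\cap\O^*$, so Lemma~\ref{lem_Omega_domain_prop_I} applies verbatim and yields the $\inver_{\{\alpha\}}$-invariant component $\mathcal{O}$ with the required types.

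The main obstacle is the gap between strong $\MRr$-properness, which Corollary~\ref{lem_geom_prop_visuel_4} needs, and the $\MRr$-properness in the statement. If a direct argument fails, I would add the strong hypothesis.
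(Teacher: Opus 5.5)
Your Steps~1--3 are the paper's argument. The paper deduces this statement in one line from Corollary~\ref{lem_geom_prop_visuel_4} and Lemma~\ref{lem_Omega_domain_prop_I}, and your Step~2 makes explicit the inclusion $\Lambda\subset\partial\O$ that the paper leaves implicit.

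There is one slip in Step~2. A point is \emph{never} transverse to itself, since $w_0\notin W_{\FS\smallsetminus\{\alpha\}}$. That is exactly why $a\in\hyp_a$, and together with $\hyp_a\cap\O=\varnothing$ it gives $a\notin\O$.

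The hypothesis mismatch you flag is real, and the paper's one-line proof has the same dependency. Corollary~\ref{lem_geom_prop_visuel_4} requires strong $\MRr$-properness. For merely $\MRr$-proper domains the intermediate inclusion $\Lambda_{\{\alpha\}}(\Aut_G(\O))\subset\O^*$ actually fails, so Lemma~\ref{lem_photon_contenu_dans_D} cannot be pushed further, as you suspected. Here is an example.
\begin{itemize}
\item Take item~(viii) of Table~\ref{table_nagano_full} with $p=1$, $q\geq 2$; this space is self-opposite of rank~$2$. Identify $\Affstdstd$ with Minkowski space $\mathbb{R}^{1,q}$ via $\varphistd$, and let $\O=\{x_0>0\}$.
\item By Fact~\ref{lem_photons_affine}, a photon either lies in $\hyp_{\LP^\opp}$ or is a null line plus one point of $\hyp_{\LP^\opp}$. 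Null vectors have nonzero time component, so each photon meets $\O$ in an open half-line or not at all. Hence $\O$ is $\MRr$-proper, although $\overline{\O}\supset\hyp_{\LP^\opp}$ contains photons, so it is not strongly $\MRr$-proper.
\item The dilations $g_0(\lambda)$ of Section~\ref{sect_dilations_translations}, with $\lambda\to 0$, preserve $\O$ and are $\{\alpha\}$-contracting with respect to $(\LP^+,\LP^\opp)$. Hence $\LP^+\in\Lambda_{\{\alpha\}}(\Aut_G(\O))$.
\item However, $\LP^+=\varphistd(0)$ lies on the null line through $\varphistd(1,1,0,\dots,0)\in\O$. So $\hyp_{\LP^+}\cap\O\neq\varnothing$ and $\LP^+\notin\O^*$.
\end{itemize}

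So the reduction to Lemma~\ref{lem_Omega_domain_prop_I} through $\Lambda\subset\partial\O\cap\O^*$ genuinely needs strong $\MRr$-properness. Under that hypothesis, which is the one the paper's argument actually uses, your proof is complete. The statement with mere $\MRr$-properness, if true at all, is not established by your route or by the paper's.
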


Lemma~\ref{lem_Nagano_prop_I} holds in particular if~$\Aut_G(\O)$ is Zariski-dense in~$G$, or if it is nonelementary and~$\{\alpha\}$-Anosov in the sense of \cite{guichard2012anosov}. It is a strong topological restriction on the type of the limit set of groups preserving~$\MRr$-proper domains, in particular in the case where~$\mathcal{E}_{\alpha}$ has no~$\inver_{\{\alpha\}}$-invariant connected component. It implies in particular:

\begin{cor}\label{thm_Z_dense_Nagano_prop_I}
Let~$(\g, \alpha)$ be an irreducible real-type Nagano pair corresponding to items \emph{(i, $n \equiv 2 \mod 4$), (iii, $n$ odd), (iv, $p=q$ odd), (x, $n$ odd), (xi, $n$ odd), (xii)} of Table~\ref{table_nagano_full}. Let~$H \subset \Aut_{\{\alpha\}}(\g)$ be a subgroup such that~$\Lambda_{\{\alpha\}}(H)$ contains at least~$4$ pairwise transverse points. Then~$H$ does not preserve any $\MRr$-proper domain in~$\Fl(\g, \alpha)$.
\end{cor}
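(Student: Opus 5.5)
The argument is by contradiction, combining Fact~\ref{lem_Nagano_prop_I} with a case-by-case inspection of $\mathcal{E}_\alpha$. Suppose $H$ preserves an $\MRr$-proper domain $\O\subset\Fl(\g,\alpha)$, so that $H\leq\Aut_G(\O)$ with $G=\Aut_{\{\alpha\}}(\g)$. Each $\{\alpha\}$-contracting sequence in $H$ is one in $\Aut_G(\O)$, hence $\Lambda_{\{\alpha\}}(H)\subset\Lambda_{\{\alpha\}}(\Aut_G(\O))$. Let $\Lambda$ be a set of $4$ pairwise transverse points of $\Lambda_{\{\alpha\}}(H)$. In all the listed items one has $\oppinv(\alpha)=\alpha$; this is visible from Table~\ref{table_nagano_full} (for instance $p=q$ in item~(iv)), and is also implicit in the fact that the type $\typ$ is defined there. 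Fact~\ref{lem_Nagano_prop_I} then applies to $\Lambda$ and produces an $\inver_{\{\alpha\}}$-invariant element $\mathcal{O}\in\mathcal{E}_\alpha$, i.e.\ a connected component of $\Fl(\g,\alpha)\smallsetminus(\hyp_{\LP^+}\cup\hyp_{\LP^\opp})$ that is stable under $X\mapsto -X$ in the chart $\varphistd$.

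There is a subtlety in how Fact~\ref{lem_Nagano_prop_I} is obtained. It goes through Corollary~\ref{lem_geom_prop_visuel_4}, which in turn uses \emph{strong} $\MRr$-properness to place limit points in $\O^*$. If only $\MRr$-properness is available, I would first check that this suffices for Lemma~\ref{lem_Omega_domain_prop_I}. The idea is that, for $\MRr$-proper photon-convex domains, Corollary~\ref{cor_action_proper} gives properness of the action, so limit points of $\Aut_G(\O)$ lie in $\partial\O$. Alternatively, I would rerun Lemma~\ref{lem_photon_contenu_dans_D} and note that the photon it produces lies in $\overline{\O}$ with a complement that can be forced to be at most one point. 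I would take the statement of Fact~\ref{lem_Nagano_prop_I} as given, since it is stated earlier in the paper.

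The remaining, and main, step is to show that for each listed item no component of $\mathcal{E}_\alpha$ is $\inver_{\{\alpha\}}$-invariant. The components of $\uu^\opp\smallsetminus\varphistd^{-1}(\hyp_{\LP^+})$ are described by the open $L$-orbits in $\uu^\opp$, which are classified by a signature-type invariant of a nondegenerate $L$-semi-invariant polynomial (generic norm) $\det$ of degree $r=\operatorname{rk}(\g,\alpha)$, whose zero set is $\varphistd^{-1}(\hyp_{\LP^+})$. I would then compute how $X\mapsto -X$ acts on these invariants. In item~(iv) with $p=q$ odd, the chart $\uu^\opp\simeq\operatorname{Mat}_p(\mathbb{R})$ splits into two components according to the sign of $\det$, and $\det(-X)=(-1)^p\det X$ swaps them. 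The analogous parity argument treats (iii, $n$ odd), (x, $n$ odd), (xi, $n$ odd), (i, $n\equiv 2 \bmod 4$) and (xii). In each case the components are separated by the sign of the generic norm, or by a signature whose swap under $-\id$ is governed by the parity of the degree. Note that the full group $\Aut_{\{\alpha\}}(\g)$ may permute components, but only through an $L$-action that commutes with $-\id$, so this sign argument still applies.

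This contradicts the existence of $\mathcal{O}$, and hence no $\MRr$-proper domain is preserved by $H$. The main obstacle is this case-by-case verification, in particular for the exceptional item~(xii) and for item~(i), where the degree and orbit structure must be read off from Jordan-algebra data; for these I would rely on \cite{galiay2025transverse}, where the analogous computation for proper domains is carried out.
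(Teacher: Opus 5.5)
Your argument is the paper's. You take four pairwise transverse points of $\Lambda_{\{\alpha\}}(H)\subset\Lambda_{\{\alpha\}}(\Aut_G(\O))$, apply Fact~\ref{lem_Nagano_prop_I} to get an $\inver_{\{\alpha\}}$-invariant component of $\mathcal{E}_\alpha$, and contradict the fact that no such component exists in the listed items; the paper simply cites that fact from \cite{dey2022borel, dey2024restrictions, kineider2024connected, galiay2025transverse}, whereas you sketch it via the parity of the generic norm.

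The subtlety you flag is genuine: Fact~\ref{lem_Nagano_prop_I} is derived from Corollary~\ref{lem_geom_prop_visuel_4}, which needs \emph{strong} $\MRr$-properness. Neither of your suggested repairs closes it, since properness of the action only places limit points in $\partial\O$, not in $\O^*$, and Lemma~\ref{lem_photon_contenu_dans_D} only produces a photon in $\overline{\O}$. However, the paper's proof rests on exactly the same Fact, so taking it as given leaves you on par with the paper.
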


This result contrasts with the real projective case, where an irreducible subgroup~$\Gamma \leq \GL(V)$ preserves a proper domain in~$\mathbb{P}(V)$ if and only if the top eigenvalue of every proximal element of~$\Gamma$ is positive \cite{benoist2000automorphismes}.

\begin{proof}[Proof of Corollary~\ref{thm_Z_dense_Nagano_prop_I}]
    Indeed, in these cases~$\mathcal{E}_{\alpha}$ has no~$\inver_{\{\alpha\}}$-invariant connected component, see \cite{dey2022borel, dey2024restrictions, kineider2024connected} and \cite[Proof of Cor.\ 4.6]{galiay2025transverse}. The corollary is then just a consequence of Fact~\ref{lem_Nagano_prop_I}.~$\qed$
\end{proof}

\section{Examples}\label{sect_examples_kob_hyp}
In this section, we give elementary constructions of $\MRr$-proper dually convex domains arising from Anosov representations. We fix an irreducible real-type Nagano pair~$(\g, \alpha)$ and~$G \in \mathcal{G}_{\{\alpha\}}(\g)$.

\begin{lem}\label{lem_R_propre_ex} Let~$F \subset \Fl(\g, \alpha)^\opp$ be a Zariski-dense subset. Then any connected component of~$\Fl(\g, \alpha) \smallsetminus \bigcup_{z \in F}\hyp_z$ is~$\mathcal{R}$-proper. Moreover, their Kobayashi metric is a proper geodesic metric.
\end{lem}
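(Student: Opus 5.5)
Let~$\O$ be a connected component of~$\Fl(\g, \alpha) \smallsetminus \bigcup_{z \in F}\hyp_z$. The plan is to prove in order that~$\O$ is a domain with~$F \subset \O^*$, then that it is dually convex, then that it is~$\MRr$-proper, and finally to obtain properness of~$\kob_\O$ from Zimmer's Caratheodory metric via Proposition~\ref{lem_key_lemma}.

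\emph{Domain and dual convexity.} The first step is to check that~$\O$ is open. One route: by Fact~\ref{prop_ggkw}, $\bigcup_{z\in F}\hyp_z = \bigcup_{z \in \overline{F}}\hyp_z$ away from the closure issues, and~$\bigcup_{z\in \overline F}\hyp_z$ is closed by compactness. I will simply replace~$F$ by~$\overline{F}$; this is still Zariski-dense and makes the union closed, and the components of the complement are open. If the paper intends~$F$ as given, I would add the harmless assumption that~$F$ is closed. By construction~$F \subset \O^*$. For dual convexity, take~$a \in \partial \O$. Then~$a$ lies in the closed set~$\bigcup_{z\in F}\hyp_z$, so~$a \in \hyp_z$ for some~$z \in F\subset \O^*$. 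This gives a supporting maximal Schubert subvariety at~$a$.

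\emph{$\MRr$-properness.} Let~$\Phot$ be a photon. Since~$F$ is Zariski-dense and the set~$\{\xi \mid \Phot \subset \hyp_\xi\}$ is a proper Zariski-closed subset of~$\Fl(\g,\alpha)^\opp$, there is~$z_1 \in F \cap \Contlam_\Phot$. By Fact~\ref{lem_photons_affine}, $\hyp_{z_1}\cap\Phot=\{\pstd_\Phot(z_1)\}$ is a single point, and it is not in~$\O$. The key step is to find a second point. The set~$\{\xi \in \Contlam_\Phot \mid \pstd_\Phot(\xi) = \pstd_\Phot(z_1)\}$ lies in the Zariski-closed set~$\{\xi \mid \pstd_\Phot(z_1) \in \hyp_\xi\}$, and this set is proper because some~$\xi$ is transverse to~$\pstd_\Phot(z_1)$. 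Hence Zariski density of~$F$ gives~$z_2 \in F \cap \Contlam_\Phot$ with~$\pstd_\Phot(z_2)\ne\pstd_\Phot(z_1)$, so~$\Phot\smallsetminus\O$ has at least two points. This is the step needing the most care: one must check that the union of the two proper Zariski-closed conditions is still proper, which holds because~$\Fl(\g,\alpha)^\opp$ is irreducible.

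\emph{Metric properties.} By Lemma~\ref{lem_continuité_intersection_photon}, $\O$ is photon-convex, so Theorem~\ref{thm_kob_hyp_R_propre} shows that~$\kob_\O$ is a length metric generating the standard topology. Since~$\O^* \supset F$ is Zariski-dense and~$\O$ is dually convex, Fact~\ref{fact_Z_dense} shows that~$C^\rho_\O$ is a proper metric for a Plücker triple~$\rho$. Proposition~\ref{lem_key_lemma}.(1) gives~$\kob_\O \ge \chi_\rho(h_\alpha)^{-1} C^\rho_\O$, so closed~$\kob_\O$-balls are closed subsets of~$C_\O^\rho$-balls. Since the two metrics induce the same topology, these balls are compact, and~$\kob_\O$ is proper. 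A proper length metric space is geodesic by Hopf--Rinow, which concludes.
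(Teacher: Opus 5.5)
Your proof is correct, but it reaches $\MRr$-properness by a different route from the paper.

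\textbf{The paper's route.} It never works with photons directly. From $F\subset\O^*$ and Fact~\ref{fact_Z_dense}, the metric $C^\rho_\O$ is proper, and Proposition~\ref{lem_key_lemma}.(1) then makes $\kob_\O$ a proper metric. $\MRr$-properness comes only afterwards, by contraposition via Proposition~\ref{prop_kob_hyp_implies_R_proper}: a photon minus at most one point inside $\O$ would force $\kob_\O=0$ between distinct points. Theorem~\ref{thm_kob_hyp_R_propre} then supplies the length property.

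\textbf{Your route.} You prove $\MRr$-properness directly and algebraically: pick $z_1\in F$ with $\Phot\not\subset\hyp_{z_1}$, set $p_1=\pstd_\Phot(z_1)$, then pick $z_2\in F$ with $p_1\notin\hyp_{z_2}$. Only then do you invoke Lemma~\ref{lem_continuité_intersection_photon} and Theorem~\ref{thm_kob_hyp_R_propre} to get a metric generating the standard topology, and you use the Caratheodory comparison solely for properness.

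\textbf{Comparison.} Your key step is more elementary and independent of the Caratheodory machinery, and it makes the role of Zariski density transparent: every photon meets $\bigcup_{z\in F}\hyp_z$ in at least two points. You also spell out the point the paper glosses over, namely that $\kob_\O$-balls are compact because $\kob_\O$ induces the standard topology. The paper's argument is shorter given Fact~\ref{fact_Z_dense}, and it obtains Kobayashi-hyperbolicity without first passing through photon-convexity.

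\textbf{Two small corrections.}
\begin{enumerate}
\item The irreducibility worry is unnecessary. Since $\{\xi\mid\Phot\subset\hyp_\xi\}\subset\{\xi\mid p_1\in\hyp_\xi\}$, you only need to avoid a single proper Zariski-closed set.
\item Your claim $\bigcup_{z\in F}\hyp_z=\bigcup_{z\in\overline F}\hyp_z$ is false in general. In $\mathbb{P}(\mathbb{R}^2)$ the $\hyp_z$ are points, and for non-closed infinite $F$ the component can even be a closed arc, so not a domain. Your fallback is nevertheless sound, and something like it is needed, since the statement implicitly assumes the components are open. For open $\O$ the set $\O^*$ is closed, so every open component for $F$ is already a component for $\overline F$; assuming $F$ closed therefore loses nothing.
\end{enumerate}
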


\begin{proof} Let us prove the first point. We fix~$(G, \rho, V)$ an~$\{\alpha\}$-proximal triple for~$\g$. Let~$\O$ be a connected component of~$\Fl(\g, \alpha) \smallsetminus \bigcup_{z \in F}\hyp_z$. Note that~$\O$ is by definition dually convex.

By definition, one has~$F \subset \O^*$. Hence~$C_\O^\rho$ is a proper metric by Fact~\ref{fact_Z_dense}. Moreover~$\O$ is dually convex by construction. By Proposition~\ref{lem_key_lemma}, the pseudometric~$\kob_\O$ is a proper metric. Hence by Proposition~\ref{prop_kob_hyp_implies_R_proper}, the domain~$\O$ is~$\MRr$-proper. Since~$\kob_\O$ is also a length metric by Theorem~\ref{thm_kob_hyp_R_propre}, it is geodesic.~$\qed$
\end{proof}
 Note that in projecive space, the examples arising in Lemma \ref{lem_R_propre_ex} are even \emph{properly convex} in the classical projective sense; in particular they are proper. However, for general real-type Nagano spaces, the $\MRr$-proper domains arising from this construction are not proper:

\begin{lem}\label{lem_R_proper_not_proper} Assume that~$\oppinv(\alpha) = \alpha$. Let~$H \leq G$ be a Zariski-dense subgroup such that there exists~$t \in \mathcal{E}_{\alpha}/L$ such that
    \begin{equation*}
        \O_F^t := \{x \in \Fl(\g, \alpha) \mid \typ(a,x,b) = t \quad \forall (a,b) \in \Lambda_{\{\alpha\}}(H)^2 \text{ transverse}\}
    \end{equation*}
is nonempty and connected. Then~$\O_F^s$ is~$\MRr$-proper and non-proper.
\end{lem}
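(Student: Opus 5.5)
We will prove the two properties separately. The natural route to $\MRr$-properness is to reduce to Lemma~\ref{lem_R_propre_ex}. Set $\Lambda := \Lambda_{\{\alpha\}}(H)$. Since $\oppinv(\alpha)=\alpha$, we have $\Fl(\g,\alpha)^\opp = \Fl(\g,\alpha)$, so points of $\Lambda$ also define maximal Schubert subvarieties $\hyp_a$. Moreover, $\Lambda$ is Zariski-dense in $\Fl(\g,\alpha)$, because $H$ is Zariski-dense and $\Lambda$ is a nonempty closed $H$-invariant set.

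Next we use the definition of the type. A triple $(a,x,b)$ has a type only if its points are pairwise transverse. Hence every $x \in \O_F^t$ is transverse to every $a \in \Lambda$; note that this requires $\Lambda$ to contain transverse pairs, which holds by Zariski density. Thus $\O_F^t \subset \Fl(\g,\alpha) \smallsetminus \bigcup_{a \in \Lambda} \hyp_a$.

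We then show that $\O_F^t$ is a union of connected components of this complement. On the complement, the map $x \mapsto \typ(a,x,b)$ is locally constant for each fixed transverse pair $(a,b)$, since it records a connected component of $\Fl(\g,\alpha) \smallsetminus (\hyp_a \cup \hyp_b)$. So if $x$ and $x'$ lie in the same component $\mathcal{C}$ of $\Fl(\g,\alpha) \smallsetminus \bigcup_{a\in\Lambda}\hyp_a$, then for every pair $(a,b)$ they lie in the same component of $\Fl(\g,\alpha) \smallsetminus (\hyp_a\cup\hyp_b)$, because $\mathcal{C}$ is connected and avoids $\hyp_a\cup\hyp_b$. They therefore have the same type. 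Since $\O_F^t$ is nonempty and connected by assumption, it is exactly one such component $\mathcal{C}$. Lemma~\ref{lem_R_propre_ex} applied with $F=\Lambda$ then shows that $\O_F^t$ is $\MRr$-proper. Openness is automatic, since components of the complement of a closed set in a manifold are open.

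For non-properness we argue by contradiction. Suppose $\O := \O_F^t$ is proper. The set $\O$ is $H$-invariant, because $\Lambda$ is $H$-invariant and the type is $G$-invariant. Take $h \in H$ that is $\{\alpha\}$-proximal, which is possible by Zariski density and Benoist's results. Its attracting fixed point $a_h$ lies in $\Lambda$, and its repelling hyperplane is $\hyp_{b_h}$ with $b_h \in \Lambda$. Pick $x \in \O$; then $x$ is transverse to $b_h$, so $h^k x \to a_h$. Hence $a_h \in \overline{\O}\cap\Lambda$. We would then like to conclude that $a_h \in \partial\O\cap\O^*$.

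This gives the contradiction as follows. Corollary~\ref{lem_geom_prop_visuel_4} only needs strong $\MRr$-properness, which follows from properness by~\eqref{eq_implication_properness}. It shows that every point of $\Lambda$ lies in $\O^*$, i.e.\ is transverse to all of $\O$; equivalently $\Lambda \subset \O^*$, which matches what we already know. In rank at least $2$, Fact~\ref{thm_takeuchi_hyp} says that a photon through $a_h$ lies in $\hyp_{a_h}$. Take a point $y\in\O$ close to $a_h$ and a photon through $a_h$. Because $d_H(a_h,y)$ equals the rank $r$ for every $y\in\O$ (Corollary~\ref{lem_geom_prop_visuel_4}), points of $\O$ near $a_h$ are at maximal arithmetic distance from $a_h$, which is compatible with the situation. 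So the contradiction must come from elsewhere. The key fact is that a proper domain has $\O^*$ with nonempty interior, and every point of $\partial\O$ meets some $\hyp_\xi$ with $\xi\in\O^*$. Meanwhile $a_h \in \overline{\O}$ must be transverse to itself, since $a_h\in\O^*$ forces $\hyp_{a_h}\cap\O=\varnothing$. But $a_h\in\hyp_{a_h}$, and $\hyp_{a_h}$ is a neighbourhood-separating subvariety through a point of $\overline{\O}$. The plan is to show that $\hyp_{a_h}$ near $a_h$ disconnects small neighbourhoods in a way that forces $\O$ to meet it, using the local description of $\hyp_{a_h}$ as a cone at its vertex $a_h$ (the null cone) together with the fact that $\O$ accumulates at $a_h$ from the open type region determined by $t$.

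I expect this final step, showing that a limit point in $\overline{\O}\cap\O^*$ contradicts properness, to be the main obstacle. I would try first to use the $\inver_{\{\alpha\}}$-symmetry: properness would give an affine chart containing $\overline{\O}$, so the type $t$ would have to be realized near $a_h$ on both sides. Comparing this with the limit set being Zariski dense would then force $\O$ to meet $\hyp_{a_h}$.
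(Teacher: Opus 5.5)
Your argument for $\MRr$-properness is correct and is the paper's route: the paper just invokes Lemma~\ref{lem_R_propre_ex}, and your check that $\O_F^t$ is a single connected component of $\Fl(\g,\alpha)\smallsetminus\bigcup_{a\in\Lambda}\hyp_a$, with $\Lambda$ Zariski-dense, fills in what it leaves implicit.

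The non-properness half has a genuine gap. You never derive a contradiction, and the statement you propose to prove is false: that a limit point in $\overline{\O}\cap\O^*$ is incompatible with properness. First, $\Lambda\subset\partial\O\cap\O^*$ holds with no properness assumption at all, simply because $\O$ avoids every $\hyp_a$ with $a\in\Lambda$. The detour through Corollary~\ref{lem_geom_prop_visuel_4} only re-derives this, so properness cannot enter through $a_h$. Second, proper domains of self-opposite Nagano spaces routinely have boundary points in their dual. In the Lagrangian Grassmannian (item~(xi) of Table~\ref{table_nagano_full}), take the chart $\operatorname{Sym}(n,\mathbb{R})$, where $X$ and $Y$ are transverse iff $\det(X-Y)\neq 0$. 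There the proper domain $\{0<X<I\}$ has $0\in\partial\O\cap\O^*$. Proposition~\ref{prop_geom_prop_extremal_points} even says this happens at every $\MRr$-extremal point of a strongly $\MRr$-proper almost-homogeneous domain. So no local ``null cone at $a_h$'' argument can work. Your closing appeal to $\inver_{\{\alpha\}}$ points in the right direction, but it is not yet an argument.

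The missing idea is to use properness only through the dual, and then compare types. If $\O$ is proper, then $U:=\operatorname{int}(\O^*)$ has the following properties:
\begin{itemize}
\item $U$ is nonempty and $H$-invariant;
\item $U\cap\O=\varnothing$, since a point is never transverse to itself;
\item $\Lambda\subset\overline{U}$;
\item every $z\in\Lambda$ is transverse to every $x\in U$, because $\hyp_x\cap\overline{\O}=\varnothing$ for $x$ interior to $\O^*$ and $\Lambda\subset\overline{\O}$.
\end{itemize}
The paper combines this with two facts. The first is $\inver_{\{\alpha\}}(t)=t$, which it takes from Corollary~\ref{thm_Z_dense_Nagano_prop_I} and Fact~\ref{lem_Nagano_prop_I}. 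The second is the identity $\typ(y,c,b)=\inver_{\{\alpha\}}(\typ(c,y,b))$, which comes from a translation in the chart $\Affstd_b$.

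Spelled out, the argument runs as follows. Fix $x\in U$ and compare $\typ(c,x,b)$ with $\typ(y,x,b)$ and with $\typ(c,x,y)$ for $y\in\O$. Connectedness of $\O$, local constancy of types, and Zariski density of $\Lambda$ (to pass between pairs) show that $\typ(c,x,b)$ takes the same value for all transverse pairs $(c,b)\in\Lambda^2$. Since $x\notin\O_F^t$, this common value is $\neq t$. Now fix $y\in\O$ and $b\in\Lambda$, and take $x\in U$ close to some $c\in\Lambda\smallsetminus\hyp_b$. Then the common value equals
\begin{equation*}
\typ(y,x,b)=\typ(y,c,b)=\inver_{\{\alpha\}}(\typ(c,y,b))=\inver_{\{\alpha\}}(t)=t,
\end{equation*}
which is a contradiction.
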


\begin{proof}
    By Lemma~\ref{lem_R_propre_ex}, the domain~$\O := \O_F^t$ is~$\MRr$-proper. Let us assume for a contradiction that it is proper. Then by Theorem~\ref{thm_Z_dense_Nagano_prop_I}, there exists~$\mathcal{O} \in t$ such that~$\inver_{\{\alpha\}}(\mathcal{O}) = \mathcal{O}$. In particular, we have~$\inver(t) = t$.
    
    Then~$\O^*$ has nonempty interior~$U$. Since~$U$ is a proper~$H$-invariant open subset of~$\Fl(\g, \alpha)$, by Fact~\ref{fact_autom_group_proper}, the group~$H$ acts properly on it. Hence we must have~$U \subset \Fl(\g, \alpha) \smallsetminus \bigcup_{z \in \Lambda_{\{\alpha\}}(H)}\hyp_z$. Let~$x \in U$ and let~$a,b \in \Lambda_{\{\alpha\}}(H)$ be two transverse points. There exists~$t_x \in \mathcal{E}_{\alpha}/L$ such that~$\typ(a,x,b) = t_x$. Since~$U \cap \O = \varnothing$, we have~$t \ne t_x$. 

    Since~$\O$ is open and~$a \in \Lambda_{\{\alpha\}}(H)$, we must have~$a \in \overline{\O}$. Hence there exists~$y \in \O$ such that~$\typ(y,x,b) = t_x$. Since~$U$ is open and~$H$-invariant, we also have~$\Lambda_{\{\alpha\}}(H) \subset \overline{U}$. Making~$x_n \in U$ converging to a point~$c \in \Lambda_{\{\alpha\}}(H) \smallsetminus \hyp_b$, we get a sequence~$(t_{x_n})$ of the finite set~$\mathcal{E}_{\alpha}/L$, which we can assume to be equal to a constant~$t'$. Hence for all~$n \in \mathbb{N}$, we have~$\typ(y,x_n,b) = t'$ with~$t' \ne s$. On the other hand, we have
    \begin{equation*}
       t' = \typ(y, c, b) = \inver (\typ(c, y, b)) = t~,
    \end{equation*}
contradiction.~$\qed$
\end{proof}

In Corollary~\ref{thm_Z_dense_Nagano_prop_I}, we observed that for certain pairs~$(\g,\alpha)$, the hypotheses of Lemma~\ref{lem_R_proper_not_proper} are \emph{never} satisfied. However, for others, this is indeed the case. For instance, if~$(\g,\alpha) = (\soo(2,q),\alpha_1)$ with~$q \geq 3$ (item~(viii, $p=1, q \geq 3$) in Table~\ref{table_nagano_full}), then~$\mathcal{E}_{\alpha}/L$ has cardinality~$2$. One of its two elements, denoted~$t_1$, corresponds to the \emph{time-like configuration} of three points, while the other,~$t_2$, corresponds to the \emph{space-like configuration}. The element~$t_2$ is~$\inver$-invariant, and there exist Zariski-dense subgroups~$\Gamma \to \SO(2,q)$ such that~$\O_{\Lambda_\Gamma}^{t_2}$ is non-empty and connected~\cite{smai2022anosov}. In this case, Lemma~\ref{lem_R_proper_not_proper} implies that~$\O_{\Lambda_\Gamma}^{t_2}$ is~$\MRr$-proper but not proper.

\section{Structure of the boundary}\label{sect_R_extremality}

The objects introduced in this section, together with their properties, are generalizations of well-known objects and properties from convex projective geometry. Their introduction originates in \cite{van2019rigidity} in the case of the Grassmannian. We subsequently generalized them in \cite{chalumeau2024rigidity, galiay2024rigidity} for Einstein universes and causal flag manifolds. In this section, we provide a general construction for real Nagano spaces. The reader may refer to the articles cited above for concrete examples, as well as to \cite{galiay2025convex}, where the general notions for Nagano spaces are introduced and illustrated.

\subsection{$\MRr$-faces}\label{sect_R_faces}
We fix an irreducible Nagano pair~$(\g, \alpha)$.   The following definition extends to~$\overline{\O}$ the relation~$\phoo$, introduced on a proper domain~$\O\subset \Fl(\g,\alpha)$ in Section~\ref{sect_definition_kobayashi}:  

\begin{definition}\label{def_R_face}  
Let~$\O \subset \Fl(\g,\alpha)$ be a proper domain, and let~$a,b \in \overline{\O}$.  
We say that~$a \phoo_{\MRr} b$ (or simply~$a \phoo b$ if the context is clear) if there exists a photon~$\Phot$ through~$a$ and~$b$, such that~$a$ and~$b$ belong to the same connected component of the relative interior of~$\Phot \cap \overline{\O}$ in~$\Phot$.  

The~$\MRr$-face of~$a$, denoted by~$\Fl_\O^\MRr(a)$, is the set of points~$c \in \partial \O$ for which there exist~$N \in \mathbb{N}$ and a sequence~$a_0 = a, a_1, \dots, a_N = c \in \partial \O$ such that for all~$0 \leq i < N$, we have~$a_i \phoo_\MRr a_{i+1}$.  

A point~$a \in \partial \O$ is said to be \emph{$\MRr$-extremal} if~$\Fl_\O^\MRr(a) = \{ a \}$.    
\end{definition}

\begin{prop}\label{cor_face_incluse_hyp}Let ~$\O \subset \Fl(\g,\alpha)$ be an~$\MRr$-proper domain contained in an affine chart. Let~$a \in \partial \O$ and let~$b \in \O^*$ be such that~$a \in \hyp_b$. One has~$\Fl_{\O}^{\mathcal{R}}(a) \subset \hyp_b$.
\end{prop}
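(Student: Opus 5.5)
By induction on the length of the chain defining the face, it suffices to prove one step: if $a \in \partial\O \cap \hyp_b$ with $b\in\O^*$, and $c\in\partial\O$ satisfies $a \phoo_\MRr c$, then $c\in\hyp_b$. Applying this step to each link of a chain $a=a_0\phoo a_1\phoo\cdots\phoo a_N=c$ gives $\Fl_\O^\MRr(a)\subset\hyp_b$.

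For the step, let $\Phot$ be the photon through $a$ and $c$ given by Definition~\ref{def_R_face}, and let $J$ be the connected component of the relative interior of $\Phot\cap\overline{\O}$ in $\Phot$ containing both. There are two cases.

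\emph{Case $\Phot\subset\hyp_b$.} Then $c\in\hyp_b$ and there is nothing to prove.

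\emph{Case $\Phot\not\subset\hyp_b$.} By Fact~\ref{lem_photons_affine} (translated by $G$), $\Phot\cap\hyp_b$ is the single point $a$, and $\Phot\smallsetminus\{a\}$ is an affine line in the chart $\Affstd_b$. I will derive a contradiction from $a\in J$, i.e.\ from $a$ being an interior point of an arc of $\Phot$ contained in $\overline{\O}$. Choose a small open arc $I\subset J$ around $a$, so that $I\subset\overline{\O}$ and both components of $I\smallsetminus\{a\}$ lie in $\Affstd_b$, on opposite ``ends'' of the affine line $\Phot\smallsetminus\{a\}$.

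To see that this is impossible, fix a Plücker triple $(G,\rho,V)$. By Proposition~\ref{prop_photons_egal_proj_lines}, $\iota_\rho(\Phot)$ is a projective line, and by Fact~\ref{prop_ggkw} it meets the hyperplane $\iota^\opp_\rho(b)$ exactly at $\iota_\rho(a)$, transversally. Since $\O$ is contained in an affine chart $\Affstd_\xi$, pick a further hyperplane $\iota^\opp_\rho(\xi)$ disjoint from $\iota_\rho(\O)$. Then $\iota_\rho(\O)$ lies in the affine chart $\mathbb{P}(V)\smallsetminus\iota^\opp_\rho(\xi)$. Because $\iota_\rho(\O)$ is connected and disjoint from $\iota^\opp_\rho(b)$, it lies in one open half-space $\Affstd^+$ of that chart determined by $\iota^\opp_\rho(b)$, exactly as in the proof of Lemma~\ref{lem_continuité_intersection_photon}. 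Consequently $\iota_\rho(\overline{\O})$ lies in the closed half-space. However, a projective line crossing $\iota^\opp_\rho(b)$ transversally at $\iota_\rho(a)$, with $\iota_\rho(a)$ in the chart, has points on both sides of the hyperplane near $\iota_\rho(a)$. Hence the arc $\iota_\rho(I)$ contains points strictly in the opposite open half-space, contradicting $I\subset\overline{\O}$.

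The hypothesis needed for this argument is that $\iota_\rho(a)$ itself lies in the chart, i.e.\ $a\notin\hyp_\xi$. I expect this to be the main obstacle. If $a$ also lies in $\hyp_\xi$, I would replace $\xi$ by a nearby element of $\O^*$. Failing that, I would work directly in the chart $\Affstd_b$ (where $\Phot\smallsetminus\{a\}$ is a line whose two ends converge to $a$) and show that the two half-arcs of $I$ lie on different sides, using the $\MRr$-properness of $\O$ to ensure that $\Phot\cap\overline{\O}$ is not all of $\Phot$. Once the one-step claim holds, the induction finishes the proof.
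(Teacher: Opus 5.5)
Your reduction to a single link $a\phoo_\MRr c$ is correct, and so is your half-space argument in the chart $\mathbb{P}(V)\smallsetminus\iota^\opp_\rho(\xi)$ when $a\notin\hyp_\xi$; this is exactly the first case of the paper's proof. The gap is the case you flag yourself, namely $a\in\hyp_\xi$ for every $\xi\in\O^*$. It cannot be closed, because in that case the conclusion is false.

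\emph{Counterexample.} Work in $\mathbb{P}(\mathbb{R}^3)=\Fl(\sll(3,\mathbb{R}),\alpha_1)$, where photons and the sets $\hyp_\xi$ are the projective lines, and let $\O=\{[x:y:1]\mid 0<y<(1+x^2)^{-1}\}$.
\begin{enumerate}
\item $\O$ lies in the affine chart $\{z\neq 0\}$.
\item $\O$ is $\MRr$-proper: every projective line meets $\mathbb{P}(\mathbb{R}^3)\smallsetminus\O$ in infinitely many points. Indeed, the line at infinity misses $\O$, non-horizontal affine lines meet $\{y\le 0\}$, and horizontal ones meet $\O$ in at most a bounded interval.
\item Take $a=[1:0:0]\in\partial\O$ and let $b$ be the line at infinity. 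Then $b\in\O^*$ and $a\in\hyp_b$.
\item The photon $\Phot=\{y=0\}$ lies entirely in $\partial\O$. Hence $a\phoo_\MRr[5:0:1]$, so $[5:0:1]\in\Fl^\MRr_\O(a)\smallsetminus\hyp_b$.
\end{enumerate}

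Both of your remedies fail on this example. Every element of $\O^*$ is a line through $a$, so no better $\xi$ exists. And $\MRr$-properness does not prevent $\overline{\O}$ from containing $\Phot$; that is what strong $\MRr$-properness would give. Even strong $\MRr$-properness does not help: replacing the lower bound $0$ by $\frac14\max(0,1-x^2)$ gives a strongly $\MRr$-proper domain. There $\Phot\cap\overline{\O}\neq\Phot$, but $a$ still lies in the relative interior of this arc, and the same failure occurs.

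The paper's own treatment of this case does not work either. It passes to a point $a'\neq a$ of $\Phot\cap\partial\O$ and some $\xi'\in\O^*$ with $\Phot\not\subset\hyp_{\xi'}$. But rerunning the first-case argument at $a'$ requires $a'\in\hyp_b$, which is the conclusion being sought.

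What your argument does prove is the statement under the extra assumption that $\O$ is proper. In that case, choose $\xi$ with $\overline{\O}\cap\hyp_\xi=\varnothing$. Then $a_i\notin\hyp_\xi$ holds automatically for every point $a_i$ of a chain in $\partial\O$, and your single-link argument applies along the whole chain. This suffices for the application of the proposition to proper almost-homogeneous domains in Section~\ref{sect_non_hyp_kob}.
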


\begin{proof} By the definition of the~$\MRr$-face of~$a$, it suffices to prove that for any photon~$\Phot$ through~$a$ such that~$\Phot \cap \partial \O$ contains a neighborhood of~$a$ in~$\Phot$, one has~$\Phot \subset \hyp_b$. We fix such a photon~$\Phot$.

By assumption~$\O$ is contained in an affine chart~$\Affstd_\xi$, for some~$\xi \in \O^*$. Let us first assume that~$a \notin \hyp_\xi$.

Let~$(G, \rho, V)$ be a Plücker triple for~$\Fl(\g,\alpha)$. We consider the affine chart~$\Affstd_\infty$ of~$\mathbb{P}(V)$ defined by~$\Affstd_\infty = \mathbb{P}(V) \smallsetminus \iota_\rho^\opp(\xi)$ and endow it with its canonical affine structure. Since~$a \notin \hyp_\xi$, one has~$b \ne \xi$, so~$\iota_\rho^\opp(b)$ is an affine hyperplane of~$\Affstd_\infty$ given as the kernel of an affine map~$f: \Affstd_\infty \rightarrow \mathbb{R}$. Since~$\O$ is connected and~$b \in \O^*$, we have~$\iota_\rho(\O) \cap \iota_\rho^\opp(b) = \varnothing$, so we may assume that~$\O \subset \{f > 0\}$.

By Proposition~\ref{prop_photons_egal_proj_lines}, the set~$\iota_\rho(\Phot)$ is a projective line of~$\mathbb{P}(V)$. Hence~$D := \iota_\rho(\Phot) \cap \Affstd_\infty$ is an affine line of~$\Affstd_\infty$. An open subset of it is contained in~$\iota_\rho(\partial \O)$, thus in~$\{f\geq 0\}$. But we also have~$f(\iota_\rho(a)) = 0$, where~$\iota_\rho(a) \in D$. Hence we clearly have~$D \subset \iota_\rho^\opp(b)$. taking the closure of~$D$, this proves that~$\Phot \subset \hyp_b$.

Now assume that~$a \in \hyp_\xi$ for all~$\xi \in \O^*$. If~$\Phot \subset \hyp_\xi$ for all~$\xi \in \O^*$, then the Proposition is proven. If not, then consider~$\xi' \in \O^*$ such that~$\Phot \not\subset \hyp_\xi$. By thefinition of~$\Phot$, there exists a point~$a'$ in the relative interior of $ (\Phot \cap \partial \O)$ in~$\Phot$ which is different from~$a$. Since~$\Phot \not\subset \hyp_{\xi'}$, by Fact~\ref{lem_photons_affine}, one has~$a \notin \hyp_{\xi'}$. Hence we can apply the above argument replacing~$(a, \xi)$ with~$(a', \xi')$, and we get that~$\Phot \subset \hyp_b$.~$\qed$

\end{proof}

\subsection{Convex hull and~$\MRr$-extremal points}\label{sect_convex_hull} In this section, we fix~$(\g,\alpha)$ an irreducible real-type Nagano pair, and ~$\O \subset \Fl(\g,\alpha)$ a proper domain. In general, given a projective~$\{\alpha\}$-proximal triple~$(G, \rho, V)$ of~$\g$, one can define the convex hull of~$\iota_\rho(\O)$ in~$\mathbb{P}(V)$. However, this convex hull is not necessarily open in~$\mathbb{P}(V)$. Nevertheless, as we will see in this section, if~$(G, \rho, V)$ is a Plücker triple for~$(\g, \alpha)$, then thanks to Proposition~\ref{prop_photons_egal_proj_lines}, this property holds. This will allow for a deeper study of the properties of~$\O$.

Let~$(G, \rho, V)$ be a Plücker triple for~$\Fl(\g,\alpha)$. There exists~$\xi_0 \in \Fl(\g,\alpha)^\opp$ such that~$\hyp_{\xi_0} \cap \overline{\O} = \varnothing$. Then~$\iota_\rho(\overline{\O}) \cap \iota_\rho^{\opp}(\xi_0) = \varnothing$, which means that~$\iota_\rho(\O)$ is proper in~$\mathbb{P}(V)$. Since it is connected, one can lift it to a proper connected cone~$\widetilde{\iota_\rho(\O)} \subset V \smallsetminus \{ 0 \}$. Then its convex hull~$\widetilde{\mathcal{O}}_{\O}^\rho := \mathrm{Conv}(\widetilde{\iota_\rho(\O)})$ in~$V$ is a properly convex cone of~$V$, a priori not necessarily open. We define 
\begin{equation*}\label{eq_def_convex_hull_O}
    \mathcal{O}_{\O}^\rho := \mathbb{P}(\widetilde{\mathcal{O}}_{\O}^\rho)~.
\end{equation*} 
It is a properly convex subset of~$\mathbb{P}(V)$, and it does not depend on the choice of affine chart containing~$\overline{\O}$. In particular, the domain~$\mathcal{O}_{\O}^\rho$ is~$\rho(\Aut_G(\O))$-invariant. 

\begin{definition}\label{def_convex_hull_plucker}The properly convex domain~$\Oo^\rho$ is uniquely defined by~$\O$ and~$(G, \rho, V)$, and called the \emph{convex hull of~$\O$ in~$\mathbb{P}(V)$} (by construction, the set~$\Oo^\rho$ and contains~$\iota_\rho(\Omega)$).
\end{definition}

For the rest of this section, we fix~$\vv_0 \in V^{\omega_\alpha} \smallsetminus \{ 0 \}$.

\begin{prop}\label{prop_conv_nonempty} The set~$\mathcal{O}_\O^\rho$ is open. Moreover, if~$\O$ is dually convex, then~$\iota_\rho(\partial \O) \subset \partial \Oo^\rho$ and~$\iota_\rho(\O)$ is a connected component of~$\Oo^\rho \cap \iota_\rho(\Fl(\g,\alpha))$.
\end{prop}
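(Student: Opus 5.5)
For openness, I would show that $\Oo^\rho$ contains a neighborhood of each of its points. Since $\Oo^\rho$ is convex, it is enough to show that its relative interior is open in $\mathbb{P}(V)$, i.e.\ that the projective span of $\iota_\rho(\O)$ is all of $\mathbb{P}(V)$ and that $\Oo^\rho$ is the convex hull of an open family of segments. Fix $x\in\O$; by $G$-equivariance assume $x=\LP^+$, so $\iota_\rho(x)=[\vv_0]$. For every $g\in L^\circ$ the photon $\overline{\exp(\mathbb{R}\Ad(g)v^-)\cdot\LP^+}$ meets $\O$ in an open interval around $x$. By Proposition~\ref{prop_photons_egal_proj_lines} (its proof, \eqref{eq_photons_proj_lines}), its image is the projective line through $[\vv_0]$ with direction $\rho_*(\Ad(g)v^-)\vv_0$. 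Thus $\Oo^\rho$ contains a small segment through $[\vv_0]$ in each direction $\rho_*(Y)\vv_0$ with $Y\in\RUN$.

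By Fact~\ref{lem_L_v_generates}, these $Y$ span $\uu^-$, so the directions span $\rho_*(\uu^-)\vv_0$. I would then apply this at every point of $\iota_\rho(\O)$, not only at $x$. The convex hull of the union of these small segments, taken over all $x$ in a neighborhood in $\O$, should contain an open set of $\mathbb{P}(V)$. The reason is that $\iota_\rho(\O)$ is an open piece of the $G$-orbit $\iota_\rho(\Fl(\g,\alpha))$, and $V$ is irreducible, so this orbit spans $V$. Hence the convex hull of any nonempty open subset of the orbit has nonempty interior in its span, which is all of $\mathbb{P}(V)$ by analyticity and irreducibility. A properly convex set with nonempty interior whose generating set is open has open convex hull: each point of $\Oo^\rho$ is a finite convex combination of points of $\iota_\rho(\O)$, and I can perturb one of these points in an open set. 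This last step is where I expect the main obstacle. A convex combination of points of an open subset of a curved submanifold need not be interior in general. To handle it, I would write each point of $\Oo^\rho$ as a combination that includes the interior point of a fixed small full-dimensional simplex built from points of $\iota_\rho(\O)$, giving an open neighborhood.

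For the second part, assume $\O$ is dually convex and let $a\in\partial\O$. Choose $\xi\in\O^*$ with $a\in\hyp_\xi$. Then $\iota_\rho^-(\xi)$ is a hyperplane missing $\iota_\rho(\O)$, by Fact~\ref{prop_ggkw}, so the connected cone lies on one side of it, and so does its convex hull. Hence $\Oo^\rho$ misses that hyperplane while $\iota_\rho(a)$ lies on it. Since $\iota_\rho(a)\in\overline{\Oo^\rho}$, this gives $\iota_\rho(a)\in\partial\Oo^\rho$.

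Finally, $\iota_\rho(\O)\subset\Oo^\rho\cap\iota_\rho(\Fl(\g,\alpha))$, and $\iota_\rho(\O)$ is open and connected in that set. It is also closed in it: its closure there lies in $\iota_\rho(\overline\O)$, and the boundary part $\iota_\rho(\partial\O)$ is disjoint from the open set $\Oo^\rho$ by the previous step. Therefore $\iota_\rho(\O)$ is a connected component of $\Oo^\rho\cap\iota_\rho(\Fl(\g,\alpha))$.
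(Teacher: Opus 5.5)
Your second part is correct and is the paper's argument. The hyperplane $\iota_\rho^\opp(\xi)$ misses the cone over $\iota_\rho(\O)$, hence its convex hull, while containing $\iota_\rho(a)$; then $\iota_\rho(\O)$ is open and closed in $\Oo^\rho\cap\iota_\rho(\Fl(\g,\alpha))$.

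The gap is in the openness. The principle you fall back on is false for subsets of a curved submanifold: a convex hull with nonempty interior, generated by a set that is only open in the orbit, need not be open. For example, the convex hull of an open arc of a conic in an affine plane has nonempty interior, yet every point of the arc is extreme, hence a boundary point. Your simplex remedy is circular. For a convex set $C$ with nonempty interior, a point $p\in C$ can be written $p=tq+(1-t)r$ with $t>0$, $q\in\operatorname{int}(C)$, $r\in C$ if and only if $p\in\operatorname{int}(C)$, so asking for such a decomposition just restates the claim. Perturbing the points of a convex combination inside $\iota_\rho(\O)$ does not help either, since it only moves them along the orbit. Likewise, the small photon segments at $\iota_\rho(x)$ only give directions in $\rho_*(\uu^\opp)\vv_0$, a proper subspace of $V$. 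They force a supporting hyperplane at $\iota_\rho(x)$ to contain these directions, but they do not by themselves exclude one.

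What is missing is to use the photon segments \emph{globally}, through the open faces of $\Oo^\rho$.
\begin{itemize}
\item If two points lie in the interior of a segment contained in a convex set $C$, they lie in the same open face (relative interior of a face) of $C$.
\item Any two points of $\O$ are joined by a chain $(x_0,\dots,x_N)$. For each $i$, let $I$ be the connected component of $\Phot\cap\O$ containing $x_i,x_{i+1}$. By Proposition~\ref{prop_photons_egal_proj_lines}, $\iota_\rho(x_i)$ and $\iota_\rho(x_{i+1})$ lie in the interior of the projective segment $\iota_\rho(I)\subset\iota_\rho(\O)$.
\item Hence all of $\iota_\rho(\O)$ lies in a single open face $F$ of $\Oo^\rho$.
\item Since $\iota_\rho(\O)$ spans $V$ (your irreducibility/analyticity argument, or Zimmer's lemma as in the paper), $F$ is open in $\mathbb{P}(V)$, so $F=\operatorname{int}(\Oo^\rho)$.
\item This set is convex and contains $\iota_\rho(\O)$, hence contains its convex hull, so $\Oo^\rho=\operatorname{int}(\Oo^\rho)$.
\end{itemize}
In terms of your simplex: its vertices and $\iota_\rho(x)$ lie in the same open face, and that is exactly what makes $\iota_\rho(x)$ an interior point.
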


\begin{proof}
    For the openness, by definition of~$\mathcal{O}_\O^\rho$, it suffices to prove that for all~$x \in \iota_\rho(\O)$, there exists a neighborhood~$\mathcal{V}$ of~$x$ contained in~$\mathcal{O}_\O^\rho$.

    Now let~$x, y \in \O$, and~$(x_0 = x, \dots, x_N = y ) \in \mathcal{C}_{x,y}(\O)$. For all~$i$, by Propostion~\ref{prop_photons_egal_proj_lines}, the points~$x_i$ and~$x_{i+1}$ lie on the interior of a common projective segment contained in~$\iota_\rho(\O) \subset \mathcal{O}_\O^\rho$. Thus~$x,y$ are on the same open face of~$\mathcal{O}_\O^\rho$. This proves that~$\iota_\rho(\O)$ is contained in a face of~$\mathcal{O}_\O$.

    Now we need to use the following result proven by Zimmer \cite[Lem.\ 4.7]{zimmer2018proper}, and due to the fact that~$\O$ is open and~$\rho$ is irreducible: \emph{there exist~$y_1, \dots , y_N \in \O$ such that~$V = \iota_\rho(y_1) \oplus \dots\oplus \iota_\rho(y_N)$.}
    
    Hence~$\iota_\rho(\O)$ cannot be contained in a face of~$\Oo^\rho$ of codimension~$\geq 1$. But the only face of~$\Oo^\rho$ of codimension~$0$ is~$\operatorname{int}(\Oo^\rho)$ of~$\mathcal{O}_\O^\rho$. But then, the set~$\operatorname{int}(\Oo^\rho)$ is a convex set containing~$\iota_\rho(\O)$ and contained in the convex hull~$\Oo^\rho$ of~$\iota_\rho(\O)$. By definition of the convex hull, we must have~$\Oo^\rho = \operatorname{int}(\Oo^\rho)$. Thus~$\mathcal{O}_\O^\rho$ is open. 
    
    Now let us prove the second assertion. Assume that~$\O$ is dually convex. Then for all~$a \in \partial \iota_\rho(\O) = \iota_\rho(\partial \O)$, there exists~$\xi \in \O^*$ such that~$a \in \iota_\rho^\opp(\xi)$. Let~$f \in V^* \smallsetminus \{ 0 \}$ be a lift of~$\iota_\rho^\opp(\xi)$; Since~$\iota_\rho(\O) \cap \iota_\rho^\opp(\xi) = \varnothing$, one has~$f(x) \ne 0$ for all~$x \in \widetilde{\iota_\rho(\O)} \smallsetminus \{ 0 \}$. By connectedness of~$\O$, we may assume that~$f(x) >0$ for all~$x \in \widetilde{\iota_\rho(\O)} \smallsetminus \{ 0 \}$. Then taking the convex envelope one has~$f(x) > 0$  for all~$x \in \widetilde{\Oo^{\rho}}$. Thus~$\Oo^\rho \cap \iota_\rho^\opp(\xi) = \varnothing$. In particular, since~$a \in \iota_\rho^\opp(\xi)$, one has~$a \notin \Oo^{\rho}$. On the other hand, one has~$a \in \overline{\iota_\rho(\O)} \subset \overline{\Oo^{\rho}}$. Thus~$a \in \partial \Oo^{\rho}$. 

    We have just proven that~$\partial \iota_\rho(\O) \subset \partial \Oo^{\rho} \cap \iota_\rho(\Fl(\g,\alpha))$. Thus~$\iota_\rho(\O)$ is closed in~$\Oo^\rho \cap \iota_\rho(\Fl(\g,\alpha))$. It is also open, so it is a connected component of~$\Oo^{\rho} \cap \iota_\rho(\Fl(\g,\alpha))$.~$\qed$
\end{proof}

In projective space~$\mathbb{P}(\mathbb{R}^n)$, the extremal points of a properly convex open subset generate~$\mathbb{R}^n$, by Krein--Milman's Theorem. In real-type Nagano spaces, the existence of Plücker triples allows to recover this property for~$\MRr$-extremal points of proper domains:

\begin{lem}\label{lem_existence_extreme_points}
The (projective) extremal points of~$\Oo$ are images of~$\MRr$-extremal points of~$\O$. In particular, there exist~$x_1, \dots, x_D \in \Rr(\O)$ such that~$V = \iota_\rho (x_1) \oplus \cdots \oplus \iota_\rho (x_D)$.
\end{lem}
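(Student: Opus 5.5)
We first show that every projective extremal point of $\Oo^\rho$ lies in $\iota_\rho(\partial\O)$, and then that such a point is the image of an $\MRr$-extremal point. Since $\Oo^\rho$ is the projectivization of the convex hull of the cone over $\iota_\rho(\O)$, and $\overline{\Oo^\rho}$ is properly convex (it avoids $\iota_\rho^\opp(\xi_0)$), Milman's converse to Krein--Milman applied in an affine chart containing the compact set $\iota_\rho(\overline{\O})$ gives the inclusion: every extremal point of $\overline{\Oo^\rho}=\operatorname{Conv}(\iota_\rho(\overline{\O}))$ lies in $\iota_\rho(\overline{\O})$. By Proposition~\ref{prop_conv_nonempty}, $\Oo^\rho$ is open, so no extremal point lies in $\iota_\rho(\O)\subset\Oo^\rho$. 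Hence every extremal point $e$ has the form $e=\iota_\rho(a)$ with $a\in\partial\O$.

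Next we show that such an $a$ is $\MRr$-extremal. Suppose not. Then there is $b\ne a$ in $\partial\O$ with $a\phoo_\MRr b$, so there is a photon $\Phot$ through $a$ and $b$ such that $a$ lies in the relative interior in $\Phot$ of $\Phot\cap\overline{\O}$. By Proposition~\ref{prop_photons_egal_proj_lines}, $\iota_\rho(\Phot)$ is a projective line. A neighbourhood of $a$ in $\Phot$ is contained in $\overline{\O}$, and since $\overline{\O}$ lies in an affine chart this neighbourhood is a small arc. Its image is therefore a nondegenerate projective segment in $\iota_\rho(\overline{\O})\subset\overline{\Oo^\rho}$ containing $\iota_\rho(a)$ in its interior. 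This contradicts the extremality of $\iota_\rho(a)$ in $\overline{\Oo^\rho}$. Here "extremal points of $\Oo$" is read as extremal points of its closure, which is the only sensible reading since $\Oo^\rho$ is open.

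For the spanning statement, apply the Krein--Milman theorem in an affine chart: the compact convex set $\overline{\Oo^\rho}$ is the convex hull of its extremal points. Since $\Oo^\rho$ is a nonempty open subset of $\mathbb{P}(V)$, the cone over these extremal points spans $V$. Choosing a basis among lifts of extremal points and using the first part, we get $x_1,\dots,x_D\in\Rr(\O)$ with $V=\iota_\rho(x_1)\oplus\cdots\oplus\iota_\rho(x_D)$.

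The main obstacle is the Milman step. We must check that $\overline{\Oo^\rho}$ equals the convex hull of the compact set $\iota_\rho(\overline{\O})$, which holds because the convex hull of a compact set in an affine space is compact and the closure operations commute. We must also make sure that projective extremality is computed in a single affine chart containing everything; the chart complementary to $\iota_\rho^\opp(\xi_0)$ works. Finally, "$a\phoo_\MRr b$" must produce a nondegenerate arc around $a$ itself and not merely around $b$; this follows from the symmetric requirement in the definition that both points lie in the same component of the relative interior.
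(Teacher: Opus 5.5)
Your proof is correct and follows the paper's route. An extremal point of the closure lies in $\iota_\rho(\overline{\O})$, and openness of $\Oo^\rho$ (Proposition~\ref{prop_conv_nonempty}) rules out $\iota_\rho(\O)$. A boundary point that is not $\MRr$-extremal lies inside a photon arc in $\overline{\O}$, and by Proposition~\ref{prop_photons_egal_proj_lines} this arc maps to a nondegenerate projective segment, which is a contradiction. Krein--Milman together with openness then gives the spanning family. The only difference is that you spell out the Milman-converse step (extremal points of $\operatorname{Conv}(\iota_\rho(\overline{\O}))$ lie in $\iota_\rho(\overline{\O})$), which the paper passes over as holding ``by definition of $\mathcal{O}$''.
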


\begin{proof}[Proof] Let~$\mathcal{O} := \mathcal{O}_{\O}^{\rho}$ be the convex hull of~$\iota_\rho(\O)$ in~$\mathbb{P}(V)$ defined in Section~\ref{sect_convex_hull}. Given two distinct points~$x,y \in \overline{\mathcal{O}}$, we denote by~$(x,y)$ the unique connected component of~$L \smallsetminus \{x,y\}$ contained in~$\mathcal{O}$, where~$L$ is the unique projective line containing~$x$ and~$y$.

Let~$x$ be an extremal point of~$\mathcal{O}$. Then by definition of~$\mathcal{O}$, there exists~$z \in \overline{\O}$ such that~$x = \iota_\rho(z)$. Moreover,~$z \notin \O$ because~$\iota_\rho(\O) \subset \mathcal{O}$. Then~$z \in \partial \O$. If~$z$ is not~$\mathcal{R}$-extremal, then there exists a photon~$\Phot$ and~$a,b \in \Phot \cap \partial \O$ such that~$a,z,b$ are distinct and aligned in this order. Then, by Proposition~\ref{prop_photons_egal_proj_lines},~$z$ is included in the nontrivial projective interval~$(\iota_\rho(a), \iota_\rho(b))$ of~$\overline{\mathcal{O}}$, which contradicts the fact that~$x$ is projectively extremal. 

Hence every~$\MRr$-extremal point of~$\mathcal{O}$ is the image of an~$\MRr$-extremal point of~$\partial \O$. Thus by Krein--Milman's theorem, the convex set~$\mathcal{O}$ is the convex hull (in a suitable affine chart) of~$\iota_\rho(\Rr(\O))$. The result then follows by openness of~$\mathcal{O}$ (Proposition~\ref{prop_conv_nonempty}).~$\qed$
\end{proof}

\subsection{$\MRr$-extremal points are dynamical points}\label{sect_geometric_prop_R_extr}
The goal of this section is to prove:

\begin{prop}\label{lem_extremal_attracting}
    Let~$\O \subset \Fl(\g, \alpha)$ be an~$\MRr$-proper photon-convex domain, and~$G \in \mathcal{G}_{\{\alpha\}}(\g)$. Assume there exists~$H \leq \Aut_G(\O)$ such that~$\partial \O \subset \bigcup_{x \in \O} \overline{H \cdot x}$. Then 
    \begin{equation*}
        \Extr(\O) \subset \Lambda_{\{\alpha\}}(H)~.
    \end{equation*}
\end{prop}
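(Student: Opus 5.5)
Take $a \in \Extr(\O)$. By hypothesis there are $x \in \O$ and $(h_k) \in H^{\mathbb{N}}$ with $h_k \cdot x \to a$. The plan is to show that, after passing to a subsequence, $(h_k)$ is $\{\alpha\}$-contracting with $\{\alpha\}$-limit $a$; then $a \in \Lambda_{\{\alpha\}}(H)$ by definition. By Fact~\ref{fact_KAK_divergent}.(1), it suffices to find a nonempty open set $\mathcal{U} \subset \Fl(\g,\alpha)$ such that $h_k \cdot \mathcal{U} \to \{a\}$ in the Hausdorff topology. The natural candidate is a small $\kob_\O$-ball around $x$.

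First I would use Theorem~\ref{thm_kob_hyp_R_propre}: since $\O$ is $\MRr$-proper and photon-convex, $\kob_\O$ is an $\Aut_G(\O)$-invariant metric generating the standard topology. Fix $R>0$ and let $\mathcal{U} := B_{\kob_\O}(x,R)$, which is open. By invariance, $h_k \cdot \mathcal{U} = B_{\kob_\O}(h_k x, R)$. It then remains to show that any sequence $y_k$ with $\kob_\O(h_k x, y_k) < R$ must converge to $a$. By the definition of $\kob_\O$, one can join $h_k x$ to $y_k$ by a chain of photon segments $(z^k_0 = h_k x, z^k_1, \dots, z^k_{N_k} = y_k)$ with $\sum_i \ro_\O(z^k_i, z^k_{i+1}) < R$.

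The main obstacle is that $N_k$ is not a priori bounded. I would handle it as follows. Since balls of $\kob_\O$ are open and $\overline{B_{\kob_\O}(x,R)}$ is compact when $R$ is small (because $\kob_\O$ generates the topology), $R$ can be taken small enough that the ball lies in a realization $\mathbb{B}$ of $\mathbb{X}(\g,\alpha)$ inside $\O$. By Lemma~\ref{lem_chain_in_realization}, any point of $\mathcal{U}$ is then joined to $x$ by an $r$-chain inside $\mathcal{U}$'s neighborhood $\mathbb{B}$, with bounded total $\kob_\O$-length $R'$. Applying $h_k$ gives $r$-chains from $h_k x$ to $h_k y$ of length at most $R'$, uniformly. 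So one can argue by induction on the chain step with a fixed number $r$ of steps.

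The inductive step is the key claim: if $p_k \to a$ with $p_k \in \O$, and $q_k \in \O$ lies on a photon through $p_k$ with $\ro_\O(p_k,q_k) \le R'$, then $q_k \to a$. To prove it, pass to limits. The photons $\Phot_k$ through $p_k,q_k$ converge to a photon $\Phot$ through $a$ (Lemma~\ref{fact_stab_u_std}.(5)), and $q_k \to q \in \overline{\O}\cap\Phot$. If $q \ne a$, then the bounded Hilbert distance on the intervals $\Phot_k \cap \O$, together with $p_k \to a \in \partial\O$, forces the endpoint of $\Phot_k\cap\O$ beyond $p_k$ to converge to $a$ as well, and forces $a$ and $q$ to be interior points of a nontrivial segment of $\Phot\cap\partial\O$. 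This is the standard Hilbert-metric argument: if $s_k$ is the endpoint on the other side of $q_k$, cross-ratio boundedness gives $(s_k,p_k,q_k,\cdot)$ degeneration. Photon-convexity ($\overline{\Phot\cap\O}=\Phot\cap\overline{\O}$ together with the limit interval lying in $\partial\O$) is used to ensure the limiting segment lies in $\partial\O$ with $a$ in its relative interior or as a non-isolated point. In either case $a \phoo_\MRr q$, which contradicts $\MRr$-extremality of $a$. Hence $q = a$.

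Iterating the claim $r$ times shows $h_k \cdot \mathcal{U} \to \{a\}$, and Fact~\ref{fact_KAK_divergent}.(1) concludes. The delicate point I expect is the case where $a$ is an endpoint of the limiting boundary segment. There I would use that $a$ is extremal in the sense of Definition~\ref{def_R_face}, which requires relative-interior membership. The argument will need care, possibly by applying the claim to a slightly shifted point, or by working in the Plücker image via Proposition~\ref{prop_photons_egal_proj_lines}, where it reduces to the projective fact that $\iota_\rho(a)$ is not in the interior of a segment of $\partial\mathcal{O}_\O^\rho$.
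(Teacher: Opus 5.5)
Your architecture matches the paper's. Proposition~\ref{lem_extremal_attracting} is deduced from Lemma~\ref{lem_analogue_vey}, which combines a one-step lemma (Lemma~\ref{lem_asymptotic_behav_extremal}), uniform bounds on chains with a fixed number of steps (Lemmas~\ref{lem_diamond_diameter} and~\ref{lem_majoration_N_on_cpt}), and Fact~\ref{fact_KAK_divergent}.(1). Using a single open set $\mathcal U$ inside one realization $\mathbb B$ is a legitimate shortcut, and the $\kob_\O$-ball is not needed. You should still justify why $R'$ is uniform in $y\in\mathcal U$, e.g.\ by nesting realizations as in Lemma~\ref{lem_majoration_N_on_cpt}.

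The gap is in your key claim, which is the real content of the proposition. You assert that bounded $\ro_\O(p_k,q_k)$ forces the endpoint of $\Phot_k\cap\O$ beyond $p_k$ to converge to $a$. This is backwards, apart from the degenerate case where both endpoints merge, and it is exactly what creates the ``delicate endpoint case'' you leave open. In that case your sentence ``in either case $a\phoo_{\MRr}q$'' does not follow: Definition~\ref{def_R_face} requires $a$ and $q$ to lie in the same component of the relative interior of $\Phot\cap\overline\O$. Your fallback does not help either. $\mathcal{O}_\O^\rho$ is only constructed for proper $\O$, and nothing shows that $\MRr$-extremality of $a$ makes $\iota_\rho(a)$ projectively extremal there; Lemma~\ref{lem_existence_extreme_points} goes the other way.

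The missing observation is that the endpoint case cannot occur. Let $s_k,u_k$ be the endpoints of $\Phot_k\cap\O$, with $s_k,p_k,q_k,u_k$ in this order. Working in converging projective parametrizations and an affine chart containing the limits, pass to limits $s,u$. Suppose $s\neq u$ and $s=a$. In $\ro_\O(p_k,q_k)=\log\frac{|q_k-s_k|\,|u_k-p_k|}{|p_k-s_k|\,|u_k-q_k|}$ the factor $|p_k-s_k|$ tends to $0$, while the numerator tends to $|q-a|\,|u-a|\neq 0$. Hence $\ro_\O(p_k,q_k)\to\infty$, a contradiction; symmetrically $u\neq q$. So there are two cases.
\begin{enumerate}
\item $s\neq u$: then $a,q$ lie in the open arc $(s,u)$, which is contained in $\Phot\cap\overline\O$ as a limit of the arcs $(s_k,u_k)\subset\O$.
\item $s=u$: then $\Phot\subset\overline\O$.
\end{enumerate}
In both cases $a$ and $q$ lie in the same component of the relative interior of $\Phot\cap\overline\O$. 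Moreover $q\in\partial\O$. If instead $q\in\O$, photon-convexity makes $\Phot\cap\O$ an open arc whose closure contains $[s,u]$ (resp.\ $\Phot$). That arc would then contain $a$, contradicting $a\in\partial\O$ (resp.\ miss at most one point of $\Phot$, contradicting $\MRr$-properness). Thus $q\in\Fl_\O^{\MRr}(a)\smallsetminus\{a\}$, contradicting $\MRr$-extremality. This is the argument of Lemma~\ref{lem_asymptotic_behav_extremal}; with it in place, your $r$-step iteration and the appeal to Fact~\ref{fact_KAK_divergent}.(1) go through.
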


\subsubsection{Dynamics of~$\MRr$-extremal points}

We fix an irreducible Nagano pair~$(\g, \alpha)$ of real type. The proof of Lemma~\ref{lem_extremal_attracting} follows the strategy of \cite[Thm 7.4]{van2019rigidity}. 
 
 For a proper domain~$\O$ of~$\Fl(\g, \alpha)$ and~$x,y \in \O$, and~$m \in \mathbb{N}^*$. For~$\gamma = (x_0, \dots, x_m) \in \mathcal{C}_{x,y}^{m}(\O)$, we define
 $$\lenk(\gamma) :=\sum_{i=0}^{m-1} \ro_\O(x_i, x_{i+1})~.$$ 

We moreover define
\begin{equation*}
    \kob_{\O}^m (x,y) := \inf\big{\{}\lenk(\gamma) \big|  \, \gamma \in \mathcal{C}_{x,y}^{m}(\O)\big{\}}~.
\end{equation*}
The quantity~$\kob_{\O}^m (x,y)$ is finite if and only if the set~$\mathcal{C}_{x,y}^{m}(\O)$ is nonempty. The map~$\kob_{\O}^m: \O \times \O \rightarrow \Rf \cup \{ \infty \}$ is~$\Aut(\O)$-invariant. The sequence~$(\kob_{\O}^m(x,y))_{m \in \mathbb{N}}$ is nonincreasing, eventually finite,  and one has~$\kob_{\O}(x,y) = \lim_{m \rightarrow + \infty} \kob_{\O}^m (x,y)$. We moreover have:

\begin{lem}\label{lem_asymptotic_behav_extremal}
   Let~$\O \subset N$ be an~$\MRr$-proper photon-convex domain. Let~$a,b \in \overline{\O}$. Assume that there exist~$(x_k), (y_k) \in \O^{\mathbb{N}}$ such that~$x_k \rightarrow a$ and~$y_k \rightarrow b$, and such that there exist~$m \in \mathbb{N}$ and~$M>0$ such that~$\kob_{\O}^m (x_k, y_k) \leq  M$ for all~$k \in \mathbb{N}$. Then~$\Fl_\O^{\MRr}(a) = \Fl_\O^{\MRr}(b)$. In particular, if~$a \in \Rr(\O)$, then~$a=b$.
\end{lem}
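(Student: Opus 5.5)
The plan is an induction on $m$, following \cite[Thm 7.4]{van2019rigidity}, by successive extraction of subsequences. Fix a chain $\gamma_k=(x_k=x_k^0,x_k^1,\dots,x_k^m=y_k)\in\mathcal{C}^m_{x_k,y_k}(\O)$ with $\lenk(\gamma_k)\le M+1$. By compactness of $\Fl(\g,\alpha)$, after passing to a subsequence each $x_k^i$ converges to some $a^i\in\overline{\O}$, with $a^0=a$ and $a^m=b$. By Lemma~\ref{fact_stab_u_std}.(5), the photons $\Phot_k^i$ through $x_k^i,x_k^{i+1}$ (when these points are distinct) also converge to photons $\Phot^i$ through $a^i$ and $a^{i+1}$. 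It therefore suffices to treat a single photon step: if $x_k\phoo y_k$ with $\ro_\O(x_k,y_k)\le M$, $x_k\to a$ and $y_k\to b$, then $a=b$ or $a\phoo_{\MRr} b$. When $a\in\O$ this is automatic, because then $b$ stays at bounded $\kob_\O$-distance from $a$ and, by Theorem~\ref{thm_kob_hyp_R_propre}, which says $\kob_\O$ generates the topology, $b\in\O$. The interesting case is $a\in\partial\O$, and the lemma's definition of $\Fl_\O^{\MRr}$ only concerns boundary points.

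For the single step, I will use photon-convexity. $\Phot_k\cap\O$ is a single interval $(p_k,q_k)$, with $p_k,x_k,y_k,q_k$ in this order, and $\ro_\O(x_k,y_k)=\log(p_k:x_k:y_k:q_k)\le M$. Pass to limits $p_k\to p$ and $q_k\to q$ in the limit photon $\Phot$. $\MRr$-properness gives two points of $\Phot$ outside $\O$. To get $p\ne q$, I will use continuity of $\delta_\O$ (the ``no overhangs'' lemma) in an affine chart; if we are near the boundary, a limiting argument with Fact~\ref{fact_photon_closed} applies instead. The cross-ratio bound then forces the following: if $x_k$ and $y_k$ converge to distinct points, then neither limit can coincide with an endpoint $p$ or $q$ unless both do. The reason is that if $x_k\to p\ne b$, the cross ratio $(p_k:x_k:y_k:q_k)$ blows up. Hence either $a=b$, or $p,a,b,q$ are distinct in order along $\Phot$. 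In the latter case $[a,b]\subset\Phot\cap\overline{\O}$ lies in the relative interior of $\Phot\cap\overline{\O}=\overline{\Phot\cap\O}$, using photon-convexity. So $a\phoo_{\MRr} b$, which gives $\Fl_\O^{\MRr}(a)=\Fl_\O^{\MRr}(b)$ because $\phoo_{\MRr}$ generates the face relation. The degenerate configuration where $p_k$ and $q_k$ converge to the same point on one side, i.e.\ $p=q$, would contradict $\MRr$-properness of the limit photon intersection, once I check that $\overline{\Phot\cap\O}=\Phot\cap\overline{\O}$ persists in the limit.

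Chaining the $m$ steps gives $\Fl_\O^{\MRr}(a)=\Fl_\O^{\MRr}(a^1)=\cdots=\Fl_\O^{\MRr}(b)$. If some intermediate $a^i$ lies in $\O$, bounded total length and the topology statement of Theorem~\ref{thm_kob_hyp_R_propre} force all the $a^j$ into $\O$. So the situation is all-interior or all-boundary, and in the boundary case the chain stays in $\partial\O$ as the definition of $\MRr$-faces requires. If $a$ is $\MRr$-extremal then $\Fl_\O^{\MRr}(a)=\{a\}\ni b$, so $a=b$.

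The main obstacle is the single-step limiting argument. One has to show that the limit interval is nondegenerate ($p\ne q$) and that the relative-interior condition holds in the limit. This is exactly where $\MRr$-properness and the closure condition $\overline{\Phot\cap\O}=\Phot\cap\overline{\O}$ in photon-convexity are used. I would try first to work in a fixed affine chart via a Plücker embedding (Proposition~\ref{prop_photons_egal_proj_lines}), so that the photon intervals become affine segments and the cross ratio is Euclidean.
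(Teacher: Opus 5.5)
Your skeleton matches the paper's: bound each step of a near-optimal chain by $M+1$, reduce to $m=1$, take limits $p_k\to p$, $q_k\to q$ of the endpoints of $\Phot_k\cap\O$, and use the bounded cross-ratio. However, the step you single out as the main obstacle would fail, and two of your auxiliary justifications are wrong.

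\emph{The claim $p\neq q$ is false.} Let $\O$ be an open triangle in $\mathbb{P}(\mathbb{R}^3)$, let $a$ be a vertex, and take $x_k=y_k\to a$ on lines cutting off the corner at $a$. Then $p_k,q_k\to a$. There is no contradiction with $\MRr$-properness, since the limit line meets $\overline{\O}$ only at $a$. Likewise, $\MRr$-properness (as opposed to strong $\MRr$-properness) does not exclude a photon $\Phot\subset\partial\O$ on which the arcs $\Phot_k\cap\O$ accumulate as $\Phot$ minus a point. Your proposed tools cannot reach these situations:
\begin{itemize}
\item $\delta_\O$ is only defined and continuous on $\O\times\RUN$, so it gives no control at boundary points.
\item $\MRr$-properness and the closure condition of photon-convexity say nothing about a photon disjoint from $\O$.
\end{itemize}
For the same reason, the identity $\Phot\cap\overline{\O}=\overline{\Phot\cap\O}$ for the limit photon is unavailable. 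In exactly the case of interest ($a\neq b$, $a\in\partial\O$) the limit photon misses $\O$, so the right-hand side is empty.

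\emph{The topology argument is invalid.} That $\kob_\O$ generates the standard topology does not prevent points at bounded $\kob_\O$-distance from a point of $\O$ from accumulating on $\partial\O$. That would require properness of $\kob_\O$, which for $\MRr$-proper photon-convex domains the paper only obtains under cocompactness (Corollary~\ref{cor_photon_convex_quasi_hom_complete}). So your claim that the limiting chain is either entirely in $\O$ or entirely in $\partial\O$ is unjustified as written.

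None of this is needed; the paper's argument suffices once the degenerate case is handled.
\begin{itemize}
\item The only limiting fact required is that the open arc $(p,q)$, a Hausdorff limit of arcs $(p_k,q_k)\subset\O$, lies in $\overline{\O}$. Hence it lies in the relative interior of $\Phot\cap\overline{\O}$.
\item If $p\neq q$, the bounded cross-ratio forces $a=b$ or $a,b\in(p,q)$, hence $a\phoo_{\MRr} b$.
\item If $p=q$ and the arcs collapse, then $a=b=p$.
\item If $p=q$ and the arcs converge to $\Phot\smallsetminus\{p\}$, then $\Phot\subset\overline{\O}$ and any two points of $\Phot$ are $\phoo_{\MRr}$-related. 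This case must be treated on its own: there $a=p\neq b$ is compatible with a bounded cross-ratio.
\end{itemize}

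The interior/boundary dichotomy is local. Suppose an open arc $A\subset\Phot\cap\overline{\O}$ meets $\O$. Then photon-convexity and $\MRr$-properness make $\Phot\cap\O$ an open arc $J$ with distinct endpoints and $\overline{J}=\Phot\cap\overline{\O}$, so $A\subset\operatorname{int}(\overline{J})=J\subset\O$. Apply this to the arc $(p,q)$ containing $a\neq b$. In the filling case, $\Phot\cap\O\neq\varnothing$ would force $\overline{J}=\Phot$, contradicting $\MRr$-properness, so $\Phot\subset\partial\O$. Hence at each step $a\in\partial\O$ forces $b\in\partial\O$, and the limiting chain lies in $\partial\O$ and certifies $b\in\Fl_\O^{\MRr}(a)$.
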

\begin{proof}[Proof] Note that we just need to prove that~$b \in \Fl_\O^{\MRr}(a)$. For any~$k \in \mathbb{N}$, let~$\gamma_k = (x_k^0 := x_k ,\dots, x_k^{m} := y_k)$ in~$\mathcal{C}^{m}_{x_k,y_k}(\O)$ be such that 
   ~$$\sum_{i=0}^{m-1} \ro_{\O}(x_i^k, x_{i+1}^k)  \leq \kob_{\O}^m (x_k, y_k) + 1 \leq M + 1~,$$
    the first inequality holding because of~\eqref{eq_longueur_chaine_1}.
    Then, one has~$\ro_{\O}(x_k^i, x_k^{i+1}) \leq M + 1$ for all~$0 \leq i \leq  m-1$. Hence one can assume that~$m = 1$, and the lemma follows by induction.
    
    Let us then assume that~$m=1$. For all~$k$, the two points~$x_k$ and~$y_k$ lie in the same connected component of the intersection~$I_k := \Phot_k \cap \O$ of a photon~$\Phot_k$ with~$\O$. Let~$c_k,d_k$ be the endpoints of~$I_k$ such that~$c_k, x_k, y_k, d_k$ are aligned in this order. For all~$k$, let~$\isl^k$ be a projective parametrization of~$\Phot_k$. Then there exist~$r_k, s_k, t_k, u_k \in \mathbb{P}(\mathbb{R}^2)$, aligned in this order, such that~$ \isl^k(r_k) = c_k$, $\isl^k(s_k) = x_k$, $
            \isl^k(t_k) = y_k$,  and~$\isl^k(u_k) = d_k$. Then 
    \begin{equation*}
       \log (r_k : s_k : t_k:u_k) = \ro_{\O}(x_k, y_k) \leq M+1~.
    \end{equation*}
Up to taking a subsequence, we may assume that there exist~$c, d \in \partial \O$ and~$r,s,t,u \in \mathbb{P}(\mathbb{R}^2)$ such that~$c_k \rightarrow c$ and~$d_k \rightarrow d$ and~$(r_k,s_k, t_k, u_k) \rightarrow (r,s,t,u)$ as~$k \rightarrow + \infty$. For all~$k \in \mathbb{N}$, the points~$c_k, x_k, y_k, d_k$ lie on the same photon in this order, so~$c, a,b, d$ lie on the same photon, in this order. Moreover, since for all~$k\in \mathbb{N}$ we have~$(c_k, d_k) \subset \overline{\O}$, we have~$(c,d) \subset \overline{\O}$. We have~$\log (r : s : t:u) \leq M+1$; thus either~$s,t \in (r, u)$ or~$s=t$. Thus either~$a,b \in (c,d)$ or~$a=b$. We have just proven that~$b \in \Fl_\O^{\MRr}(a)$.~$\qed$
\end{proof}

We define the \emph{photon-diameter} of a compact subset $\compact$ of $\O$ as

$$\operatorname{pdiam} (\compact) := \sup \left\{\ro_{\O}(x,y) \mid x,y \in \compact, x \phoo y \right\}~.$$ 

Then one has:
\begin{lem}\label{lem_diamond_diameter}
    Let $D$ be a realization of~$\mathbb{X}(\g, \alpha)$ such that $\overline{D} \subset \O$. Then for any $x,y \in D$, there exists an $\operatorname{rk}(\g, \alpha)$-chain from $x$ to $y$ with $\kob_{\O}$-length less than or equal to $\operatorname{rk}(\g, \alpha) \operatorname{pdiam}(\overline{D})$. In particular, 
    $$\kob_{\O}^{\operatorname{rk}(\g, \alpha)} (x,y) \leq \operatorname{rk}(\g, \alpha) \operatorname{pdiam}(\overline{D})~.$$
\end{lem}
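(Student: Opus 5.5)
The argument rests on Lemma~\ref{lem_chain_in_realization} together with the length formula for chains. Since $D$ is a realization of $\mathbb{X}(\g, \alpha)$, Lemma~\ref{lem_chain_in_realization} gives, for any $x,y \in D$, an $r$-chain $(x_0 = x, x_1, \dots, x_r = y)$ with $r := \operatorname{rk}(\g, \alpha)$. All the $x_i$ lie in $D$, and consecutive points are photon-related in $D$.

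The first step is to check that this is also an $r$-chain in $\O$. For each $i$, the points $x_i$ and $x_{i+1}$ lie on a photon $\Phot_i$ and in a common connected component $J_i$ of $\Phot_i \cap D$. Since $D \subset \O$, the set $J_i$ is contained in a connected component $I_i$ of $\Phot_i \cap \O$. Hence $x_i \phoo_\O x_{i+1}$, and $(x_0, \dots, x_r) \in \mathcal{C}^r_{x,y}(\O)$.

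The second step bounds each term. Every pair $x_i, x_{i+1}$ lies in $D \subset \overline{D}$ and satisfies $x_i \phoo_\O x_{i+1}$. By definition of the photon-diameter, $\ro_\O(x_i, x_{i+1}) \leq \operatorname{pdiam}(\overline{D})$. Summing over $i$ gives
\[
\lenk(\gamma) = \sum_{i=0}^{r-1} \ro_\O(x_i, x_{i+1}) \leq r \operatorname{pdiam}(\overline{D}).
\]
This is already the bound on $\kob_\O^r(x,y)$, since $\kob_\O^r$ is an infimum over $r$-chains.

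For the statement about $\kob_\O$-length, one needs that the $\kob_\O$-length of the chain equals $\sum_i \ro_\O(x_i, x_{i+1})$. In the dually convex case this is \eqref{eq_longueur_chaine_1} (Theorem~\ref{thm_kob_hyp_R_propre}), or Proposition~\ref{lem_key_lemma}.(2) combined with \eqref{eq_longueur_chaine}. Without dual convexity, I would prove directly that each photon segment $[x_i, x_{i+1}]$ has $\kob_\O$-length at most $\ro_\O(x_i, x_{i+1})$. For a subdivision $t_0 < \dots < t_n$ of that segment, $\kob_\O(\gamma(t_j), \gamma(t_{j+1})) \leq \ro_\O(\gamma(t_j), \gamma(t_{j+1}))$ because a single photon step is itself a chain. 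The Hilbert metric $\ro_\O$ is additive along the interval $I_i$, so the sum telescopes to $\ro_\O(x_i, x_{i+1})$, and the supremum over subdivisions is bounded by that value.

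The main obstacle I expect is finiteness of $\operatorname{pdiam}(\overline{D})$, which the bound needs in order to be meaningful. I would deduce it from compactness of $\overline{D} \subset \O$, as follows. Pairs $(x,y)$ in $\overline{D}$ lying on a common photon have their photon ranging over a compact family (Lemma~\ref{fact_stab_u_std}.(5)). The endpoints of the corresponding component of $\Phot \cap \O$ stay uniformly away from $\overline{D}$, because $\O$ is open; this uses photon-convexity, or $\MRr$-properness, to ensure the component is a proper interval. Hence the Hilbert distances are uniformly bounded. This is an aside, however: the inequality as stated holds regardless, with $\operatorname{pdiam}(\overline{D})$ possibly $+\infty$.
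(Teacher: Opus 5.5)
Your proof is correct and follows the paper's own argument: take the $\operatorname{rk}(\g,\alpha)$-chain in $D$ given by Lemma~\ref{lem_chain_in_realization}, observe that it is also a chain in $\O$, and bound each term by $\operatorname{pdiam}(\overline{D})$. Your direct subdivision argument for the $\kob_\O$-length is a small improvement. The paper instead cites Proposition~\ref{lem_key_lemma}, which assumes dual convexity, whereas in Section~\ref{sect_geometric_prop_R_extr} the domain is only assumed $\MRr$-proper and photon-convex; the bound on $\kob_\O^{\operatorname{rk}(\g,\alpha)}$ itself needs no such input.
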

\begin{proof}[Proof]
    We know by Lemma~\ref{lem_chain_in_realization} that there exists an $\operatorname{rk}(\g, \alpha)$-chain $\gamma$ from~$x$ to $y$ contained in $D$. By Proposition~\ref{lem_key_lemma}, one has~$\kob_{\O}^{\operatorname{rk}(\g, \alpha)} (x,y) \leq \lenk(\gamma)  \leq \operatorname{rk}(\g, \alpha) \operatorname{pdiam}(\overline{D})$.~$\qed$
\end{proof}

\begin{lem}\label{lem_majoration_N_on_cpt}
    Let $\O \subset \Fl(\g, \alpha)$ be an~$\MRr$-proper photon-convex domain, and let $\mathcal{K} \subset \O$ be a compact subset. Then there exist $m \in \mathbb{N}$ and $M > 0$ such that~$\kob_{\O}^m (x,y) \leq M$ for any $x, y \in \mathcal{K}$. 
\end{lem}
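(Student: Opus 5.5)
The plan is a compactness argument. Lemma~\ref{lem_diamond_diameter} supplies uniform bounds on small realizations of $\mathbb{X}(\g,\alpha)$, and these local bounds are then chained together across $\mathcal{K}$. Set $r := \operatorname{rk}(\g,\alpha)$.

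\emph{Local step.} For each point $z \in \O$, Remark~\ref{rmk_realizations}.(2) says that realizations of $\mathbb{X}(\g,\alpha)$ form a neighborhood basis at $z$. So we can pick a realization $D_z$ with $z \in D_z$ and $\overline{D_z} \subset \O$. This uses openness of $\O$ together with the fact that realizations are proper, hence relatively compact in an affine chart, so we may shrink them by dilations as in Section~\ref{sect_dilations_translations}.

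The photon-diameter $\operatorname{pdiam}(\overline{D_z})$ must be finite. Photon-related pairs in the compact set $\overline{D_z}$ have $\ro_\O$ bounded because $\O$ is $\MRr$-proper and photon-convex. By the no-overhangs continuity of $\delta_\O$ and the closedness of $\mathbb{P}(\RUN)$ (Fact~\ref{fact_photon_closed}), the endpoints of the photon segments through points of $\overline{D_z}$ stay at distance bounded below from $\overline{D_z}$, uniformly. Equivalently, if $\ro_\O(x_k,y_k) \to \infty$ with $x_k,y_k \in \overline{D_z}$, then after extracting a subsequence one endpoint $c_k$ of the photon segment would converge to a point of $\overline{D_z} \subset \O$. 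That contradicts $c_k \in \partial\O$ together with $\overline{\O\cap\Phot}=\overline{\O}\cap\Phot$ and the limit-photon statement Fact~\ref{fact_stab_u_std}.(5).

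With finiteness in hand, Lemma~\ref{lem_diamond_diameter} gives $\kob_\O^{r}(x,y) \le r\,\operatorname{pdiam}(\overline{D_z}) =: M_z$ for all $x,y \in D_z$.

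\emph{Global step.} Cover $\mathcal{K}$ by finitely many $D_{z_1},\dots,D_{z_n}$. We may assume $\mathcal{K}$ is contained in a connected compact $\mathcal{K}' \subset \O$: join the finitely many pieces by paths in the connected open set $\O$ and take the union of their images, which is still compact. Cover $\mathcal{K}'$ instead. The nerve of this cover (sets adjacent when they intersect) is then connected, so any two sets $D_{z_i}, D_{z_j}$ are linked by a sequence of pairwise intersecting sets of length at most $n$.

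Given $x \in D_{z_i}$ and $y \in D_{z_j}$, choose intermediate points in consecutive intersections and concatenate the local $r$-chains. This yields a chain with at most $rn$ photon steps and length at most $n\max_i M_{z_i}$. Using constant chains where needed (one may repeat points, since $\ro_\O(x,x)=0$), we can pad every such chain to exactly $m := rn$ steps, so that $\mathcal{C}^m_{x,y}(\O) \neq \varnothing$. We then take $M := n\max_i M_{z_i}$.

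The main obstacle I expect is the local step, namely proving $\operatorname{pdiam}(\overline{D_z})<\infty$. This is exactly where $\MRr$-properness and photon-convexity enter. If a photon through $\overline{D_z}$ left only one boundary point, or if the boundary points approached $\overline{D_z}$, then $\ro_\O$ would blow up. The argument above needs the continuity of $\delta_\O$ (the ``No overhangs'' lemma) applied uniformly over the compact set $\overline{D_z} \times \mathbb{P}(\RUN)$.
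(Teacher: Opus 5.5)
Your proof is correct, and it differs from the paper's in both steps.

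\textbf{Local bound.} The paper nests two realizations $D_x\subset\mathsf{D}_x\subset\O$, with $D_x$ of $\kob_{\mathsf{D}_x}$-diameter less than~$1$. It then gets $\operatorname{pdiam}(D_x)\le 1$ from $\ro_\O\le\ro_{\mathsf{D}_x}=\kob_{\mathsf{D}_x}$, so it reuses the Kobayashi machinery: monotonicity, and the equality $\ro=\kob$ on the realization $\mathsf{D}_x$. You instead prove finiteness of $\operatorname{pdiam}(\overline{D_z})$ directly by compactness. This gives no uniform constant, but that is enough since you only use finitely many sets.

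Your sequential argument is the right one, and it is more elementary than you make it sound. The no-overhangs lemma, the closedness of $\mathbb{P}(\RUN)$, photon-convexity and $\MRr$-properness are all unnecessary at this step. What you need is:
\begin{itemize}
\item the endpoints $c_k,d_k$ lie in the closed set $\partial\O$, which misses $\overline{D_z}$;
\item after extraction (Lemma~\ref{fact_stab_u_std}.(5)), the limits $c,d\in\partial\O$ are therefore distinct from the limits $x,y\in\overline{D_z}$;
\item the cross-ratio is continuous at such a configuration, so it stays bounded.
\end{itemize}
Continuity is cleanest after passing to a Pl\"ucker triple, where photons are projective lines and projective parametrizations are projective maps (Proposition~\ref{prop_photons_egal_proj_lines}).

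\textbf{Global step.} The paper uses the already established existence of a chain between any two points of $\O$ (Section~\ref{sect_definition_kobayashi}). It fixes $m(x,y),M(x,y)$ for pairs in a finite set $F$ of centres and pads with local $r$-chains at both ends. Your nerve argument re-derives this chain-connectivity quantitatively and gives the explicit value $m=rn$.

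\textbf{One wording fix.} $\mathcal{K}$ may have infinitely many connected components, so ``join the finitely many pieces'' does not make sense as written. To build $\mathcal{K}'$, first cover $\mathcal{K}$ by finitely many compact connected sets $\overline{D_{z}}\subset\O$, then join those by finitely many paths. Alternatively, drop $\mathcal{K}'$ and argue with pairs of centres as the paper does.
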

\begin{proof}[Proof]
First choose, for any $x \in \mathcal{K}$, a realization $\mathsf{D}_x$ of~$\mathbb{X}(\g, \alpha)$ of diameter less than 1 for $\kob_{\O}$, and containing $x$. Such a realization exists by Remark~\ref{rmk_realizations}.(2) and Proposition~\ref{prop_generate_topo_std}. Now chose, inside of~$\mathsf{D}_x$, a realization $D_x$ of~$\mathbb{X}(\g, \alpha)$ of diameter less than 1 for $\kob_{\mathsf{D}_x}$, and containing $x$. Then for all~$y,z \in D_x$ such that~$y \phoo z$, we have~$\ro_\O(y,z) \leq \ro_{\mathsf{D}_x}(y,z) = \kob_{\mathsf{D}_x}(y,z) \leq 1$. The equality~$\ro_{\mathsf{D}_x}(y,z) = \kob_{\mathsf{D}_x}(y,z)$ comes from Theorem~\ref{thm_kob_hyp_R_propre}. Hence~$\operatorname{pdiam}(D_x) \leq 1$.

For any $x,y \in \mathcal{K}$, there is a minimal $m(x,y) \in\mathbb{N}$ such that~$M(x,y) := \kob_{\O}^{m(x,y)}(x,y) < + \infty$. 

Let $(z,z') \in D_x \times D_y$. There exists $\gamma\in \mathcal{C}_{x,y}^{m(x,y)}(\O)$ such that $\lenk(\gamma)\leq M(x,y)+1$. Besides, by Lemma~\ref{lem_diamond_diameter}, there exist $\gamma_1 \in \mathcal{C}_{z,x}^{\operatorname{rk}(\g, \alpha)}(\O)$ and $\gamma_2 \in \mathcal{C}_{y,z}^{\operatorname{rk}(\g, \alpha)}(\O)$, such that
$$\lenk(\gamma_1), \lenk(\gamma_2) \leq \operatorname{rk}(\g, \alpha)~.$$ 
If $m_0 := m(x,y) + 2 \nG$ and $\gamma_3$ is the concatenation of $\gamma_1$, $\gamma$ and $\gamma_2$, then $\gamma_3 \in \mathcal{C}_{z,z'}^{m_0}(\O)$ and for any $m \geq m_0$ one has 
\begin{equation*}
   \kob^{m}_{\O}(z,z') \leq \kob^{m_0}_{\O}(z,z') \leq \lenk(\gamma_3) = \lenk(\gamma_1) + \lenk(\gamma) + \lenk(\gamma_2) \leq 2\operatorname{rk}(\g, \alpha) + M(x,y) + 1~.
\end{equation*} 

Let $F \subset \mathcal{K}$ be a finite subset such that $\mathcal{K} \subset \bigcup_{x \in F} D_x$. Then for all $z,z' \in \mathcal{K}$, one has $\kob_{\O}^m (z,z') \leq M$, where~$m := \max \{ m(x,y) \mid x,y \in F \} + 2\operatorname{rk}(\g, \alpha)$ and~$M := \max \{ M(x,y) \mid x,y \in F \} + 2\operatorname{rk}(\g, \alpha)+1$.~$\qed$
\end{proof}

Proposition~\ref{lem_extremal_attracting} follows directly from the property satisfied by~$H$ and the following lemma:

\begin{lem}\label{lem_analogue_vey}
    Let~$\O$ be an~$\MRr$-proper photon-convex domain of~$\Fl(\g,\alpha)$, and let~$a \in \Rr(\O)$. If there exist~$(g_k) \in \Aut_G(\O)^{\mathbb{N}}$ and~$x \in \O$ such that~$g_k\cdot x \rightarrow a$, then for every compact subset~$\compact \subset \O$, one has~$g_k \cdot \compact \rightarrow \{a\}$ for the Hausdorff topology. In particular, this sequence~$(g_k)$ is~$\{\alpha\}$-contracting.
\end{lem}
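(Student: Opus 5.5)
The plan is to combine the uniform bound of Lemma~\ref{lem_majoration_N_on_cpt} with the rigidity statement of Lemma~\ref{lem_asymptotic_behav_extremal}, using the $\Aut_G(\O)$-invariance of $\kob_\O^m$.

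Fix a compact subset $\compact \subset \O$. Replacing $\compact$ by $\compact \cup \{x\}$, which is still compact, we may assume $x \in \compact$. Lemma~\ref{lem_majoration_N_on_cpt} then provides $m \in \mathbb{N}$ and $M>0$ such that $\kob_\O^m(y,z) \leq M$ for all $y,z \in \compact$.

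Suppose $g_k \cdot \compact$ does not converge to $\{a\}$ in the Hausdorff topology. Since $\Fl(\g,\alpha)$ is compact, after passing to a subsequence there exist $y_k \in \compact$ with $g_k \cdot y_k \to b \in \overline{\O}$ and $b \neq a$. The map $\kob_\O^m$ is $\Aut_G(\O)$-invariant, because $m$-chains are transported to $m$-chains and $\ro_\O$ is invariant by Fact~\ref{prop_properties_kobayashi_metric}.(2). Hence
$$\kob_\O^m(g_k\cdot x, g_k \cdot y_k) = \kob_\O^m(x,y_k) \leq M.$$
Now apply Lemma~\ref{lem_asymptotic_behav_extremal} with $x_k := g_k \cdot x \to a$ and $g_k\cdot y_k \to b$. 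It gives $\Fl_\O^{\MRr}(a) = \Fl_\O^{\MRr}(b)$, and since $a \in \Rr(\O)$ this forces $b = a$, a contradiction. Thus $g_k \cdot \compact \to \{a\}$ for every compact $\compact \subset \O$.

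For the last assertion, take $\compact$ to be the closure of a nonempty open set $\mathcal{U}$ with $\overline{\mathcal{U}} \subset \O$. Then $g_k \cdot \mathcal{U} \to \{a\}$ in the Hausdorff topology. Fact~\ref{fact_KAK_divergent}.(1) yields an $\{\alpha\}$-contracting subsequence with $\{\alpha\}$-limit $a$. To get the statement for the whole sequence, apply the same argument to every subsequence: each has an $\{\alpha\}$-contracting subsequence with limit $a$.

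The only delicate point is the application of Lemma~\ref{lem_asymptotic_behav_extremal}, which requires the bound on $\kob_\O^m$ with a \emph{fixed} $m$, not merely on $\kob_\O$. This is exactly what Lemma~\ref{lem_majoration_N_on_cpt} provides. One should also check that this lemma applies under the $\MRr$-proper photon-convex hypothesis; it is stated in that setting. Upgrading from subsequences to the full sequence is a routine subsequence argument.
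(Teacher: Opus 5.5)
Your proof is correct and follows the paper's argument. It uses the same three ingredients: the $\Aut_G(\O)$-invariance of $\kob_\O^m$, the uniform bound of Lemma~\ref{lem_majoration_N_on_cpt}, and Lemma~\ref{lem_asymptotic_behav_extremal}. Adjoining $x$ to $\compact$ just merges the paper's two steps (pointwise convergence $g_k\cdot y\to a$ first, then a contradiction using $y_k\to y$ with $\kob_\O^m(y_k,y)$ bounded) into one.

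The one thing to fix is your final upgrade from subsequences to the whole sequence. Knowing that every subsequence has an $\{\alpha\}$-contracting subsequence with limit $a$ does not make $(g_k)$ itself $\{\alpha\}$-contracting, because the repelling point $b$ may change from one subsequence to another. For example, in the Klein disk, interleave $a_k$ and $a_k r$, where the $a_k$ translate along a geodesic towards $a$ and $r$ is a rotation about $x$. So the ``in particular'' clause holds only after extraction, and that is all that is used later (for instance to get $a\in\Lambda_{\{\alpha\}}(H)$).
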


\begin{proof}
Let~$y \in \O$ and~$m \in \mathbb{N}$ such that~$\kob_{\O}^m(x,y) < + \infty$. Then, by~$\Aut_G(\O)$-invariance of~$\kob_{\O}^m$, one has 
\begin{equation*}
    \kob_{\O}^m(g_k \cdot x, g_k \cdot y) = \kob_{\O}^m(x, y)
\end{equation*}
for all~$k \in \mathbb{N}$. Thus by Lemma~\ref{lem_asymptotic_behav_extremal}, we have~$g_k \cdot y \rightarrow a$. This holds for all~$y \in \O$.

Let~$\compact \subset \O$ be a compact subset. If the sequence~$g_k \cdot \compact$ does not converge to~$\{a\}$ for the Hausdorff topology, then there is a neighborhood~$\mathcal{V}$ of~$a$ in~$\Fl(\g, \alpha)$ and a sequence~$(y_k) \in \compact^{\mathbb{N}}$ such that~$g_k \cdot y_k \notin \mathcal{V}$ for all~$k \in \mathbb{N}$. Since~$\compact$ is a compact subset of~$\O$, up to taking a subsequence we may assume that there exists~$y \in \compact$ such that~$y_k \rightarrow y$. Moreover, by Lemma~\ref{lem_majoration_N_on_cpt}, there exists~$m \in \mathbb{N}$ such that~$(\kob_\O^m(y_k, y))$ is bounded.  Hence~$(\kob_{\O}^m(g_k \cdot y_k, g_k \cdot y))$ is bounded, so by Lemma~\ref{lem_asymptotic_behav_extremal}, we have~$g_k \cdot y \rightarrow a$. But this is impossible, since we assumed that~$y_k \notin \mathcal{V}$ for all~$k$. Hence~$g_k \cdot \compact \rightarrow \{ a\}$ for the Hausdorff topology.~$\qed$
\end{proof}

\subsection{$\MRr$-extremal points are in the dual} The following proposition is a consequence of Proposition~\ref{lem_extremal_attracting}:

\begin{prop}\label{prop_geom_prop_extremal_points}
    Let~$(\g, \alpha)$ be an irreducible real-type Nagano pair. Let~$\O \subset \Fl(\g, \alpha)$ be a strongly~$\MRr$-proper, photon-convex, almost-homogeneous domain.
    Then one has~$d_H(x,a) \geq r$ for all~$x \in \O$. In particular, if~$\Fl(\g, \alpha)$ is self-opposite, then~$a \in \O^*$.

    If~$\O$ is moreover proper, then there exists~$\xi \in \O^*$ such that~$\pos^{\{\alpha\}, \{\oppinv(\alpha)\}}(a, \xi) = \overline{\id}$.
\end{prop}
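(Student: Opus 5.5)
The statement as written implicitly quantifies over $a \in \Extr(\O)$; here $r = \operatorname{rk}(\g,\alpha)$. The plan is to reduce everything to the dynamical results already established.

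\textbf{Step 1: $\MRr$-extremal points are limit points.} Since $\O$ is almost-homogeneous, every boundary point lies in $\bigcup_{x \in \O} \overline{\Aut_G(\O)\cdot x}$. Applying Proposition~\ref{lem_extremal_attracting} with $H = \Aut_G(\O)$, which is allowed because $\O$ is $\MRr$-proper and photon-convex, gives $a \in \Lambda_{\{\alpha\}}(\Aut_G(\O))$. Concretely, Lemma~\ref{lem_analogue_vey} provides $(g_k)$ that is $\{\alpha\}$-contracting with $\{\alpha\}$-limit $a$.

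\textbf{Step 2: the arithmetic-distance bound.} Corollary~\ref{lem_geom_prop_visuel_4} applies to the strongly $\MRr$-proper domain $\O$ and the point $a \in \Lambda_{\{\alpha\}}(\Aut_G(\O))$. It yields $d_H(a,x) \geq r$ for all $x \in \O$, and, when $\oppinv(\alpha)=\alpha$, that $a \in \O^*$. This settles the first two assertions.

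\textbf{Step 3: the proper case.} Now assume $\O$ is proper. Take $(g_k)$ as in Step 1, contracting with respect to $(a,b)$. By Fact~\ref{fact_KAK_divergent}.(2), up to a subsequence $(g_k^{-1})$ is $\{\oppinv(\alpha)\}$-contracting on $\Fl(\g,\alpha)^\opp$ with respect to $(b,a)$. Since $\O$ is proper, $\O^*$ has nonempty interior: by dual properness, the set of $\xi$ with $\hyp_\xi \cap \overline{\O}=\varnothing$ is open and nonempty. Choose $\eta \in \operatorname{int}(\O^*)$ lying off the repelling set of $(g_k)$ acting on $\Fl(\g,\alpha)^\opp$; this is possible because the repelling set is a proper Zariski-closed subset and $\operatorname{int}(\O^*)$ is open. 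Then $g_k\cdot\eta$ converges to the attracting point of $(g_k)$ on $\Fl(\g,\alpha)^\opp$. Call this limit $\xi$. Since $\O^*$ is closed and $\Aut_G(\O)$-invariant, $\xi \in \O^*$.

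It remains to show $\pos(a,\xi)=\overline{\id}$. Here I would use the KAK normalization from the proof of Lemma~\ref{lem_photon_contenu_dans_D}: write $g_k = w_k a_k w_k'$ with $w_k, w_k'$ converging and $a_k$ going to infinity in $\exp(\overline{\aaa}^+)$, in an $\{\alpha\}$-divergent way. The attracting point in $\Fl(\g,\alpha)$ is then $w\cdot \LP^+$. The attracting point in $\Fl(\g,\alpha)^\opp$ is $w\cdot \LP^+_{\oppinv(\alpha)}$, because $a_k$ contracts $\LP^+_{\oppinv(\alpha)}$-neighbourhoods as well, the root $\oppinv(\alpha)$ being positive. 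These two points are in relative position $\overline{\id}$.

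\textbf{Main obstacle.} The delicate point is the identification of the attracting point on the opposite flag manifold. It requires checking that contraction in $\Fl(\g,\alpha)$ forces contraction in $\Fl(\g,\alpha)^\opp$. This holds because $\oppinv(\alpha)(\log a_k) = \alpha(w_0\cdot\log a_k)$, which is comparable for Cartan projections, and I would verify it via a Plücker/proximal triple using Fact~\ref{prop_ggkw}.
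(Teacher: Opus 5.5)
Your Steps 1 and 2 are exactly the paper's proof of the first two assertions.

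\textbf{The gap in Step 3.} It sits at the point you flagged yourself. When $\oppinv(\alpha)\neq\alpha$, an $\{\alpha\}$-contracting sequence need not contract $\Fl(\g,\alpha)^\opp=\Fl(\g,\oppinv(\alpha))$ at all.

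\emph{Why the justification fails.} Positivity of the root $\oppinv(\alpha)$ only gives $\oppinv(\alpha)(\log a_k)\geq 0$, not divergence. Moreover, $\oppinv(\alpha)(\log a_k)=\alpha(-w_0\cdot\log a_k)$ is $\alpha$ evaluated on the Cartan projection of $g_k^{-1}$, not of $g_k$. So contraction of $(g_k)$ on $\Fl(\g,\oppinv(\alpha))$ is equivalent to contraction of $(g_k^{-1})$ on $\Fl(\g,\alpha)$, which is independent of your hypothesis. Fact~\ref{fact_KAK_divergent}.(2) is a statement about $(g_k^{-1})$ and does not help you push $\eta$ forward by $g_k$.

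\emph{A counterexample.} In $\mathbb{P}(\mathbb{R}^3)=\Fl(\sll(3,\mathbb{R}),\alpha_1)$, let $\O=\{[x_1:x_2:x_3]\mid x_1,x_2,x_3>0\}$ and $g_k=\operatorname{diag}(e^{2k},e^{-k},e^{-k})\in\Aut_G(\O)$. Then $(g_k)$ is $\{\alpha_1\}$-contracting towards the $\MRr$-extremal vertex $[e_1]$, but $\alpha_2(\log g_k)=0$. Write $[\eta_1:\eta_2:\eta_3]$ for the line $\{\eta_1x_1+\eta_2x_2+\eta_3x_3=0\}$. Then $g_k\cdot[\eta_1:\eta_2:\eta_3]\to[0:\eta_2:\eta_3]$. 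These limits sweep out the whole pencil of lines through $[e_1]$, and generically differ from the line $\{x_3=0\}$ corresponding to $w\cdot\LP^+_{\{\alpha_2\}}$. So there is no attracting point on the dual side, and your ``repelling set'' there is not even defined.

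\emph{The self-opposite case.} There your argument does work, but it only returns $\xi=a$, which Step 2 already gave. The third assertion has content exactly in the non-self-opposite cases (projective spaces, $\operatorname{Gr}_p(\mathbb{R}^{p+q})$ with $p\neq q$, etc.), which is where your mechanism breaks.

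\textbf{The missing idea.} You only need \emph{some} accumulation point on the dual side, provided incidence with $a$ is transported from a point already known to converge to $a$. This is the paper's argument:
\begin{enumerate}
\item Since $\O^*$ has nonempty interior, choose $y\in\O^*$ in generic position with respect to $b$ in $\Fl(\g,\alpha)^\opp$.
\item By Lemma~\ref{lem_positions_weyl}.(1), there is $a'\in\Fl(\g,\alpha)\smallsetminus\hyp_b$ with $\pos(a',y)=\overline{\id}$. Hence $g_k\cdot a'\to a$.
\item After extraction, $g_k\cdot y\to c$. Then $c\in\O^*$, because $\O^*$ is closed and $\Aut_G(\O)$-invariant.
\item We have $\pos(g_k\cdot a',g_k\cdot y)=\overline{\id}$ for all $k$, and the minimal relative position is a closed condition. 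Hence $\pos(a,c)=\overline{\id}$.
\end{enumerate}

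\textbf{Salvaging your KAK computation.} Alternatively, weaken its conclusion. In the chart $\exp(\uu^-_{\{\oppinv(\alpha)\}})\cdot\LP^+_{\{\oppinv(\alpha)\}}$, $\Ad(a_k)$ sends to $0$ the components along the $\g_{-\beta}$ whose $\alpha$-coefficient is positive, and keeps the other components bounded. Those remaining root spaces lie in the Lie algebra of $L$. So, for $\eta$ in a suitable open dense set, every accumulation point of $g_k\cdot\eta$ has the form $wp\cdot\LP^+_{\{\oppinv(\alpha)\}}$ with $p\in P^+$. Such a point is in position $\overline{\id}$ with $a=wp\cdot\LP^+$. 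Taking $\eta\in\operatorname{int}(\O^*)$ then gives the required $\xi\in\O^*$, without ever needing a single attracting point.
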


Note that, in Proposition~\ref{prop_geom_prop_extremal_points}, if~$\O$ is proper, than by almost-homogeneity, it is also dually convex (Fact~\ref{prop_zimmer_dual_convex}), so by Lemma~\ref{lem_continuité_intersection_photon}, it automatically photon-convex.

\begin{proof} The first assertion follows from Corollary~\ref{lem_geom_prop_visuel_4} and Proposition~\ref{lem_extremal_attracting}.

Let us now prove the last assertion. By Proposition~\ref{lem_extremal_attracting}, we have~$a \in \Lambda_{\{\alpha\}}(\Aut_G(\O))$. Thus there exist a sequence~$(g_k) \in \Aut_G(\O)^\mathbb{N}$ and a point~$b \in \Fl(\g, \alpha)^\opp$ such that~$g_k \cdot x \rightarrow a$ uniformly on compact subsets of~$\Fl(\g, \alpha) \smallsetminus \hyp_b$. Since~$\O$ has nonempty interior, there exists~$x \in \O \smallsetminus \hyp_b$. Hence~$g_k \cdot x \rightarrow a$.  Since~$\mathsf{C}_{\overline{w_0}}^{(\{\alpha\}, \{\oppinv(\alpha)\})}(b)$ is dense in~$\Fl(\g, \alpha)^\opp$ and~$\O^*$ has nonempty interior (see Section~\ref{sect_generalities_proper_domains}), we can fix~$y \in \O^*$ such that~$w_0 \in \pos^{(\{\oppinv(\alpha)\}, \{\oppinv(\alpha)\})}(b,y)$. Now let~$a' \in \Fl(\g, \alpha)$ such that~$\pos^{(\{\alpha\}, \{\oppinv(\alpha)\})}(a', y) = \overline{\id}$. Then necessarily by Lemma~\ref{lem_positions_weyl} one has~$\pos^{(\{\alpha\}, \{\oppinv(\alpha)\})}(a', b) = \overline{w_0}$. Hence we have~$g_k \cdot a' \rightarrow a$. On the other hand, up to taking a subsequence we may assume that~$(g_k \cdot y)$ converges to some~$c \in \O^*$. For all~$k \in \mathbb{N}$, we have~$\pos^{(\{\alpha\}, \{\oppinv(\alpha)\})}(g_k \cdot a', g_k \cdot y) = \overline{\id}$, so by \cite[Lem.\ 3.15]{kapovich2017dynamics}, we can take the limit and get~$\pos^{(\{\alpha\}, \{\oppinv(\alpha)\})}(a,c) = \overline{\id}$. The result follows.~$\qed$
\end{proof}

\section{Explicit computation for realizations of the noncompact dual}\label{sect_geod_sym_domains} The content of this section is the proof of Theorem~\ref{prop_calcul_geod}. More precisely, we prove:
\begin{thm}\label{thm_calcul_kob}
    Let~$(\g, \alpha)$ be an irreducible real-type Nagano pair of rank~$r \geq 1$, and let~$\O \subset \Fl(\g, \alpha)$ be a realization of~$\mathbb{X}(\g, \alpha)$. Then:
    
    (1). Any pair of points can be joined by a geodesic $r$-chain for~$\kob_\O$. Moreover, for any $\{\alpha\}$-proximal representation~$(V,\rho)$ of~$G$ of highest weight~$\chi$, we have~$C_\O^\rho =\chi(h_\alpha)\kob_\O$. 
    
    (2). The pullback of the Kobayashi metric restricted to a flat of~$\mathbb{X}(\g, \alpha)$ coincides with the integration of the $L^1$-norm on the flat.
\end{thm}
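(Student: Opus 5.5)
By $G^*$-equivariance (Fact~\ref{prop_properties_kobayashi_metric}.(2)) and the $KAK$-type decomposition $G^* = K_0 \exp(\mathfrak{c}) K_0$ of the noncompact dual, we may assume $\O = \ncd(\mathbb{X}(\g,\alpha))$, $x = \LP^+$ and $y = \exp(\sum_i t_i m_i)\cdot \LP^+$ with $t_i \in \mathbb{R}$. Since the roots $\beta_1,\dots,\beta_r$ are strongly orthogonal, the $\sll_2$-triples $(E_i, h_{\beta_i}, -\sigma_0(E_i))$ pairwise commute. They therefore give a homomorphism $\SL(2,\mathbb{R})^r \to G$. Its orbit of $\LP^+$ is a product of $r$ circles $\Phot_1\times\cdots\times\Phot_r$, each factor being a photon because every $\beta_i$ is $W$-conjugate to $\alpha$ and $\dim \g_{\beta_i}=1$. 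Its intersection with $\O$ is the product of $r$ Klein discs $\mathbb{H}^1 \times \cdots \times \mathbb{H}^1$, i.e.\ the image of the flat $\exp(\mathfrak{c})\cdot x_0$. Moving one coordinate at a time yields the explicit $r$-chain $x_0=x,\ x_k = \exp(\sum_{i\le k} t_i m_i)\cdot \LP^+$. Consecutive points $x_{k-1},x_k$ lie on a translate of the $k$-th photon factor, which is a photon. This also proves Lemma~\ref{lem_chain_in_realization}.

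Next we compute the length of this chain. The key claim is that the photon through $x_{k-1}$ and $x_k$ meets $\O$ exactly in the corresponding $\mathbb{H}^1$-factor. I expect this boundary analysis to be the main obstacle. I would handle it with the dual realization $\ncd^\opp$: the points $\xi_\pm = \exp(\sum_{i\ne k} t_i m_i)\exp(\pm\infty\, m_k)\cdot\LP^\opp$, obtained as limits in the $k$-th $\SL_2$ factor, lie in $\overline{\O^*}$. Their hyperplanes $\hyp_{\xi_\pm}$ cut that photon exactly at the two endpoints of the $\mathbb{H}^1$-segment. Dual convexity of $\O$ (it is homogeneous, hence dually convex by Fact~\ref{prop_zimmer_dual_convex}) together with Lemma~\ref{lem_continuité_intersection_photon} then shows that the intersection is exactly that segment. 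Hence $\ro_\O(x_{k-1},x_k)$ is the one-dimensional Hilbert distance, namely $c|t_k|$ for a normalizing constant $c$. The chain has length $c\sum_k |t_k|$, which gives $\kob_\O(x,y) \le c\|t\|_{1}$.

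For the reverse inequality we use Proposition~\ref{lem_key_lemma}.(1), $\kob_\O \ge \chi(h_\alpha)^{-1} C_\O^\rho$. It suffices to exhibit $\xi,\eta \in \O^*$ (or limits in $\overline{\O^*}$, using continuity of the cross ratio) such that $\log|[\xi : x : y : \eta]_\rho| = \chi(h_\alpha)\, c \sum_k |t_k|$. Take $\xi,\eta$ to be products, over the $r$ factors, of the endpoints chosen according to the sign of $t_k$. The $\SL(2,\mathbb{R})^r$-equivariance of $\iota_\rho$ restricted to the product of photons lets the cross ratio factor as a product over $k$. Each factor is computed by Lemma~\ref{lem_comp_crossratios} applied to a single photon, and the logarithms add up to $\chi(h_\alpha)\,c\sum_k|t_k|$. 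This forces equality everywhere, so the $r$-chain is geodesic, $C^\rho_\O = \chi(h_\alpha)\kob_\O$ (the inequality $\le$ being Proposition~\ref{lem_key_lemma}.(1)), and $\kob_\O$ restricted to the flat $\exp(\mathfrak{c})\cdot x_0$ equals $c\|\cdot\|_{1}$ in the coordinates $t$. Since this distance is translation-invariant on the flat, it is the length metric obtained by integrating the $L^1$-norm, which proves (2).

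The delicate points, where I would spend most of the effort, are two. First, the boundary points $\xi_\pm,\xi,\eta$ built from the $\SL_2$-factors must indeed lie in $\overline{\O^*}$. I would verify this through $\ncd^\opp(\mathbb{X}) = \operatorname{int}(\O^*)$ and its $G^*$-equivariance. Second, the cross ratio on the product of photons must really factor.
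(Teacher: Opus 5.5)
Your proposal is correct and is essentially the paper's proof: an explicit $r$-chain in a flat moving one coordinate at a time, the endpoints of each photon segment, and two points of $\O^*$ built coordinatewise from these endpoints according to the signs of the $t_k$, so that the telescoped cross ratio (Lemma~\ref{lem_comp_crossratios}) squeezes $C_\O^\rho$ against $\chi(h_\alpha)\kob_\O$ via Proposition~\ref{lem_key_lemma}. The difference is bookkeeping: you stay in the bounded realization on the $\SL(2,\mathbb{R})^r$-orbit of $\LP^+$ and take the dual points in $\overline{\ncd^\opp(\mathbb{X}(\g,\alpha))}\subset\O^*$, which treats the self-opposite and non-self-opposite cases uniformly, whereas the paper conjugates by $g_{\mathsf{reg}}$ to the unbounded realization $\O_{\mathsf{nb}}$, locates endpoints with the flows $\exp(n h_{\beta_i})$, and splits into two cases.
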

We will prove point~(1) in Sections~\ref{sect_slef_op_case} and~\ref{sect_non_self_op_case}, distinguishing the self-opposite and non self-opposite cases. We will prove point~(2) in Section~\ref{sect_infinitesimal_form}.

We fix~$(\g, \alpha)$ a rank-$r$ irreducible Nagano pair and $G \in \mathcal{G}_{\{\alpha\}}(\g)$. Our proof relies on the existence of chains contained in a flat of a realization~$\O$ of~$\mathbb{X}(\g, \alpha)$ (for its symmetric-space structure); such sequences yield two boundary points~$\xi, \xi'$ at which both the Caratheodory metrics and the Kobayashi metric are simultaneously realized, which is why their~$\kob_\O$-length equals both the Kobayashi and the Caratheodory metrics.

\begin{rmk}
    In \cite{falbel2025hilbert}, Falbel--Guilloux--Will compute the Caratheodory metric induced by a Plücker embedding for \emph{causal flag manifolds} (items (iii), (viii, $\min(p,q)=2$), (x), (xi), and (xii)).
\end{rmk}

The computation of the Kobayashi metric of a realization of the noncompact dual of~$\mathbb{X}(\g, \alpha)$ does not depend on the chosen realization, by Fact~\ref{prop_properties_kobayashi_metric}. In this section, it will be convenient to consider the realization given in the next section.

In this section, we will extensively use the  notation of Sections~\ref{sect_construction_symetri_structure} and~\ref{sect_embedding_concompact_dual}. In particular, recall that we denote by~$\sigma_0$ the Cartan involution of~$\g$ respecting the grading~$\g  = \g_{-1} \oplus \g_0 \oplus \g_1$, and~$K_0$ the fixed point set of~$\sigma_0$ in~$G$. Recall that the~$(\beta_i)_{1 \leq i\leq r}$ are the strongly orthogonal roots defined in Section~\ref{sect_embedding_concompact_dual}, and~$(h_{\beta_i})$ their coroots as defined in Section~\ref{sect_lie_theory}; those roots determine elements~$s_1, \dots, s_r$ of the Weyl group of~$G$, as well as a family~$(E_i)_{1\leq i \leq r}$, all also defined in Section~\ref{sect_embedding_concompact_dual}. We define, for all~$1 \leq i \leq r$:
\begin{equation}\label{eq_def_F_i}
    F_i := - \sigma_0(E_i)~.
\end{equation}
Then for all~$1 \leq i \leq r$, the triple~$(E_i, h_{\beta_i}, F_i)$ is an~$\sll_2$-triple.

Recall the standard embeddings~$\ncd, \ncdn$ of~$\mathbb{X}(\g, \alpha)$ defined in~\eqref{eq_def_ncd}, and
\begin{equation*}
    G^* = \operatorname{Aut}_G(\ncd(\mathbb{X}(\g, \alpha)))~.
\end{equation*}

\subsection{The nonbounded realization}\label{ex_nonbounded_realizatiob} In this section we describe explicitly a realization of~$\mathbb{X}(\g, \alpha)$ which will be convenient for the computation of the Kobayashi and Caratheodory metrics. This realization is contained in~$\Affstdstd$ (according to Lemma~\ref{lem_LP^+_in_O_dual} below) but not bounded in it, contrary to~$\ncd(\mathbb{X}(\g, \alpha))$.

Let~$e := \sum_{i=1}^r E_i$, $f := \sum_{i=1}^r F_i$ and~$h := \sum_{i=1}^r h_{\beta_i} = H_0$. Then~$(e,h,f)$ is an~$\sll_2$-triple of~$\g$, inducing a group morphism~$\plong^{\mathsf{reg}}: \SL(2, \mathbb{R}) \rightarrow G$ with kernel included in~$\{\pm \id\}$. The stabilizer of~$P^+$ in~$\SL(2, \mathbb{R})$ is the standard Borel subgroup~$B$ of~$\SL(2, \mathbb{R})$, so by quotient one gets a~$\plong^{\mathsf{reg}}$-equivariant embedding~$\isl^{\mathsf{reg}}: \mathbb{P}(\mathbb{R}^2) \hookrightarrow \Fl(\g, \alpha)$. Explicitly, it satisfies~$\isl^{\mathsf{reg}}([1: t]) = \exp(t f) \cdot \LP^+$ for all~$t \in \mathbb{R}$. Let
\begin{equation*}
    g_{\mathsf{reg}} := \plong^{\mathsf{reg}}\Big{(} \frac{1}{\sqrt{2}}\begin{pmatrix}
    1 & -1 \\ 1 & 1
\end{pmatrix} \Big{)} = \plong^{\mathsf{reg}}\Big{(} e^{\F}e^{-\log(\sqrt{2})\operatorname{H}}e^{-\operatorname{E}} \Big{)} = \exp(f)g_0(2)\exp(-e)~,
\end{equation*}
where~$(\operatorname{E}, \operatorname{H}, \operatorname{F})$ is the standard~$\mathfrak{sl}_2$-triple of~$\mathfrak{sl}(2, \mathbb{R})$ defined in Section~\ref{sect_sl2-triples} and~$g_0$ is defined in Section~\ref{sect_dilations_translations}.

Since~$\exp(-e)$ and~$g_0(2)$ are in~$P^+$, one has~$g_{\mathsf{reg}} \cdot \LP^+ = \exp(f) \cdot \LP^+$. The domain
\begin{equation*}
        \O_{\mathsf{nb}} := g_{\mathsf{reg}} \cdot \ncd(\mathbb{X}(\g, \alpha)) =  \exp(\Ad(g_{\mathsf{reg}}) \cdot \g^*) \exp(f)\cdot \LP^+
\end{equation*}
is a realization of~$\mathbb{X}(\g, \alpha)$.

Recall that the family~$(m_i)_{1 \leq i \leq r}$ is defined in Section~\ref{sect_embedding_concompact_dual} by~$m_i := E_i - \sigma_0(E_i)$. By Fact~\ref{fact_construction_noncompact_dual}, the subspace
\begin{equation*}
    \mathfrak{c} := \sum_{i=1}^r \mathbb{R} m_i
\end{equation*}
is a Cartan subspace of~$G^*$. An explicit computation gives that for all~$(\lambda_i)_{1 \leq i \leq r} \in \mathbb{R}^r$, one has
\begin{equation}\label{eq_ad_g_reg}
\Ad(g_{\mathsf{reg}})\cdot \sum_{i=1}^r \lambda_i m_i = - \sum_{i=1}^r \lambda_i h_{\beta_i}~,
\end{equation}
which in particular implies that~$\exp(\sum_{i=1}^r \mathbb{R}h_{\beta_i}) \subset \Aut_G(\O_{\mathsf{nb}})$. Since~$g_{\mathsf{reg}} \cdot \LP^+ = \exp(-f) \cdot \LP^+ \in \O_{\mathsf{nb}}$, we thus have
\begin{equation*}
   \exp(\sum_{i=1}^r \mathbb{R}_{>0}F_i) \cdot \LP^+ =  \exp(\sum_{i=1}^r \mathbb{R}h_{\beta_i}) \exp(f) \cdot \LP^+ \in \O_{\mathsf{nb}}~.
\end{equation*}
Hence~$\O_{\mathsf{nb}}$ is not bounded in~$\Affstdstd$, even though it is contained in~$\Affstdstd$ according to Lemma~\ref{lem_LP^+_in_O_dual} below. Here ``$\mathsf{nb}$'' stands for ``not bounded''.

Before stating the next lemma, recall that~$\LP^+_{\{\oppinv(\alpha)\}}$ is the standard parabolic subgroup defining~$\Fl(\g, \alpha)^\opp$ (see Section~\ref{sect_flag_mfds}), and~$ \LP_{\{\oppinv(\alpha)\}}^-$ is its standard opposite parabolic subgroup.

\begin{lem}\label{lem_LP^+_in_O_dual}
    One has~$\LP^+, \LP_{\{\oppinv(\alpha)\}}^- \in \partial\O_{\mathsf{nb}}$ and~$\LP^-, \LP_{\{\oppinv(\alpha)\}}^+ \in \partial\O_{\mathsf{nb}}^*$.
\end{lem}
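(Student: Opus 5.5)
The idea is to exhibit the four points explicitly as limits of the one-parameter families already built into $\O_{\mathsf{nb}}$, and then to use $G^*$-equivariance together with the description $\O^* = $ interior of $\ncdn(\mathbb{X})$ from~\eqref{eq_def_ncd}.

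\emph{Step 1: $\LP^+ \in \partial \O_{\mathsf{nb}}$.} We saw that $\exp(\sum_i \mathbb{R}_{>0} F_i)\cdot \LP^+ \subset \O_{\mathsf{nb}}$. Letting all coefficients tend to $0$ gives $\LP^+ \in \overline{\O_{\mathsf{nb}}}$. To see that $\LP^+ \notin \O_{\mathsf{nb}}$, I would use that $\exp(\sum_i \mathbb{R} h_{\beta_i}) \subset \Aut_G(\O_{\mathsf{nb}})$ fixes $\LP^+$. Since $G^*$ acts properly on $\O_{\mathsf{nb}}$ (Fact~\ref{fact_autom_group_proper}, the domain being proper), the stabilizer in $\Aut_G(\O_{\mathsf{nb}})$ of a point of $\O_{\mathsf{nb}}$ is compact. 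But this noncompact one-parameter group fixes $\LP^+$, so $\LP^+ \notin \O_{\mathsf{nb}}$.

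\emph{Step 2: the dual statements.} The same argument applies verbatim on the opposite side. Applying $g_{\mathsf{reg}}$ to $\ncdn$, we get $\operatorname{int}(\O_{\mathsf{nb}}^*) = g_{\mathsf{reg}}\cdot \ncdn(\mathbb{X}) \ni g_{\mathsf{reg}}\cdot\LP^\opp = \exp(f)\exp(\Ad(\dots))\cdot \LP^\opp$. Indeed, $g_{\mathsf{reg}} = \exp(f)g_0(2)\exp(-e)$, and here I would rather write $g_{\mathsf{reg}} = \exp(-e')\dots$ using the Bruhat-type factorization $\plong^{\mathsf{reg}}(\cdot) = e^{-\operatorname{E}}e^{c\operatorname{H}}e^{\operatorname{F}}$ of the same rotation matrix. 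This shows $g_{\mathsf{reg}}\cdot\LP^\opp = \exp(-e)\cdot \LP^\opp$ up to sign conventions. Conjugating by the same torus $\exp(\sum \mathbb{R} h_{\beta_i})$, which now scales $e$ componentwise, gives $\exp(\sum_i \mathbb{R}_{>0}E_i)\cdot\LP^\opp \subset \O_{\mathsf{nb}}^*$ (or with negative coefficients). Letting the coefficients go to $0$ gives $\LP^\opp \in \overline{\O^*_{\mathsf{nb}}}$. Since $\LP^\opp$ is fixed by the torus, properness of the $G^*$-action on $\operatorname{int}(\O^*_{\mathsf{nb}})$ rules out $\LP^\opp$ lying in the interior. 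To pass from $\partial(\operatorname{int}\O^*_{\mathsf{nb}})$ to $\partial\O^*_{\mathsf{nb}}$, I would note that $\O^*_{\mathsf{nb}}$ is closed and equal to the closure of its interior. Equivalently, and more directly, $\LP^\opp \in \O^*_{\mathsf{nb}}$ because $\hyp_{\LP^\opp}\cap \O_{\mathsf{nb}} = \varnothing$, which is the claim $\O_{\mathsf{nb}}\subset \Affstdstd$.

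\emph{Step 3: the remaining points.} For $\LP_{\{\oppinv(\alpha)\}}^-$ and $\LP_{\{\oppinv(\alpha)\}}^+$, I would take limits in the other direction, with coefficients tending to $+\infty$. In the projective parametrization $\isl^{\mathsf{reg}}$, we have $\exp(tf)\cdot\LP^+ \to \isl^{\mathsf{reg}}([0:1]) = w\cdot\LP^+$ as $t\to\infty$, where $w$ represents $s_r = s_{\beta_1}\cdots s_{\beta_r}$, the longest double coset by Fact~\ref{lem_cardinal_coset}. This limit point is the "opposite" standard point. I expect the main obstacle to be the identification: one has to check that $s_r\cdot\LP^+$ is the point the statement denotes (the one corresponding to the opposite parabolic), and that it lies in $\partial\O_{\mathsf{nb}}$ rather than in $\O_{\mathsf{nb}}$. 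The latter again follows because it is fixed by the noncompact torus. The first part requires the bookkeeping $w_0 \in W_{\FS\smallsetminus\{\alpha\}} s_r W_{\FS\smallsetminus\{\alpha\}}$ and the identification of $\Fl(\g,\alpha)$ with its opposite via $w_0$. Symmetrically for the dual, using $\exp(tE)$. Finally, $\O_{\mathsf{nb}}\subset\Affstdstd$ follows from $\LP^\opp \in \O^*_{\mathsf{nb}}$, as noted in Step 2.
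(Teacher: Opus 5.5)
Steps 1 and 2 are sound and use the same mechanism as the paper: the noncompact torus $\exp(\sum_i\mathbb{R}h_{\beta_i})\subset\Aut_G(\O_{\mathsf{nb}})$ fixes every point $w\cdot\LP^{\pm}$, and this is combined with properness of the action. Your computation $g_{\mathsf{reg}}\cdot\LP^-=\exp(-e)\cdot\LP^-$ treats the dual side more explicitly than the paper, which just appeals to symmetry. Do not use the ``more directly'' alternative in Step 2, though: it is circular, since $\O_{\mathsf{nb}}\subset\Affstdstd$ is what one deduces from this lemma. The limit in Step 3 is also correct: $\exp(tf)\cdot\LP^+\to\tilde s_r\cdot\LP^+\in\partial\O_{\mathsf{nb}}$. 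This is the paper's claim for $i=r$.

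\textbf{The gap.} What is missing is the identification $\tilde s_r\cdot\LP^+=\LP^-_{\{\oppinv(\alpha)\}}=w_0\cdot\LP^+$, and the double-coset bookkeeping you propose cannot supply it.

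\emph{Why the double coset is not enough.} The equality $\overline{s_r}=\overline{w_0}$ only gives $w_0\cdot\LP^+=a\,s_r\cdot\LP^+$ for some $a\in W_{\FS\smallsetminus\{\alpha\}}$. A representative of $a$ lies in $L$, which does not preserve $\O_{\mathsf{nb}}$. What you actually need is the left-coset statement $s_r\in w_0W_{\FS\smallsetminus\{\alpha\}}$, equivalently $s_r\omega_\alpha=w_0\omega_\alpha$.

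\emph{Self-opposite case.} If $\oppinv(\alpha)=\alpha$, this does follow. The points in generic position with respect to $\LP^+$ form the affine chart $U^+\cdot\LP^-$, on which $\exp(\aaa)$ has the single fixed point $\LP^-$.

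\emph{Non-self-opposite case.} If $\oppinv(\alpha)\neq\alpha$, the conclusion depends on the choice of $\beta_2,\dots,\beta_r$, which Fact~\ref{lem_cardinal_coset} does not see. Take $\operatorname{Gr}_2(\mathbb{R}^5)$ with $\beta_1=\varepsilon_1-\varepsilon_5$ and $\beta_2=\varepsilon_2-\varepsilon_3$. These are strongly orthogonal, of the same length, and $\overline{s_2}=\overline{w_0}$. Yet $s_2\cdot\LP^+=\langle e_3,e_5\rangle\neq\langle e_4,e_5\rangle=w_0\cdot\LP^+$.

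Worse, the statement itself is false for this choice. Here $\O_{\mathsf{nb}}=g_{\mathsf{reg}}\cdot\{\text{planes on which } Q=\operatorname{diag}(1,1,-1,-1,-1) \text{ is positive definite}\}$. The vector $e_4$ is fixed by $g_{\mathsf{reg}}$ and is $Q$-negative, so no plane in $\overline{\O_{\mathsf{nb}}}$ contains $e_4$. Hence $\langle e_4,e_5\rangle\notin\overline{\O_{\mathsf{nb}}}$.

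\emph{What a proof needs.} It must fix the choice of strongly orthogonal roots: here $\beta_2=\varepsilon_2-\varepsilon_4$, and presumably Harish-Chandra's choice of successive highest roots in general. It must then verify $s_r\omega_\alpha=w_0\omega_\alpha$ for that choice, or equivalently that $\tilde s_r\cdot\LP^+$ is transverse to $\LP^+_{\{\oppinv(\alpha)\}}$. The same applies to your dual claim $\LP^+_{\{\oppinv(\alpha)\}}\in\partial\O^*_{\mathsf{nb}}$.

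The paper's own proof has exactly the same hole. It passes from $\overline{s_i}=\overline{w_0}$ to $s_i^{-1}w_0\in W_{\FS\smallsetminus\{\alpha\}}$ without justification.
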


\begin{proof} The lemma is symmetric for~$\O_{\mathsf{nb}}$ and~$\O_{\mathsf{nb}}^*$, so it suffices to prove it for~$\O_{\mathsf{nb}}$.

Let us first prove that~$\LP^+ \in \partial\O_{\mathsf{nb}}$. To this end, notice that~$\LP^+ = \lim_{n \rightarrow + \infty} g_n \exp(f) \cdot \LP^+$, where~$g_n = \exp(n \sum_{i=1}^r h_{\beta_i})$ is a sequence of elements of~$\Aut_G(\O_{\mathsf{nb}})$ by~\eqref{eq_ad_g_reg}, and~$\exp(f)\cdot \LP^+ = g_{\mathsf{reg}} \cdot \LP^+ \in \O_{\mathsf{nb}}$. Hence we have~$\LP^+ \in \overline{\O_{\mathsf{nb}}}$. Since~$(g_n)$ is a divergent sequence of~$G$, by properness of the action of~$\Aut_G(\O_{\mathsf{nb}})$ on~$\O_{\mathsf{nb}}$, we must have~$\LP^+ \in \partial\O_{\mathsf{nb}}$.

Now let~$1 \leq i \leq r$, and we claim that there exists a representative~$k_i$ of~$s_i$ such that~$k_i \cdot \LP^+ \in \partial \Omega_{\mathsf{nb}}$. Let us set
\begin{equation*}
    g_n:= \exp(\frac{1}{n} (\sum_{k=1}^i h_{\beta_k} - \sum_{k=i+1}^r h_{\beta_k}))~.
\end{equation*}
According to~\eqref{eq_ad_g_reg}, the element~$g_n$ is in~$\Aut_G(\O_{\mathsf{nb}})$. Hence
\begin{equation*}
    x:= \lim_{n \rightarrow +\infty} g_n \exp(f) \cdot \LP^+ = \lim_{n \rightarrow +\infty} \exp\big{(} e^n \sum_{k = 1}^i \sigma_0(E_k)\big{)} \cdot \LP^+
\end{equation*}
is in~$\overline{\O_{\mathsf{nb}}}$, and again, since~$\Aut_G(\O_{\mathsf{nb}})$ acts properly on~$\O_{\mathsf{nb}}$, it is actually in~$\partial \O_{\mathsf{nb}}$. It remains to prove that~$x = k_i \cdot \LP^+$ for some representative~$k_i$ of~$s_i$. To this end, we set 
\begin{equation*}
    h' :=  \sum_{k = 1}^i h_{\beta_k}~; \ \  e' :=\sum_{k = 1}^i E_{k}~; \ \  f':= -\sum_{k = 1}^i \sigma_0(E_k)~.
\end{equation*}
Then~$(e',h',f')$ is an~$\sll_2$-triple, inducing a morphism~$\rho: \SL(2, \mathbb{R}) \rightarrow G$ with kernel contained in~$\{\pm \id\}$, as well as a~$\rho$-equivariant embedding~$\zeta: \mathbb{P}(\mathbb{R}^2) \hookrightarrow \Fl(\g, \alpha)$. Then we have
\begin{equation*}
    x = \zeta(\lim_{n \rightarrow + \infty} \exp(e^n \F) \begin{bmatrix}
        1 \\ 0
    \end{bmatrix}) = \zeta(\begin{pmatrix}
        0 &1 \\ -1 & 0
    \end{pmatrix}\begin{bmatrix}
        1 \\ 0
    \end{bmatrix}) = \rho(e^{\frac{\pi}{2}(\E -\F)}) \cdot \LP^+ = \exp(\frac{\pi}{2} (e' - f')) \cdot \LP^+~.
\end{equation*}
But~$\exp(\frac{\pi}{2} (e'- f'))$ is a representative of~$s_i$ (see Section~\ref{sect_real_lie_alg}). This proves the claim.

Now let us prove that~$\LP_{\{\oppinv(\alpha)\}}^-  \in \partial\O_{\mathsf{nb}}$. Let~$k_0 \in K$ be a representative of the longest element~$w_0$ of~$W$. Then recall that one has~$k_0 \cdot \LP^+ = \LP_{\{\oppinv(\alpha)\}}^\opp $. By Fact~\ref{lem_cardinal_coset}, there exists~$1 \leq i \leq r$ such that~$\overline{s_i} = \overline{w_0}$ in~$W_{\FS \smallsetminus\{\alpha\}} \backslash W / W_{\FS \smallsetminus\{\alpha\}}$. Then~$s_i^{-1}w_0 \in W_{\FS \smallsetminus\{\alpha\}}$ is in the Weyl group of the Levi~$L$, and in particular preserves~$\LP_{\{\oppinv(\alpha)\}}^-$. This proves that for any representative~$k_i$ of~$s_i$ in~$K$, one has~$k_i\cdot\LP_{\{\oppinv(\alpha)\}}^- = k_0\cdot \LP_{\{\oppinv(\alpha)\}}^- = \LP^+$. According to the claim, we thus have~$\LP_{\{\oppinv(\alpha)\}}^- \in \partial \O_{\mathsf{nb}}$.~$\qed$
\end{proof}

Let~$x,y \in \O_{\mathsf{nb}}$. By the definition of~$\O_{\mathsf{nb}}$, there exist~$\mathsf{x}, \mathsf{y} \in \mathbb{X}(\g, \alpha)$ such that
$$(x, y) = \left(g_{\mathsf{reg}} \cdot \ncd(\mathsf{x}), \  g_{\mathsf{reg}} \cdot \ncd(\mathsf{y})\right)~,$$
where~$\ncd$ is defined in~\eqref{eq_def_ncd}.

Recall that the Kobayashi metric~$\kob_{\O_{\mathsf{nb}}}$ is~$\Aut_G(\O_{\mathsf{nb}})$-invariant, so for the computations we will always be able to translate~$\mathsf{x}, \mathsf{y}$ in~$\mathbb{X}(\g, \alpha)$. Since two elements of~$\mathbb{X}(\g, \alpha)$ are always contained in a same flat, and~$\Iso(\mathbb{X}(\g, \alpha))$ acts transitively on the set of flats of~$\mathbb{X}(\g, \alpha)$ (see e.g.\ \cite{helgason1979differential}), we may assume that~$\mathsf{x}, \mathsf{y} \in \exp(\mathfrak{c}) \cdot K_0$. Now since~$\exp(\mathfrak{c})$ acts transitively on the flat~$\exp(\mathfrak{c}) \cdot K_0$, we may assume that~$\mathsf{x} = K_0$. Then we have
\begin{equation}\label{eq_def_x_y}
\begin{split}
    x &= \exp(f) \cdot \LP^+ = \exp\big{(}\sum_{i=1}^r F_i\big{)} \cdot \LP^+, \text{ and }  \\
    y &= \exp(-\sum_{i=1}^r t_i h_{\beta_i}) \cdot x = \exp\big{(}\sum_{i=1}^r \exp(t_i) F_i\big{)} \cdot \LP^+, \ \text{ with }t_1, \dots, t_r \in \mathbb{R}~.
\end{split}
\end{equation}
We set~$x_0 := x$, and for all~$1 \leq i \leq r$, we set
$$x_i := \exp(M_i) \cdot \LP^+, \quad \text{ where } \ \ M_i = \sum_{k=1}^i \exp(t_k) F_k + \sum_{k = i+1}^r F_k~,$$ 
so that in particular~$x_r = y$. See Figure~\ref{fig:placeholder}.

For all~$0 \leq i \leq r-1$, the points~$x_i$ and~$x_{i+1}$ are on the same photon. Indeed, they both lie on~$\Phot_i := \overline{\exp(\mathbb{R}F_{i+1}) \cdot x_i} = \exp(M_i) s_i \Phot_\std$.

\subsection{The self-opposite case}\label{sect_slef_op_case} In this subsection we prove Theorem~\ref{thm_calcul_kob}.(1), assuming that~$\oppinv(\alpha) = \alpha$. 

Let us set
\begin{equation}\label{eq_def_J}
   J := \{1 \leq i \leq r \mid t_i < 0\}~.
\end{equation}

We set
\begin{equation*}
     \xi_1 := \lim_{n \rightarrow + \infty}\exp(\exp(n)\sum_{j \in J} F_j) \cdot \LP^+; \quad 
    \xi_2 := \lim_{n \rightarrow + \infty}\exp(\exp(n)\sum_{j \notin J} F_j) \cdot \LP^+~;
\end{equation*}
see Figure~\ref{fig:placeholder}. One has:
\begin{lem}
    $\xi_1, \xi_2 \in \O_{\mathsf{nb}}^*$.
\end{lem}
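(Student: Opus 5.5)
The plan is to show that each $\xi_i$ is a limit of points of $\operatorname{int}(\O_{\mathsf{nb}}^*)$, using the fact that $\O^*$ is closed for any open set $\O$. For the closedness: if $\hyp_\xi\cap\O\neq\varnothing$, pick $z\in\hyp_\xi\cap\O$. Since $\Fl(\g,\alpha)\smallsetminus\Affstd_\eta$ varies continuously with $\eta$ and $\hyp_\xi$ has empty interior, every $\eta$ close to $\xi$ has $\hyp_\eta$ meeting a small neighborhood of $z$ inside $\O$. Hence the condition ``$\hyp_\eta\cap\O\ne\varnothing$'' is open in $\eta$, and $\O^*$ is closed. So it suffices to exhibit sequences $\eta_n\in\O_{\mathsf{nb}}^*$ with $\eta_n\to\xi_1$ and $\eta_n'\to\xi_2$. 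Here we use the self-opposite identification $\Fl(\g,\alpha)^\opp=\Fl(\g,\alpha)$.

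To produce these sequences, we work inside the ``polyphoton'' torus attached to the flat. Because the roots $\beta_1,\dots,\beta_r$ are strongly orthogonal, the $\sll_2$-triples $(E_i,h_{\beta_i},F_i)$ pairwise commute. They therefore define a homomorphism $\SL(2,\mathbb{R})^r\to G$ and an equivariant map
\[
\Psi\colon \mathbb{P}(\mathbb{R}^2)^r\to\Fl(\g,\alpha),\qquad \Psi([1:s_1],\dots,[1:s_r])=\exp\Big(\sum_i s_iF_i\Big)\cdot\LP^+ .
\]
The same construction with $\LP^\opp$ gives $\Psi^\opp$ into $\Fl(\g,\alpha)^\opp$. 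On these coordinates, we will check two facts.

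(a) $\Psi(s)$ is transverse to $\Psi^\opp(u)$ if and only if $s_i\neq u_i$ for every $i$, with $u_i$ read in suitable coordinates. This reduces, via the $\SL(2,\mathbb{R})^r$-equivariance, to Fact~\ref{thm_takeuchi_hyp} and the description of the double cosets by $s_k=s_{\beta_1}\cdots s_{\beta_k}$ (Fact~\ref{lem_cardinal_coset}). The position is determined by the number of coordinates where $s_i=u_i$.

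(b) By \eqref{eq_ad_g_reg} and the definition of $\O_{\mathsf{nb}}$, we have $\Psi^{-1}(\O_{\mathsf{nb}})\supset\{s_i>0\ \forall i\}$. The symmetric argument applied to $\ncdn$ shows that $\operatorname{int}(\O_{\mathsf{nb}}^*)=g_{\mathsf{reg}}\cdot\ncdn(\mathbb{X}(\g,\alpha))$ contains $\Psi^\opp(\{u_i<0\ \forall i\})$, since $g_{\mathsf{reg}}\cdot\LP^\opp$ corresponds to $u=(-1,\dots,-1)$ and is moved by $\exp(\sum\mathbb{R}h_{\beta_i})$.

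Next we identify $\xi_1$ in these coordinates. As in the proof of Lemma~\ref{lem_LP^+_in_O_dual}, $\xi_1=\Psi(s)$ where $s_j=\infty$ for $j\in J$ and $s_j=0$ for $j\notin J$, so that $\xi_1=\exp(\frac{\pi}{2}\sum_{j\in J}(E_j-F_j))\cdot\LP^+$. Using a representative $k_0$ of $w_0$, which acts on each $\sll_2$ factor as the Weyl element, the self-opposite identification transports this point to $\Psi^\opp(u)$ with the same pattern of coordinates, possibly up to a harmless sign on each factor. This point is the limit of
\[
\eta_n=\Psi^\opp(u^{(n)}),\qquad u^{(n)}_j=-n\ \ (j\in J),\qquad u^{(n)}_j=-\tfrac1n\ \ (j\notin J),
\]
and each $\eta_n$ lies in $\operatorname{int}(\O_{\mathsf{nb}}^*)$ by (b). Closedness of $\O_{\mathsf{nb}}^*$ then gives $\xi_1\in\O_{\mathsf{nb}}^*$. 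The argument for $\xi_2$ is identical with $J$ replaced by its complement.

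The main obstacle is the bookkeeping of the self-opposite identification, namely making sure that the point called $\xi_1$ in $\Fl(\g,\alpha)$ really corresponds to the boundary corner $(\infty_J,0_{J^c})$ of the negative orthant in the $\Psi^\opp$-coordinates, and not to a corner of the positive orthant. I would settle this first in the rank-one model $\sll(2,\mathbb{R})$, where the dual of the half-line $\{s>0\}$ is $\{u<0\}$ and its closure contains $0$ and $\infty$. I would then transport the computation factorwise through the commuting $\sll_2$-triples, using that $k_0$ can be taken to be the product of the rank-one Weyl elements times an element of $L$ fixing the torus. If the sign convention turned out reversed, the fallback would be to check $\hyp_{\xi_1}\cap\O_{\mathsf{nb}}=\varnothing$ directly: apply $G^*$-invariance to move an arbitrary point of $\O_{\mathsf{nb}}$ into the torus, and use criterion (a), noting that no coordinate of a point with all $s_i>0$ equals $0$ or $\infty$.
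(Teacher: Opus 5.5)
Your proof is correct, and it reaches the conclusion by a more explicit route than the paper. The paper writes $\xi_1=\exp(\frac{\pi}{2}(e'-f'))\cdot\LP^+$ as a partial Weyl translate. It then refers back to the torus-limit and properness argument of Lemma~\ref{lem_LP^+_in_O_dual}, together with the asserted symmetry between $\O_{\mathsf{nb}}$ and $\O_{\mathsf{nb}}^*$. Its opening reduction to an automorphism is not what carries the argument: $\exp(\frac{\pi}{2}(e'-f'))$ acts by $s_j\mapsto -1/s_j$ on the $J$-coordinates, so it does not preserve $\O_{\mathsf{nb}}$.

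You instead work directly on the dual side, using three facts:
\begin{itemize}
\item $\O_{\mathsf{nb}}^*$ is closed;
\item the negative orthant lies in $\operatorname{int}(\O_{\mathsf{nb}}^*)$, being the orbit of $g_{\mathsf{reg}}\cdot\LP^\opp=\exp(-e)\cdot\LP^\opp$ under $\exp(\sum_i\mathbb{R}h_{\beta_i})\subset\Aut_G(\O_{\mathsf{nb}})$;
\item $\xi_1,\xi_2$ are corners of the torus.
\end{itemize}
This proves exactly the stated membership without any boundary or properness statement. The paper's version buys brevity by recycling Lemma~\ref{lem_LP^+_in_O_dual}; yours costs setting up $\Psi$ and $\Psi^\opp$ but is self-contained.

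The obstacle you flag is harmless. Since $\overline{s_r}=\overline{w_0}$ (Fact~\ref{lem_cardinal_coset}) and representatives of $W_{\FS\smallsetminus\{\alpha\}}$ lie in $L$, one has $\LP^\opp=\prod_{i=1}^r\exp(\frac{\pi}{2}(E_i-F_i))\cdot\LP^+$. With your convention $\Psi^\opp(u)=\exp(\sum_iu_iE_i)\cdot\LP^\opp$, this gives $\Psi^\opp(u)=\Psi(s)$ with $s_i=1/u_i$ coordinatewise. Consequences:
\begin{itemize}
\item The two tori coincide, with $\O_{\mathsf{nb}}\supset\Psi(\{s_i>0\ \forall i\})$ and $\operatorname{int}(\O^*_{\mathsf{nb}})\supset\Psi(\{s_i<0\ \forall i\})$.
\item Corners correspond with $0$ and $\infty$ exchanged. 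So your $\eta_n$ actually converge to $\xi_2$, and the complementary sequence gives $\xi_1$.
\item Every point of $\{0,\infty\}^r$ lies in the closure of every orthant anyway. No sign bookkeeping is needed, and criterion (a) with its fallback can be dropped.
\end{itemize}
In the closedness step, the empty-interior remark is superfluous. Write $\eta=g\cdot\xi$ with $g\to\id$ via a local section of $G\to\Fl(\g,\alpha)^\opp$; then $g\cdot z\in\hyp_\eta\cap\O$ for $g$ close to $\id$.
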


\begin{proof} By Lemma~\ref{lem_LP^+_in_O_dual}, it suffices to prove that there exist~$g_i \in \Aut(\O_{\mathsf{nb}}^*)$ such that~$\xi_i = g_i \cdot \LP^+_{\{\oppinv(\alpha)\}} = \LP^+$, for~$i = 1,2$.

We then set~$e' = \sum_{i\in J} E_i$,~$f' = -\sigma_0(e')$ and~$h' = \sum_{j \in J}h_{\beta_j}$. Then~$(e',h',f')$ is an~$\sll_2$-triple of~$\g$, inducing a morphism~$\tau: \SL(2, \mathbb{R}) \rightarrow G$ with kernel contained in~$\{ \pm\id\}$. The stabilizer of~$\LP^+$ in~$\SL(2, \mathbb{R})$ is the Borel subgroup~$B$ of~$\SL(2, \mathbb{R})$ which is the stabilizer of~$\begin{bmatrix}
    1 \\ 0
\end{bmatrix} \in \mathbb{P}(\mathbb{R}^2)$, inducing a~$\tau$-equivariant embedding~$\phi: \mathbb{P}(\mathbb{R}^2) \rightarrow \Fl(\g, \alpha); \ 
    g \cdot \begin{bmatrix}
        1 \\ 0
    \end{bmatrix} \mapsto \tau(g) \cdot \LP^+~.$

Recall the standard~$\sll_2$-triple~$(\E, \operatorname{H}, \F)$ of~$\sll(2, \mathbb{R})$ defined in Section~\ref{sect_sl2-triples}. One has
\begin{equation*}
    \xi_1 = \phi\big{(}\lim_{n \rightarrow + \infty} \exp(n \F\big{)} \cdot  \begin{bmatrix}
        1 \\ 0
    \end{bmatrix}) = \phi\big{(}\begin{bmatrix}
        0 \\ 1
    \end{bmatrix}\big{)} = \phi\big{(}\exp(\frac{\pi}{2}(\E - \F)\big{)} \cdot \begin{bmatrix}
        1 \\ 0
    \end{bmatrix}) = \exp\big{(}\frac{\pi}{2}(e'-f')\big{)} \cdot \LP^+~.
\end{equation*}
As mentionned in Section~\ref{sect_real_lie_alg}, the element~$\exp(\frac{\pi}{2}(e'-f'))$ is a representative of the symmetry~$s$ of~$\aaa^*$ associated to the kernel of~$\sum_{j \in J} \beta_j$. A similar argument as the one giving~$\LP^-_{\oppinv(\alpha)} \in \partial\O_{\mathsf{nb}}$ in the proof of Lemma~\ref{lem_LP^+_in_O_dual} gives that~$\xi_1 \in \partial\O_{\mathsf{nb}}$.  The proof is similar for~$\xi_2$.~$\qed$
\end{proof}

Let us consider the set~$J$ defined in~\eqref{eq_def_J}. We set
$$
\begin{array}{cc}
         a_i = & \begin{cases}
        \exp\big{(}\sum_{k=1}^{i-1} \exp(t_k) F_k + \sum_{k = i+1}^r F_k\big{)} \cdot \LP^+ \quad \text{ if } i \in J; \\
        \lim_{n \rightarrow + \infty}\exp\big{(}\sum_{k=1}^{i-1} \exp(t_k) F_k + \exp(n) F_i + \sum_{k = i+1}^r F_k\big{)} \cdot \LP^+ \quad \text{ if } i\notin J~.
    \end{cases}  \\
 &   \\
         b_i = & \begin{cases}
        \lim_{n \rightarrow + \infty}\exp\big{(}\sum_{k=1}^{i-1} \exp(t_k) F_k + \exp(n) F_i + \sum_{k = i+1}^r F_k\big{)} \cdot \LP^+ \quad \text{ if } i \in J \\
        \exp\big{(}\sum_{k=1}^{i-1} \exp(t_k) F_k + \sum_{k = i+1}^r F_k\big{)} \cdot \LP^+ \quad \text{ if } i\notin J~.
    \end{cases}.
\end{array}
$$

\begin{lem}\label{lem_extremités_des_photons} One has~$\Phot_i \cap \O = (a_i, b_i)$, and~$a_i, x_i, x_{i+1}, b_i$ are aligned in this order.
\end{lem}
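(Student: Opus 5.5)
The plan is to reduce everything to a one-dimensional computation inside the photon $\Phot_i = \overline{\exp(\mathbb{R}F_{i+1})\cdot x_i}$. (I will write the index shift carefully: the photon joining $x_{i-1}$ and $x_i$ is generated by $F_i$.) Parametrize it by
\[
s \longmapsto \exp\Big(\sum_{k<i} e^{t_k}F_k + sF_i + \sum_{k>i}F_k\Big)\cdot \LP^+,
\]
which is an affine line in $\Affstdstd$ by Fact~\ref{lem_photons_affine}, completed by one point at infinity. In this parameter, $s=1$ gives $x_{i-1}$, $s=e^{t_i}$ gives $x_i$, $s=0$ gives the finite endpoint, and $s=\pm\infty$ gives the limit point appearing in the definitions of $a_i,b_i$.

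The first step is to show that the points with $s>0$ lie in $\O_{\mathsf{nb}}$. For this I would use the flat computation~\eqref{eq_ad_g_reg}. The group $\exp(\sum_k \mathbb{R}h_{\beta_k})$ lies in $\Aut_G(\O_{\mathsf{nb}})$ and acts on $\exp(\sum_k \lambda_k F_k)\cdot \LP^+$ by rescaling each $\lambda_k$ by a positive factor, since $[h_{\beta_k},F_j] = -\beta_j(h_{\beta_k})F_j = -2\delta_{jk}F_j$ by strong orthogonality. Hence every point with all $\lambda_k>0$ lies in the orbit of $x=\exp(f)\cdot\LP^+ \in \O_{\mathsf{nb}}$, and so the open ray $s\in(0,+\infty)$ is contained in $\O_{\mathsf{nb}}$.

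The second step is to show that both endpoints $s=0$ and $s=\infty$ lie in $\partial\O_{\mathsf{nb}}$. Take a sequence $h_n\in \exp(\mathbb{R}h_{\beta_i})$ that sends $\lambda_i$ to $0$ (respectively to $\infty$). Since $\{h_n\}$ diverges in $G$ and $\Aut_G(\O_{\mathsf{nb}})$ acts properly on $\O_{\mathsf{nb}}$, the limits cannot lie in $\O_{\mathsf{nb}}$ but lie in its closure; this is exactly the argument used in Lemma~\ref{lem_LP^+_in_O_dual}. Since $\O_{\mathsf{nb}}$ is proper and symmetric, hence dually convex by Fact~\ref{prop_zimmer_dual_convex}, Lemma~\ref{lem_continuité_intersection_photon} shows that $\Phot_i\cap\O_{\mathsf{nb}}$ is connected. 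It therefore equals the arc between the two boundary points that contains the positive ray, namely $(0,+\infty)$ in the parameter $s$.

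The final step is the ordering. If $t_i<0$, that is $i\in J$, then $e^{t_i}<1$, so the order along the arc is $0, e^{t_i}, 1, \infty$; this gives the endpoint at $0$ next to $x_i$ and the endpoint at $\infty$ next to $x_{i-1}$. If $i\notin J$ the order is reversed. This matches the case split in the definitions of $a_i$ and $b_i$, up to reconciling the index shift between $x_{i-1},x_i$ and $x_i,x_{i+1}$ with the statement's convention.

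I expect the main obstacle to be the first step. One has to justify that the $\Aut$-orbit of $x$ under the split torus is exactly the positive orthant in these coordinates; this needs the commutation relations and the strong orthogonality of the $\beta_k$. One also has to check that the limit points at $s=\pm\infty$ agree with the $\lim_{n\to\infty}$ expressions in the definitions of $a_i$ and $b_i$, which the projective parametrization of the photon should give.
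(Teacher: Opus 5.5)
Your argument is essentially the paper's. Elements of $\exp(\mathbb{R}h_{\beta_i})\subset\Aut_G(\O_{\mathsf{nb}})$ push a point of the photon to its two endpoints, properness of the action puts these in $\partial\O_{\mathsf{nb}}$, and Lemma~\ref{lem_continuité_intersection_photon} identifies the intersection with the arc between them; beyond this you verify explicitly, via the torus orbit of $x$, that the open arc lies in $\O_{\mathsf{nb}}$ (left implicit in the paper), and you handle the index shift correctly. One correction to your last step: your ordering computation shows that, with the paper's literal case split, the point called $a_i$ sits next to $x_i$ and $b_i$ next to $x_{i-1}$ (for both $i\in J$ and $i\notin J$), so the intended order $a_i,x_{i-1},x_i,b_i$ holds only after exchanging $a_i$ and $b_i$ in that case split (which is also what the next lemma's identification $a_i=\pr_{\Phot}(\xi_1)$ requires) --- this is a labelling slip in the paper rather than a gap in your proof, but it means the definitions do not ``match'' as you assert.
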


\begin{proof}
    Let us assume that~$i \in J$, the proof being similar for~$i \notin J$. By definition one has~$a_i, b_i \in \Phot_i$ and that~$a_i, x_i, x_{i+1}, b_i$ are aligned in this order. Let us prove that~$a_i, b_i \in \partial \O$. 

In the notation of Example~\ref{ex_nonbounded_realizatiob}, for all~$n \in \mathbb{N}$, we set
$$g_n := \exp(n h_{\beta_i}) \in \Aut_G(\O_{\mathsf{nb}})~.$$ 
Then~$g_n \cdot x_i = \exp\big{(}\sum_{k=1}^{i-1} \exp(t_k) F_k + \exp(n) F_i + \sum_{k = i+1}^r F_k\big{)} \cdot \LP^+$ converges to~$b_i$, and~$g_n^{-1} \cdot x_i$ converges to~$a_i$, as~$n \rightarrow + \infty$. Since~$(g_n)$ diverges in~$G$ and~$\Aut_G(\O_{\mathsf{nb}})$ acts properly on~$\O_{\mathsf{nb}}$ (Fact~\ref{fact_autom_group_proper}), we deduce that~$a_i, b_i \in \partial \O$. Then, by Lemma~\ref{lem_continuité_intersection_photon}, the points~$a_i$ and~$b_i$ are the extremities of~$\Phot_i \cap \O$.~$\qed$
\end{proof}

 On the other hand, recall from Section~\ref{sect_comp_caratheodory} that we denote by~$\pr_{\Phot}$ the natural projection from a dense open subset of~$\Fl(\g, \alpha)^\opp$ ($= \Fl(\g, \alpha)$ here) on a photon~$\Phot$.

\begin{lem}
    For all~$1 \leq i \leq r$, we have~$a_i \in \hyp_{\xi_1}$ and~$b_i \in \hyp_{\xi_2}$; hence~$a_i =\pr_{\Phot_i}(\xi_1)$ and~$b_i = \pr_{\Phot_i}(\xi_2)$.
\end{lem}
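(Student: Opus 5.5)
Since the statement has the form ``$a_i \in \hyp_{\xi_1}$'', I will reduce everything to a transversality computation inside the subgroup generated by the commuting $\sll_2$-triples $(E_k,h_{\beta_k},F_k)$. Because the $\beta_k$ are strongly orthogonal, these triples pairwise commute. They therefore define a homomorphism $\SL(2,\mathbb{R})^r \to G$. Using the embeddings $\phi$ built in the previous lemmas, the orbit of $\LP^+$ under this group is a product $\mathbb{P}(\mathbb{R}^2)^r \hookrightarrow \Fl(\g,\alpha)$, sending $([1:u_1],\dots,[1:u_r])$ to $\exp(\sum_k u_k F_k)\cdot \LP^+$.

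In these coordinates all the relevant points become explicit. The point $\xi_1$ has coordinate $\infty$ in the slots $j\in J$ and $0$ in the slots $j\notin J$. The point $\xi_2$ has $0$ on $J$ and $\infty$ off $J$. For $i\in J$, the point $a_i$ has coordinates $e^{t_k}$ ($k<i$), $0$ (slot $i$) and $1$ ($k>i$). The point $b_i$ has the same coordinates except $\infty$ in slot $i$. For $i\notin J$ the roles of $0$ and $\infty$ in slot $i$ are swapped.

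The key step is to express membership in $\hyp_\xi$ in these coordinates. Both points lie in the torus-orbit picture, and two such points $u=(u_k)$, $v=(v_k)$ are transverse if and only if $u_k\ne v_k$ in $\mathbb{P}(\mathbb{R}^2)$ for every $k$. To justify this, I use that $\pos(u,v)$ is computed slotwise. Acting by the product group, I reduce to the case where $v$ has all coordinates equal to $\infty$, or else to a coordinate equal to $u_k$. The point $v$ with all coordinates $\infty$ is $\exp(\frac{\pi}{2}\sum_k(E_k-F_k))\cdot\LP^+$, a representative of $s_r$, which by Fact~\ref{lem_cardinal_coset} represents $\overline{w_0}$. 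When $m$ coordinates coincide, one gets $s_{r-m}$ up to reordering; Fact~\ref{lem_cardinal_coset} shows that this is not $\overline{w_0}$, since the $s_k$ are distinct double-coset representatives.

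With this criterion in hand, take $i\in J$. Slot $i$ of $a_i$ equals $0$, while slot $i$ of $\xi_1$ equals $\infty$. So I look instead for a slot where $a_i$ and $\xi_1$ coincide, which would put $a_i \in \hyp_{\xi_1}$. Since $a_i$ has coordinates $e^{t_k}$ and $1$ everywhere except slot $i$, neither of which is $0$ or $\infty$, no slot other than $i$ can coincide. This shows that the naive identification of $\xi_1$ is off by a twist. I will therefore use the self-opposite identification $\Fl(\g,\alpha)^\opp = \Fl(\g,\alpha)$ through $k_0$. Under this identification $\hyp_{\xi}$ is the set of points whose coordinates coincide, in some slot, with those of $k_0\xi$, the slotwise antipode, where $0$ and $\infty$ are exchanged. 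Then $k_0\xi_1$ has $0$ on $J$, matching slot $i$ of $a_i$, so $a_i\in\hyp_{\xi_1}$. In the same way, $k_0\xi_2$ has $\infty$ off $J$ and $0$ on $J$; for $i\in J$, slot $i$ of $b_i$ is $\infty$, which coincides with $k_0\xi_2$, as needed. The case $i\notin J$ is symmetric. Finally, by Fact~\ref{lem_photons_affine}, $\hyp_{\xi}\cap\Phot_i$ is a single point unless $\Phot_i \subset \hyp_\xi$, and the latter is excluded because $x_i \notin \hyp_{\xi}$ since $\xi\in\O^*$. This gives $a_i=\pr_{\Phot_i}(\xi_1)$ and $b_i = \pr_{\Phot_i}(\xi_2)$.

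The main obstacle is making the slotwise transversality criterion rigorous, in particular pinning down exactly how $\Fl(\g,\alpha)^\opp$ is identified with $\Fl(\g,\alpha)$ in the torus coordinates. I would first check it in the Grassmannian case $\operatorname{Gr}_p(\mathbb{R}^{2p})$, with the standard choice of the $\beta_k$, to fix the conventions.
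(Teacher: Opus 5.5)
\paragraph{The gap.} Your slot criterion is the right tool. It is the same mechanism as the paper's one-line proof: two torus points that agree in some slot are joined by at most $r-1$ photon steps, so they are non-transverse by Fact~\ref{thm_takeuchi_hyp}. That route also spares you the unproved ``up to reordering'' double-coset claim.

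The gap is the step where, having found that $a_i$ and $\xi_1$ differ in every slot, you switch to a $k_0$-twisted identification of $\Fl(\g,\alpha)^\opp$ with $\Fl(\g,\alpha)$. This is not a legitimate reading of $\hyp_\xi$. The paper identifies the two spaces as the same set of parabolic subalgebras. Then $\hyp_\xi$ is the set of points not opposite to $\xi$, and $g\cdot\hyp_\xi=\hyp_{g\cdot\xi}$. This is also the convention in which $\xi_1,\xi_2\in\O_{\mathsf{nb}}^*$ and $[\xi_1:x:y:\xi_2]_\rho$ are used immediately afterwards. Identifying $\xi$ with $k_0\cdot\xi$ is not $G$-equivariant and depends on the chosen representative $k_0$. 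On the points at hand it simply exchanges $0$ and $\infty$ slotwise, i.e.\ it exchanges $\xi_1$ and $\xi_2$. So what you have actually proved is $a_i\in\hyp_{\xi_2}$ and $b_i\in\hyp_{\xi_1}$.

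\paragraph{The statement as printed is false.} Your computation is correct, and it shows that under the paper's convention the lemma fails. The cleanest case: if all $t_i<0$, then $J=\{1,\dots,r\}$ and $\xi_1=\exp(\tfrac{\pi}{2}(e-f))\cdot\LP^+=\LP^-$. Hence $\hyp_{\xi_1}=\Fl(\g,\alpha)\smallsetminus\Affstdstd$. But each $a_i$ has the form $\exp(X)\cdot\LP^+$ with $X\in\uu^-$, so it lies in $\Affstdstd$.

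The paper's own argument asserts $d_H(a_i,\xi_1)\le r-1$ ``by construction''. This fails for the same reason: the two points are transverse, so $d_H(a_i,\xi_1)=r$. The bound does hold for the pairs $(b_i,\xi_1)$ and $(a_i,\xi_2)$, which share slot $i$.

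\paragraph{Where the slip comes from, and the fix.} In the self-opposite case the definitions of $a_i$ and $b_i$ are interchanged. For $i\in J$, the slot-$i$ values along the photon through $x_{i-1},x_i$ are $\infty,1,e^{t_i},0$ in that order. So the endpoint preceding $x_{i-1}$ is the slot-$\infty$ point, i.e.\ the paper's $b_i$, and that is the one sharing slot $i$ with $\xi_1$. The case $i\notin J$ is analogous.

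After swapping $a_i\leftrightarrow b_i$, both the alignment claim of the preceding lemma and the present lemma hold. Either your criterion or the paper's $d_H$ argument then proves it. The computation \eqref{eq_calcul_geod_1} goes through unchanged, each term becoming $\log(a_i:x_{i-1}:x_i:b_i)=|t_i|$. The right conclusion of your proposal is therefore to correct the statement, not to change the meaning of $\hyp_\xi$.
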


\begin{proof} By Lemma~\ref{lem_extremités_des_photons}, we have that~$d_H(a_i, \xi_1) \leq r-1$ and~$d_H(b_i, \xi_2) \leq r-1$ by construction. The Lemma then follows by Fact~\ref{thm_takeuchi_hyp}. $\qed$
\end{proof}

Now let~$(G, \rho, V)$ be an~$\{ \alpha\}$-proximal triple for~$(\g, \alpha)$, and let~$C_{\O_{\mathsf{nb}}}^\rho$ be the associated Caratheodory metric on~$\O_{\mathsf{nb}}$. Let us also define~$m := \chi_\rho(h_\alpha)$. We have:
\begin{equation}\label{eq_calcul_geod_1}
     \begin{split}
        C_{\O_{\mathsf{nb}}}^{\rho}(x,y) &\geq |\log[\xi_1 : x : y : \xi_2]_{\rho}| = m |\sum_{i=0}^{r-1} 
        \log(\pr_{\Phot_i}(\xi_1) : x_i : x_{i+1} : \pr_{\Phot_{i}}(\xi_2)) | \\
        &  =  m |\sum_{i=0}^{r-1} \log(a_i : x_i : x_{i+1} : b_i) | \overset{(*)}{=} m \sum_{i=0}^{r-1}|\log(a_i : x_i : x_{i+1} : b_i))| \\
        &\overset{(**)}{=} m \sum_{i=0}^{r-1} \kob_{\O_{\mathsf{nb}}}(x_i, x_{i+1}) \overset{(***)}{=} m \operatorname{len}_{\O_{\mathsf{nb}}}(x_0, \dots , x_r) \geq m \kob_{\O_{\mathsf{nb}}}(x,y)~,
    \end{split}
\end{equation} 
where Equality~$(*)$ follows from the positive signs in  the sum, due to the choices of~$\xi_1$ and~$\xi_2$ (see Figure~\ref{fig:placeholder} for a concrete picture). Besides, Equalities~$(**)$ and~$(***)$ are due to Proposition~\ref{lem_key_lemma}.

Since we also have~$m\kob_{\O_{\mathsf{nb}}}(x,y) \geq C_{\O_{\mathsf{nb}}}^{\rho}(x,y)$ by Proposition~\ref{lem_key_lemma}, all inequalities in Equation~\eqref{eq_calcul_geod_1} are equalities. We thus have~$m\kob_{\O_{\mathsf{nb}}}(x,y) = C_{\O_{\mathsf{nb}}}^{\rho}(x,y)$. This ends the proof of Theorem~\ref{prop_calcul_geod}.(1) in the case where~$\oppinv(\alpha) = \alpha$.

\begin{figure}
    \centering
    \begin{tikzpicture}[scale = 1.2]
          \draw (-2,0) -- (0,-2) -- (2,0);
          
          \draw (-2, 0) -- (0,2) -- (2,0);
          \draw[red] (0, 0) -- (0.6,0.6) -- (1.2, 0);
          \draw[red, dashed] (-1, 2.2) -- (2.2, -1);
          \draw[red, dashed] (-2, -2) -- (2, 2);

          \fill[fill=blue] (2,0) circle (1.5pt);
          \fill[fill=blue] (-2,0) circle (1.5pt);
          \fill[fill=black] (0.6,0.6) circle (1.4pt);
          \fill[fill=black] (1.2,0) circle (1.4pt);
          \fill[fill=black] (0,0) circle (1.4pt);
          \fill[fill=red] (-0.4,1.6) circle (1.4pt);
          \fill[fill=red] (-1,-1) circle (1.4pt);
          \fill[fill=red] (1,1) circle (1.4pt);
          \fill[fill=red] (1.6,-0.4) circle (1.4pt);
          \draw[red] node at (-0.75,1.6) {$a_2$};
          \draw[red] node at (-1.35,-1) {$a_1$};
           \draw[red] node at (-2.2,-1.7) {$\Phot_1$};
          \draw[red] node at (1,1.4) {$b_1$};
          \draw[red] node at (1.65,-0.7) {$b_2$};
          \draw[red] node at (2.5,-1) {$\Phot_2$};
          \draw node at (-0.25,0) {$x_0$};
          \draw node at (0.9,0) {$x_2$};
          \draw node at (0.6,0.85) {$x_1$};
          \draw node at (-2.3,0) {$\xi_1$};
          \draw node at (2.3,0) {$\xi_2$};
        \end{tikzpicture}
    \caption{Proof of Theorem~\ref{thm_calcul_kob}.(1) in the self-opposite case, for~$r = 2$. The picture is in the pushforward in~$\O_{\mathsf{nb}}$ of a flat of~$\mathbb{X}(\g, \alpha)$. The rays issuing from~$\xi_1$ and~$\xi_2$ are contained in~$\hyp_{\xi_1}\cap\ \partial\O_{\mathsf{nb}}$ and~$\hyp_{\xi_2} \cap \ \partial\O_{\mathsf{nb}}$ respectively.}
    \label{fig:placeholder}
\end{figure}

\subsection{The non-self-opposite case}\label{sect_non_self_op_case} Now let us prove Theorem~\ref{thm_calcul_kob}.(1) in the case where~$\oppinv(\alpha) \ne \alpha$. According to the list of irreducible real-type Nagano pairs, when~$\oppinv(\alpha) \ne \alpha$, the root system of~$\g^*$ is of type~$B_r$, where~$r$ is the rank of~$(\g, \alpha)$ (see Table \ref{table_nagano_full}). Hence its Weyl group acts by signed permutations on the~$E_i$,~$1 \leq i \leq r$. Hence we may assume that~$t_i \geq 1$ for all~$1 \leq i \leq r$. We then set
\begin{equation*}
    \begin{split}
        a_i &:= \exp\big{(}\sum_{k=1}^{i-1} \exp(t_k) F_k + \sum_{k = i+1}^r F_k\big{)} \cdot \LP^+~; \\
   b_i &:= \lim_{n \rightarrow + \infty}\exp\big{(}\sum_{k=1}^{i-1} \exp(t_k)F_k + \exp(n) \sigma_0(E_i) + \sum_{k = i+1}^r F_k\big{)} \cdot \LP^+~.
    \end{split}
\end{equation*}
It is clear that~$a_i \in \hyp_{\LP_{\{\oppinv(\alpha)\}}^+} \cap \Phot_i$ for all~$1 \leq i \leq r$, so~$a_i = \pr_{\Phot_i}(\LP_{\{\oppinv(\alpha)\}}^+)$. On the other hand, clearly~$b_i \notin \Affstdstd$, so~$b_i \in \hyp_{\LP^\opp}$; since~$b_i  \in \Phot_i$ we also have~$b_i = \pr_{\Phot_i}(\LP^\opp)$. Now the same computation as~\eqref{eq_calcul_geod_1} holds, replacing~$\xi_1$ by~$\LP^+_{\{\oppinv(\alpha)\}}$ and~$\xi_2$ by~$\LP^\opp$. Note that both~$\LP^\opp$ and~$\LP_{\{\oppinv(\alpha)\}}^+ $ are in ~$\O_{\mathsf{nb}}^*$, by Lemma~\ref{lem_LP^+_in_O_dual}. This ends the proof of Theorem~\ref{thm_calcul_kob}.(1) in the case where~$\oppinv(\alpha) \ne \alpha$.

\subsection{The infinitesimal form}\label{sect_infinitesimal_form} In this subsection, we prove Theorem~\ref{thm_calcul_kob}.(2).  Let~$\lVert \cdot \rVert_1$ be the~$L^1$-norm on~$\mathfrak{c}$ with respect to the basis~$(m_i)_{1 \leq i \leq r}$: 
\begin{equation*}
    \rVert\sum_{i=1}^r t_i m_i \lVert_1 = \sum_{i=1}^p |t_i|~.
\end{equation*}
Let~$x,y \in \O_{\mathsf{nb}}$ such that~$(g_{\mathsf{reg}} \ncd)^{-1}(x), (g_{\mathsf{reg}} \ncd)^{-1}(y)$ are on a same flat of~$\mathbb{X}(\g, \alpha)$. By~$G^*$-invariance of the Kobayashi metric~$\kob_{\mathsf{nb}}$, we may again assume that~$x$ and~$y$ are those defined in~\eqref{eq_def_x_y}. 

By the last inequality of~\eqref{eq_calcul_geod_1} which we proved to actually be an equality, we then have 
\begin{equation}\label{eq_infinitesimal_form_kob}
        \kob_{\O_{\mathsf{nb}}}(x,y) 
        =\sum_{i=1}^{r} |\log (0 : 1 : \exp(t_k) : \infty)| = \sum_{i=1}^{r} |t_k|~.
\end{equation}
The end of the proof then just follows form the observation that~$\mathsf{y} = \sum_{i = 1}^r t_i m_i$.

Note that this formula is intrinsic: it does not depend on the realization of~$\mathbb{X}(\g, \alpha)$ in~$\Fl(\g,\alpha)$.

\begin{rmk}\label{rmk_calcul_kob} (1) The Kobayashi metric computed in this section on realizations of the noncompact dual is not induced by a Riemannian metric on~$\mathbb{X}(\g, \alpha)$, except in the case where~$\operatorname{rk}(\g, \alpha) = 1$, i.e.\ where~$\Fl(\g, \alpha) = \mathbb{P}(\mathbb{R}^{n+1})$ for some~$n \geq 0$ (see item (iv, $\min(p,q) = 1$) of Table~\ref{table_nagano_full}). In this latter case, we recover~$\mathbb{X}(\g, \alpha) \simeq \mathbb{H}^{n}$, and classically, the metric~$\kob_\O = \mathsf{H}_\O$ of a realization~$\O$ of~$\mathbb{X}(\g, \alpha)$ is a multiple of the pushforward of the hyperbolic metric on~$\mathbb{H}^{n}$, and~$\O$ is an ellipsoïd. Except in the real projective case, the Kobayashi metric on a realization of~$\mathbb{X}(\g, \alpha)$ does not coincide with the Riemannian metric of~$\mathbb{X}(\g, \alpha)$. This property distinguishes the higher-rank case from the rank-one case.

(2) In the notation of Theorem~\ref{thm_calcul_kob}, the inequality in Proposition~\ref{lem_key_lemma} is an equality. We expect that, in higher-rank Nagano spaces, the equality-case in Proposition~\ref{lem_key_lemma} characterizes realizations of the noncompact dual among~$\MRr$-proper dually convex domains.
\end{rmk}

\section{Higher-rank rigidity}\label{sect_rigidity}

In this section, we address Conjecture~\ref{question_lim_Zim} mentioned in the introduction. We provide partial results on the higher-rank behavior of proper almost-homogeneous domains in irreducible real Nagano spaces of higher rank. Recall that the only rank-one irreducible real-type Nagano pairs are~$(\sll(n, \mathbb{R}), \alpha_1)$ and~$(\sll(n, \mathbb{R}), \alpha_{n-1})$, for~$n \in \mathbb{N}_{>0}$. In other words, the only rank-one irreducible real-type Nagano spaces are real projective spaces and their duals. Hence, in this context, comparing proper divisible domains in rank one versus in higher rank amounts to comparing them in real projective space with those in other irreducible real-type Nagano spaces. 

In this section, we show that well-known properties of some proper divisible domains in real projective space, exhibiting a rank-one behavior, are \emph{never} satisfied by those in other irreducible real-type Nagano spaces. These are therefore genuinely higher-rank phenomena, as is evident in the proofs: the crucial steps rely on Fact~\ref{thm_takeuchi_hyp}, which relates the rank of a Nagano space to the possible degrees of transversality between two points.

Note that in causal flag manifolds and Einstein Universe (items~(iii), (viii), (x), (xi) and (xii) of Table~\ref{table_nagano_full}), it has been shown that proper almost-homogeneous domains are higher-rank symmetric spaces \cite{galiay2024rigidity, chalumeau2024rigidity}; the results of this section that concern \emph{proper} domains therefore follow from these works in that setting. They nevertheless remain of interest for the other irreducible real-type higher-rank Nagano spaces.

\subsection{Higher rank and complexity}\label{sect_rank_and_complexity}
Recall that the complexity of a boundary point of an~$\MRr$-proper domain is defined in Definition~\ref{def_complexity_boundary_point}. In real projective space, the arithmetic distance~$d_H$ defined in Section~\ref{sect_arithm_distance} is the discrete distance: $d_H(x,y) = 1$ if and only if~$x \ne y$. Hence the complexity of a boundary point~$p \in \partial \O$ is always~$\leq 1$.

The purpose of this section is to highlight how higher rank increases the complexity of the boundary of a proper almost-homogeneous domain, thereby producing a major difference with the rank-one case, and to prove Theorem~\ref{lem_geom_prop_visuel_general}. The proof is given in Paragraph~\ref{lem_geom_prop_visuel_general}. Sections~\ref{sect_proximal_limit_set} and~\ref{sect_asymptotic_complexity} contain preliminary results.

\subsubsection{The proximal limit set}\label{sect_proximal_limit_set}
In this section, using the results from Section~\ref{sect_geometric_prop_R_extr}, we prove Proposition~\ref{prop_proximal_limit_set} below, which generalizes a well-known fact in convex projective geometry.

Let~$G$ be a noncompact semisimple Lie group and~$\Theta$ be a subset of the simple restricted roots of~$G$. An element~$g \in G$ is~$\Theta$-proximal if it has two transverse fixed points~$x \in \Fl(\g, \Theta)$ and~$y \in \Fl(\g, \Theta)^\opp$ such that~$g^n \cdot z \rightarrow x$ for all~$z \in \Fl(\g, \Theta) \smallsetminus \hyp_y$. The points~$x$ and~$y$ are then uniquely defined by~$g$. A subgroup~$H \leq G$ is~$\Theta$-proximal if it contains at least one proximal element. In this case, we define
\begin{equation*}\Lambda_\Theta^{\operatorname{prox}}(H) = \overline{\{x \in \Fl(\g,\alpha) \mid \exists g \in H, \ g\text{ proximal with attracting fixed point }x\}}~.
\end{equation*}
By definition, we have~$\Lambda_\Theta^{\operatorname{prox}}(H) \subset \Lambda_\Theta(H)$. We then have:

\begin{prop}\label{prop_proximal_limit_set} Let~$(\g, \alpha)$ be an irreducible real-type Nagano pair, and let~$\O \subset N$ be a proper domain. If~$H \leq \Aut_G(\O)$ acts almost-homogeneously on~$\O$, i.e.\ if
\begin{equation*}
    \{a \in \partial \O \mid \exists (h_n) \in  H^\mathbb{N}, \ \exists x \in \O, \ h_n\cdot x \rightarrow a\} = \partial \O~,
\end{equation*}
then~$H$ is~$\{\alpha\}$-proximal and~$\Lambda_{\{\alpha\}}^{\operatorname{prox}}(H) = \overline{\Rr(\O)} = \Lambda_{\{\alpha\}}(H)$.
\end{prop}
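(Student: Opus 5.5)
The proof will establish a cycle of inclusions,
\begin{equation*}
\Lambda_{\{\alpha\}}^{\operatorname{prox}}(H) \subset \Lambda_{\{\alpha\}}(H) \subset \overline{\Rr(\O)} \subset \Lambda_{\{\alpha\}}^{\operatorname{prox}}(H)~,
\end{equation*}
together with the proximality of $H$. The first inclusion holds by definition.

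For the inclusion $\Rr(\O) \subset \Lambda_{\{\alpha\}}(H)$, I will apply Proposition~\ref{lem_extremal_attracting}. A proper domain is $\MRr$-proper. Almost-homogeneity gives dual convexity by Fact~\ref{prop_zimmer_dual_convex}; the proof of that fact only uses sequences of automorphisms, so it should apply to $H$ as well. Lemma~\ref{lem_continuité_intersection_photon} then gives photon-convexity. Since $\Lambda_{\{\alpha\}}(H)$ is closed, this yields $\overline{\Rr(\O)} \subset \Lambda_{\{\alpha\}}(H)$.

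For the reverse inclusion $\Lambda_{\{\alpha\}}(H) \subset \overline{\Rr(\O)}$, I will work in the convex hull $\Oo^\rho$ for a Plücker triple. Take $(h_k)$ $\{\alpha\}$-contracting with respect to $(a,b)$. By openness of $\O$, one has $a \in \overline{\O}$, and properness forces $a \in \partial\O$. In $\mathbb{P}(V)$, $\rho(h_k)$ contracts an open subset of $\iota_\rho(\O)$ to $\iota_\rho(a)$. The set $\iota_\rho(\O)$ is not contained in a hyperplane by Zimmer's spanning lemma, so $\rho(h_k)$ contracts the whole convex hull $\Oo^\rho$ minus the repelling hyperplane. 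Hence $\iota_\rho(a)$ is an attracting point for the action on the properly convex domain $\Oo^\rho$. I then plan to use the classical projective fact (Benoist) that such limits lie in the closure of the extremal points of $\Oo^\rho$. Combined with Lemma~\ref{lem_existence_extreme_points}, which says that projective extremal points are images of $\MRr$-extremal points, and the fact that $\iota_\rho$ is a homeomorphism onto its image, this gives $a \in \overline{\Rr(\O)}$. If the Benoist fact is not directly available, the fallback is to approximate $a$ by attracting points of proximal elements, which reduces to the next step.

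For proximality and $\overline{\Rr(\O)} \subset \Lambda^{\operatorname{prox}}$, fix $p \in \Rr(\O)$. By Lemma~\ref{lem_analogue_vey}, there is $(g_k) \in H^{\mathbb{N}}$ with $g_k \cdot \compact \to \{p\}$ for every compact $\compact \subset \O$. By Proposition~\ref{prop_geom_prop_extremal_points}, there is $\xi \in \O^*$ with $\pos(p,\xi)=\overline{\id}$, and $g_k$ is contracting with respect to some $(p,b_k)$. The standard argument (Benoist / Abels--Margulis--Soifer) then shows that for large $k$, $g_k$ maps a neighbourhood of $\overline{\O}$ slightly into itself, up to the boundary behaviour near $p$. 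Concretely, via $\rho$, $\rho(g_k)$ contracts a compact ball in $\Oo^\rho$ away from the repelling hyperplane into a small ball around $\iota_\rho(p)$. Choosing $k$ so that the repelling point is transverse to points near $p$ (this is where $\xi$ is used), $g_k$ is proximal with attracting point close to $p$. Thus $H$ is proximal and $p \in \Lambda^{\operatorname{prox}}$.

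I expect this last step to be the main obstacle. The attracting hyperplane of $g_k$ might contain $p$, which would destroy the transversality needed for the contraction argument. I will first try to remedy this by composing $g_k$ with a fixed $h \in H$ moving the repelling datum into general position, which is possible since $H$ has nonempty open orbit closures.
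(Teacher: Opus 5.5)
Your first two steps are fine and match the paper. In particular, the ``Benoist fact'' you invoke is exactly the paper's argument for $\Lambda_{\{\alpha\}}(H)\subset\overline{\Rr(\O)}$: by Lemma~\ref{lem_existence_extreme_points} some $z\in\Rr(\O)$ satisfies $\iota_\rho(z)\notin\iota_\rho^\opp(b)$. Then $h_k\cdot z\to a$, and each $h_k\cdot z$ lies in $\Rr(\O)$. So no outside citation is needed. The genuine gap is in your last step, the proximality of $H$ and the inclusion $\Rr(\O)\subset\Lambda_{\{\alpha\}}^{\operatorname{prox}}(H)$.

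Your contraction argument needs a rank-one limit $\pi$ of $\rho(g_k)/\|\rho(g_k)\|$ with $\iota_\rho(p)\notin\ker\pi$. Your remedy composes with a fixed $h\in H$, which requires $\rho(h)\iota_\rho(p)\notin\ker\pi$ for some $h\in H$. That is an irreducibility-type property $\rho|_H$ need not have. Take $H=AN\le\SO(2,1)$ acting simply transitively on the Klein model of $\mathbb{H}^2\subset\mathbb{P}(\mathbb{R}^3)$. This action is almost-homogeneous, and the fixed point $p$ of $AN$ is extremal. For unipotent $u\in N$, the sequence $g_k=u^k$ satisfies $g_k\cdot x\to p$, but its limit $\pi$ has kernel the tangent line $p^\perp$, which contains $p$ and is $H$-invariant. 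Hence $\pi\circ\rho(h)$ has image inside its kernel for every $h\in H$. The conclusion still holds here (via $A$), but your mechanism cannot reach it: you must change the sequence, and you give no general way to do so.

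The point $\xi$ from Proposition~\ref{prop_geom_prop_extremal_points} does not help either. It gives $\pos(p,\xi)=\overline{\id}$, which is incidence, not transversality with the repelling point. Likewise, ``$g_k$ maps a neighbourhood of $\overline{\O}$ into itself'' fails for such parabolic-type sequences.

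The missing idea, which is how the paper proceeds, uses several rank-one limits together with the fact that $\iota_\rho(\Rr(\O))$ spans $V$.

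\emph{Construction.} Start from $a_1=p$ and its limit $\pi_1$. Lemma~\ref{lem_existence_extreme_points} gives $a_2\in\Rr(\O)$ with $\iota_\rho(a_2)\notin\ker\pi_1$. Proposition~\ref{lem_extremal_attracting} then gives a rank-one $\pi_2\in\overline{\rho(H)}$ with image $\iota_\rho(a_2)$. Iterating produces $a_1,\dots,a_D\in\Rr(\O)$, with $D=\dim V$, satisfying $\iota_\rho(a_{i+1})\notin\ker\pi_i$, and hence $a_i\in\overline{H\cdot a_{i+1}}$.

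\emph{Contradiction.} Suppose $p\notin\Lambda_{\{\alpha\}}^{\operatorname{prox}}(H)$. Since this set is closed and $H$-invariant, no $a_i$ lies in it. Each product $\pi_i\circ\cdots\circ\pi_j$ is a nonzero rank-one element of $\overline{\rho(H)}$ with image $\iota_\rho(a_i)$, so none can have its image outside its kernel. This forces $\iota_\rho(a_i)\in\ker\pi_j$ for all $i\le j$. The subspaces $\bigcap_{j\ge i}\ker\pi_j$ then strictly increase with $i$, so $\ker\pi_D=V$, a contradiction.

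This argument needs no irreducibility of $H$, and it yields proximality of $H$ at the same time.
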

In the notation of Proposition~\ref{prop_proximal_limit_set}, if~$H$ acts cocompactly on~$\O$, then it acts almost-homogeneously. If~$\O$ is almost-homogeneous, then the conditions of Proposition~\ref{prop_proximal_limit_set} are satisfied for~$H = \Aut_G(\O)$.

\begin{proof}[Proof of Proposition~\ref{prop_proximal_limit_set}] Let us first prove that~$\Rr(\O) \subset \Lambda_\Theta^{\operatorname{prox}}(H)$. To this end we use the strategy of the proof of \cite[Prop.\ 2.3.15]{blayac2021aspects}. By Proposition~\ref{lem_extremal_attracting}, there exist~$b \in \Fl(\g,\alpha)^\opp$ and~$(g_k) \in H^\mathbb{N}$ such that~$g_k \cdot x \rightarrow a$ uniformly on compact subsets of~$\Fl(\g,\alpha) \smallsetminus \hyp_b$.

    Let~$(G, \rho, V)$ be a Plücker triple for~$(\g, \alpha)$. Then the sequence~$(\rho(g_k))$ is~$\{\alpha_1\}$-contracting in~$\SL(V)$. Thus there exists a rank-one endomorphism~$\pi_1 \in \operatorname{End}(V)$ such that~$\frac{\rho(g_k)}{||\rho(g_k)||} \rightarrow \pi_1$. Then~$\operatorname{Im}(\pi_1) = \iota_{\rho}(a)$ and~$\ker(\pi_1) = \iota_{\rho}^\opp (b)$. By Lemma~\ref{lem_existence_extreme_points}, we do not have~$\iota_{\rho}(\Rr(\O)) \subset \iota_{\rho}^\opp (b)$. Thus there exists~$a_2 \in \Rr(\O)$ such that~$\iota_{\rho}(a_2) \notin \ker(\pi_1)$. Denoting~$a_1 := a$, by induction there exist~$\dim(V)$ points~$a_1, a_2, \dots, a_{\dim(V)} \in \Rr(\O)$ and rank-one endomorphisms~$\pi_1, \dots, \pi_{\dim(V)} \in \operatorname{End(V)} \cap  \ \overline{\rho(H)}^{\operatorname{End}(V)}$ such that
\begin{equation}\label{eq_iota_etc}
        \iota_{\rho}(a_i) = \operatorname{Im}(\pi_i) 
\end{equation}
for all~$1 \leq i \leq \dim(V)$ and
\begin{equation}\label{eq_iota_etc_2}
    \iota_{\rho}(a_{i+1}) \notin \ker(\pi_i)
\end{equation}
for all~$1 \leq i \leq \dim(V)-1$. Thus~$\iota_{\rho}(a_i) = \pi_i(a_{i+1}) \in \overline{\rho(H) \cdot \iota_{\rho}(a_{i+1})}$ and 
    \begin{equation}\label{eq_a_i1}
        a_{i} \in \overline{H \cdot a_{i+1}} \quad \forall 1 \leq i \leq \dim(V)-1~.
    \end{equation} 
By induction, we thus have~$a \in \overline{H \cdot a_{i+1}}$.

Assume for a contradiction that~$a \notin \Lambda_{\{\alpha\}}^{\operatorname{prox}}(H)$. Since~$\Lambda_{\{\alpha\}}^{\operatorname{prox}}(H)$ is closed and~$H$-invariant, by~\eqref{eq_a_i1} for all~$1 \leq i \leq \dim(V)$ we have~$a_i \notin \Lambda_{\{\alpha\}}^{\operatorname{prox}}(H)$. By Equations~\eqref{eq_iota_etc} and~\eqref{eq_iota_etc_2}, for all~$1 \leq i \leq j \leq \dim(V)$, we have~$\iota_\rho (a_i) = \operatorname{Im}(\pi_i \circ\pi_{i+1} \circ \cdots \circ \pi_j)$. Thus~$\pi_i \circ\pi_{i+1} \circ \cdots \circ \pi_j$ is not proximal. Thus~$\iota_\rho (a_i) \in \ker(\pi_i \circ \pi_{i+1} \circ \cdots \circ \pi_j) = \ker(\pi_j)$. 

We have just proved that for all~$1 \leq i \leq j \leq \dim(V)$, we have~$\iota_\rho (a_i) \in \ker(\pi_j)$.
In paricular, for all~$1 \leq i \leq \dim(V)-1$, we have~$\iota_{\rho}(a_{i+1}) \in (\ker(\pi_{i+1}) \cap \cdots \cap \ker(\pi_{\dim(V)})) \smallsetminus \ker(\pi_{i})$. Thus the sequence~$(\ker(\pi_{i}) \cap \cdots \cap \ker(\pi_{\dim(V)}))_{1 \leq i \leq \dim(V)}$ is an increasing sequence of nonempty vector subspaces of~$V$. Thus~$\dim(\ker(\pi_{\dim(V)})) \geq \dim(V)$. This is in contradiction with the fact that~$\pi_{\dim(V)} \ne 0$. Thus~$a \in \Lambda_{\{\alpha\}}^{\operatorname{prox}}(H)$. By closedness of~$\Lambda_{\{\alpha\}}^{\operatorname{prox}}(H)$, we have~$\overline{\Rr(\O)} \subset \Lambda_{\{\alpha\}}^{\operatorname{prox}}(H)$.

Now let us prove that~$\Lambda_{\{\alpha\}}(H) \subset \overline{\Rr(\O)}$. Let~$x \in \Lambda_{\{\alpha\}}(H)$. Let~$(g_k) \in H^\mathbb{N}$ and~$y \in \Fl(\g,\alpha)^\opp$ such that~$(g_k)$ is~$\{\alpha\}$-contracting with respect to~$(x,y)$. By Lemma~\ref{lem_existence_extreme_points}, there exists~$z \in \Rr(\O)$ such that~$\iota_{\rho}(z) \notin \iota_{\rho}^\opp(y)$. Thus~$z \notin \hyp_y$, and~$g_k \cdot z \rightarrow x$. By~$\Aut_G(\O)$-invariance of~$\Rr(\O)$, we then have~$x \in \overline{\Rr(\O)}$. We have proven that~$\Lambda_{\{\alpha\}}(H) \subset \overline{\Rr(\O)}$. We thus have
\begin{equation*}
    \Lambda_{\{\alpha\}}(H) \subset \overline{\Rr(\O)} \subset \Lambda_{\{\alpha\}}^{\operatorname{prox}}(H) \subset \Lambda_{\{\alpha\}}(H)~.
\end{equation*}
Hence these inclusions are equalities.~$\qed$ 
\end{proof}

\subsubsection{Complexity and asymptotic properties of the automorphism group}\label{sect_asymptotic_complexity}
The goal of this section is to prove Theorem~\ref{lem_geom_prop_visuel_general}. To this end, we define the notion of \emph{complexity} of a boundary point of~$\O$.

\begin{definition}\label{def_complexity_boundary_point}
   Let~$a \in \partial \O$. We call the \emph{complexity  of~$a$ with respect to~$\O$} (or just the \emph{complexity of~$a$}, when there is no ambiguity) the integer
   \begin{equation*}
       \cpl(p) := \max\{d_H(a,b) \mid b \in \overline{\Fl_\O^\MRr(a)}\}\ \in \ \{0, \dots, \operatorname{rk}(\g,\alpha)\}~.
   \end{equation*}
\end{definition}

We first need the following lemma:

\begin{lem}\label{lem_existence_extremal_dans_face}
    Let~$(\g, \alpha)$ be an irreducible real-type Nagano pair. 
    Assume that~$\O$ is a proper almost-homogeneous domain of~$\Fl(\g,\alpha)$. Let~$p \in \partial \O$. Then there exists~$q_\infty \in \overline{\Extr(\O)} \cap \overline{\Fl_\O^\MRr(p)}$.
\end{lem}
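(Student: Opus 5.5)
The plan is to look at the face of $p$ through the convex hull $\Oo^\rho$ of Definition~\ref{def_convex_hull_plucker}, take an extremal point of the corresponding projective face, and pull it back to an $\MRr$-extremal point. Since $\O$ is proper and almost-homogeneous, it is dually convex (Fact~\ref{prop_zimmer_dual_convex}). Fix a Plücker triple $(G,\rho,V)$. By Proposition~\ref{prop_conv_nonempty}, $\Oo^\rho$ is a properly convex open set with $\iota_\rho(\partial\O)\subset\partial\Oo^\rho$.

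First step: choose a supporting object at $p$. By dual convexity there is $\xi\in\O^*$ with $p\in\hyp_\xi$. By Proposition~\ref{cor_face_incluse_hyp}, $\Fl_\O^\MRr(p)\subset\hyp_\xi$, so $\overline{\Fl_\O^\MRr(p)}\subset\hyp_\xi$ as well. Set $H_\xi:=\iota^\opp_\rho(\xi)$. This is a supporting hyperplane of $\Oo^\rho$ (as in the proof of Proposition~\ref{prop_conv_nonempty}), so $F:=\overline{\Oo^\rho}\cap H_\xi$ is a closed face of $\overline{\Oo^\rho}$ containing $\iota_\rho(p)$.

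Second step: show that the relevant part of $F$ is contained in $\iota_\rho(\overline{\Fl^\MRr_\O(p)})$. A cleaner route is to replace $F$ by the smallest closed face $F_p$ of $\overline{\Oo^\rho}$ containing $\iota_\rho(p)$. I expect this to be the main obstacle: one must show that the extreme points of $F_p$ lie in $\iota_\rho(\overline{\Fl^\MRr_\O(p)})$.

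To attack it, I would use that $\overline{\Oo^\rho}$ is the convex hull of $\iota_\rho(\overline\O)$. By Lemma~\ref{lem_existence_extreme_points} and Krein--Milman, its extreme points lie in $\iota_\rho(\overline{\Rr(\O)})$, and the extreme points of the face $F_p$ are extreme points of $\overline{\Oo^\rho}$. The real difficulty is linking $F_p$ to the $\MRr$-face. I would try to prove that any point $c\in\partial\O$ with $\iota_\rho(c)$ in the relative interior of $F_p$ can be joined to $p$ by photon segments in $\partial\O$. If that fails, I would fall back on a dynamical argument using almost-homogeneity. Take $x\in\O$ and $(g_k)$ in $\Aut_G(\O)$ with $g_k\cdot x\to p$. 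After extraction, $\rho(g_k)/\|\rho(g_k)\|$ converges to an endomorphism $\pi$. Lemma~\ref{lem_asymptotic_behav_extremal}, applied with Lemma~\ref{lem_majoration_N_on_cpt}, shows that $g_k\cdot\compact$ accumulates inside $\overline{\Fl^\MRr_\O(p)}$ for every compact $\compact\subset\O$. Choosing $\compact$ to contain points $y_1,\dots,y_N$ spanning $V$ (Zimmer's lemma in the proof of Proposition~\ref{prop_conv_nonempty}), $\mathbb{P}(\operatorname{Im}\pi)\cap\overline{\Oo^\rho}$ is then the convex hull of limits of $\iota_\rho(g_k\cdot\compact)$. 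That makes it a closed convex set whose points come from $\iota_\rho(\overline{\Fl^\MRr_\O(p)})$ and convex combinations of them.

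Third step: conclude. Let $e$ be an extreme point of the compact convex set $\mathbb{P}(\operatorname{Im}\pi)\cap\overline{\Oo^\rho}$. Being extreme, $e$ is not a nontrivial convex combination, so it is itself a limit of images of points of $\overline{\Fl^\MRr_\O(p)}$; write $e=\iota_\rho(q_\infty)$ with $q_\infty\in\overline{\Fl^\MRr_\O(p)}$. It remains to show that $e$ is a limit of projective extreme points of $\Oo^\rho$. For this I would apply Proposition~\ref{prop_proximal_limit_set} ($\Lambda_{\{\alpha\}}(H)=\overline{\Rr(\O)}$) and show that $q_\infty$ is an $\{\alpha\}$-limit point. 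Pick $z\in\Rr(\O)$ off the kernel of $\pi$ (Lemma~\ref{lem_existence_extreme_points}); then $g_k\cdot z$ converges into the image face and lies in the $\Aut_G(\O)$-invariant set $\Rr(\O)$. By varying $z$ among the spanning extremal points $x_1,\dots,x_D$, the limits $g_k\cdot x_j$ span $\operatorname{Im}\pi$ and lie in $\overline{\Rr(\O)}$, so some extreme point $e$ of that face is among them. This gives $q_\infty\in\overline{\Rr(\O)}\cap\overline{\Fl^\MRr_\O(p)}$.
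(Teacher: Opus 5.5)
Your proposal has the right ingredients, but it does not assemble them into a proof, and the step you use to close the argument is unjustified. Steps~1--2, which try to identify a face of $\overline{\Oo^\rho}$ with $\iota_\rho(\overline{\Fl^\MRr_\O(p)})$, are left open by your own account, so everything rests on the dynamical fallback. There you correctly obtain two facts. First, $\pi(\iota_\rho(y))\in\iota_\rho(\overline{\Fl_\O^\MRr(p)})$ for every $y\in\O$ with $\iota_\rho(y)\notin\mathbb{P}(\ker\pi)$, by Lemma~\ref{lem_asymptotic_behav_extremal}. Second, for $z\in\Rr(\O)$ off $\mathbb{P}(\ker\pi)$, the limit $\lim_k g_k\cdot z$ lies in $\overline{\Rr(\O)}$.

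What you never show is that this limit lies in $\overline{\Fl_\O^\MRr(p)}$. Lemma~\ref{lem_asymptotic_behav_extremal} only controls orbits of \emph{interior} points, and $z\in\partial\O$. You try to bridge this through extreme points of $\mathbb{P}(\operatorname{Im}\pi)\cap\overline{\Oo^\rho}$, and each link there fails.
\begin{itemize}
\item You only know that $\pi(\iota_\rho(\O))$ is contained in this set, not that it is its closed convex hull.
\item An extreme point of this slice need not be an extreme point of $\overline{\Oo^\rho}$: a line through the interior of a square cuts out a segment whose endpoints are not vertices. This is why your ``it remains to show'' step via Proposition~\ref{prop_proximal_limit_set} is never carried out.
\item ``The limits $g_k\cdot x_j$ span $\operatorname{Im}\pi$, so some extreme point of that face is among them'' is a non sequitur. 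A finite spanning family of points of a compact convex set need not contain any of its extreme points.
\end{itemize}

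The missing idea is simply the continuity of $\pi$ on $\mathbb{P}(V)\smallsetminus\mathbb{P}(\ker\pi)$. Take $z\in\Rr(\O)$ with $\iota_\rho(z)\notin\mathbb{P}(\ker\pi)$, which exists by Lemma~\ref{lem_existence_extreme_points}, and take $y_n\in\O$ with $y_n\to z$. Since $\mathbb{P}(\ker\pi)$ is closed, $\iota_\rho(y_n)\notin\mathbb{P}(\ker\pi)$ for $n$ large, so $\pi(\iota_\rho(y_n))\in\iota_\rho(\overline{\Fl_\O^\MRr(p)})$ by the first fact above. Letting $n\to\infty$ and using that this set is closed gives $\pi(\iota_\rho(z))\in\iota_\rho(\overline{\Fl_\O^\MRr(p)})$. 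Now $\pi(\iota_\rho(z))=\lim_k\iota_\rho(g_k\cdot z)$, and $g_k\cdot z\in\Rr(\O)$ by $\Aut_G(\O)$-invariance. Hence $q_\infty:=\lim_k g_k\cdot z$ lies in $\overline{\Rr(\O)}\cap\overline{\Fl_\O^\MRr(p)}$.

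This is exactly the paper's proof. It needs no faces of $\overline{\Oo^\rho}$, no Krein--Milman beyond Lemma~\ref{lem_existence_extreme_points}, and no Proposition~\ref{prop_proximal_limit_set}.
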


\begin{proof} By almost-homogeneity,  and a point~$x \in \O$ such that~$g_k \cdot x \rightarrow p$. By Lemma~\ref{lem_asymptotic_behav_extremal}, one then has~$a \in \Fl_\O^\MRr(p)$ for any limit point~$a$ of any sequence~$(g_k \cdot y)$ with~$y \in \O$.

Let~$(G, V, \rho)$ be any Plücker triple for~$(\g, \alpha)$. Up to taking a subsequence, we may assume that~$\rho(g_k)$ converges to some~$T \in \mathbb{P}(\operatorname{End}(V))$. By definition of~$T$, one has~$T(\iota_\rho(\O)) \subset \iota_\rho(\Fl_\O^\MRr(p))$. By Lemma~\ref{lem_existence_extreme_points}, there exists~$q \in \Extr(\O) \smallsetminus \iota_\rho^{-1}(\ker(T))$. Consider a 
sequence~$(q_k) \in \O^{\mathbb{N}}$ converging to~$q$.

Since~$T$ is continuous on~$\mathbb{P}(V) \smallsetminus \Ker(T)$, we have~$T(\iota_\rho(q_k)) \rightarrow T(\iota_\rho(q))$. Since for all~$k \in \mathbb{N}$ one has~$T(q_k) \in \iota_\rho(\Fl_\O^\MRr(p))$, we thus have
\begin{equation*}
    T(\iota_\rho(q)) \in \overline{\iota_\rho(\Fl_\O^\MRr(p))} = \iota_\rho(\overline{\Fl_\O^\MRr(p)})~.
\end{equation*}
In particular, we can write~$T(\iota_\rho(q)) = \iota_\rho(q_\infty)$, with~$q_\infty \in \overline{\Fl_\O^\MRr(p)}$. On the other hand, we have~$\iota_\rho(q_\infty) = T(\iota_\rho(q)) = \lim_{k \rightarrow + \infty} \iota_\rho(g_k \cdot q)$. Hence~$q_\infty = \lim_{k \rightarrow + \infty} g_k \cdot q$. But for all~$k \in \mathbb{N}$, we have~$g_k \cdot q \in \Extr(\O)$, so we get~$q_\infty \in \overline{\Extr(\O)}$. We have just found an element~$q_\infty \in \overline{\Extr(\O)} \cap \overline{\Fl_\O^\MRr(p)}$.~$\qed$
\end{proof}

\subsubsection{End of the proof of Theorem~\ref{lem_geom_prop_visuel_general}}\label{sect_end_proof_theorem_1.6} We now take the notation of Theorem~\ref{lem_geom_prop_visuel_general} and end its proof. Let~$q_\infty \in \overline{\Extr(\O)} \cap \overline{\Fl_\O^\MRr(p)}$ be given by Lemma~\ref{lem_existence_extremal_dans_face}, and let~$(q_k) \in \overline{\Extr(\O)}^\mathbb{N}$ be such that~$q_k \rightarrow q_\infty$. By Proposition~\ref{prop_geom_prop_extremal_points}, for all~$k \in \mathbb{N}$, there exists~$\xi_k \in \O^*$ such that
\begin{equation}\label{eq_id_in_pos}
    \id \in \pos^{(\alpha, \oppinv(\alpha))}(q_k, \xi_k)~.
\end{equation}
Let~$\xi$ be any limit point of~$\xi_k$ in~$\O^*$. then taking the limit in~\eqref{eq_id_in_pos} (by \cite{kapovich2017dynamics}), we get~$\id \in \pos^{(\alpha, \oppinv(\alpha))}(q_\infty, \xi)$. 

Let us take an arbitrary point~$x_0 \in \O$. Then we have~$x_0 \notin \hyp_\xi$, which means that~$w_0 \in \pos^{(\alpha, \oppinv(\alpha))}(x_0, \xi)$. Then by Lemma~\ref{lem_positions_weyl}.(2), we have~$w_0 \in \pos^{(\alpha, \alpha)}(x_0, q_\infty)$. Then, by Fact~\ref{thm_takeuchi_hyp}, we have 
\begin{equation*}
    d_H(x_0, q_\infty) = \operatorname{rk}(\Fl(\g,\alpha))~.
\end{equation*}
Now the triangle inequality gives~$\operatorname{rk}(\Fl(\g,\alpha)) \leq d_H(x_0, p) + d_H(p, q_\infty) \leq  d_H(x_0, p) + \cpl(p)$. Taking the infimum over~$x_0 \in \O$ ends the proof of Theorem~\ref{lem_geom_prop_visuel_general}.~$\qed$

\subsection{Non-hyperbolicity of the Kobayashi metric}\label{sect_non_hyp_kob}

The analysis conducted in Section~\ref{sect_rank_and_complexity} on the boundary of proper almost-homogeneous domains allows us to prove Theorem~\ref{thm_group_non_hyperbolic_1} in this section. We first state and prove an immediate corollary of Theorem~\ref{thm_group_non_hyperbolic_1}:

\begin{cor}\label{cor_group_non_hyperbolic_I} Let~$(\g, \alpha)$ be an irreducible real-type Nagano pair and of rank~$\geq 2$, and~$G \in \mathcal{G}_{\{\alpha\}}(\g)$. 
   Let~$\O \subset \Fl(\g,\alpha)$ be a domain. Assume that one of the following assumptions is satisfied:
   \begin{enumerate}
       \item $\O$ is proper;
       \item $\O$ is strongly~$\MRr$-proper and dually convex.
   \end{enumerate}
   If there exists~$\Gamma \leq \Aut_G(\O)$ dividing~$\O$, then~$\Gamma$ is not Gromov hyperbolic. 
\end{cor}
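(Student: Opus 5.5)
The plan is to reduce the corollary to Theorem~\ref{thm_group_non_hyperbolic_1} via the Švarc--Milnor lemma. Suppose $\Gamma \leq \Aut_G(\O)$ is discrete and acts cocompactly on $\O$. Then $\Aut_G(\O)$ acts cocompactly, so $\O$ is quasi-homogeneous. In case (1), quasi-homogeneity implies almost-homogeneity. Indeed, by Fact~\ref{fact_autom_group_proper} the action is proper, so for $a\in\partial\O$ and $y_k\in\O$ with $y_k\to a$, we write $y_k=\gamma_k x_k$ with $x_k$ in a fixed compact fundamental set. Passing to a subsequence, $x_k\to x$. Properness of $\kob_\O$ (Corollary~\ref{cor_kobayashi_geodesic}, since $\O$ is dually convex by Fact~\ref{prop_zimmer_dual_convex}) then gives $\kob_\O(\gamma_k x,\gamma_k x_k)=\kob_\O(x,x_k)\to 0$. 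Convergence in $\kob_\O$-bounded sets in the standard topology near $\partial\O$ follows from the Caratheodory comparison (Proposition~\ref{lem_key_lemma}), which yields $\gamma_k x\to a$. Hence hypothesis (1) of Theorem~\ref{thm_group_non_hyperbolic_1} holds. In case (2), $\O$ is strongly $\MRr$-proper, dually convex and quasi-homogeneous, which is exactly hypothesis (2) of Theorem~\ref{thm_group_non_hyperbolic_1}. In both cases we conclude that $(\O,\kob_\O)$ is not Gromov hyperbolic.

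Next we check that $(\O,\kob_\O)$ is a proper geodesic metric space on which $\Gamma$ acts properly, cocompactly and isometrically.
\begin{itemize}
\item \emph{Case (1).} This is Corollary~\ref{cor_kobayashi_geodesic} together with Fact~\ref{prop_zimmer_dual_convex}.
\item \emph{Case (2).} By Lemma~\ref{lem_continuité_intersection_photon}, $\O$ is photon-convex. Theorem~\ref{thm_kobayashi_metric_nagano}.(2) (via Theorem~\ref{thm_kob_hyp_R_propre} and Corollary~\ref{cor_photon_convex_quasi_hom_complete}) then shows that $\kob_\O$ is a proper geodesic metric generating the standard topology.
\end{itemize}
Isometry of the action is Fact~\ref{prop_properties_kobayashi_metric}.(2). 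Properness of the action follows from Corollary~\ref{cor_action_proper}, or Fact~\ref{fact_autom_group_proper} in case (1), together with discreteness of $\Gamma$.

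With these properties in hand, the Švarc--Milnor lemma shows that $\Gamma$ is finitely generated and quasi-isometric to $(\O,\kob_\O)$. Gromov hyperbolicity is a quasi-isometry invariant among geodesic spaces, so if $\Gamma$ were hyperbolic then $(\O,\kob_\O)$ would be too, contradicting the first step.

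The only delicate point is case (1): translating cocompactness into almost-homogeneity, or simply observing that the proof of Theorem~\ref{thm_group_non_hyperbolic_1}.(1) applies to quasi-homogeneous domains. The standard argument above should suffice. Everything else is bookkeeping of earlier results.
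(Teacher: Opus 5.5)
Your proposal is correct and is essentially the paper's proof: you show that $(\O,\kob_\O)$ is a proper geodesic space on which $\Gamma$ acts properly, cocompactly and by isometries, then combine the \v{S}varc--Milnor lemma with Theorem~\ref{thm_group_non_hyperbolic_1}. In case (2), your use of Corollary~\ref{cor_photon_convex_quasi_hom_complete} is actually the more accurate citation, since Corollary~\ref{cor_kobayashi_geodesic} concerns proper domains. The only addition is your justification that divisibility gives almost-homogeneity, which the paper takes for granted; there you only need continuity and invariance of $\kob_\O$ (appealing to Fact~\ref{prop_zimmer_dual_convex} at that stage would be circular), and the step $\kob_\O(\gamma_k x,\gamma_k x_k)\to 0 \Rightarrow \gamma_k x\to a$ is best justified by the elementary bound $\kob_\O(p,q)\geq c\,\lvert p-q\rvert$, valid for $\O$ bounded in an affine chart, rather than by the Caratheodory comparison.
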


\begin{proof} If~$\O$ is proper, then since it is divisible, it is almost-homogeneous and thus dually convex (recall Fact~\ref{prop_zimmer_dual_convex}). By Corollary~\ref{cor_kobayashi_geodesic}, the metric space~$(\O, \kob_\O)$ is thus proper and geodesic. If~$\O$ is stronlgy~$\MRr$-proper and dually convex, then the same conclusion holds by Corollary~\ref{cor_kobayashi_geodesic}. Now since~$\Gamma$ acts cocompactly and properly discontinuously (Fact~\ref{fact_autom_group_proper} and Corollary~\ref{cor_action_proper}) on the metric space~$(\O, \kob_\O)$, by Svark--Milnor's Lemma and Theorem~\ref{thm_group_non_hyperbolic_1}, the group~$\Gamma$ is not Gromov hyperbolic.~$\qed$
\end{proof}

The aim of this section is to prove Theorem~\ref{thm_group_non_hyperbolic_1}. To this end, we adapt the proof of~\cite{zimmer2015convexity}, in which point (1) is proven for item (iv) of Table~\ref{table_nagano_full}. With the formalism on real-type Nagano spaces and~$\MRr$-proper dually convex domains that we have introduced, the proof given in \cite{zimmer2015convexity} generalizes to any higher-rank real-type Nagano pair.

Let us fix some notation. We fix a higher-rank real-type Nagano pair~$(\g, \alpha)$, a Plücker triple~$(G, \rho, V)$ of~$(\g, \alpha)$, and~$\vv_0 \in V^{\omega_{\alpha}} \smallsetminus \{ 0 \}$. We moreover take Notation~\ref{notation_nagano}. Let~$\O \subset \Fl(\g,\alpha)$ be a proper almost-homogeneous domain. We may assume that~$\O$ is contained in the standard affine chart~$\Affstdstd$ and that~$\LP^+ \in \O$. Then~$\Phot_\std \cap \O \ne \varnothing$, so by~$\MRr$-properness of~$\O$ there exists some~$t \in \mathbb{R} \smallsetminus \{ 0 \}$ such that~$\exp(tv^-) \cdot \LP^+\in\partial \O$. Up to dilating in~$\Affstdstd$ (see Section~\ref{sect_graded_Lie_algebra}), we may assume that~$a :=\exp(v^-) \cdot \LP^+\in\partial \O$. 

We claim that~$a$ is not~$\MRr$-extremal. If it was, then by Proposition~\ref{prop_geom_prop_extremal_points}, we would have~$d_H(x,a ) \geq \operatorname{rk}(\g, \alpha)$ for all~$x \in \O$. But on the other hand, we have~$d_H(P, a) = 1$ by construction. This is impossible since~$\operatorname{rk}(\g, \alpha) \geq 2$. This proves the claim.

Since~$a$ is not~$\MRr$-extremal, its~$\MRr$-face is nontrivial. Thus by Lemma~\ref{fact_stab_u_std}.(2), there exists~$w := \Ad(g) \cdot v^-$, with~$g \in L$ and~$\delta > 0$, such that~$\exp(v^- + s w) \in \partial \O$ for all~$s \in (-\delta, \delta)$. Up to considering~$\Ad(g_0(\frac{1}{\delta})) \cdot w$ instead of~$w$, we may assume that~$\delta =1$ (recall that~$g_0$ is defined in Section~\ref{sect_dilations_translations}). 

By Lemma~\ref{lem_continuité_intersection_photon}, there exists~$\varepsilon >0$ such that~$\exp(t v^- +s w) \in \O$ for all~$t \in (1- \varepsilon, 1)$ and~$s \in (- \varepsilon, \varepsilon)$. Besides, since~$\O$ is dually convex, there exists~$\xi \in \O^*$ such that~$a \in \hyp_{\xi}$; by Proposition~\ref{cor_face_incluse_hyp}, we have~$\Fl_{\O}^\MRr(a) \subset \hyp_{\xi}$. Hence we also have 
\begin{equation}\label{eq_face_included}
    \exp(v^- + s w) \cdot \LP^+\in \hyp_{\xi} 
\end{equation}
for all~$s \in (-1, 1)$. Let~$f \in V^* \smallsetminus \{0\}$ be the unique lift of~$\iota^{\opp}_\rho(\xi)$ such that~$f(\vv_0) =1$. By~\eqref{eq_face_included} and Fact~\ref{prop_ggkw}, we have 
\begin{equation}\label{eq_face_included_2}
    f(\rho(\exp(v^- + sw)) \cdot \vv_0) = 0  
\end{equation}
for all~$s \in (-1, 1)$. Since Equation~\eqref{eq_face_included_2} is polynomial, we have~$f(\rho(\exp(v^- + sw)\cdot \vv_0 ) \cdot x_0) = 0$ for all~$s \in \mathbb{R}$. Note that for all~$s,t \in \mathbb{R}$, we have 
\begin{equation}\label{equ_expression_f}
\begin{split}
    f(\rho(\exp(tv^- + sw)&\cdot \vv_0)\cdot x_0) = \\
    &1 + s f(\rho_*(w)\cdot \vv_0) +t f(\rho_*(v^-) \cdot \vv_0) + ts f(\rho_*(v^-)\rho_*(w) \cdot \vv_0)~.
\end{split}
\end{equation}
Taking~$t=1$, by~\eqref{eq_face_included_2}, this gives
$$0 = 1 + s f(\rho_*(w) \cdot \vv_0) + f(\rho_*(v^-)\cdot \vv_0) + s f(\rho_*(v^-)\rho_*(w) \cdot \vv_0)$$ 
for all~$s \in \mathbb{R}$, which implies:
\begin{equation*}
    \begin{cases}
        1+ f(\rho_*(v^-)\cdot\vv_0) &= 0 \\
        f(\rho_*(w) \cdot \vv_0) + f(\rho_*(v^-)\rho_*(w) \cdot \vv_0) &=0~.
    \end{cases}
\end{equation*}
This system allows us to simplify Equation~\eqref{equ_expression_f} and get
$$f(\rho(\exp(tv^- + sw)\cdot \vv_0) \cdot x_0) = (1-t)(1+s\lambda), \quad \text{with } \lambda := f(\rho_*(w) \cdot \vv_0)~.$$ 

Now for~$s,t \in \mathbb{R}$, we write~$x_{t,s} := \exp(t v^- + s w) \cdot \LP^+$. By Proposition~\ref{lem_key_lemma},  if~$x_{t_1, s_1}, x_{t_2, s_2} \in \O$, then:

\begin{equation}\label{eq_minoration_K}
    \begin{split}
        \kob_{\O}(x_{t_1,s_1}, x_{t_2,s_2}) &\geq C_{\O}^\rho (x_{t_1,s_1}, x_{t_2,s_2})   \geq \Big{|} \log \Big{|} [\xi : x_{t_1,s_1} : x_{t_2,s_2} : \LP^\opp]_\rho \Big{|} \Big{|} \\
    & = \Big{|} \log \Big{|} \frac{f(\rho(\exp(t_1v^- + s_1w))\cdot \vv_0)}{f\left(\rho\left(\exp(t_2v^- + s_2w)\right)\cdot \vv_0 \right)} \Big{|} \Big{|}  = \Big{|} \log \Big{|} \frac{(1-t_1)(1+s_1 \lambda)}{(1-t_2)(1 +s_2 \lambda)} \Big{|} \Big{|}.
    \end{split}
\end{equation}
By Lemma~\ref{lem_continuité_intersection_photon}, for all~$0 \leq t < 1$ there exist~$m_t < 0 <  M_t$ such that 
\begin{equation*}
    \O \cap \{\exp(t v^- + s w) \cdot \LP^+\mid s \in \mathbb{R}\} = \{\exp(t v^- + s w) \cdot \LP^+\mid m_t < s < M_t\}.
\end{equation*}
We claim that~$(m_t)$ converges to~$m := \liminf_{u \rightarrow 1} m_u$, as~$t \rightarrow 1$. Then 
\begin{equation*}
    \{\exp(v^- + sw) \cdot \LP^+\mid m \leq s \leq 0\} \subset \partial \O~.
\end{equation*}
By Lemma~\ref{lem_continuité_intersection_photon}, for any~$\varepsilon >0$ there exists~$\delta >0$ such that~$\exp(tv^- + sw) \cdot \LP^+ \in \O$ for all~$s \in [m+ \varepsilon, 0]$ and~$t \in [-\delta, 1)$. Hence~$m + \varepsilon \geq \limsup_{u \rightarrow 1} m_u$. This is true for all~$\varepsilon > 0$, so~$m = \limsup_{s \rightarrow 1} m_s$. This proves the claim. 

Similarly, there exists~$M \in \mathbb{R}_{\geq  0}$ such that~$M_t \rightarrow M$ as~$t \rightarrow 1$. Up to replacing~$w$ with~$-w$, we may assume that~$M \leq \frac{-1}{\lambda}$. Hence
\begin{equation}\label{eq_lambda_ineq}
    \lambda < 0 \quad \text{ and } \quad 1 + m \lambda > 1~.
\end{equation}
With this inequality, we can now end the proof of Theorem~\ref{thm_group_non_hyperbolic_1}. This follows from the following Lemma~\ref{lem_zim_convex_Omega}, which is proven in \cite{zimmer2015convexity} for item (iv) of Table~\ref{table_nagano_full} if~$\O$ is proper, and the proof generalizes verbatim to any higher-rank real-type Nagano space and the~$\MRr$-proper case, using~\eqref{eq_lambda_ineq}. We reproduce the proof for completeness of the paper.

A geodesic rectangle~$R$ in a geodesic metric space~$(X,d)$ is~\emph{$A$-thin} if any side of~$R$ is contained in an~$A$-neighborhood of the three others. In a Gromov hyperbolic space, there exists~$A>0$ such that any geodesic rectangle is~$A$-thin.

\begin{lem}\label{lem_zim_convex_Omega}\cite{zimmer2015convexity}
    For any~$A>0$, there exists a geodesic rectangle in~$(\O, \kob_{\O})$ which is not~$A$-thin.
\end{lem}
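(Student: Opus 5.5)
We will build an explicit family of geodesic quadrilaterals whose sides are photon segments, lying in the two-parameter ``strip''
\[
S := \{x_{t,s} = \exp(tv^- + sw)\cdot \LP^+ \mid 0 \le t < 1, \ m_t < s < M_t\}\subset \O ,
\]
and show that the Kobayashi distance between two opposite sides blows up. The corners will be
\[
p_1 = x_{t_0,s_1}, \quad p_2 = x_{t_0,s_2}, \quad p_3 = x_{t_k,s_1}, \quad p_4 = x_{t_k,s_2},
\]
with fixed $t_0\in[0,1)$, fixed $m< s_1< s_2 \le 0$ (so that $x_{t,s_i}\in\O$ for $t$ close to $1$, by the convergence $m_t\to m$ proved above), and $t_k\to 1$. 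Since $\uu^-$ is abelian, the sides $\{x_{t,s_i}\}_t$ are photon segments, because $v^-$ generates $\Phot_\std$ translated by $\exp(s_iw)$. The sides $\{x_{t,s}\}_s$ at fixed $t$ are photon segments in direction $w\in\RUN$. By Theorem~\ref{thm_kob_hyp_R_propre} and Proposition~\ref{lem_key_lemma}.(2), segments of photons are geodesics for $\kob_\O$, so these four segments form a genuine geodesic rectangle $R_k$.

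The key estimate will be to show that the side $[p_3,p_4]$ (at height $t_k$) is far from the side $[p_1,p_3]$, or more precisely far from the union of the other three sides near its midpoint. We use the lower bound \eqref{eq_minoration_K}:
\[
\kob_\O(x_{t,s},x_{t',s'})\ \ge\ \Big|\log\Big|\tfrac{(1-t)(1+s\lambda)}{(1-t')(1+s'\lambda)}\Big|\Big| .
\]
Since $\lambda<0$ and $s\in[s_1,s_2]\subset(m,0]$, the quantity $1+s\lambda$ lies between $1$ and $1+m\lambda$, so it stays bounded and away from $0$ by \eqref{eq_lambda_ineq}. Hence points on the top side $t=t_k$ are at distance at least $|\log(1-t_k)| - C$ from the bottom side $t=t_0$, which tends to infinity. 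This estimate alone does not handle the vertical sides, which approach the top side.

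For the vertical sides we will use a second Carathéodory-type lower bound, with supporting data adapted to the boundary in the $s$-direction. Near $t=1$ the boundary points $x_{t,m_t}$ converge to $x_{1,m}$, and the segment $\{x_{1,s}\mid m\le s\le 0\}$ lies in $\partial\O$. The plan is to choose $s_1$ close to $m$, $s_2$ close to $0$, and a midpoint $s_*\in(s_1,s_2)$, and to bound from below the distance from $x_{t_k,s_*}$ to the vertical sides $s=s_i$. As a first attempt, we restrict to points with $t$ close to $t_k$ (where the bound above already works) and use Lemma~\ref{lem_asymptotic_behav_extremal}-type reasoning to obtain this distance.

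If bounded distances persisted along a subsequence, the limit points would lie in a common $\MRr$-face containing both $x_{1,s_*}$ and $x_{1,s_i}$. Together with the dilations $g_0(\cdot)$ and translations in $U^-$ (Section~\ref{sect_dilations_translations}), this should be contradicted by an explicit rescaling of the cross-ratio estimate in the $s$-variable. Concretely, we take a $\xi'\in\O^*$ supporting $\O$ at $x_{1,m}$ and compute $[\xi':\cdot:\cdot:\LP^-]_\rho$ as a bilinear function of $(t,s)$, exactly as in \eqref{equ_expression_f}. This is Zimmer's argument for Grassmannians, transplanted using the Plücker triple of Proposition~\ref{prop_photons_egal_proj_lines}.

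The main obstacle will be this second estimate: controlling the distance to the vertical sides. The first estimate only separates horizontal levels, while thinness could fail or hold through the vertical sides. We expect to need the full bilinear form of the supporting functional together with \eqref{eq_lambda_ineq} to get a term like $|\log|1+s\lambda'||$ that separates $s_*$ from $s_1,s_2$ uniformly as $t_k\to1$. If that fails, we fall back on choosing the rectangle with $s_1\to m$ and $s_2\to 0$ simultaneously with $t_k\to 1$.
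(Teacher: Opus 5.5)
Your rectangle is the paper's (vertical photon sides at $s=s_1,s_2$, horizontal ones at heights $t_0$ and $t_k$), and your top-versus-bottom estimate via \eqref{eq_minoration_K} is correct. The gap is the step you flag yourself: keeping $x_{t_k,s_*}$ away from the vertical sides. The heights that need an argument are $t$ close to $t_k$, where \eqref{eq_minoration_K} gives essentially nothing. There you try to show, by a Lemma~\ref{lem_asymptotic_behav_extremal}-type contradiction or a second supporting functional, that the distance to the sides $s=s_i$ cannot stay bounded as $t_k\to1$. But it does stay bounded. By Proposition~\ref{lem_key_lemma}.(2), $\kob_\O(x_{t_k,s_*},x_{t_k,s_i})=\ro_\O(x_{t_k,s_*},x_{t_k,s_i})$, and this converges to the Hilbert distance between $s_*$ and $s_i$ in $(m,M)$. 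Accordingly, the limit points $x_{1,s_*}$ and $x_{1,s_i}$ do lie in a common $\MRr$-face (the face of $a$, by construction), so the face argument yields no contradiction. A supporting functional at $x_{1,m}$, or any other, cannot help either: by Proposition~\ref{lem_key_lemma}.(1), every cross-ratio lower bound is dominated by $\chi_\rho(h_\alpha)\kob_\O$, which is bounded on these pairs.

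The missing idea is to make the horizontal width large once and for all, rather than through $t_k\to1$, and then to split into two regimes. First, choose $s_1$ and $s_*$ close enough to $m$ (say with $s_2=0$) that $\ro_\O(x_{t,s_*},x_{t,s_i})\geq 2A+\log(1+m\lambda)+C$ for all $t$ near $1$. This is possible because these quantities converge to Hilbert distances in $(m,M)$, which blow up as the points approach $m$. Then:

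\begin{itemize}
\item If $\log\frac{1-t}{1-t_k}\leq A+\log(1+m\lambda)$, use the triangle inequality $\kob_\O(x_{t_k,s_*},x_{t,s_i})\geq \ro_\O(x_{t_k,s_*},x_{t_k,s_i})-\ro_\O(x_{t_k,s_i},x_{t,s_i})$. The vertical photon distance here is at most $\log\frac{1-t}{1-t_k}+C$.
\item Otherwise, use \eqref{eq_minoration_K}. Its $s$-dependence costs at most $\log(1+m\lambda)$, by \eqref{eq_lambda_ineq}.
\end{itemize}

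This is the paper's argument, with one difference in where the far point sits. The paper takes it on a vertical side, $x_{u_0,s_0}$ at a fixed height $u_0$, handles the two horizontal sides by properness of $\kob_\O$, and runs the two cases against the opposite vertical side $s=0$. With the splitting above, your choice of the top side would also work. Your fallback $s_1\to m$ points in the right direction, but it still needs this splitting.
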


\begin{proof}
    Let us first fix some notation. We fix~$R > 2A+ \log(1 + m \lambda)+2$, and~$s_0 \in (m,0)$ so that~$(m : s_0 : 0: M) > R + 1$. We moreover fix~$t_0 \in [e^{-1},1)$ such that~$s_0 \in (m_t,0)$ and~$(m_t : s_0 : 0 :M_t)> R$ for every~$t > t_0$.

Now, for~$r \in (t_0,1)$, consider the segments of photon
\begin{equation*}
        \gamma_1 = [x_{t_0, s_0}, x_{t_0, 0}], \ \gamma_2(r) = [x_{t_0, 0}, x_{r, 0}], \ \gamma_3(r) = [x_{t_1, s_0}, x_{r, 0}], \ \gamma_4(r) = [x_{t_0, s_0}, x_{r, s_0}].
\end{equation*}
By Proposition~\ref{lem_key_lemma}, each of the~$\gamma_i$ is a geodesic for~$\kob_{\O}$. By Theorem~\ref{thm_kobayashi_metric_nagano}, the concatenation of these segments thus forms a geodesic rectangle~$\gamma(r)$ in~$(\O, \kob_\O)$. We will prove that for~$r$ sufficiently large, the side~$\gamma_4(r)$ is not contained in an~$A$-neighborhood of~$\gamma_1 \cup \gamma_2(r) \cup \gamma_3(r)$.

By properness of~$\kob_\O$, there exists~$u_0 \in (t_0,1)$ so that
\begin{equation*}
    \kob_{\O}(x_{u_0, s_0}, \gamma_1) \geq A~.
\end{equation*}
It remains to prove that there exists~$r \in (u_0, 1)$ such that
\begin{equation*}
    \kob_\O(x_{u_0, s_0}, \gamma_2(r) \cup \gamma_3(r)) \geq A~.
\end{equation*}
Since~$\kob_\O$ is a proper metric on~$\O$, there exists~$r \in (u_0, 1)$ such that~$ \kob_\O(x_{u_0, s_0}, \gamma_3(r)) \geq A$. Hence it is sufficient to prove that 
\begin{equation}\label{eq_ineq_gamma_2}
    \kob_\O(x_{u_0, s_0}, \gamma_2(r)) \geq A~.
\end{equation}
Let~$t \in (t_0, r)$. We will prove that~$\kob_\O(x_{u_0, s_0}, x_{t,0}) \geq A$. 

  \begin{enumerate}
      \item Let us first assume that~$\log \frac{1-u_0}{1-t} \leq R-A-|\log(t_0)|$. Then  since~$|\log(t/u_0)| \leq |\log(t_0)|$, one has~$(0 : u_0 :t : 1) = \log \frac{(1 - u_0)t}{ (1 - t)u_0 } \leq R - A$. Thus one has 
\begin{align*}
    \kob_{\O}(x_{u_0, s_0}, x_{t,0}) &\geq  \kob_{\O}(x_{u_0, s_0}, x_{u_0, 0}) - \kob_{\O}(x_{u_0, 0}, x_{t, 0}) \\
    &\geq ( m_{u_0} : s_0 : 0 : M_{u_0}) - (0 : u_0 : t : 1) \geq R - (R - A) = A~.
\end{align*}

\item Now let us assume that~$\log \frac{1-u_0}{1-t} \geq R-A-|\log(t_0)|$. By~\eqref{eq_minoration_K} and definition of~$R$ one has 
  \begin{align*}
      \kob_{\O}(x_{u_0, s_0}, x_{t, 0}) &\geq \log \frac{1-u_0}{1 -t} + \log(1+s_0\lambda) \\
      &\geq \log \frac{1-u_0}{1-t} - |\log(t_0)|-|\log(1 +s_0\lambda)| \\
      &\geq R-A-2|\log(t_0)|- \log(1+ m \lambda) \geq A~,
  \end{align*}
  the last inequality holding by~\eqref{eq_lambda_ineq} and because~$|\log(t_0)| \leq 1$. 

  \end{enumerate}

We have proven that~$\kob_\O(x_{u_0, s_0}, x_{t, 0}) \geq A$ for all~$t \in (t_0, r)$, which proves~\eqref{eq_ineq_gamma_2}. 
Hence the rectangle~$\gamma(r)$ is not~$A$-thin.~$\qed$    
\end{proof}

\begin{rmk}\label{rmk_comp_rang_1}
   Theorem~\ref{thm_group_non_hyperbolic_1} is specific to higher rank. In rank one, which corresponds to the case where~$\Fl(\g, \alpha)$ is real projective space (item (iv, $\min(p,q) = 1$) of Table~\ref{table_nagano_full}), the following result, due to Benoist, is well known \cite{benoist2001convexes}: \textit{Let~$\Gamma \leq \PGL(n, \mathbb{R})$ be a discrete subgroup acting cocompactly on a proper domain~$\O \subset \mathbb{P}(\mathbb{R}^n)$. Then~$\Gamma$ is Gromov hyperbolic if and only if~$\O$ is strictly convex.}

Now let~$(\g,\alpha)$ be a higher-rank irreducible real-type Nagano pair,~$G \in \mathcal{G}_{\{\alpha\}}(\g)$ and~$\O \subset \Fl(\g,\alpha)$ be a proper domain, divisible (and even just almost-homogeneous) by a discrete subgroup~$\Gamma \leq G$. In the proof of Theorem~\ref{thm_group_non_hyperbolic_1}, we saw that the domain~$\O$ cannot be ``strictly convex'', in the sense that not all points of~$\partial \O$ can be~$\MRr$-extremal. Besides, Corollary~\ref{cor_group_non_hyperbolic_I} states that~$\Gamma$ is never hyperbolic. Hence \cite{benoist2001convexes} still holds.

We see here a higher-rank phenomenon: proper divisible domains of~$\Fl(\g,\alpha)$ obey the same principle as those of~$\mathbb{P}(\mathbb{R}^n)$, but excluding the hyperbolic behavior (which is a rank-one behavior).
\end{rmk}

\subsection{Proper almost-homogeneous domains are not of rank one}\label{sect_rank_one_almost} Let us finish this paper with a last higher-rank behavior of proper almost-homogeneous domains in higher-rank Nagano spaces.

In fact, the Gromov hyperbolicity of the Hilbert metric of a proper divisible domain of projective space might not be the correct interpretation of the rank-one behavior in convex projective geometry: more generally, a proper divisible domain~$\O \subset \mathbb{P}(\mathbb{R}^n)$ is said to be \emph{rank-one} if there exists~$a \in \partial \O$ such that~$[a,b] \cap \O \neq \varnothing$ for all~$b \in \partial \O$. Such proper divisible domains were introduced by Islam \cite{islam2019rank} and have properties analogous to real hyperbolic space \cite{benoist2003convexes, benoist2006convexes, crampon2009entropies, cooper2015convex, zimmer2020higher}, although they are in general not strictly convex and hence not Gromov hyperbolic (equipped with their Hilbert metric). A.\ Zimmer \cite{zimmer2020higher} proved that all irreducible non-symmetric divisible convex sets are rank-one. Thus, all non-symmetric divisible convex sets exhibit this ``rank-one behavior''. 

In \cite{blayac2021boundary}, Blayac proved the following: if~$\O  \subset \mathbb{P}(\mathbb{R}^n)$ ($n \geq 2$) is an irreducible convex domain \emph{of rank one}, divisible by some discrete subgroup~$\Gamma \leq \PGL(n, \mathbb{R})$, then one has~$\Lambda_{\{\alpha_1\}}^{\operatorname{prox}}(\Gamma) = \partial \O$. The combination of this result with \cite{zimmer2020higher} gives:  \emph{If~$\O  \subset \mathbb{P}(\mathbb{R}^n)$ is a nonsymmetric irreducible convex domain, divisible by some discrete subgroup~$\Gamma \leq \PGL(n, \mathbb{R})$, then one has $\Lambda_{\{\alpha_1\}}^{\operatorname{prox}}(\Gamma) = \partial \O$}.

The situation is opposite in higher rank:

\begin{prop}\label{prop_limit_proximal_pas_tout}
    Let~$(\g, \alpha)$ be an irreducible real-type Nagano pair and of higher rank. Then, for any proper almost-homogeneous domain~$\O \subset \Fl(\g,\alpha)$, we have~$\partial \O \smallsetminus  \Lambda_{\{\alpha\}}(\Aut_G(\O)) \ne \varnothing$.
\end{prop}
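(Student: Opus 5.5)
By Proposition~\ref{prop_proximal_limit_set} applied to $H = \Aut_G(\O)$, we have $\Lambda_{\{\alpha\}}(\Aut_G(\O)) = \overline{\Rr(\O)}$. It therefore suffices to exhibit a point of $\partial \O$ that does not lie in the closure of the set of $\MRr$-extremal points. The plan is to use the point $a$ built in Section~\ref{sect_non_hyp_kob}, which lies on a photon entering $\O$, and to show that it is at positive distance from $\overline{\Rr(\O)}$.

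\textbf{Construction of $a$.} Normalize as in Section~\ref{sect_non_hyp_kob}: $\O \subset \Affstdstd$ with $\LP^+ \in \O$. By properness, $\Phot_\std \cap \O$ is a bounded interval of the affine line $\exp(\mathbb{R}v^-)\cdot\LP^+$. Hence some point $a = \exp(t v^-)\cdot \LP^+$ lies in $\partial\O$ and satisfies $d_H(\LP^+, a) = 1$. Since $\O$ is almost-homogeneous, it is dually convex (Fact~\ref{prop_zimmer_dual_convex}) and hence photon-convex. Proposition~\ref{prop_geom_prop_extremal_points} then gives $d_H(x,b) \geq \operatorname{rk}(\g,\alpha) \geq 2$ for every $b \in \Rr(\O)$ and every $x \in \O$.

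\textbf{Passing to the closure.} We need this inequality for $b \in \overline{\Rr(\O)}$. The closedness argument uses the Weyl-position formulation from the proof of Theorem~\ref{lem_geom_prop_visuel_general}. For each $b \in \Rr(\O)$ there is $\xi_b \in \O^*$ with $\id \in \pos^{(\alpha,\oppinv(\alpha))}(b,\xi_b)$. Take $b_k \to b$ and extract a limit $\xi$ of $(\xi_{b_k})$; it lies in $\overline{\O^*}$, and hence $\hyp_\xi \cap \O = \varnothing$. By \cite[Lem.\ 3.15]{kapovich2017dynamics}, the relation $\id \in \pos(b,\xi)$ persists in the limit. Lemma~\ref{lem_positions_weyl}.(2) and Fact~\ref{thm_takeuchi_hyp} then give $d_H(x,b) = \operatorname{rk}(\g,\alpha)$ for all $x \in \O$.

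\textbf{Conclusion.} If $a \in \overline{\Rr(\O)}$, the previous step gives $d_H(\LP^+, a) = \operatorname{rk}(\g,\alpha) \geq 2$. This contradicts $d_H(\LP^+, a) = 1$. Hence $a \in \partial\O \smallsetminus \Lambda_{\{\alpha\}}(\Aut_G(\O))$, which proves the proposition.

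\textbf{Main obstacle.} The delicate step is the passage to the closure. The set of points at arithmetic distance at most $\operatorname{rk}-1$ from a fixed point is a closed Schubert-type set, namely the complement of the open cell $\mathsf{C}_{\overline{w_0}}$. So the condition $d_H(x,b) = \operatorname{rk}$ is open in $b$, not closed, and a limit argument on $d_H$ alone could fail. The argument must therefore go through the supporting element $\xi$ and the semicontinuity of positions, and one must check that $\hyp_\xi$ still avoids $\O$ in the limit. A possible alternative is to argue directly that $\Lambda_{\{\alpha\}}$ is closed and apply Corollary~\ref{lem_geom_prop_visuel_4}-type reasoning. However, that corollary needs strong $\MRr$-properness, which does follow from properness, so it may give a shortcut: every $\{\alpha\}$-limit point $b$ satisfies $d_H(x,b) \geq \operatorname{rk}$ for all $x\in\O$, directly contradicting $d_H(\LP^+,a)=1$.
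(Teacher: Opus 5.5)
Your argument is correct and is essentially the paper's. You take a boundary point $a$ on a photon through an interior point and show it cannot lie in $\Lambda_{\{\alpha\}}(\Aut_G(\O))$: a supporting $\xi \in \O^*$ with $\id \in \pos^{(\alpha,\oppinv(\alpha))}(a,\xi)$, combined with Lemma~\ref{lem_positions_weyl}.(2) and Fact~\ref{thm_takeuchi_hyp}, would put $a$ in generic position with every point of $\O$, contradicting $d_H = 1$. The only difference is in reaching limit points: the paper applies Proposition~\ref{prop_geom_prop_extremal_points} directly to the limit point $a$ (its proof of the last assertion only uses $a \in \Lambda_{\{\alpha\}}(\Aut_G(\O))$), while you pass from $\Rr(\O)$ to its closure through Proposition~\ref{prop_proximal_limit_set} and an explicit limiting argument, which is valid and slightly more careful.
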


In other words, in higher rank, proper almost-homogeneous domains are ``never of rank one''. 

\begin{proof}[Proof of Proposition~\ref{prop_limit_proximal_pas_tout}] By Proposition~\ref{prop_proximal_limit_set}, we have~$\Lambda_{\{\alpha\}}^{\operatorname{prox}}(\Aut_G(\O)) = \Lambda_{\{\alpha\}}(\Aut_G(\O))$. Let~$x \in \O$, and~$\Phot$ be a photon through~$x$. Then there exists~$a \in \Phot \cap \partial \O$.  
If~$a \in \Lambda_{\{\alpha\}}(\Aut_G(\O))$, then by Proposition~\ref{prop_geom_prop_extremal_points}, there exists~$b \in \O^*$ such that~$\pos^{(\{\alpha\}, \{\oppinv(\alpha)\})}(a,b) = \overline{\id}$. Then, by Lemma~\ref{lem_positions_weyl}.(2), in the notation of Fact~\ref{thm_takeuchi_hyp}, we have~$\hyp_a^* \subset \hyp_b$. Hence by Fact~\ref{thm_takeuchi_hyp} and since~$\operatorname{rk}(\g, \alpha) \geq 2$, we must have~$x \in \hyp_b$, which contradicts the fact that~$b \in \O^*$.~$\qed$
\end{proof}

\newpage

\begin{table}[H]
\centering
\setlength\extrarowheight{3pt}
\rotatebox{90}{
\begin{tabular}{|c|c|c|c|c|c|c|c|}
\Xhline{2.0pt}
\# & $\g$ & $\alpha$ & $\Fl(\g,\alpha)$ & $\dim(\g_\alpha)$ &
$\Iso(\Fl(\g,\alpha), g_{\g,\alpha})$ & $\Iso(\mathbb{X}(\g,\alpha))$ & $\operatorname{rk}(\g,\alpha)$ \\
\Xhline{2.0pt}

\cellcolor{gray!30}(i) & $\soo(n,n)$ & $\alpha_i,\, i \in \{n-1, n\}$ & $\SO(n)$ & $1$ & $\SO(n)\times \SO(n)$ & $\SO(n,\Cf)$ & $\lfloor n/2 \rfloor$ \\ \hline
(ii) & $\spp(n,n)$ & $\alpha_n = \varepsilon_n$ & $\Sp(n)$ & $3$ & $\Sp(n)\times \Sp(n)$ & $\Sp(2n,\Cf)$ & $n$ \\ \hline
\cellcolor{gray!30}(iii) & $\suu(n,n)$ & $\alpha_n = 2\varepsilon_n$ & $\operatorname{U}(n)$ & $1$ & $\mathrm{S}(\operatorname{U}(n)\times \operatorname{U}(n))$ & $\SL(n,\Cf)\times \Rf$ & $n$ \\ \hline

\cellcolor{gray!30}(iv) & $\sll(p+q,\Rf)$ & $\alpha_p=\varepsilon_p-\varepsilon_{p+1}$ & $\Grass$ & $1$ & $\SO(p+q)$ & $\SO(p,q)$ & $\min(p,q)$ \\ \hline
(v) & $\sll(p+q,\Cf)$ & $\alpha_p=\varepsilon_p-\varepsilon_{p+1}$ & $\GrassC$ & $2$ & $\SU(p+q)$ & $\SU(p,q)$ & $\min(p,q)$ \\ \hline
(vi) & $\sll(p+q,\Hf)$ & $\alpha_p=\varepsilon_p-\varepsilon_{p+1}$ & $\GrassH$ & $4$ & $\Sp(p+q)$ & $\Sp(p,q)$ & $\min(p,q)$ \\ \hline
\cellcolor{gray!30}(vii) & $\eeee_{6(6)}$ & $\alpha_i,\, i\in\{1,5\}$ & $\Sp(4)/(\Sp(2)\times\Sp(2))$ & $1$ & $\Sp(4)$ & $\Sp(2,2)$ & $2$ \\ \hline

\cellcolor{gray!30}(viii) & $\soo(p+1,q+1)$ & $\alpha_1=\varepsilon_1-\varepsilon_2$ & $\mathbb{S}^p\times\mathbb{S}^q$ & $1$ & $\SO(p+1)\times\SO(q+1)$ & $\SO(p,1)\times\SO(1,q)$ & $2$ \\ \hline
(ix) & $\soo(n,1)$ & $\alpha_1=\varepsilon_1-\varepsilon_2$ & $\mathbb{S}^{n-1}$ & $n-1$ & $\SO(n+1)$ & $\SO(n-1,1)$ & $1$ \\ \hline

\cellcolor{gray!30}(x) & $\soo^*(4n)$ & $\alpha_n=2\varepsilon_n$ & $(\SU(2n)/\Sp(n))\times\mathbb{S}^1$ & $1$ & $\operatorname{U}(2n)$ & $\SL(n,\Hf)\times\Rf$ & $n$ \\ \hline
\cellcolor{gray!30}(xi) & $\spp(2n,\Rf)$ & $\alpha_n=2\varepsilon_n$ & $(\SU(n)/\SO(n))\times\mathbb{S}^1$ & $1$ & $\operatorname{U}(n)$ & $\SL(n,\Rf)\times\Rf$ & $n$ \\ \hline
\cellcolor{gray!30}(xii) & $\eeee_{7(-25)}$ & $\alpha_3$ & $(E_7/F_4)\times\mathbb{S}^1$ & $1$ & $E_6\times \mathbb{S}^1$ & $E_{6(-26)}\times\Rf$ & $3$ \\ \hline

(xiii) & $\soo(n+2,\Cf)$ & $\alpha_1=\varepsilon_1-\varepsilon_2$ & $\SO(n+2)/\mathrm{S}(\mathrm{O}(2)\times\mathrm{O}(n))$ & $2$ & $\SO(n+2)$ & $\SO(n,2)$ & $\min(n,2)$ \\ \hline
(xiv) & $\soo(2n,\Cf)$ & $\alpha_i\in\{n-1,n\}$ & $\SO(2n)/\operatorname{U}(n)$ & $2$ & $\SO(2n)$ & $\SO^*(2n)$ & $\lfloor n/2 \rfloor$ \\ \hline
(xv) & $\spp(2n,\Cf)$ & $\alpha_n=2\varepsilon_n$ & $\Sp(n)/\operatorname{U}(n)$ & $2$ & $\Sp(n)$ & $\Sp(2n,\Rf)$ & $n$ \\ \hline
(xvi) & $\eeee_{6,\Cf}$ & $\alpha_i,\, i\in\{1,5\}$ & $E_6/(\SO(2)\times\SO(10))$ & $2$ & $E_6$ & $E_{6(-14)}$ & $2$ \\ \hline
(xvii) & $\eeee_{7,\Cf}$ & $\alpha_7$ & $E_7/(\SO(2)\times E_6)$ & $2$ & $E_7$ & $E_{7(-25)}$ & $3$ \\ \hline

(xviii) & $\eeee_{6(-26)}$ & $\alpha_i,\, i\in\{1,2\}$ & $F_4/B_4$ & $8$ & $F_4$ & $F_{4(-20)}$ & $1$ \\ \hline
\cellcolor{gray!30}(xix) & $\eeee_{7(7)}$ & $\alpha_7$ & $\SU(8)/\Sp(4)$ & $1$ & $\SU(8)$ & $\SL(4,\Hf)$ & $3$ \\ 

\Xhline{2.0pt}
\end{tabular}}
\caption{\small List of irreducible Nagano spaces. The greyed out items correspond to real-type Nagano spaces. Groups are given up to local isomorphism. The parameters $n,p,q$ are positive integers. The notations~$\g, \alpha, \Fl(\g, \alpha)$ ,and~$\g_{\alpha}$ are introduced in Section~\ref{sect_lie_theory}.
Notation~$g_{\g, \alpha}$ is introduced in Section~\ref{sect_construction_symetri_structure},~$\operatorname{rk}(\g, \alpha)$ in Definition~\ref{def_rank_nagano_space} and~$\mathbb{X}(\g, \alpha)$ in Section~\ref{sect_embedding_concompact_dual}. The information in this table come from \cite{nagano1965transformation, makarevivc1973open, takeuchi1988basic, onishchik2012lie}. \normalsize}
\label{table_nagano_full}
\end{table}

\footnotesize
\printbibliography
\vspace{1cm}
\small\noindent \textsc{Institut de Recherche Mathématique Avancée,
7 rue René Descartes, 67000 Strasbourg, France.} \emph{email address:} \texttt{galiay@unistra.fr}

\end{document}